\documentclass[a4paper,10pt,reqno]{amsart}

\usepackage{amsmath,amsfonts,amscd,amssymb,amsthm, graphicx,mathrsfs,eufrak,enumitem,upgreek,fp}
\usepackage{extarrows}
\usepackage[mathscr]{euscript}
\usepackage{enumitem}
\usepackage{stmaryrd}

\usepackage{setspace}
\setlength{\marginparwidth}{1in}
\newcommand{\marginparstretch}{0.6}
\let\oldmarginpar\marginpar
\renewcommand\marginpar[1]{\-\oldmarginpar[\framebox{\setstretch{\marginparstretch}\begin{minipage}{\marginparwidth}{\raggedleft\tiny #1}\end{minipage}}]{\framebox{\setstretch{\marginparstretch}\begin{minipage}{\marginparwidth}{\raggedright\tiny #1}\end{minipage}}}}


\usepackage[colorlinks]{hyperref}
\usepackage{tikz,mathrsfs}
\usepackage{relsize}

\usetikzlibrary{arrows,decorations.pathmorphing,decorations.pathreplacing,positioning,shapes.geometric,shapes.misc,decorations.markings,decorations.fractals,calc}
\usetikzlibrary{matrix}

        \tikzset{
        cvertex/.style={circle,draw=black,inner sep=1pt,outer sep=3pt},
        vertex/.style={circle,fill=black,inner sep=1pt,outer sep=3pt},
        DBs/.style={circle,draw=black,circle,fill=black,inner sep=0pt, minimum size=3pt},
        DB/.style={circle,draw=black,circle,fill=black,inner sep=0pt, minimum size=4pt},
         DWs/.style={circle,draw=black,circle,fill=white,inner sep=0pt, minimum size=3pt},
         DWds/.style={circle,draw=black,densely dotted,circle,fill=white,inner sep=0pt, minimum size=3pt},
        DW/.style={circle,draw=black,inner sep=0pt, minimum size=4pt},
        tvertex/.style={inner sep=1pt,font=\scriptsize},
        gap/.style={inner sep=0.5pt,fill=white},
        Ggap/.style={inner sep=0.5pt,fill=green!40!black!20}}

\tikzstyle{mybox} = [draw=black, fill=blue!10, very thick,
   rectangle, rounded corners, inner sep=10pt, inner ysep=20pt]
\tikzstyle{boxtitle} =[fill=blue!50, text=white,rectangle,rounded corners]

\makeatletter
\def\calcLength(#1,#2)#3{%
\pgfpointdiff{\pgfpointanchor{#1}{center}}%
             {\pgfpointanchor{#2}{center}}%
\pgf@xa=\pgf@x%
\pgf@ya=\pgf@y%
\FPeval\@temp@a{\pgfmath@tonumber{\pgf@xa}}%
\FPeval\@temp@b{\pgfmath@tonumber{\pgf@ya}}%
\FPeval\@temp@sum{(\@temp@a*\@temp@a+\@temp@b*\@temp@b)}%
\FProot{\FPMathLen}{\@temp@sum}{2}%
\FPround\FPMathLen\FPMathLen5\relax
\global\expandafter\edef\csname #3\endcsname{\FPMathLen}
}
\makeatother

\addtolength{\hoffset}{-0.5cm} \addtolength{\textwidth}{1cm}
\addtolength{\voffset}{-1.5cm} \addtolength{\textheight}{2cm}

\usepackage{colonequals}

\newtheorem{theorem}{Theorem}[section]

\newtheorem{corollary}[theorem]{Corollary}
\newtheorem{lemma}[theorem]{Lemma}
\newtheorem{proposition}[theorem]{Proposition}
\newtheorem{definition-proposition}[theorem]{Definition-}

\newtheorem{setup}[theorem]{Setup}

\newtheorem{definition}[theorem]{Definition}

\newtheorem{example}[theorem]{Example}

\theoremstyle{definition}
\newtheorem{remark}[theorem]{Remark}
\newtheorem{notation}[theorem]{Notation}

\newcommand{\N}{\mathbb{N}}

\newcommand{\Ext}{\operatorname{Ext}\nolimits}
\newcommand{\Tor}{\operatorname{Tor}\nolimits}
\newcommand{\Hom}{\operatorname{Hom}\nolimits}
\newcommand{\uHom}{\operatorname{\underline{Hom}}\nolimits}
\newcommand{\uEnd}{\operatorname{\underline{End}}\nolimits}

\newcommand{\End}{\operatorname{End}\nolimits}

\newcommand{\RHom}{\mathbf{R}\strut\kern-.2em\operatorname{Hom}\nolimits}
\newcommand{\RshHom}{\mathbf{R}\strut\kern-.2em\mathscr{H}\strut\kern-.3em\operatorname{om}\nolimits}
\newcommand{\shHom}{\mathscr{H}\strut\kern-.3em\operatorname{om}\nolimits}

\newcommand{\Image}{\operatorname{Im}\nolimits}
\newcommand{\Kernel}{\operatorname{Ker}\nolimits}

\newcommand{\Spec}{\operatorname{Spec}\nolimits}

\newcommand{\Ker}{\operatorname{Ker}\nolimits}

\newcommand{\con}{\operatorname{con}\nolimits}
\newcommand\fundgp{\uppi_{\hspace{0.5pt}1}\hspace{-0.5pt}}

\newcommand{\tT}{\EuScript{T}}

\DeclareMathOperator{\moduleCategory}{\mathsf{mod}} \renewcommand{\mod}{\moduleCategory}
\DeclareMathOperator{\Mod}{\mathsf{Mod}}
\DeclareMathOperator{\proj}{\mathsf{proj}}

\DeclareMathOperator{\thick}{\mathsf{thick}}
\DeclareMathOperator{\coh}{\mathsf{coh}}

\DeclareMathOperator{\add}{\mathsf{add}}

\DeclareMathOperator{\cm}{\mathrm{CM}}
\DeclareMathOperator{\ucm}{\underline{\mathrm{CM}}}

\newcommand{\cmr}{\cm R}
\newcommand{\ucmr}{\ucm R}
\def\twotilt{\mathop{\text{$2$-}\sf tilt}\nolimits}

\newcommand{\cut}{\ar@{-}@[|(5)]}

\newcommand{\iso}{\cong}

\numberwithin{equation}{section}

\def\Db{\mathop{\mathrm{D^b}\kern -0.05em}\nolimits}
\def\Dnb{\mathop{\mathrm{D}\kern -0.05em}\nolimits}
\def\Kb{\mathop{\mathrm{K^b}\kern -0.05em}\nolimits}

\newcommand{\cC}{\EuScript{C}}
\newcommand{\cD}{\mathcal{D}}

\newcommand{\cG}{\mathcal{G}}
\newcommand{\cH}{\mathcal{H}}

\newcommand{\cS}{\mathcal{S}}

\newcommand{\id}{\mathrm{id}}

\setenumerate{label={\normalfont(\arabic*)}}

\begin{document}
\title{\textsc{The Tilting Theory of Contraction Algebras}}
\author{Jenny August}
\address{Max Planck Institute for Mathematics,
Vivatsgasse 7,
53111 Bonn,
Germany.}
\email{jennyaugust@mpim-bonn.mpg.de}
\begin{abstract}

To every minimal model of a complete local isolated cDV singularity Donovan--Wemyss associate a finite dimensional symmetric algebra known as the contraction algebra. We construct the first known standard derived equivalences between these algebras and then use the structure of an associated hyperplane arrangement to control the compositions, obtaining a faithful group action on the bounded derived category. Further, we determine precisely those standard equivalences which are induced by two-term tilting complexes and show that any standard equivalence between contraction algebras (up to algebra isomorphism) can be viewed as the composition of our constructed functors. Thus, for a contraction algebra, we obtain a complete picture of its derived equivalence class and, in particular, of its derived autoequivalence group.

\end{abstract}
\subjclass[2010]{Primary 16E35; Secondary 14E30, 16G50}
\maketitle
\parindent 15pt
\parskip 0pt

\section{Introduction}

Contraction algebras were introduced by Donovan--Wemyss as an invariant of certain birational maps \cite{DefFlops, twists}. For a special class of these maps, known as \emph{3-fold flopping contractions}, these algebras are always finite dimensional, symmetric and are known to control certain aspects of the associated geometry \cite{DefFlops, invsing, HomMMP}. This paper provides further evidence that, in these special cases, the derived category of a contraction algebra actually controls all of the geometry; in particular, we prove that the group structure of certain derived symmetries arising via \emph{flops} in the geometry descends to the derived category of the associated contraction algebra. 

Algebraically, this involves first constructing \emph{standard derived equivalences}, those of the form $\RHom(T,-)$ for some bimodule complex $T$, between the contraction algebras, and then secondly, understanding how to compose them. Both of these are generally difficult problems but using the associated geometry, we obtain something that is very rare in the literature: a class of finite dimensional algebras where we fully understand all the standard derived equivalences between them and in particular, understand the structure of their derived autoequivalence groups.

\subsection{Background and Motivation}
Given a complete local isolated cDV singularity $X= \Spec R$ (see \S2.1 for a definition of such 3-folds), the Minimal Model Programme outputs certain contractions $f \colon Y \to X$, known as \textit{minimal models}. Although minimal models of $X$ are not unique, any two minimal models are connected by a sequence of codimension two modifications called \textit{simple flops} (see e.g.\ \cite{[K]}) and further, if $f \colon Y \to X$ and $g \colon Z \to X$ are two minimal models of $X$ related by a simple flop, then there are associated derived equivalences
\begin{align*}
\Db(\coh Z) \to \Db(\coh Y) \quad \text{and} \quad \Db(\coh Y) \to \Db(\coh Z), 
\end{align*}
known as \emph{Bridgeland--Chen flop functors} \cite{bridgeland, chen}. As these are not inverse to each other, a nontrivial autoequivalence of $\Db(\coh Y)$ can be obtained by considering their composition. More generally, we think of any autoequivalence of $\Db(\coh Y)$ which is obtained as the composition of flop functors as a derived symmetry arising from birational geometry. \\

The Homological Minimal Model Programme studies this geometry using techniques from noncommutative algebra \cite{HomMMP}. The key observation is that $Y$ (as above) is derived equivalent to an algebra of the form $\End_R(M)$ which is known as the \emph{maximal modification algebra (MMA)} of $Y$. Sending $Y$ to $M$ induces a bijection between minimal models of $X$ and basic maximal rigid objects in the singularity category of $R$, written $\ucmr$, under which simple flops correspond to mutations at indecomposable summands \cite[4.10]{HomMMP}. Furthermore, mutation of maximal rigid objects induces derived equivalences of the MMAs which are isomorphic to the flop functors. Thus, questions about minimal models can be translated into algebraic problems involving only the MMAs. In this way, MMAs are known to control all the geometry but it is conjectured by Donovan--Wemyss \cite[1.3]{rigidequiv} that the same is true when we pass to the stable endomorphism algebra $\uEnd_R(M)$, known as the \emph{contraction algebra}. This motivates the goal of this paper which is to fully understand the derived equivalences between the contraction algebras of all minimal models of $\Spec R$. As well as being interesting algebraically, this will show the contraction algebras retain information about the corresponding flop functors.

\subsection{New Results}

As contraction algebras are symmetric, it is well known that they have no tilting modules and hence derived equivalences must be induced by tilting complexes of higher length. However, producing \emph{two-sided tilting complexes}, which are needed to obtain standard equivalences, is often very difficult. The first result of this paper uses the link with MMAs to construct such a complex.

\begin{theorem}[Theorems \ref{square}, \ref{equivalence}] \label{introexplicit}
Suppose that  $\Spec R$ is a complete local isolated cDV singularity and $M,N \in \ucmr$ are basic maximal rigid objects with $R$ as a summand. Writing $\Lambda$ and $\Gamma$ for the MMAs and $\Lambda_{\con}$ and $\Gamma_{\con}$ for the contraction algebras, define 
\begin{align}
\tT \colonequals \tau_{\scriptscriptstyle{ \geq -1}} ( \Gamma_{\!\con} \otimes^{\bf L}_\Gamma \Hom_R(M,N) \otimes^{\bf L}_{\Lambda} \Lambda_{\con}) \label{bimodcomplex}
\end{align}
which is a $\Gamma_{\con}$-$\Lambda_{\con}$-bimodule complex. If $M$ and $N$ are related by a single mutation at an indecomposable summand $M_i \in \add(M)$ then $\tT$ induces a standard equivalence $F_{_i} \colonequals \RHom_{\Lambda_{\con}}(\tT, -)$ such that the following diagram commutes.
\begin{center}\hspace{1cm}
\begin{tikzpicture}
  \matrix (m) [matrix of math nodes,row sep=3em,column sep=4em,minimum width=2em] {  
\Db(\Lambda_{\con})  & \Db(\Lambda)   \\
\Db(\Gamma_{\con})  & \Db(\Gamma)   \\};
 \path[-stealth]
    (m-1-1.east|-m-1-2) edge node [above] { $\scriptstyle - \otimes_{\Lambda_{\con}} \Lambda_{\con}$} (m-1-2) 
    (m-2-1.east|-m-2-2) edge node [above] {$\scriptstyle - \otimes_{\Gamma_{\!\con}} \Gamma_{\!\con}$} (m-2-2)  
 (m-1-1) edge node [left] {$\scriptstyle F_{_i} \colonequals \RHom_{\Lambda_{\con}}(\tT, -)$} (m-2-1)
 (m-1-2) edge node [right] {$ \scriptstyle G_{_i} \colonequals \RHom_{\Lambda}(\Hom_\Lambda(M,N), -)$} (m-2-2);

\end{tikzpicture}
\end{center}
\end{theorem}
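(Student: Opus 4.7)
The plan is to treat $\tT$ as the natural ``descent'' of the known tilting $\Gamma$-$\Lambda$-bimodule $T \colonequals \Hom_R(M,N)$ to the contraction algebras, and to verify the required tilting and commutativity properties by transport along the quotient maps $\Lambda \to \Lambda_{\con}$ and $\Gamma \to \Gamma_{\con}$.

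First, I would invoke Iyama-Wemyss mutation theory to recall that, for a simple mutation at $M_i$, the bimodule $T$ is a two-term tilting $\Gamma$-$\Lambda$-bimodule with an explicit projective presentation coming from the exchange sequence at $M_i$, and that $G_i \colonequals \RHom_\Lambda(T,-)$ is a standard derived equivalence that lifts the Bridgeland-Chen flop functor. This is the starting input.

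Second, and this is the technical heart, I would show that $\tT$ is a two-sided tilting complex for $(\Gamma_{\con},\Lambda_{\con})$. Using the two-term structure of $T$, the derived tensor $C \colonequals \Gamma_{\con}\otimes^{\mathbf{L}}_\Gamma T \otimes^{\mathbf{L}}_\Lambda \Lambda_{\con}$ admits an explicit description, and $\tau_{\geq -1}$ discards any extra cohomology that the contracted exchange sequence produces below degree $-1$. To verify the tilting axioms, namely perfectness on each side, the self-orthogonality $\Hom_{\Db(\Lambda_{\con})}(\tT,\tT[n])=0$ for $n\neq 0$, and the identification $\End_{\Db(\Lambda_{\con})}(\tT)\cong\Gamma_{\con}$, I would use tensor-Hom adjunction between restriction along $\Lambda\to\Lambda_{\con}$ and $-\otimes^{\mathbf{L}}_\Lambda\Lambda_{\con}$, combined with the tilting properties of $T$ at the MMA level. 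The symmetry of $\Lambda_{\con}$ and $\Gamma_{\con}$ plays an essential role: it allows Hom-vanishing for $\Lambda_{\con}$-modules to be controlled via the corresponding statements for $\Lambda$-modules.

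Third, the commutative diagram follows from a two-step adjunction. For $X\in\Db(\Lambda_{\con})$, tensor-Hom adjunction gives
\[ \RHom_\Lambda\!\bigl(T,\, X\otimes^{\mathbf{L}}_{\Lambda_{\con}}\Lambda_{\con}\bigr) \;\cong\; \RHom_{\Lambda_{\con}}\!\bigl(T\otimes^{\mathbf{L}}_\Lambda \Lambda_{\con},\, X\bigr), \]
and a further adjunction in the $\Gamma$-variable identifies the right-hand side, after applying $\Gamma_{\con}\otimes^{\mathbf{L}}_\Gamma-$, with $F_i(X)\otimes^{\mathbf{L}}_{\Gamma_{\con}}\Gamma_{\con}$. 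One then checks that $\tau_{\geq -1}$ is invisible to this computation, which reduces to the cohomological concentration of $C$ established in Step 2.

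The main obstacle I anticipate is Step 2: derived base-change of a tilting bimodule along a quotient map is not in general tilting, so verifying this requires genuine input about simple mutation. The combination of the two-term nature of $T$, the symmetric property of the contraction algebras, and the explicit projective resolutions supplied by the exchange sequence should, however, provide precisely the structure needed to push the argument through.
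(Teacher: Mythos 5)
The broad outline — treat $\tT$ as a descent of the tilting bimodule $T\colonequals\Hom_R(M,N)$, verify two-sided tilting, and deduce commutativity — is in the spirit of what the paper does, but your proof plan has two gaps that are not cosmetic.

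First, the ``further adjunction in the $\Gamma$-variable'' in your Step 3 does not exist. There is a genuine derived adjunction relating $\RHom_\Lambda(T,\mathrm{res}(X))$ and $\RHom_{\Lambda_{\con}}(T\otimes^{\mathbf{L}}_\Lambda\Lambda_{\con},X)$, but the subsequent move — pulling $\Gamma_{\!\con}\otimes^{\mathbf{L}}_\Gamma-$ \emph{into} the first (contravariant) argument of $\RHom_{\Lambda_{\con}}$ — is not an adjunction identity. Tensor does not commute with $\RHom$ in the contravariant slot; the best you get from formal nonsense is a natural map, and showing that this map is a quasi-isomorphism is precisely the hard content. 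The paper separates this as Theorem~\ref{square}: it first reduces to showing the bimodule isomorphism $\Gamma_{\!\con}\otimes^{\mathbf{L}}_\Gamma T_i\cong\tT_i\otimes_{\Lambda_{\con}}\Lambda_{\con}$ in $\Db(\Gamma_{\!\con}\text{-}\Lambda\text{-bimod})$, constructs an explicit chain map between a Cartan--Eilenberg resolution of the left side and its truncated tensor, and proves injectivity on homology. The crucial input that makes that injectivity argument work is a Tor-vanishing computation ($\mathrm{Tor}_2^\Lambda(\uHom_R(M,\upnu_iM),\Lambda_{\con})=0$, Lemma~\ref{assumpholds}), which uses the specific projective resolution of $\Lambda_{\con}$ over $\Lambda$ coming from the syzygy sequences. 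None of this is visible from adjunction alone, and nothing in your plan supplies it.

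Second, your ordering of Steps 2 and 3 is inverted relative to what the argument actually needs. You propose to first prove $\tT$ is a two-sided tilting complex and then deduce the commutative square. But in the paper, the commutative square (Theorem~\ref{square}) is established \emph{first}, and it is then the decisive tool for proving the hardest tilting axiom, namely that the canonical map $\Gamma_{\!\con}\to\End_{\Db(\Lambda_{\con})}(\tT_i)$ is an isomorphism (Lemma~\ref{naturalmap}): commutativity forces the map to be surjective by comparing with the corresponding (known) isomorphism at the MMA level, and a dimension count via restriction of scalars finishes it. Your Step 2 promises this endomorphism-ring identification via ``tensor-Hom adjunction... combined with the tilting properties of $T$ at the MMA level,'' but without the commutative square already in hand there is no mechanism to transport the MMA-level isomorphism to the contraction-algebra level. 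The remaining one-sided tilting axioms in Step 2 (perfectness, self-orthogonality, generation) you could indeed get, but only after an explicit computation identifying $\tT_i$ with the AIR two-term complex $P$ from the exchange sequence (Propositions~\ref{onesided} and~\ref{onesided2} in the paper) — that computation, rather than an adjunction, is what shows the truncation $\tau_{\geq-1}$ is harmless.
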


This results also holds more generally for basic rigid objects but for maximal rigid objects, the functor $G_{i}$ is precisely the mutation functor between MMAs (and hence is isomorphic to the flop functor) and so we think of $F_{i}$ as also induced by the flop. 
Moreover, the commutative diagram is key to the rest of the paper as it makes it possible to understand the composition of the $F_i$. Composing directly would involve taking the derived tensor products of complexes of bimodules, which is difficult to compute, but with the commutative diagram we can use known results about the $G_i$ from \cite{faithful} to bypass these difficulties. 

Associated to each minimal model of $\Spec R$ (and hence to each maximal rigid object in $\ucmr$) there is a real hyperplane arrangement, and this provides the key topological data needed to control compositions. Although this data comes from the geometry (or equivalently from the MMAs), we show in Theorem \ref{intrinsic} that it can be recovered completely from the two-term tilting theory of the corresponding contraction algebra. 
The hyperplane arrangement produces a directed graph where the vertices can be viewed as maximal rigid objects and an arrow $M \to N$ corresponds to the derived equivalence $F_i$ between the corresponding contraction algebras as in Theorem \ref{introexplicit} (see \S \ref{composingsection} for details). Viewing paths in this combinatorial structure as the composition of arrows and hence of these functors, we obtain the following result about paths of shortest length. 

\begin{theorem}[Theorem \ref{composing}] \label{introcomposing}
Suppose $\Spec R$ is a complete local isolated cDV singuarity and $M \in \ucmr$ is a basic maximal rigid object with associated hyperplane arrangement $\cH$. If $N \in \ucmr$ is any other maximal rigid object then the composition along any positive minimal path from $M$ to $N$ is isomorphic to the functor $F_{MN} \colonequals \RHom(\tT, -)$, where $\tT$ is constructed as in \eqref{bimodcomplex}; in particular, the functor is independent of the minimal path chosen. 
\end{theorem}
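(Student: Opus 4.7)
The plan is to transport the path-independence statement from the MMA side to the contraction algebra side using the commutative diagram of Theorem \ref{introexplicit}, and then to identify the resulting bimodule complex explicitly with \eqref{bimodcomplex}. First I would invoke the corresponding result for MMAs from \cite{faithful}: along any positive minimal path $M = M_0 \to M_1 \to \cdots \to M_n = N$ in the hyperplane arrangement, the composition $G \colonequals G_{i_n} \circ \cdots \circ G_{i_1}$ is functorially isomorphic to $\RHom_\Lambda(\Hom_R(M,N), -)$. In bimodule terms, this records the identification
\[ \Hom_R(M_0, M_1) \otimes^{\bf L} \cdots \otimes^{\bf L} \Hom_R(M_{n-1}, M_n) \cong \Hom_R(M, N) \]
of $\Gamma$-$\Lambda$ bimodules, simultaneously pinning down the MMA composite and its defining bimodule, and depending only on the endpoints.

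Next I would stack the commutative squares of Theorem \ref{introexplicit}, one for each mutation step in the path, to obtain a single commutative square whose top row is the composite $F \colonequals F_{i_n} \circ \cdots \circ F_{i_1}$ and whose bottom row is $G$. Since each $F_{i_k}$ is standard, so is $F$, and its defining $\Gamma_{\con}$-$\Lambda_{\con}$ bimodule complex is the appropriate iterated derived tensor product of the individual $\tT_{i_k}$. The remaining task is to show that this iterated tensor product is isomorphic to the single truncation $\tT = \tau_{\geq -1}(\Gamma_{\con} \otimes^{\bf L}_\Gamma \Hom_R(M,N) \otimes^{\bf L}_\Lambda \Lambda_{\con})$ of \eqref{bimodcomplex}. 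Expanding each $\tT_{i_k}$ using its definition and substituting the MMA bimodule identification above, the formula should collapse to this single truncation of a global tensor product. Because the resulting expression depends only on $M$ and $N$, both the path-independence and the explicit bimodule description follow at once.

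The principal technical obstacle is commuting the individual truncations $\tau_{\geq -1}$ past the derived tensor products: in general $\tau_{\geq -1}(A) \otimes^{\bf L} \tau_{\geq -1}(B) \not\cong \tau_{\geq -1}(A \otimes^{\bf L} B)$, and equality requires vanishing of the relevant higher Tors. I expect this vanishing to be forced by the two-term tilting structure underlying each simple mutation (which bounds the amplitude of each $\tT_{i_k}$), combined with the positivity and minimality of the chosen path in the hyperplane arrangement. The most natural execution is induction on the path length $n$, extending the truncation one step at a time and using the combinatorial constraints of positive minimal paths to control the amplitudes that arise, with the base case $n=1$ being precisely Theorem \ref{introexplicit}.
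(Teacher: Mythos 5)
Your skeleton is correct: invoke the MMA result from \cite{faithful}, stack the commutative squares, and collapse the iterated tensor product of the $\tT_{i_k}$ to a single truncation. You also correctly identify the technical hazard — that $\tau_{\geq -1}$ does not commute with $\otimes^{\bf L}$. But the mechanism you propose to resolve this is not what the paper does, and as stated it has a genuine gap: there is no reason that ``positivity and minimality of the path'' directly forces the Tor-vanishings you would need to push truncations through an $n$-fold derived tensor product inductively, and the amplitude of $\tT_{i_n}\otimes^{\bf L}_{\Delta_{\con}}\tau_{\geq -1}(\cdots)$ is not controlled in the way your inductive step assumes.

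The paper sidesteps the truncation-vs.-tensor issue entirely. Rather than trying to commute $\tau_{\geq -1}$ past each step, it first restricts to $\Gamma_{\!\con}$-$\Lambda$-bimodules, i.e.\ applies $-\otimes_{\Lambda_{\con}}\Lambda_{\con}$ to both sides of the (stacked) commutative square. On the MMA side there is no truncation at all, and the atom hypothesis gives $T_\upalpha \cong \Hom_R(M,N)$ directly, a tilting bimodule of projective dimension one. This immediately bounds the amplitude of $\tT_\upalpha\otimes_{\Lambda_{\con}}\Lambda_{\con}$, hence of $\tT_\upalpha$, to degrees $-1$ and $0$. The full $\Lambda_{\con}$-bimodule structure is then recovered by a dedicated recovery lemma (Lemma~\ref{truncation}): for $X$ with homology concentrated near degree $0$, one has
\[
\tau_{\geq -1}\bigl(X\otimes_{\Lambda_{\con}}\Lambda_{\con}\otimes^{\bf L}_{\Lambda}\Lambda_{\con}\bigr)\cong X
\]
in the derived bimodule category, which in turn rests on the computation (Lemma~\ref{homlambda}) that $\Lambda_{\con}\otimes^{\bf L}_{\Lambda}\Lambda_{\con}$ has homology only in degrees $0$ and $-3$, with degree-$0$ part $\Lambda_{\con}$. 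Applying $-\otimes^{\bf L}_{\Lambda}\Lambda_{\con}$ followed by $\tau_{\geq -1}$ to the $\Lambda$-bimodule identity and invoking this lemma yields
\[
\tT_\upalpha \cong \tau_{\geq -1}\bigl(\Gamma_{\!\con}\otimes^{\bf L}_{\Gamma}\Hom_R(M,N)\otimes^{\bf L}_{\Lambda}\Lambda_{\con}\bigr)
\]
in one step, with no induction on path length. So the minimality of the path is used only where you expect it (to get $T_\upalpha\cong\Hom_R(M,N)$), but the truncation is handled by going down to $\Lambda$ and back up via the recovery lemma, not by propagating $\tau_{\geq-1}$ through the chain. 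Your proposal is missing precisely this ``restrict-and-recover'' device, and without it the inductive argument does not close.
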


An immediate consequence of this is that the functors $F_i$ satisfy higher length braid relations (see Corollary \ref{braidthm}). Moreover, this theorem provides the first standard derived equivalence between any two contraction algebras. We next show that the $F_{MN}$ are precisely the standard equivalences induced by two-term tilting complexes.
\begin{theorem}[Corollary \ref{twotermareatoms}] \label{twotermintro}
Suppose $\Spec R$ is a complete local isolated cDV singularity and $M \in \ucmr$ is a basic maximal rigid object. Set $\Lambda_{\con} \colonequals \uEnd_R(M)$. Then positive minimal paths starting at $M$ determine precisely (up to algebra isomorphism) the standard equivalences from $\Db(\Lambda_{\con})$ induced by two-term tilting complexes of $\Lambda_{\con}$.
\end{theorem}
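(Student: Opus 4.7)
My plan is to prove the claim via two matching inclusions. In the forward direction, I will show every composite equivalence arising from a positive minimal path starting at $M$ is induced by a two-term tilting complex. In the reverse direction, invoking Theorem \ref{intrinsic}, I will identify an arbitrary two-term tilting complex of $\Lambda_{\con}$ with one arising this way, up to twisting by $\Aut(\Lambda_{\con})$.

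The forward direction falls out of Theorem \ref{introcomposing}: along any positive minimal path from $M$ to some maximal rigid object $N$, the composite functor is functorially isomorphic to $\RHom_{\Lambda_{\con}}(\tT,-)$ for the explicit bimodule complex $\tT$ of \eqref{bimodcomplex}. By construction $\tT$ is obtained by applying $\tau_{\geq -1}$ to a derived tensor product of ordinary modules, so it is cohomologically concentrated in degrees $-1$ and $0$. As this complex induces a standard equivalence, it is automatically perfect and satisfies $\Hom(\tT,\tT[i])=0$ for $i \neq 0$, and hence is a two-term tilting complex of $\Lambda_{\con}$.

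For the reverse direction I would invoke Theorem \ref{intrinsic}, which supplies a bijection between the chambers of the hyperplane arrangement $\cH$ associated to $M$ and the basic two-term tilting complexes of $\Lambda_{\con}$ (up to algebra automorphism), matching the distinguished chamber with $\Lambda_{\con}$ itself and adjacent chambers with the irreducible functors $F_i$ of Theorem \ref{introexplicit}. Given a basic two-term tilting complex $T$, let $C$ be the corresponding chamber and take any minimal gallery from the chamber of $M$ to $C$. Because such a gallery crosses each separating hyperplane exactly once and moves away from the fundamental chamber at every step, it is a positive minimal path in the sense of the paper. Composing the associated $F_i$ along this path and applying Theorem \ref{introcomposing} produces a standard equivalence whose underlying tilting complex is of the form \eqref{bimodcomplex} and must coincide with $T$ up to twisting by an element of $\Aut(\Lambda_{\con})$, closing the bijection.

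The hardest step is the reverse direction, and within it the passage from the intrinsic description of $\cH$ in Theorem \ref{intrinsic} to an explicit match between the bimodule complex \eqref{bimodcomplex} and a preassigned two-term tilting complex $T$. One must not only know the chamber reached, but also track how the bimodule structure propagates along a minimal gallery so that the resulting complex is isomorphic to $T$ modulo the $\Aut(\Lambda_{\con})$ ambiguity. This matching, presumably phrased as the identification of two-term tilting complexes with the chamber-adjacent \emph{atoms} referenced in the label of the corollary, is where the bulk of the real work sits.
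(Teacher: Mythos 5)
Your forward direction is essentially in line with the paper: Theorem \ref{introcomposing} gives that $F_\upalpha$ along an atom $\upalpha$ is represented by the complex \eqref{bimodcomplex}, and the paper then verifies it is two-term by computing $\Hom_{\Kb(\proj\Lambda_{\con})}(\Lambda_{\con},\upmu_\upalpha\Lambda_{\con}[i])\cong H^i(\tT_\upalpha)$ and, using that $\Lambda_{\con}$ is symmetric, deducing $\Lambda_{\con}\geq\upmu_\upalpha\Lambda_{\con}\geq\Lambda_{\con}[1]$. You skip this bookkeeping, but the logic is sound (concentration of cohomology in degrees $-1,0$ does force two-term for tilting complexes over a symmetric algebra).

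The reverse direction, however, has a genuine gap which you yourself flag. You invoke the hyperplane/chamber picture to produce a minimal gallery to the chamber labelled by a given two-term tilting complex $T$, and then assert that the resulting composite standard equivalence ``must coincide with $T$ up to $\Aut(\Lambda_{\con})$.'' That assertion is exactly what needs proof, and nothing in your argument supplies it. The paper proves it by an order-theoretic route that does not go through the hyperplane arrangement at all: (i) Proposition \ref{mutationpaths}(2) identifies $\tT_\upalpha$ with the iterated silting mutation $\upmu_\upalpha\Lambda_{\con}$; (ii) \cite[3.5]{tiltingconnectedness} says every two-term tilting complex is an iterated left mutation of $\Lambda_{\con}$, so $T\cong\upmu_\upalpha\Lambda_{\con}$ for some positive path $\upalpha$; (iii) the key Lemma \ref{twotermatoms} shows, by induction on path length and the partial order on tilting complexes, that a positive path whose mutation endpoint is two-term must be an atom; and (iv) a counting argument via Theorem \ref{ctbij} upgrades injectivity to bijectivity. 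Steps (ii) and (iii) have no counterpart in your write-up, and without them the ``must coincide'' step is unjustified — a priori a minimal gallery could mutate through complexes that wander outside the two-term regime and end at a different complex with the same chamber label. Separately, appealing to Theorem \ref{intrinsic} here is logically permissible (it does not depend on Corollary \ref{twotermareatoms}) but inverts the paper's order of development, and \ref{intrinsic} itself is ultimately built on the same mutation-compatibility facts you are implicitly using, so it does not give you the identification for free.
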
 

In the process of proving Theorem \ref{twotermintro}, we upgrade the bijection \cite[4.7]{[AIR]} between maximal rigid objects in $\ucmr$ and two-term tilting complexes of a contraction algebra to additionally include the two-sided two-term tilting complexes, and further show that the bijection preserves endomorphism algebras in this case (see Corollary \ref{twosidedctbij}).

A further consequence of Theorem \ref{introcomposing} is that there is a well-defined functor between two groupoids: one associated to the hyperplane arrangement and the other consisting of standard derived equivalences between contraction algebras (see \S\ref{composingsection} for details). In \S \ref{faithfulness}, we show this functor is faithful and hence obtain the following, purely algebraic, corollary.

\begin{theorem}[Corollary \ref{groupaction}] \label{groupactionintro}
Suppose $R$ is a complete local isolated cDV singularity and $M \colonequals \bigoplus_{i=1}^n M_i \in \ucmr$ is a basic maximal rigid object with associated real hyperplane arrangement $\cH$. Writing $\Lambda_{\con} \colonequals \uEnd_R(M)$, then there is an injective group homomorphism
\begin{align*}
\pi_1(\mathbb{C}^n \backslash \cH_{\mathbb{C}}) \to \mathrm{Auteq}(\Db(\Lambda_{\con}))
\end{align*}
where  $\cH_{\mathbb{C}}$ is the complexification of $\cH$.
\end{theorem}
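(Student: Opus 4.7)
The plan is to upgrade the desired map to a functor between groupoids, restrict to the isotropy at the vertex of $M$ to extract the group homomorphism, and then reduce injectivity to the already-known faithfulness for MMAs via the commutative diagram of Theorem \ref{introexplicit}. On the topological side, let $\mathcal{G}(\cH)$ denote the Deligne groupoid of $\cH$: objects are the chambers of $\cH$, morphisms are positive minimal galleries modulo the codimension-two relations. By a theorem of Paris, $\mathcal{G}(\cH)$ embeds into the fundamental groupoid of $\mathbb{C}^n \setminus \cH_{\mathbb{C}}$, whose isotropy at the chamber corresponding to $M$ is $\pi_1(\mathbb{C}^n \setminus \cH_{\mathbb{C}})$. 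On the algebraic side, introduce the groupoid $\mathcal{E}$ whose objects are basic maximal rigid $N \in \ucmr$ containing $R$ as a summand, and whose morphisms $N \to N'$ are isomorphism classes of standard equivalences $\Db(\uEnd_R(N)) \to \Db(\uEnd_R(N'))$.

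Theorem \ref{introexplicit} associates to each wall-crossing a morphism $F_i$ in $\mathcal{E}$, and Theorem \ref{introcomposing} says precisely that the defining relations of $\mathcal{G}(\cH)$ are respected. This produces a groupoid functor $\Phi \colon \mathcal{G}(\cH) \to \mathcal{E}$. Since every morphism of $\mathcal{E}$ is invertible, $\Phi$ extends uniquely to a functor out of the fundamental groupoid of $\mathbb{C}^n \setminus \cH_{\mathbb{C}}$. Taking isotropy at the chamber of $M$ then yields the desired homomorphism $\Phi_M \colon \pi_1(\mathbb{C}^n \setminus \cH_{\mathbb{C}}) \to \mathrm{Auteq}(\Db(\Lambda_{\con}))$.

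The main obstacle is injectivity. I would reduce to the analogous faithfulness statement for MMAs, which is the main result of \cite{faithful}. Suppose $\gamma \in \ker \Phi_M$ is represented by a signed word $w$ in the generators, so that $F_w \cong \id$ on $\Db(\Lambda_{\con})$. Iterating the commutative square of Theorem \ref{introexplicit} along $w$ yields a natural isomorphism
\[
\iota \circ F_w \;\cong\; G_w \circ \iota,
\]
where $\iota$ denotes restriction of scalars along $\Lambda \twoheadrightarrow \Lambda_{\con}$ and $G_w$ is the corresponding composition of MMA mutation functors on $\Db(\Lambda)$. Hence $G_w$ acts as the identity on the essential image of $\iota$, which in particular contains $\Lambda_{\con}$ and every finite length $\Lambda$-module factoring through it. The crux is to upgrade this to $G_w \cong \id$ on $\Db(\Lambda)$, and for this I would invoke Theorem \ref{twotermintro}: the bijection between positive minimal paths starting at $M$ and two-term tilting complexes of $\Lambda_{\con}$ (up to algebra automorphism) is compatible under $\iota$ with the corresponding bijection on the MMA side. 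Triviality of $F_w$ on the two-term tilting data of $\Lambda_{\con}$ therefore transports to triviality of the tilting complex giving $G_w$, so $G_w \cong \id$, and the faithfulness result of \cite{faithful} then forces $\gamma = 1$.
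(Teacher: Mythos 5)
The groupoid/isotropy setup in your proposal matches the paper exactly: the paper constructs the functor $\Phi\colon \cG_{\cH}\to\mathbb{F}$ in Corollary~\ref{functordef} and derives the homomorphism by restricting to the isotropy group at $C_M$. The real content is injectivity, and here your proposed reduction to the MMA case has a genuine gap.

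The commutative square of Theorem~\ref{square} gives $\iota\circ F_w\cong G_w\circ\iota$ where $\iota = -\otimes_{\Lambda_{\con}}\Lambda_{\con}\colon \Db(\Lambda_{\con})\to\Db(\Lambda)$. If $F_w\cong\id$, you deduce only that $G_w$ restricts to the identity on the essential image of $\iota$. But this image is a very small subcategory of $\Db(\Lambda)$: it does not contain $\Lambda$ itself, nor the simple $\Lambda$-module supported at the vertex corresponding to $R$, and it certainly does not generate $\Db(\Lambda)$ as a triangulated category (indeed $\Lambda_{\con}$ is finite dimensional while $\Lambda$ is not). So triviality of $G_w$ on this subcategory gives no purchase on $G_w\cong\id$. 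The appeal to Theorem~\ref{twotermintro} doesn't repair this: that theorem classifies equivalences induced by two-term tilting complexes via \emph{atoms}, whereas $w$ is an arbitrary signed word; and the paper contains no statement to the effect that the bijection between two-term tilting complexes of $\Lambda_{\con}$ and those of $\Lambda$ is ``compatible under $\iota$'' in the sense you need. The logic you actually want -- $F_w\cong\id\Rightarrow G_w\cong\id\Rightarrow w\sim\mathrm{triv}$ -- therefore does not go through; in effect you are trying to pull faithfulness \emph{backwards} along $\iota$ into a larger category, which is the wrong direction.

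The paper argues directly on the contraction algebra side, mirroring Brav--Thomas and Hirano--Wemyss. The key is Proposition~\ref{faithful2}: for a positive path $\upalpha$ with Deligne normal form $\upalpha_k\cdots\upalpha_1$, one shows $\Ext^{\geq k+1}_{\Gamma_{\!\con}}(-,F_{\upalpha}(\Lambda_{\con}))=0$ while $\Ext^{k}_{\Gamma_{\!\con}}(S_i,F_{\upalpha}(\Lambda_{\con}))\neq 0$ precisely when $\upalpha_k$ ends with $s_i$. This is established by tracking simples through the $F_\upalpha$ using Corollary~\ref{tracksimple} (itself deduced from the known MMA result Lemma~\ref{trackbigsimple} via the commutative square -- this is the correct direction in which to use the diagram), together with the fact that $\Lambda_{\con}$ is symmetric and hence self-injective (Lemma~\ref{generalmodules}). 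From this one reads off the Deligne normal form of $\upalpha$ from the functor $F_\upalpha$, which forces faithfulness on positive paths, and by \cite[2.11]{faithful} this suffices. If you want to complete your proposal, you should replace the reduction to the MMAs with this homological argument.
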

This shows that the group structure on the subgroup of derived autoequivalences of a contraction algebra obtained by composing the $F_{i}$  is precisely the same as that of the flop functors (as seen in \cite[6.7]{faithful}). We then show in Theorem \ref{intrinsic} that $\cH$, and hence the group of flop functors $\pi_1(\mathbb{C}^n \backslash \cH_{\mathbb{C}})$, can be constructed from a given contraction algebra without any knowledge of the geometry. Amongst other things, this gives further evidence towards the conjecture \cite[1.3]{rigidequiv} of Donovan--Wemyss. 

Finally, by combining with the author's previous work \cite{rigidequiv}, we establish the following.
\begin{theorem}[Theorem \ref{summary}(2)]
Suppose $R$ is a complete local isolated cDV singularity and that $\Lambda_{\con} \colonequals \uEnd_R(M)$ is an associated contraction algebra. Then every standard derived equivalence from $\Lambda_{\con}$ (up to some algebra isomorphism) can be obtained as some composition of our constructed standard equivalences and their inverses. 
\end{theorem}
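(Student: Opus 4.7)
The plan is to reduce an arbitrary standard equivalence from $\Lambda_{\con}$ to a sequence of two-term tilting steps, and then apply Theorem~\ref{twotermintro} at each one. Any standard equivalence $F \colon \Db(\Lambda_{\con}) \to \Db(\Gamma)$ is of the form $\RHom(T,-)$ for some two-sided tilting complex $T$. The first task is to identify $\Gamma$: by the author's earlier work \cite{rigidequiv} on the derived equivalence class of a contraction algebra, $\Gamma$ must itself be isomorphic, up to an algebra automorphism, to some contraction algebra $\Gamma_{\con} \iso \uEnd_R(N)$ arising from another basic maximal rigid $N \in \ucmr$. This places $F$, modulo an automorphism on the right, inside the groupoid of standard equivalences between contraction algebras developed throughout the paper.

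The next step is to reconstruct the underlying one-sided tilting complex by iterated two-term tilts. The strategy is to invoke the strong control over the tilting theory of $\Lambda_{\con}$ established in \cite{rigidequiv}, which ensures that every tilting complex over $\Lambda_{\con}$ can be reached from $\Lambda_{\con}$ by a finite sequence of irreducible silting mutations, each corresponding to a two-term tilting complex of the intermediate algebra. This produces a chain of contraction algebras $\Lambda_{\con} = \Lambda_0, \Lambda_1, \dots, \Lambda_k \iso \Gamma_{\con}$ together with standard equivalences $H_\ell \colon \Db(\Lambda_\ell) \to \Db(\Lambda_{\ell+1})$, each induced by a two-term tilting complex of $\Lambda_\ell$, whose total composition equals $F$ up to algebra automorphism.

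Now Theorem~\ref{twotermintro} applies at each step: each $H_\ell$ equals, up to an algebra automorphism of $\Lambda_{\ell+1}$, the composition of constructed functors along some positive minimal path in the hyperplane arrangement associated to the maximal rigid object corresponding to $\Lambda_\ell$. Concatenating these paths expresses $F$ as a composition of the constructed equivalences $F_i$ and their inverses. The commutative square of Theorem~\ref{introexplicit}, together with the path-independence statement of Theorem~\ref{introcomposing}, then allows the intermediate algebra automorphisms to be absorbed into a single automorphism of $\Gamma_{\con}$ at the end.

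The main obstacle will be controlling the algebra automorphisms appearing at each step and ensuring they collect into a single automorphism of the target. Since the composition of two two-term tilts need not itself be a two-term tilt, one cannot apply Theorem~\ref{twotermintro} directly to $F$; instead, the inductive reduction must be carefully aligned with the choices of isomorphism $\Lambda_\ell \iso \uEnd_R(M_\ell)$ made at each stage. The bimodule description of the constructed functors via \eqref{bimodcomplex}, and their compatibility with composition along positive minimal paths established in Theorem~\ref{introcomposing}, is what ultimately makes this bookkeeping possible and accounts for the ``up to algebra automorphism'' qualifier in the statement.
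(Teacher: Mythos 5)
Your proposal assembles the right ingredients (tilting-connectedness of $\Lambda_{\con}$ from \cite{rigidequiv}, the correspondence between two-term tilting complexes and atoms, the bimodule complexes $\tT_\upalpha$), but the key step that makes the ``up to algebra automorphism'' qualifier work is missing, and the substitute you offer does not do the job. You correctly flag the automorphism bookkeeping as ``the main obstacle,'' but then point to Theorem~\ref{introcomposing} as what resolves it. That theorem only says that two positive minimal paths between the same chambers induce the same functor; it says nothing about algebra automorphisms. If each step $H_\ell$ equals $\upphi_\ell \circ F_{i_\ell}$ for some automorphism $\upphi_\ell$ of the intermediate contraction algebra, the composition has automorphisms interleaved with the $F_{i}$'s, and there is no general way to slide an automorphism past an $F_{i}$: automorphisms may permute idempotents, and $F_{i} \circ \upphi$ has a different underlying one-sided tilting complex from $F_{i}$, possibly corresponding to a different mutation index or even a longer atom. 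The iteration does not obviously terminate with a single automorphism at the end.

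The gap is avoidable by never introducing intermediate automorphisms at all. The paper cites \cite[3.15]{rigidequiv} for this part, and the underlying logic is a one-shot comparison of one-sided tilting complexes: any standard equivalence $F$ is $\RHom(T,-)$ for a two-sided tilting complex $T$, and tilting-connectedness gives $T_{\Lambda_{\con}} \cong \upmu_\upalpha \Lambda_{\con}$ for some (not necessarily positive) path $\upalpha$ in $X_{\cH_M}$. Proposition~\ref{mutationpaths}(2) identifies $\tT_\upalpha \cong \upmu_\upalpha \Lambda_{\con}$ in $\Db(\Lambda_{\con})$, so $F$ and $F_\upalpha$ have the same underlying one-sided tilting complex, and Proposition~\ref{rouq} (Rouquier--Zimmermann) then says they differ by a single algebra automorphism of the target. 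Your proposal never invokes Proposition~\ref{rouq}; without it the claim ``whose total composition equals $F$ up to algebra automorphism'' is an assertion rather than a deduction, and this is precisely where the argument is incomplete.
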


Forgetting the geometry, this is used in \S\ref{picture} to show that contraction algebras are interesting in their own right. The hyperplane arrangement $\cH$ constructed from two term tilting theory can be viewed as a complete picture of the derived equivalence class: the two-term tilting complexes sit in the chambers with their endomorphism rings providing all basic members of the derived equivalence class. Further, paths, along with an equivalence relation determined by the groupoid associated to $\cH$, control all the derived equivalences between them.

\subsection{Conventions} Throughout, $k$ will denote an algebraically closed field of characteristic zero. For a ring $A$, we write $\Mod A$ for the category of right $A$-modules and $\mod A$ for the category of finitely generated right $A$-modules. For $M \in \mod A$, we let $\add M$ be the full subcategory consisting of summands of finite direct sums of copies of $M$ and we let $\proj A \colonequals \add A$ be the category of finitely generated projective modules. Further, write $\Kb(\proj A)$ for the homotopy category of bounded complexes of finitely generated projectives, $\Db(A) \colonequals \Db(\mod A)$ for the bounded derived category of $\mod A$ and $\Dnb(A) \colonequals \Dnb(\Mod A)$ for the derived category of $\Mod A$. Finally, we denote by $(-)^*$ the map $\Hom_A(-,A) \colon \mod A \to \mod A^{\mathrm{op}}$ and by $D$ the map $\Hom_k(-,k) \colon \mod A \to \mod A^{\mathrm{op}}$.

\subsection{Acknowledgments}

While this work was carried out, the author was a student at the University of Edinburgh and the material contained in this paper formed part of her PhD thesis. The author would like to thank her supervisor Michael Wemyss for his helpful guidance and the Carnegie Trust for the Universities of Scotland for their financial support. Further the author would like to thank the Max Planck Institute for Mathematics in Bonn for their support during edits to this paper. 

\section{Preliminaries}
\subsection{General Setting}
Throughout this paper, we will restrict our attention to the following 3-fold singularities.
\begin{definition}
A three dimensional complete local $\mathbb{C}$-algebra $R$ is a \emph{complete local compound Du Val (cDV) singularity} if $R$ is isomorphic to 
\begin{align*}
\mathbb{C}\llbracket u,v,x,y \rrbracket / (f(u,v,x) + yg(u,v,x,y)) 
\end{align*}
where $\mathbb{C}\llbracket u,v,x \rrbracket /(f(u,v,x))$ is a Du Val surface singularity and $g$ is arbitrary. 
\end{definition}
\noindent Associated to such a singularity is the following category.
\begin{definition} 
Let $(R, \mathfrak{m})$ be a commutative noetherian local ring and choose $M \in \mod R$. The depth of $M$ is defined to be
\begin{align*}
\mathrm{depth}_R(M)=\mathrm{min}\{ i \geq 0 \mid \Ext^i_R(R/ \mathfrak{m},M) \neq 0\}.
\end{align*}
We say $M$ is \emph{maximal Cohen--Macaulay} $(\mathrm{CM})$ if $\mathrm{depth}_R(M)=\mathrm{dim}(R)$ and we write $\cmr$ for the full subcategory of $\mod R$ consisting of maximal Cohen--Macaulay modules.
\end{definition}
For each pair $M,N$ of objects in $\cmr$, define $\uHom_R(M,N)$ to be $\Hom_R(M,N)$ factored out by the set of morphisms which factor through $\proj (\cmr)=\add(R)$. The stable category of $\cmr$, denoted $\ucmr$, is then defined to have the same objects as $\cmr$, but where 
\begin{align*}
\Hom_{\ucmr}(M,N) \colonequals \uHom_R(M,N).
\end{align*} 
In particular, any projective module in $\cmr$ is isomorphic to zero in $\ucmr$. Given an object $M \in \cmr$, a \emph{syzygy} of $M$, denoted $\Omega M$, is obtained by taking the kernel of a surjective morphism $R^n \to M$. This is only defined up to projective summands, but as such, it is well-defined on $\ucmr$ and in fact, induces a functor $\Omega \colon \ucmr \to \ucmr$ which is an equivalence. 
The following theorem summarises known results about $\cmr$ and its stable category. For details, and full references, see e.g.\ \cite[\S1]{symmetric}.
\begin{proposition}   \label{prop} 
If $R$ is a complete local isolated cDV singularity, $\cmr$ is a Frobenius category. Moreover, the stable category $\ucmr$ is a Krull-Schmidt, Hom-finite, 2-Calabi-Yau triangulated category with shift functor $\Omega^{-1}$ satisfying $\Omega^2 \cong \id$. 
\end{proposition}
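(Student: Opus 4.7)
The plan is to assemble this proposition by combining several standard results about Cohen--Macaulay modules over Gorenstein rings, using the special features of the complete local cDV setting: dimension three, hypersurface, isolated singularity, and complete local.

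First I would note that any cDV singularity $R$ is by definition of the form $\mathbb{C}\llbracket u,v,x,y\rrbracket/(f+yg)$, so in particular $R$ is a three-dimensional Gorenstein hypersurface. Over a Gorenstein local ring, $R$ is itself maximal Cohen--Macaulay and the canonical module is free, which via Auslander--Buchweitz gives that $\cmr$ is closed under extensions, kernels of epimorphisms, and has enough projectives and injectives which both coincide with $\add(R)$. This gives the Frobenius structure with $\proj(\cmr)=\add(R)$, and hence by a standard theorem of Happel, the stable category $\ucmr$ inherits a triangulated structure whose shift functor is the cosyzygy $\Omega^{-1}$.

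For the remaining properties I would invoke one classical result for each. Krull--Schmidt is a direct consequence of $R$ being complete local: endomorphism rings of finitely generated $R$-modules are semiperfect, so idempotents lift and indecomposable decompositions are unique up to isomorphism. Hom-finiteness of $\ucmr$ uses the isolated singularity hypothesis: for $M,N \in \cmr$, the module $\uHom_R(M,N)$ is supported only at the maximal ideal (since on the punctured spectrum $R$ is regular and every CM module is projective there), hence has finite length. The $2$-Calabi--Yau property is Auslander's functorial version of the classical duality for isolated Gorenstein singularities: for such a ring of Krull dimension $d$, the Serre functor on $\ucmr$ is the identity, giving $\Hom_{\ucmr}(M,N)\cong D\Hom_{\ucmr}(N,\Omega^{2-d}M)$, which in our dimension three case specializes to the $2$-CY statement $\Hom_{\ucmr}(M,N)\cong D\Hom_{\ucmr}(N,\Omega^{-2}M)=D\Hom_{\ucmr}(N,M[2])$. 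Finally, the periodicity $\Omega^2\cong \id$ is Eisenbud's matrix factorization theorem: on any hypersurface ring, every CM module admits a $2$-periodic free resolution, so the syzygy functor squares to the identity on $\ucmr$.

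The only slightly subtle point is ensuring that all these inputs really apply: Gorenstein for Auslander--Buchweitz and Auslander's duality, hypersurface for Eisenbud, isolated singularity for Hom-finiteness, and complete local for Krull--Schmidt. All four are built into the definition of complete local isolated cDV. Since each component is a standard and well-documented fact (as referenced in \cite[\S1]{symmetric}), the only work is this verification; there is no real obstacle, and the statement follows by citation and assembly rather than by genuinely new argument.
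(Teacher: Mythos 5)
Your proposal is correct and matches the paper's treatment exactly: the paper gives no proof of this proposition, instead citing it as a summary of standard results (referring to \cite[\S1]{symmetric}), and you have simply unpacked which hypothesis feeds which conclusion (Gorenstein/Auslander--Buchweitz for the Frobenius structure, complete local for Krull--Schmidt, isolated singularity for Hom-finiteness, Auslander--Reiten duality for the 2-Calabi--Yau property, and Eisenbud's matrix factorization theorem for $\Omega^2\cong\id$). One small infelicity: the exponent $\Omega^{2-d}$ you write should be $\Omega^{1-d}$ (the AR translate $\tau=\Omega^{2-d}$ enters through $\Ext^1\cong\uHom(\Omega-,-)$, shifting the exponent by one), though your final formula $D\uHom(N,\Omega^{-2}M)$ for $d=3$ is the correct one.
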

\noindent In particular, it is easy to see that there are isomorphisms
\begin{align*}
\Ext_R^1(M,\Omega N) \cong \uHom_R(M, N) \cong  \Ext_R^1(\Omega M, N)
\end{align*}
which we will use freely throughout.
\begin{definition}
Let $\cC = \cmr$ or $\ucmr$. An object $M \in \cC$ is called:
\begin{enumerate}[leftmargin=0.3cm,itemindent=.6cm,labelwidth=\itemindent,labelsep=0cm,align=left]
\item \emph{rigid} if $\Ext^1_R(M,M)=0$;
\item \emph{maximal rigid} if $M$ is rigid and if $M \oplus X$ is rigid for some $X \in \cC$, then $X \in \add(M)$.
\end{enumerate}
\end{definition}
As $R$ is both projective and injective in $\cmr$, it is clear that any maximal rigid object in $\cmr$ must contain $R$ as a summand. Further, $M$ is a basic maximal rigid object in $\ucmr$ if and only if $R \oplus M$ is a basic maximal rigid object in $\cmr$. We will consider two algebras associated to each maximal rigid object.
\begin{definition}
Let $M \in \cmr$ be a basic maximal rigid object.
\begin{enumerate}[leftmargin=0.3cm,itemindent=.6cm,labelwidth=\itemindent,labelsep=0cm,align=left]
\item The \emph{maximal modification algebra} (MMA) associated to $M$ is $\End_R(M)$.
\item The \emph{contraction algebra} associated to $M$ is $\uEnd_R(M)$.
\end{enumerate}
\end{definition}
Although MMAs are known to control all the geometry of $\Spec R$, they are very large and it is conjectured that the contraction algebra, which in contrast is finite dimensional by Proposition \ref{prop}, should also be enough \cite[1.3]{rigidequiv}. A further property of contraction algebras is the following.
\begin{theorem} \cite[7.1]{symmetric} \label{symmalg}
Let $\Spec R$ be a complete local isolated cDV singularity. Then for any $N \in \cmr$, the algebra $\Lambda \colonequals \uEnd_{R}(N)$ is a symmetric algebra i.e.\ $\Lambda \cong D\Lambda$ as $\Lambda$-$\Lambda$-bimodules.
\end{theorem}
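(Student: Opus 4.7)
The plan is to deduce symmetry of $\Lambda \colonequals \uEnd_R(N)$ from the $2$-Calabi--Yau property of $\ucmr$ combined with the identity $\Omega^2 \cong \id$ recorded in Proposition \ref{prop}. Since $R$ is a $3$-dimensional isolated Gorenstein singularity (being a hypersurface, hence Gorenstein, and cDV singularities are isolated), classical Auslander--Reiten duality provides a bifunctorial isomorphism
\[
\uHom_R(X,Y) \;\cong\; D\uHom_R(Y,\Omega^{-2}X)
\]
on $\ucmr$; in triangulated language, this says that $\Omega^{-2}$ is a Serre functor for $\ucmr$, which is exactly the $2$-Calabi--Yau property together with the identification of the shift as $\Omega^{-1}$.

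Next, I would invoke the natural isomorphism $\Omega^2 \cong \id$ from Proposition \ref{prop} to identify $\Omega^{-2}N$ with $N$. Setting $X = Y = N$ in the Serre duality above then yields
\[
D\Lambda \;=\; D\uEnd_R(N) \;\cong\; \uHom_R(N,\Omega^{-2}N) \;\cong\; \uHom_R(N,N) \;=\; \Lambda.
\]
Because both the Serre duality isomorphism and the identification $\Omega^{-2}\cong \id$ are natural in their arguments, this chain respects the canonical left- and right-multiplication actions on both sides, and so promotes to a $\Lambda$-$\Lambda$-bimodule isomorphism $\Lambda \cong D\Lambda$, establishing that $\Lambda$ is symmetric.

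The step I expect to be most delicate is controlling the bimodule structure, rather than merely the underlying vector-space isomorphism. The latter is immediate from $2$-Calabi--Yau plus $\Omega^2 \cong \id$; however, for the bimodule statement one must check that the chosen natural isomorphism $\id \Rightarrow \Omega^2$ induces the identity (or at worst an inner automorphism) on $\Lambda$, equivalently that the Nakayama automorphism of $\Lambda$ is inner. This can be addressed either by making a careful choice of natural transformation via the explicit Frobenius structure on $\cmr$, in which $\add(R)$ consists of the projective-injective objects, or by appealing to a general fact that on a Hom-finite $2$-Calabi--Yau triangulated category over an algebraically closed field the Serre functor is canonically isomorphic to the shift, so that any natural identification of the shift with $\id$ automatically descends to a bimodule isomorphism on endomorphism algebras.
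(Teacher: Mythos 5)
The paper does not prove this theorem; it is cited from Burban--Iyama--Keller--Reiten \cite[7.1]{symmetric}, so there is no internal proof to compare against. Your argument is nonetheless the standard one, and it is the same mechanism underlying the cited result: Serre duality makes $\Omega^{-2}$ a Serre functor on $\ucmr$, and $\Omega^2 \cong \id$ then forces the Serre functor to be (naturally isomorphic to) the identity, which is precisely the condition that all stable endomorphism algebras be symmetric.

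The step you flag as delicate does go through, and more cleanly than your fallback options suggest. The isomorphism $\uHom_R(X,Y)\cong D\uHom_R(Y,\Omega^{-2}X)$ is natural in both $X$ and $Y$ by the very definition of a Serre functor, and that naturality is exactly what encodes compatibility with pre- and post-composition; so Serre duality is already an isomorphism of $\Lambda$-bimodules $\Lambda\cong D\uHom_R(N,\Omega^{-2}N)$, where the left action on $\uHom_R(N,\Omega^{-2}N)$ is twisted through $\Omega^{-2}$. To untwist, use that for a hypersurface the periodicity $\Omega^2\cong\id$ supplied by matrix factorisations is a genuine natural transformation $\eta\colon\id\Rightarrow\Omega^{-2}$, not merely an object-by-object identification. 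Post-composition with $\eta_N$ then carries $\Lambda=\uHom_R(N,N)$ to $\uHom_R(N,\Omega^{-2}N)$ compatibly with both actions: the right action is immediate, and the left action follows from the naturality square $\Omega^{-2}(a)\circ\eta_N=\eta_N\circ a$ for $a\in\Lambda$. Composing the two bimodule isomorphisms gives $\Lambda\cong D\Lambda$. So no separate inner-automorphism check is needed; the only input beyond the $2$-Calabi--Yau structure is the naturality of $\eta$.

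One small slip: cDV singularities are not automatically isolated, so the parenthetical justification is wrong; isolatedness is part of the hypothesis. This does not affect your argument, since you use it only to ensure $\ucmr$ is Hom-finite and admits Serre duality.
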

The goal of this paper is to fully understand the derived equivalences between the contraction algebras of $\Spec R$ and in particular, their derived autoequivalence groups. 
\subsection{Mutation} \label{mutation}
Rigid objects are largely studied for their mutation properties which we describe here, following \cite[\S6]{mmas}. To begin, we require the following.
Suppose that $\cD$ is an additive category and that $\cS$ is a class of objects in $\cD$.
\begin{enumerate}
\item A morphism $f \colon X \to Y$ is called a \emph{right $\cS$-approximation} of $Y$ if $X \in \cS$ and the induced morphism $\Hom(Z,X) \to \Hom(Z,Y)$ is surjective for any $Z \in \cS$.
\item A morphism $f \colon X \to Y$ is said to be \emph{right minimal} if for any $g \colon X \to X$ such that $f \circ g= f$, then $g$ must be an isomorphism.
\item A morphism $f \colon X \to Y$ is a \emph{minimal right $\cS$-approximation} if $f$ is both right minimal and a right $\cS$-approximation of $Y$.
\end{enumerate}
There is also the dual notion of a left minimal $\cS$-approximation. Now let $M=\bigoplus\limits_{i=0}^nM_i$ be a basic rigid object in $\cmr$ with $M_0 \cong R$. To mutate at $M_i$, where $i \neq 0$, take a minimal right $\add((M/M_i)^*)$-approximation 
\begin{align*}
V_i^* \xrightarrow{a_i} M_i^*,
\end{align*}
and let $J_i\colonequals\Kernel \ a_i$ so that there is an exact sequence
\begin{align}
0 \to J_i \xrightarrow{c_i} V_i^* \xrightarrow{a_i} M_i^*. \label{partexchange1}
\end{align}
Define $K_i \colonequals J_i^*$ and $\upnu_iM \colonequals M/M_i \oplus K_i$ which is again a rigid object in $\cmr$ \cite[6.10, 5.12]{mmas}. Further, if $M$ is maximal rigid then $\upnu_iM$ is also maximal rigid \cite[6.10, 5.12]{mmas}.
\begin{lemma} \label{exactexchnage}
The sequence \eqref{partexchange1} induces an exact sequence 
\begin{align}
0 \to M_i \xrightarrow{b_i} V_i \xrightarrow{d_i} K_i \to 0 \label{exchangesequence1}
\end{align}
such that $d_i$ is a minimal right $\add(M/M_i)$-approximation of $K_i$ and $b_i$ is a minimal left $\add(M/M_i)$-approximation of $M_i$.
\end{lemma}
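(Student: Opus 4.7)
The strategy is to dualize \eqref{partexchange1} using $(-)^* = \Hom_R(-,R)$ and exploit the fact that MCM modules over the Gorenstein ring $R$ (cDV singularities are Gorenstein) are reflexive and closed under $(-)^*$. The first step is to upgrade \eqref{partexchange1} to a short exact sequence by showing that $b_i$ is surjective. Since $i \neq 0$ and $M_0 \cong R$ is a summand of $M$, the ring $R$ is a summand of $M/M_i$, and hence $R \cong R^* \in \add((M/M_i)^*)$. Any surjection $R^n \twoheadrightarrow M_i^*$ then factors through the approximation $b_i$, forcing $b_i$ to be surjective. Applying the depth lemma to the resulting short exact sequence $0 \to J_i \to V_i^* \to M_i^* \to 0$ (with $V_i^*, M_i^* \in \CM R$) gives $J_i \in \CM R$.

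Next, I would dualize this short exact sequence via $(-)^*$. Since $M_i^*$ is MCM and $R$ is Gorenstein, $\Ext^1_R(M_i^*, R) = 0$, so the dualized sequence remains short exact. Combining this with the reflexivity $N \cong N^{**}$ for $N \in \CM R$ identifies the resulting sequence with \eqref{exchangesequence1}, which in particular forces $V_i \in \add(M/M_i)$.

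For the approximation property of $d_i^*$, apply $\Hom_R(M/M_i, -)$ to \eqref{exchangesequence1}: the obstruction term is $\Ext^1_R(M/M_i, M_i)$, which vanishes by rigidity of $M$, establishing the right $\add(M/M_i)$-approximation property. For $b_i^*$, the contravariant equivalence $(-)^* \colon \add(M) \to \add(M^*)^{\op}$ converts the minimal right $\add((M/M_i)^*)$-approximation $b_i$ into the minimal left $\add(M/M_i)$-approximation $b_i^*$. Right minimality of $d_i^*$ then follows by a short diagram chase using left minimality of $b_i^*$ together with the fact that $d_i^*$ is the cokernel of $b_i^*$. The main subtlety throughout is really the initial surjectivity step, which is precisely where the assumption that $R = M_0$ is a summand of $M$ is essential; everything else is a formal consequence of the duality $(-)^*$ combined with the rigidity of $M$.
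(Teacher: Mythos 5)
Your proof is substantially correct and follows the same overall strategy as the paper's (force surjectivity of $b_i$ via $R \in \add((M/M_i)^*)$, dualize, invoke reflexivity, then check approximation and minimality), with a couple of genuine variations worth noting. You invoke the Gorenstein/MCM machinery directly (depth lemma, $\Ext^1_R(\CM,R)=0$) where the paper cites \cite[5.12]{mmas}, which is fine. Your derivation that $b_i^*$ is a \emph{minimal left} $\add(M/M_i)$-approximation directly from the anti-equivalence $(-)^*\colon\add(M)\to\add(M^*)^{\op}$ is actually cleaner than the paper's, which establishes the approximation property separately via $\Ext^1_R(J_i^*, M/M_i)=0$ using rigidity of $\upnu_iM$.

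The one genuine imprecision is your claim that right minimality of $d_i^*$ follows from left minimality of $b_i^*$ plus $d_i^*$ being the cokernel. In a short exact sequence $0\to A\xrightarrow{f}B\xrightarrow{g}C\to 0$, left minimality of $f$ says $\Image(f)$ is not contained in any proper direct summand of $B$, whereas right minimality of $g$ says $\Ker(g)=\Image(f)$ contains no nonzero direct summand of $B$; these are independent conditions, and the implication you invoke is false in general. What actually rescues the conclusion here is that $\Ker(d_i^*)\cong M_i$ is indecomposable and $M_i\notin\add(V_i)$ (since $V_i\in\add(M/M_i)$ and $M$ is basic): any nonzero direct summand of $V_i$ lying inside $\Ker(d_i^*)$ would itself be a direct summand of $\Ker(d_i^*)\cong M_i$, hence all of it, forcing $M_i\in\add(V_i)$, a contradiction. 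Alternatively, the paper sidesteps this entirely by citing that $d_i$ is already a minimal \emph{left} $\add((M/M_i)^*)$-approximation (via \cite[6.4(2)]{mmas}) and dualizing, which hands you right minimality of $d_i^*$ by the same anti-equivalence you used for $b_i^*$.
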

\begin{proof}
As $R \in \add (M)$ and $M$ is basic, it must be that $R \in \add((M/M_i)^*)$ and hence applying $\Hom_R(R,-)$ to \eqref{partexchange1} shows that 
\begin{align} \label{exchangeproof}
0 \to J_i \xrightarrow{c_i} V_i^* \xrightarrow{a_i} M_i^* \to 0
\end{align}
is exact. 
Further, by \cite[6.4(2)]{mmas}, $c_i$ is a minimal left $\add((M/M_i)^*)$-approximation. Thus, applying $\Hom_R(-,R)$ to \eqref{exchangeproof} gives an exact sequence
\begin{align*}
0 \to M_i^{**} \xrightarrow{a_i^*} V_i^{**} \xrightarrow{c_i^*} K_i \to 0.
\end{align*} 
However, $M_i$ and $V_i$ are both rigid and hence are reflexive by \cite[5.12]{mmas} (meaning that the natural map $M_i \to M_i^{**}$ is an isomorphism and similarly for $V_i$). Thus, we get an exact sequence
 \begin{align*}
0 \to M_i \xrightarrow{a_i^*} V_i \xrightarrow{c_i^*} K_i \to 0.
\end{align*} 
As $\Ext^1_R(M/M_i, M_i)=0$ by the rigidity of $M$, it is clear that $d_i \colonequals c_i^*$ is a right $\add(M/M_i)$-approximation. Similarly, $b_i \colonequals a_i^*$ is a left $\add( M/M_i)$-approximation if $\Ext^1_R(J_i^*, M/M_i)=0$ which holds by the rigidity of $\upnu_i M$. Minimality of both follows from minimality of $a_i$ and $c_i$.
\end{proof}
We refer to the sequence \eqref{exchangesequence1} as the \textit{exchange sequence} defining $\upnu_iM$.
\begin{remark}
Mutation could also have been defined in $\ucmr$ using exchange triangles, as in \cite[1.10]{[AIR]}. However, it is easy to show that such an exchange triangle is induced by the exchange sequence \eqref{exchangesequence1} and so gives the same result upon mutation.
\end{remark}
For two basic maximal rigid objects $M,N \in \ucmr$, it is known that $M \cong  \upnu_i N$ for some $i \in \{1, \dots, n\}$ if and only if $M$ and $N$ differ by exactly one indecomposable summand \cite[5.3]{yoshino}. Thus, for these objects we have $\upnu_i \upnu_i M \cong M$ and we can use the following graph to record all the information about maximal rigid objects and their mutation.
\begin{definition}
The \emph{mutation graph} of maximal rigid objects in $\ucmr$ has the basic maximal rigid objects as vertices and an edge between two vertices $M$ and $N$ if they differ by exactly one indecomposable summand.
\end{definition}

\subsection{Relationship to Minimal Models}
For a complete local isolated cDV singularity $\Spec R$, a \emph{minimal model} is a certain crepant projective birational morphism $f \colon X \to \Spec R$ where $X$ need not be smooth but instead must be $\mathbb{Q}$-factorial terminal (see e.g.\ \cite[\S2]{HomMMP}). With this definition, $X$ is considered to be a `nicest' member of the birational equivalence class of $\Spec R$. The following are well known.
\begin{enumerate}
\item \cite{reid} Any minimal model $f \colon X \to \Spec R$ is an isomorphism away from the unique singular point of $\Spec R$, where the preimage, called the \textit{exceptional locus}, consists of a finite chain of curves.
\item \cite{[KM]} There are finitely many minimal models of $\Spec R$.
\item \cite{[K]} Any two minimal models of $\Spec R$ are connected by a sequence of \emph{simple flops}; certain codimension two modifications.  
\end{enumerate}
To explain $(3)$ in more detail, choose a curve $C_i$ in the exceptional locus of a minimal model $f \colon X \to \Spec R$. Since $R$ is complete local, $f$ can be factorised as 
\begin{align*}
X \xrightarrow{g} X_{\con} \xrightarrow{h} \Spec R
\end{align*}
where $g(C_i)$ is a single point and $g$ is an isomorphism elsewhere. For any such factorisation, there exists a certain birational map $g^+ \colon X^+ \to X_{\con}$, satisfying some technical conditions detailed in \cite[2.6]{HomMMP}, which fits into a commutative diagram
\begin{center}
\begin{tikzpicture}[scale=1.1]
\node (C1) at (0, 0) {\scriptsize{$X_{\con}$}};
\node (C2) at (0, -1.2) {\scriptsize{$\Spec R$}};
\node (C3) at (-1.2, 1.2) {\scriptsize{$X$}};
\node (C4) at (1.2, 1.2)    {\scriptsize{$X^+$}};
\node at (0.55, 0.71)    {\scriptsize{$g^+$}};
\node at (-0.55, 0.71)    {\scriptsize{$g$}};
\node at (-0.1, -0.5)    {\scriptsize{$h$}};
\draw[->] (C1) -- (C2); 
\draw[->] (C3) to node[left] {$\scriptstyle f$} (C2);
\draw[->] (C4) to node[right] {$\scriptstyle f^+$} (C2);
\draw[->] (C3) -- (C1);
\draw[->] (C4) -- (C1);
\draw[->,dashed] (C3) to node[above] {$\scriptstyle \phi$} (C4);
\end{tikzpicture}
\end{center} 
where $\phi$ is a birational equivalence (see e.g.\ \cite[p25]{kollar} or \cite[\S 2]{Schroer}). We call $f^+ \colon X^+ \to \Spec R$ the \emph{simple flop} of $f$ at the curve $C_i$. In this case, there are derived equivalences
\begin{align}
 \Db(\coh X) \to \Db(\coh X^+) \quad \text{and} \quad  \Db(\coh X^+) \to \Db(\coh X), \label{flops}
\end{align}
known as \emph{Bridgeland--Chen flop functors} \cite{bridgeland, chen}. 
\begin{definition}
Given a complete local isolated cDV singularity, define the \emph{simple flops graph} to have a vertex for each minimal model and an edge between two vertices if the corresponding minimal models are connected by a simple flop.
\end{definition}
The following is one of the fundamental theorems in the Homological Minimal Model Programme as it provides the link between the study of rigid objects and the geometry.
\begin{theorem}\cite[4.10]{HomMMP} \label{HMMPbij}
Let $\Spec R$ be a complete local isolated cDV singularity. Then there is a bijection
\begin{align*}
\{\text{basic maximal rigid objects in $\cmr$}\} \longleftrightarrow \{\text{minimal models $f \colon X \to \Spec R$}\}.
\end{align*}
Moreover,
\begin{enumerate}
\item There is a one-to-one correspondence between summands of a maximal rigid object and curves in the exceptional locus of the corresponding minimal model;
\item The mutation graph of maximal rigid objects coincides with the simple flops graph of the minimal models.
\end{enumerate}
\end{theorem}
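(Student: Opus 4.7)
The plan is to produce the bijection by passing through tilting theory and maximal modification algebras, thereby transferring the geometric problem to an algebraic one.

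Starting from a minimal model $f\colon X\to\Spec R$, I would first construct a tilting bundle on $X$. Since the exceptional locus is a finite chain of curves $C_1,\ldots,C_n$ by \cite{reid}, one chooses line bundles $\cL_1,\ldots,\cL_n$ whose first Chern classes are dual to the $C_i$ and sets $\cV \colonequals \cO_X \oplus \bigoplus_{i=1}^n \cL_i$. The main task is to verify $\Ext^{>0}(\cV,\cV)=0$ together with generation of $\Db(\coh X)$; locally around each $C_i$ this reduces to a classical surface calculation, and globally one needs the $\mathbb{Q}$-factorial terminal hypothesis to control the non-smooth points. Pushing down, $M\colonequals f_*\cV$ lands in $\cmr$, and the projection formula together with $\mathbf{R}f_*\cO_X\cong\cO_{\Spec R}$ yields $\End_R(M)\cong\End_X(\cV)$, giving a derived equivalence $\RHom(\cV,-)\colon\Db(\coh X)\xrightarrow{\sim}\Db(\End_R(M))$. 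Using the 2-Calabi--Yau property of Proposition \ref{prop} together with dimension counting, one then shows $M$ is maximal rigid with $R$ corresponding to the summand $\cO_X$.

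For the converse direction, given a basic maximal rigid $M\in\cmr$ containing $R$ as a summand, the first step is to show that $\Lambda\colonequals\End_R(M)$ is a maximal modification algebra. The main obstacle is then to reconstruct a geometric model from $\Lambda$: one must produce a projective birational morphism $f\colon X\to\Spec R$ with $X$ having only $\mathbb{Q}$-factorial terminal singularities. I would realise $X$ as a suitable moduli space of stable $\Lambda$-modules, using a dimension vector attached to the summand $R$, and then verify the crepant and minimality conditions directly from the defining properties of an MMA. That the two constructions are mutually inverse follows by tracking $\cV$ (respectively $\Lambda$) through both directions, using the distinguished pair $R\leftrightarrow\cO_X$ to rigidify the matching up to isomorphism.

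For part (1), the indecomposable non-$R$ summands $M_i$ of $M$ correspond under the tilting equivalence precisely to the line bundles $\cL_i$, and hence by the duality between Chern classes and curves to the $C_i$; the summand $R$ matches $\cO_X$ and carries no associated curve. For part (2), if $f^+\colon X^+\to\Spec R$ is the simple flop of $f$ at the curve $C_i$, then the Bridgeland--Chen flop functor transports $\cV$ to a tilting bundle $\cV^+$ on $X^+$ that agrees with $\cV$ outside the $\cL_i$ summand, which is replaced by a new line bundle of opposite winding. Pushing this replacement down to $\Spec R$ produces an exact sequence which one can identify with the exchange sequence \eqref{exchangesequence1} defining the mutation $\upnu_iM$, so simple flops correspond exactly to mutations at the relevant summand. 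The hardest ingredients throughout are the existence of the tilting bundle in the singular setting and the reverse construction of a minimal model from an MMA; both rely on substantial input from the Homological Minimal Model Program that one would need to import from \cite{HomMMP}.
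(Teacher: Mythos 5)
The theorem you are trying to prove is cited by the paper directly from \cite[4.10]{HomMMP} with no proof supplied, so there is no in-paper argument to compare against; I am therefore evaluating your sketch on its own terms. Your overall architecture (tilting bundle $\to$ pushforward $\to$ maximal modification/rigid module, and moduli of $\Lambda$-modules in the reverse direction) is the right framework and does match the strategy in \cite{HomMMP}, which ultimately rests on Van den Bergh's noncommutative crepant resolution machinery.

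However, there is a concrete gap in the very first step. Setting $\cV := \cO_X \oplus \bigoplus_{i} \cL_i$ with $\cL_i$ line bundles dual to the exceptional curves does not give a tilting bundle in general, and consequently $f_*\cV$ will not be rigid. Van den Bergh's tilting bundle is $\cO_X \oplus \bigoplus_i \cM_i$, where each $\cM_i$ is constructed from $\cL_i$ by iterated extensions by copies of $\cO_X$ chosen to kill the relevant $R^1 f_*$; the rank of $\cM_i$ is the length of the corresponding curve $C_i$, which is $\geq 2$ for many cDV types (e.g.\ a $cD_4$ contraction has a curve of length $2$, $cE_8$ has curves of length up to $6$). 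If one uses the naive line bundles instead, $\Ext^1(\cV,\cV)$ does not vanish, so the derived equivalence never gets off the ground and the dimension vectors of the resulting algebra do not match the geometry. The same issue propagates to your identification of the non-$R$ summands with the $\cL_i$ in part (1): the correct correspondence is between summands $M_i$ and the bundles $\cM_i$, which in turn are in bijection with the curves, not with line bundles per se. You also slide past the reverse step: verifying that moduli of stable $\Lambda$-modules for an MMA $\Lambda$ is projective over $\Spec R$, crepant, and $\mathbb{Q}$-factorial terminal is where most of the content of \cite[4.10]{HomMMP} actually lives, and it requires the full NCCR/MMA technology rather than a ``direct verification''.
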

In particular, this theorem shows that basic maximal rigid objects in $\cmr$ do exist and further, there are finitely many such objects.

\subsection{Derived Equivalences}
It is a well known result of Rickard \cite{morita} that two $k$-algebras $\Lambda$ and $\Gamma$ are derived equivalent if and only if there exists a tilting complex $T \in \Kb(\proj \Lambda)$ such that $\End_{\Kb(\proj \Lambda)}(T) \cong \Gamma$. 
\begin{definition}
For a $k$-algebra $\Lambda$, we say a complex $T \in \Kb(\proj \Lambda)$ is tilting if:
\begin{enumerate}
\item $\Hom_{\Kb(\proj \Lambda)}(T,T[n])=0$ for all $ n \neq 0$.
\item $\thick (T)= \Kb(\proj \Lambda)$, where $\thick (T)$ denotes the smallest triangulated full subcategory of $\Kb(\proj \Lambda)$ containing $T$ and closed under forming direct summands.
\end{enumerate}
\end{definition}
Although such a tilting complex $T$ ensures the existence of an equivalence from $\Db(\Lambda)$ to $\Db(\End_{\Kb(\proj \Lambda)}(T))$, to obtain a natural functor requires extra structure on the tilting complex, as in part $(3)$ of the following.
\begin{theorem} \cite[8.1.4]{keller} \label{kel}
Suppose $\Lambda$ and $\Gamma$ are two rings and $X$ is a complex of $\Gamma$-$\Lambda$-bimodules. The following are equivalent:
\begin{enumerate}
\item $- \otimes^{\bf L}_\Gamma X \colon \Dnb(\Gamma) \to \Dnb(\Lambda)$  is an equivalence;
\item $- \otimes^{\bf L}_\Gamma X \colon \Kb(\proj \Gamma) \to \Kb(\proj \Lambda)$ is an equivalence;
\item Viewing $X$ as a complex of right $\Lambda$-modules; \begin{enumerate}[label=\normalfont(\alph*)]
\item The map $\Gamma \to \Hom_{\Dnb(\Lambda)}(X,X)$ induced by $- \otimes^{\bf L}_\Gamma X$ is an isomorphism and further $\Hom_{\Dnb(\Lambda)}(X,X[n]) =0$ for $n \neq 0$.
\item $X$ is quasi-isomorphic to some complex $T$ in $\Kb(\proj \Lambda)$.
\item $\thick (T)= \Kb(\proj \Lambda)$.
\end{enumerate}
\end{enumerate} 
\end{theorem}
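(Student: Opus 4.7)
The plan is to prove the cycle of implications (1) $\Rightarrow$ (2) $\Rightarrow$ (3) $\Rightarrow$ (1). The key underlying fact I would use throughout is that the compact objects of $\Dnb(\Lambda)$ are exactly the perfect complexes, i.e. those quasi-isomorphic to an object of $\Kb(\proj \Lambda)$, and that this full subcategory generates $\Dnb(\Lambda)$ under arbitrary coproducts. This is what allows the bounded and unbounded pictures to be compared.

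For (1) $\Rightarrow$ (2), since $- \otimes^{\bf L}_\Gamma X$ preserves arbitrary coproducts and is a triangulated equivalence, it automatically restricts to an equivalence on the subcategories of compact objects. Applied to $\Gamma$ this also shows that $X = \Gamma \otimes^{\bf L}_\Gamma X$ is compact in $\Dnb(\Lambda)$, hence perfect as a right $\Lambda$-module; in particular condition (3b) is established as a byproduct. For (2) $\Rightarrow$ (3), the restricted equivalence sends $\Gamma$ to $X$, giving (3b), while fully faithfulness applied to $\Gamma$ and its shifts yields (3a). For (3c), one uses that $\Gamma$ generates $\Kb(\proj \Gamma)$ as a triangulated category (since every bounded complex of finitely generated projectives is built from summands of $\Gamma$ by finitely many cones and shifts), so its image $X$ and its summands must generate $\Kb(\proj \Lambda)$.

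The main direction is (3) $\Rightarrow$ (1), and this is where the work lies. The functor $- \otimes^{\bf L}_\Gamma X$ has a right adjoint $\RHom_\Lambda(X,-)$, and one must show that the unit $\eta_M \colon M \to \RHom_\Lambda(X, M \otimes^{\bf L}_\Gamma X)$ and counit $\varepsilon_N \colon \RHom_\Lambda(X, N) \otimes^{\bf L}_\Gamma X \to N$ are both isomorphisms. The strategy is a standard dévissage. At $M = \Gamma$, condition (3a) is precisely the statement that $\eta_\Gamma$ is an isomorphism; since $X$ is perfect by (3b), $\RHom_\Lambda(X, -)$ commutes with arbitrary coproducts, so the subcategory of $M$ where $\eta_M$ is an isomorphism is triangulated and closed under coproducts, and hence is all of $\Dnb(\Gamma)$. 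For the counit, $\varepsilon_X$ is an isomorphism by combining the unit isomorphism at $\Gamma$ with the triangle identities; the same coproduct-closed triangulated subcategory argument, combined with (3c), which says that $X$ and its summands generate $\Kb(\proj \Lambda)$ and hence generate $\Dnb(\Lambda)$ under coproducts, forces $\varepsilon$ to be an isomorphism everywhere.

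The main obstacle I anticipate is a careful accounting of when $\RHom_\Lambda(X, -)$ preserves coproducts; this is exactly where compactness of $X$, delivered by (3b), is essential, and it is the nontrivial input that distinguishes the bimodule setting from Rickard's purely existential theorem. Once this is in place, the dévissage and generation arguments are formal.
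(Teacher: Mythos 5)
The paper does not prove this theorem; it is quoted verbatim from Keller \cite[8.1.4]{keller}, so there is no in-text argument to compare against. Your proof is nevertheless correct, and it is the standard modern dévissage: identify the compacts of $\Dnb(\Lambda)$ with $\Kb(\proj\Lambda)$, note that $-\otimes^{\bf L}_\Gamma X$ preserves coproducts because it is a left adjoint, use (3b) to conclude $X$ is compact so that $\RHom_\Lambda(X,-)$ also preserves coproducts, and then run the unit/counit argument on localizing subcategories generated by $\Gamma$ on one side and by $T$ on the other. You have correctly located the one genuinely nontrivial input: compactness of $X$ is what makes $\RHom_\Lambda(X,-)$ coproduct-preserving, and this is precisely what (3b) supplies.

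Two small points you glide past but which are worth making explicit if you write this up in full. First, in the counit step the subcategory $\cS' = \{N : \varepsilon_N \text{ is an iso}\}$ must be closed under direct summands before (3c) can be invoked, since (3c) refers to the summands of $T$ rather than $T$ itself; this closure is not assumed but is automatic because a localizing (coproduct-closed triangulated) subcategory of a compactly generated triangulated category is thick, by the Bökstedt--Neeman argument. Second, in the step $\eta_\Gamma$ iso $\Rightarrow$ $\varepsilon_X$ iso, you want the triangle identity $\varepsilon_{FM}\circ F(\eta_M)=\mathrm{id}$ at $M=\Gamma$, so one should check that $F(\eta_\Gamma)$ is the natural isomorphism $X\cong\Gamma\otimes^{\bf L}_\Gamma X$, which it is. Keller's own route is framed in the language of lifts and DG categories and recovers this statement as a corollary of a more general derived Morita theory; your argument is more elementary and self-contained, at the cost of working only for ordinary rings rather than DG algebras, but that is all the paper uses.
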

In this case, $X$ is called a \textit{two-sided} tilting complex and it induces a \textit{standard equivalence}
\begin{align*}
\RHom_\Lambda(X,-) \colon \Dnb(\Lambda) \to \Dnb(\Gamma)
\end{align*}
with inverse $ - \otimes^{\bf L}_\Gamma X$. Note that the conditions in part $(3)$ ensure that any such $X$ is quasi-isomorphic to a tilting complex for $\Lambda$. 
Keller further showed that for any tilting complex $T \in \Kb(\proj \Lambda)$, there exists a two-sided tilting complex $\tT$ such that $\tT$ is quasi-isomorphic to $T$ in $\Db(\Lambda)$ \cite[8.3.1]{keller}. As $\tT$ is two-sided, there is a standard equivalence
 \begin{align*}
\RHom_\Lambda(\tT,-) \colon \Dnb(\Lambda) &\to \Dnb(\Gamma)
\end{align*}
which maps $T \mapsto \Gamma$. It was further shown in \cite[2.1]{kellerhomotopy} that such a $\tT$ is unique in a suitable sense. For our purposes, the following suffices.
\begin{proposition}\cite[2.3]{rouquier} \label{rouq}
Suppose that $\Lambda$, $\Gamma$ and $\Gamma'$ are $k$-algebras and that $\tT$ (respectively $\tT'$) is a two-sided tilting complex of $\Lambda$-$\Gamma$-bimodules (respectively $\Lambda$-$\Gamma'$-bimodules) with $\End_\Lambda(\tT) \cong \End_\Lambda(\tT')$. Then $\tT \cong \tT'$ in $\Db(\Lambda)$ if and only if there exists an isomorphism $\upgamma \colon \Gamma \to \Gamma'$ such that 
\begin{align*}
\tT \cong {}_\upgamma \Gamma' \otimes_{\Gamma'} \tT'
 \end{align*} 
 in the derived category of $\Gamma$-$\Lambda$ bimodules.
\end{proposition}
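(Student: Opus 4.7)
The plan is to split the proof into the two implications.

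For $(\Leftarrow)$, the argument is immediate: since the automorphism $\upalpha$ twists only the left $\Gamma$-action, ${}_\upalpha\Gamma \cong \Gamma$ as right $\Gamma$-modules, so tensoring on the right with $T$ gives ${}_\upalpha\Gamma \otimes_\Gamma T \cong T$ as right $\Lambda$-modules.

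For $(\Rightarrow)$, I would define the complex of $\Gamma$-$\Gamma$-bimodules $X \colonequals \RHom_\Lambda(T,T')$, where the right $\Gamma$-action comes from the left $\Gamma$-structure on the first argument $T$ and the left $\Gamma$-action from that on $T'$. By Theorem \ref{kel}, the functor $\RHom_\Lambda(T,-) \colon \Dnb(\Lambda) \to \Dnb(\Gamma)$ is an equivalence with $\RHom_\Lambda(T,T) \cong \Gamma$. Applying it to the hypothesis $T'_\Lambda \cong T_\Lambda$ yields a right $\Gamma$-linear isomorphism $X \cong \Gamma$ in $\Dnb(\Gamma)$, since the functor preserves the right $\Gamma$-structure (which depends only on the first argument). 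Consequently $X$ has cohomology concentrated in degree zero and is free of rank one over $\Gamma$ on the right, so I may replace it by an honest $\Gamma$-$\Gamma$-bimodule with the same underlying right $\Gamma$-module structure.

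Next I would fix a right $\Gamma$-linear isomorphism $\psi \colon X \xrightarrow{\sim} \Gamma$ and transport the left $\Gamma$-action along $\psi$. The resulting left action on $\Gamma$ commutes with right multiplication and must therefore take the form $\gamma \cdot x = \upalpha(\gamma) x$ for a unique ring homomorphism $\upalpha \colon \Gamma \to \Gamma$; in other words, $X \cong {}_\upalpha\Gamma$ as $\Gamma$-$\Gamma$-bimodules. Applying the quasi-inverse $- \otimes^{\bf L}_\Gamma T$ then gives $T' \cong {}_\upalpha\Gamma \otimes_\Gamma T$ as $\Gamma$-$\Lambda$-bimodule complexes, with the tensor product underived since ${}_\upalpha\Gamma$ is free on the right. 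Finally, because $T'$ is an invertible two-sided tilting complex with $\End_\Lambda(T') \cong \Gamma$, the induced map $\Gamma \to \End_\Lambda({}_\upalpha\Gamma \otimes_\Gamma T) = \End_\Lambda(T) \cong \Gamma$ agrees with $\upalpha$ and must be an isomorphism, forcing $\upalpha$ to be an automorphism.

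The main obstacle is bookkeeping at the bimodule level: the hypothesis directly supplies only a right $\Gamma$-module isomorphism $X \cong \Gamma$, and the task is to upgrade it to a statement of bimodules. The discrepancy between the two left $\Gamma$-actions on $\Gamma$ — the transported one and the standard one — is precisely what produces the required automorphism $\upalpha$.
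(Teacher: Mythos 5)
The paper does not prove this statement; it is cited verbatim from Rouquier--Zimmermann \cite[2.3]{rouquier}, so there is no internal argument to compare your proposal against.

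That said, your proof is correct, and it is the standard argument. A few small remarks on the places that merit care. When you pass from the derived-category isomorphism $X \cong \Gamma$ in $\Dnb(\Gamma)$ to an honest bimodule, the point is that the cohomology of $X$ as a complex of bimodules is computed on underlying abelian groups, so vanishing of $H^n(X)$ for $n \neq 0$ as a right module already forces $X$ to be quasi-isomorphic to $H^0(X)$ in the derived category of $\Gamma$-$\Gamma$-bimodules, and a derived-category map between complexes concentrated in degree zero is an actual module map; this is exactly what lets you fix $\psi$. In your final paragraph the assertion that the structure map $\Gamma \to \End_\Lambda({}_\upalpha\Gamma\otimes_\Gamma T)$ \emph{agrees with} $\upalpha$ is slightly loose: it is $\upalpha$ postcomposed with the (isomorphic) structure map of $T$, so it is this composite that is an isomorphism; but since the second factor is an isomorphism, $\upalpha$ is too, which is all you need. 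The one step I would phrase a bit more carefully is the claim that $\RHom_\Lambda(T,-)$ and $-\otimes^{\bf L}_\Gamma T$ are mutually quasi-inverse not just on $\Dnb(\Lambda)$ and $\Dnb(\Gamma)$ but also at the level of $\Gamma$-$\Lambda$ versus $\Gamma$-$\Gamma$ bimodule complexes, since this is what lets you transport $X \cong {}_\upalpha\Gamma$ back to $T' \cong {}_\upalpha\Gamma\otimes_\Gamma T$; this is standard but is doing real work and deserves a sentence. None of these affect correctness; the argument is sound as sketched.
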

In this way, we can say any tilting complex $T \in  \Kb(\proj \Lambda)$ induces a unique (up to algebra isomorphism) standard equivalence. 

As well as these general results about derived equivalences, we will also make use of the following known result about derived equivalences of MMAs.

\begin{theorem} \cite[4.17, 6.14]{mmas} \label{demma}
Let $\Spec R$ be a complete local isolated cDV singularity and suppose that $M \colonequals \bigoplus\limits_{i=0}^n M_i \in \cmr$ is a basic rigid object with $M_0 \cong R$. Writing $\Lambda \colonequals \End_R(M)$, the following statements hold.
\begin{enumerate}
\item For any $i \neq 0$, the bimodule $\Hom_R(M,\upnu_i M)$ is tilting of projective dimension one which gives rise to a standard equivalence
\begin{align*}
G_i \colonequals \RHom_\Lambda(\Hom_R(M,\upnu_i M),-) \colon \Db(\Lambda) \xrightarrow{\iso} \Db(\End_R(\upnu_i M)).
\end{align*} 
\item If further $M$ is a maximal rigid object then, for any other maximal rigid object $N$, the bimodule $\Hom_R(M,N)$ is tilting of projective dimension one which gives rise to a standard derived equivalence
\begin{align*}
\RHom_\Lambda(\Hom_R(M,N),-) \colon \Db(\Lambda) \xrightarrow{\iso} \Db(\End_R(N)).
\end{align*} 
\end{enumerate}
\end{theorem}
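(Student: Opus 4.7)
The plan is to establish part (1) directly from the exchange sequence of Lemma \ref{exactexchnage}, and then to extend to part (2) using the fact that maximal rigid objects behave like cluster tilting subcategories in the $2$-Calabi--Yau category $\ucmr$.

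For part (1), I would start by applying $\Hom_R(M,-)$ to the exchange sequence
\[
0 \to M_i \xrightarrow{b_i^*} V_i \xrightarrow{d_i^*} J_i^* \to 0.
\]
Since $M_i \in \add M$, the rigidity of $M$ gives $\Ext^1_R(M,M_i)=0$, so this yields a short exact sequence of $\Lambda$-modules
\[
0 \to \Hom_R(M,M_i) \to \Hom_R(M,V_i) \to \Hom_R(M,J_i^*) \to 0,
\]
in which the first two terms are projective (because $V_i \in \add M$). This is a length-one projective resolution of $\Hom_R(M,J_i^*)$, so $\Hom_R(M,\upnu_i M) = \Hom_R(M,M/M_i) \oplus \Hom_R(M,J_i^*)$ has projective dimension at most one over $\Lambda$. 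To verify the hypotheses of Keller's theorem (Theorem \ref{kel}), I would use the reflexive equivalence $\Hom_R(M,-)\colon \add M \xrightarrow{\sim} \proj\Lambda$, extended to rigid reflexive modules via \cite[5.12]{mmas}, to translate the required vanishing $\Ext^1_\Lambda(\Hom_R(M,\upnu_i M),\Hom_R(M,\upnu_i M))=0$ into $\Ext^1_R(\upnu_i M,\upnu_i M)=0$, which is precisely the rigidity of $\upnu_i M$. Generation of $\Kb(\proj\Lambda)$ is immediate: the projectives corresponding to summands of $M/M_i$ appear directly, while the projective for $M_i$ is recovered via the distinguished triangle associated to the two-term complex coming from the exchange sequence. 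The identification $\End_\Lambda(\Hom_R(M,\upnu_i M)) \cong \End_R(\upnu_i M)$ then follows from the same reflexive equivalence, giving the claimed standard equivalence, with the natural $\End_R(\upnu_i M)$-$\Lambda$ bimodule structure on $\Hom_R(M,\upnu_i M)$ promoting this to a two-sided tilting complex.

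For part (2), assuming $M$ is maximal rigid, I would exploit the fact that in the Hom-finite $2$-Calabi--Yau category $\ucmr$ (Proposition \ref{prop}), every object admits a two-term resolution by the cluster tilting subcategory $\add M$. Lifted back to $\cmr$, this produces a short exact sequence
\[
0 \to X_1 \to X_0 \to N \to 0
\]
with $X_0, X_1 \in \add M$. Applying $\Hom_R(M,-)$ and using $\Ext^1_R(M,X_1)=0$ (by rigidity of $M$) yields a length-one projective resolution of $\Hom_R(M,N)$ over $\Lambda$, so again the projective dimension is at most one. The remaining tilting and bimodule conditions are verified exactly as in part (1), this time reducing to the rigidity of $N$ via the reflexive equivalence.

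The main obstacle will be establishing this reflexive equivalence in sufficient generality and carefully transporting the self-Ext computations between $\mod\Lambda$ and the rigid reflexive subcategory of $\cmr$; once this dictionary is in place, everything ultimately rests on the rigidity hypotheses on $\upnu_i M$ and $N$, but the two-sided bimodule structure and the complete vanishing of higher Ext groups required by Keller's theorem demand careful bookkeeping. The existence of the two-term resolution by $\add M$ used in part (2) also relies on a nontrivial structural property of $\ucmr$ (namely that a basic maximal rigid object is cluster tilting) which must be invoked explicitly.
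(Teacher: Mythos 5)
This is a cited result (\cite[4.17, 6.14]{mmas}); the paper supplies no proof of its own, so there is nothing in the text to compare against directly. Evaluating the proposal on its own terms: part (1) is essentially on the right track. The central computation — applying $\Hom_R(M,-)$ to the exchange sequence to obtain a length-one projective resolution of $\Hom_R(M,J_i^*)$ — is correct, as is the generation argument and the identification of endomorphism rings. However, the step glossed as ``translate the required vanishing into $\Ext^1_R(\upnu_iM,\upnu_iM)=0$'' is not a formal consequence of the equivalence $\add M \simeq \proj \Lambda$: the functor $\Hom_R(M,-)$ does not preserve $\Ext^1$ in general, and one needs to compare, term by term, the exact sequence obtained by applying $\Hom_\Lambda(-,T)$ to the projective resolution with the one obtained by applying $\Hom_R(-,\upnu_iM)$ to the exchange sequence (using rigidity of $\upnu_iM$ in the second). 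That comparison is the substance, not a formality.

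Part (2) contains a genuine gap. Your argument rests on the claim that every object of $\ucmr$ admits a two-term $\add M$-resolution when $M$ is basic maximal rigid — i.e. that maximal rigid implies cluster tilting in $\ucmr$. This is not available in the generality of the theorem: an isolated cDV singularity $\Spec R$ need not admit any smooth minimal model, and in that case $\ucmr$ has no cluster tilting object (a cluster tilting object would give an NCCR and hence, by Van den Bergh, a smooth crepant resolution), yet maximal rigid objects still exist and the theorem must still apply. So there is no reason to expect the resolution $0 \to X_1 \to X_0 \to N \to 0$ with $X_0, X_1 \in \add M$; maximal rigidity of $M$ alone does not produce it. The argument in \cite{mmas} proving the cited Theorem 6.14 takes a different route, exploiting the Cohen--Macaulay/depth structure of $\Hom_R(M,N)$ as a module over the Gorenstein base $R$ and properties specific to modifying modules, rather than any cluster-tilting-type approximation of $N$ by $\add M$. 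The distinction between ``maximal rigid'' and ``cluster tilting'' is precisely the reason that machinery is required here.
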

We wish to study the derived equivalences coming from the geometry, namely the Bridgeland--Chen flop functors \eqref{flops} induced by flops. However, it is shown in \cite[4.2]{HomMMP}, that, if $M$ and $\upnu_iM$ correspond to $X$ and $X^+$ respectively, then $G_i$ is isomorphic to the inverse of the flop functor from \eqref{flops}. Thus, to study the flop functors, it is equivalent to study the equivalences $G_i$ between the MMAs. 

As contraction algebras are symmetric by Theorem \ref{symmalg}, it is well known that they have no tilting modules and thus we have to look for tilting complexes of higher length. The bijection of \cite[4.7]{[AIR]} provides a method of construction for those of length two.
\begin{theorem} \label{decon}
Let $\Spec R$ be a complete local isolated cDV singularity and suppose that $M \colonequals \bigoplus_{i=0}^n \in \cmr$ is a basic rigid object with $M_0 \cong R$. Set $\Lambda_{\con} \colonequals \uEnd_R(M)$ and, for any $i \neq 0$, mutate $M$ at $M_i$ via the exchange sequence \eqref{exchangesequence1}. Then the complex
\begin{align*}
P \colonequals \big(\uHom_R(M,M_i) \xrightarrow{b_i \circ -} \uHom_R(M,V_i)\big) \oplus \big(0 \to \bigoplus_{j\neq i} \uHom_R(M,M_j)\big)
\end{align*}
is a tilting complex for $\Lambda_{\con}$.
\end{theorem}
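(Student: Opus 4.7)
The plan is to show $P$ is a two-term \emph{silting} complex, which will automatically upgrade to tilting because $\Lambda_{\con}$ is symmetric (Theorem \ref{symmalg}): Serre duality $\Hom(P,P[-n]) \cong D\Hom(P,P[n])$ then promotes vanishing in positive degrees to vanishing in all non-zero degrees. Set $P_j \colonequals \uHom_R(M, M_j)$ and $Q \colonequals \uHom_R(M, V_i)$; by Lemma \ref{exactexchnage} we have $V_i \in \add(M/M_i)$, so $Q \in \add\bigoplus_{j \neq i} P_j$ and every term of $P$ genuinely lies in $\proj \Lambda_{\con}$. I decompose $P = T_i \oplus \bigoplus_{j \neq i} P_j$ with $T_i \colonequals (P_i \xrightarrow{b_i^* \circ -} Q)$ in degrees $-1$ and $0$.

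Generation is immediate: the defining mapping-cone triangle $P_i \to Q \to T_i \to P_i[1]$ in $\Kb(\proj \Lambda_{\con})$ shows $P_i \in \thick P$, and together with the $P_j$ summands this forces $\thick P$ to contain every indecomposable projective, whence $\thick P = \Kb(\proj \Lambda_{\con})$.

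For vanishing, since $P$ is two-term I only need $\Hom_{\Kb(\proj\Lambda_{\con})}(P, P[1]) = 0$. Decomposing by summands, the cases $\Hom(P_j, P_k[1])$ and $\Hom(P_j, T_i[1])$ vanish already at the level of chain maps, because the source concentrated in degree $0$ has no non-zero target in the relevant negative degrees. The two remaining cases, $\Hom(T_i, P_j[1])$ and $\Hom(T_i, T_i[1])$, translate via the standard equivalence $\uHom_R(M,-) \colon \add M \xrightarrow{\sim} \proj \Lambda_{\con}$ into a single factorisation question: every morphism $\phi \colon M_i \to X$ in $\ucmr$, with $X$ equal to $M_j$ (for $j \neq i$) or to $V_i$, must factor through $b_i^*$ in $\ucmr$. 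Both such $X$ lie in $\add(M/M_i)$, and Lemma \ref{exactexchnage} says $b_i^*$ is a left $\add(M/M_i)$-approximation in $\cmr$, so any lift of $\phi$ factors there; projecting back to $\ucmr$ preserves the factorisation.

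The most delicate step is $\Hom(T_i, T_i[1]) = 0$: the null-homotopy condition formally requires a decomposition $\phi = \beta b_i^* + b_i^* \alpha$, so one might fear needing both summands. The point is that the left-approximation property of $b_i^*$ already provides $\phi = \beta b_i^*$ outright, making the $b_i^* \alpha$ contribution unnecessary. The two genuinely substantive ingredients are therefore this use of the left-approximation property and the appeal to symmetry of $\Lambda_{\con}$ to bootstrap silting to tilting; the remaining steps amount to routine bookkeeping with chain maps and homotopies.
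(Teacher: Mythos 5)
Your proof is correct and establishes the same intermediate claim as the paper (that $P$ is a two-term silting complex, then upgraded to tilting by symmetry of $\Lambda_{\con}$), but it gets there by a genuinely different route. The paper's proof identifies the relevant approximation data and then cites the bijection \cite[2.16]{rigidequiv} (a generalisation of \cite[4.7]{[AIR]} to rigid, rather than maximal rigid, objects) to conclude immediately that $P$ is silting; the only content spelled out there is verifying that $\scriptsize\begin{pmatrix} d_i^* & 0 \\ 0 & \id \end{pmatrix}$ is a minimal right $\add(M)$-approximation so that the cited bijection applies. You instead bypass the AIR-type machinery entirely and check the silting axioms by hand: generation from the mapping-cone triangle, and $\Hom(P,P[1])=0$ reduced, via the equivalence $\uHom_R(M,-)\colon\add(M)\to\proj\Lambda_{\con}$, to the fact that $b_i^*$ is a left $\add(M/M_i)$-approximation (Lemma \ref{exactexchnage}), with the observation that a factorisation in $\cmr$ descends to $\ucmr$. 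Your note that the homotopy relation $\phi = \upbeta b_i^* + b_i^*\upalpha$ can be solved with $\upalpha = 0$ is exactly the right point and is where the approximation property does all the work. The final step, upgrading silting to tilting via Serre duality for the symmetric algebra $\Lambda_{\con}$, is the content of \cite[2.8]{siltingmutation} and matches the paper's citation. In short: the paper's argument is shorter but depends on a nontrivial external bijection theorem; yours is longer but self-contained and makes the role of the left-approximation property visible. Both are valid.
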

\begin{proof}
The exchange sequence \eqref{exchangesequence1} descends to a triangle
\begin{align*}
M_i \xrightarrow{b_i} V_i \xrightarrow{d_i} K_i \to \Omega^{-1} M_i
\end{align*} 
in $\ucmr$ where $d_i$ is a minimal right $\add(M/M_i)$-approximation by Lemma \ref{exactexchnage}. An easy check using that $M$ is rigid shows further that $d_i$ is a minimal right $\add(M)$-approximation and hence in the triangle 
\begin{align*}
M_i \xrightarrow{\scriptsize \begin{pmatrix} b_i \\  0 \end{pmatrix}} V_i \oplus M/M_i \xrightarrow{\scriptsize \begin{pmatrix} d_i & 0 \\ 0 & \id \end{pmatrix}} K_i \oplus M/M_i \to \Omega^{-1} M_i,
\end{align*}
 the map $ \scriptsize\begin{pmatrix} d_i & 0 \\ 0 & \id \end{pmatrix}$ is a minimal right $\add(M)$-approximation of $\upnu_i M \colonequals  K_i \oplus M/M_i$. Thus, by the bijection of \cite[2.18]{rigidequiv} (a generalisation of \cite[4.7]{[AIR]}), the complex $P$ is a silting complex. As \cite[2.8]{siltingmutation} shows any silting complex for a symmetric algebra is in fact tilting, and $\Lambda_{\con}$ is symmetric by Theorem \ref{symmalg}, this completes the proof.
\end{proof}
\begin{remark}
It is further known that $\End_{\Kb(\proj \Lambda_{\con})}(P) \cong \uEnd_R(\upnu_iM)$ and thus $\Lambda_{\con}$ and $ \uEnd_R(\upnu_iM)$ are derived equivalent \cite[4.1, 5.5]{Dugas}. However, we will not use this and will actually recover this result in \S\ref{explicit}.
\end{remark}
Although the complex $P$ is tilting, it has no two-sided structure and hence to obtain a standard equivalence, and so understand the derived equivalences of a contraction algebra, we must lift $P$ to a two-sided tilting complex. 
\section{Standard Equivalences of Contraction Algebras} \label{explicit} 
In this section, we explicitly construct standard derived equivalences between contraction algebras which then, in later sections, we will show how to compose. For this, we require the following setup where $\ref{setupcom}(2)$ is our key new object. 

Let $\Spec R$ be a complete local isolated cDV singularity and suppose that $M \colonequals \bigoplus\limits_{i=0}^n M_i$ is a basic rigid object in $\cmr$ with $M_0 \cong R$. Recall from \S\ref{mutation} that we can mutate $M$ at the summand $M_i$, where $i \neq 0$, via an exchange sequence, 
\begin{align}
0 \to M_i \xrightarrow{b_i} V_i \xrightarrow{d_i} K_i \to 0, \label{exseq}
\end{align}
to get $\upnu_iM \colonequals M/M_i \oplus K_i$. Consider the following algebras:
\[
\begin{array}{ccc}
\Lambda \colonequals \End_R(M) & \ & \Gamma \colonequals \End_R(\upnu_iM)\\
\Lambda_{\con} \colonequals \uEnd_R(M) & \ & \Gamma_{\!\con} \colonequals \uEnd_R(\upnu_iM).
\end{array}\]

\begin{setup} \label{setupcom}
With notation as above, set:
\begin{enumerate}
\item $T_{i} \colonequals \Hom_R(M, \upnu_i M)$ which, by Theorem \ref{demma}, is a $\Gamma$-$\Lambda$-bimodule giving an equivalence
\begin{align*}
- \otimes_\Gamma^{\bf L} T_i \colon \Db(\Gamma) \to \Db(\Lambda);
\end{align*} 
\item $\tT_{\!i} \colonequals \tau_{\scriptscriptstyle{ \geq -1}} ( \Gamma_{\!\con} \otimes^{\bf L}_\Gamma T_i \otimes^{\bf L}_{\Lambda} \Lambda_{\con})$ which is a complex of $\Gamma_{\!\con}$-$\Lambda_{\con}$-bimodules.
\end{enumerate}
\end{setup}
\noindent Here, $\tau_{\scriptscriptstyle \geq-1}$ is the truncation functor taking a complex 
\begin{align*}
X \colonequals \dots \to X_{i-1} \xrightarrow{d_{i-1}} X_i \xrightarrow{d_i} X_{i+1} \to \dots 
\end{align*}
to the complex
\begin{align*}
 \dots 0 \to 0 \to X_{-1}/\Image(d_{-2}) \xrightarrow{d_{-1}} X_{0} \xrightarrow{d_0} X_{1} \to \dots. 
\end{align*}
Note that, if $X$ has zero homology in degrees $-2$ and lower, then there is a quasi-isomorphism $X \to \tau_{\scriptscriptstyle \geq -1}X$.\\

In section \ref{proofofthm1}, we will establish the following theorem.
\begin{theorem} \label{square}
With the setup of \ref{setupcom}, there is an isomorphism 
\begin{align*}
\Gamma_{\!\con} \otimes^{\bf L}_{\Gamma} T_{i} \cong \tT_{\!i} \otimes_{\Lambda_{\con}} {\Lambda_{\con}}_{\Lambda}
\end{align*}
 in the derived category of $\Gamma_{\!\con}$-$\Lambda$-bimodules. Consequently, there is a commutative diagram
\begin{equation}\label{commutingsquare}
\begin{array}{c}\begin{tikzpicture} 
  \matrix (m) [matrix of math nodes,row sep=3em,column sep=4em,minimum width=2em] {  
\Dnb(\Lambda_{\con})  & \Dnb(\Lambda)   \\
\Dnb(\Gamma_{\!\con})  & \Dnb(\Gamma)   \\};
 \path[-stealth]
    (m-1-1.east|-m-1-2) edge node [above] {$\scriptstyle - \otimes_{\Lambda_{\con}} \Lambda_{\con}$ } (m-1-2)
    (m-2-1.east|-m-2-2) edge node [above] {$ \scriptstyle - \otimes_{\Gamma_{\!\con}} \Gamma_{\!\con}$} (m-2-2)
 (m-2-1) edge node [left] {$\scriptstyle  -\otimes^{\bf L}_{\Gamma_{\!\con}} \scalebox{0.9}{$\mathscr{T}$}_{\!i}$} (m-1-1)
 (m-2-2) edge node [right] {$ \scriptstyle - \otimes^{\bf L}_\Gamma T_i$} (m-1-2);
\end{tikzpicture}
\end{array}
\end{equation}
where the functor on the right hand side is an equivalence.
\end{theorem}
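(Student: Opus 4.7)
The plan is to prove the bimodule isomorphism $\Gamma_{\con} \otimes^{\bf L}_\Gamma T_i \cong \tT_i \otimes_{\Lambda_{\con}} \Lambda_{\con}$ in the derived category of $\Gamma_{\con}$-$\Lambda$-bimodules, from which commutativity of the square is immediate (by associativity, both compositions compute $A \mapsto A \otimes^{\bf L}_{\Gamma_{\con}} (\text{the relevant bimodule complex})$), and the right vertical is an equivalence by Theorem \ref{demma}(1). Set $X \colonequals \Gamma_{\con} \otimes^{\bf L}_\Gamma T_i$ and $Y \colonequals X \otimes^{\bf L}_\Lambda \Lambda_{\con}$, so $\tT_i = \tau_{\scriptscriptstyle \geq -1}Y$. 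The quotient $\Lambda \twoheadrightarrow \Lambda_{\con}$ induces a natural morphism $X \to Y$, and I would prove (a) $X$ is concentrated in cohomological degrees $\{-1, 0\}$, so this morphism factors through $\tau_{\scriptscriptstyle \geq -1}Y = \tT_i$; (b) the factored map is a quasi-isomorphism.

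For (a), apply $\Hom_R(-, \upnu_i M)$ to the exchange sequence \eqref{exseq}: rigidity of $\upnu_iM$ kills $\Ext^1_R(K_i, \upnu_iM)$, and since $V_i, K_i \in \add(\upnu_iM)$, this gives a length-one projective resolution of $\Hom_R(M_i, \upnu_iM)$ as a left $\Gamma$-module. Adjoining the already-projective $\Hom_R(M/M_i, \upnu_iM)$ yields a resolution of $T_i$, and applying $\Gamma_{\con} \otimes_\Gamma -$ represents $X$ by a two-term complex in degrees $-1, 0$.

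For (b), applying $X \otimes^{\bf L}_\Lambda -$ to $0 \to \Lambda e_0 \Lambda \to \Lambda \to \Lambda_{\con} \to 0$ (with $e_0$ the idempotent for $R$) produces a triangle whose long exact cohomology sequence reduces the quasi-isomorphism on $H^{-1}, H^0$ to the claim that $X \otimes^{\bf L}_\Lambda \Lambda e_0 \Lambda$ has cohomology concentrated in degrees $\leq -2$. The dual resolution from $\Hom_R(M,-)$ applied to \eqref{exseq} is a length-one right $\Lambda$-projective resolution of $T_i$, and injectivity of $b_i$ is preserved after restriction to the ideal factoring through $\add R$, so $T_i \otimes^{\bf L}_\Lambda \Lambda e_0 \Lambda$ is concentrated in degree $0$, equal to the submodule $N \subseteq T_i$ of maps $M \to \upnu_iM$ factoring through $\add R$. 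By associativity, $X \otimes^{\bf L}_\Lambda \Lambda e_0 \Lambda \simeq \Gamma_{\con} \otimes^{\bf L}_\Gamma N$, and the needed vanishing becomes $\Gamma_{\con} \otimes_\Gamma N = 0$ and $\Tor^\Gamma_1(\Gamma_{\con}, N) = 0$.

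The first is elementary: decomposing any $h \in N$ as $\sum_j \beta_j \circ \alpha_j$ with $\alpha_j\colon M \to R$ and $\beta_j\colon R \to \upnu_iM$, each summand equals $\widetilde\beta_j \cdot (\iota \alpha_j)$, where $\widetilde\beta_j \colonequals \beta_j \circ \pi \in \Gamma f_0 \Gamma$ and $\iota, \pi$ are the inclusion and projection of the $R$-summand of $\upnu_iM$, exhibiting $h \in \Gamma f_0 \Gamma \cdot N$. The vanishing of $\Tor^\Gamma_1(\Gamma_{\con}, N)$ is the main technical obstacle. By the long exact sequence of $0 \to \Gamma f_0 \Gamma \to \Gamma \to \Gamma_{\con} \to 0$, this Tor is the kernel of the multiplication map $\Gamma f_0 \Gamma \otimes_\Gamma N \to N$. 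I would identify $\Gamma f_0 \Gamma \cong \upnu_iM \otimes_R (\upnu_iM)^*$ and $N \cong \upnu_iM \otimes_R M^*$ using that $M, \upnu_iM$ are both maximal Cohen–Macaulay generators of $\mod R$ (Proposition \ref{prop} plus $R \in \add M \cap \add \upnu_iM$), and then check that under these identifications the multiplication map unwinds to the identity on $\upnu_iM \otimes_R M^*$, so its kernel vanishes. Establishing these tensor-product identifications is the delicate step, and is where the CM and generator structure from the cDV hypothesis is essential.
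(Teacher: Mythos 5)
Your overall reduction is structurally sound — the morphism $X \to \tT_{\!i}$ you describe is the very map the paper constructs, and part (a) is correct. Moreover, your target vanishing $H^j(X \otimes^{\bf L}_\Lambda I) = 0$ for $j = 0, -1$ (where $I = \Lambda e_0\Lambda$) is in fact equivalent to the statement the paper proves: since $H^0(X) \cong \uHom_R(M, \upnu_iM)$ and $H^{-1}(X) \cong \uHom_R(M, \Omega K_i)$ are both annihilated on the right by $I$, and $I = I^2$, one has $H^j(X) \otimes_\Lambda I = 0$ for $j=0,-1$, and the triangle $H^{-1}(X)[1] \to X \to H^0(X)$ then gives $H^{-1}(X\otimes^{\bf L}_\Lambda I) \cong \Tor_1^\Lambda(\uHom_R(M,\upnu_iM), I) \cong \Tor_2^\Lambda(\uHom_R(M,\upnu_iM), \Lambda_{\con})$, which is precisely Lemma \ref{assumpholds}. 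So both proofs must pass through the same vanishing.

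The gap is in your proposed proof of that vanishing. First, a slip: $T_i \otimes_\Lambda I$ is not the submodule $N = T_iI \subseteq T_i$; the multiplication map $T_i \otimes_\Lambda I \to T_i$ has kernel $\Tor_1^\Lambda(T_i, \Lambda_{\con}) \cong \uHom_R(M,\Omega K_i) \cong \Ext^1_R(M, K_i)$, which is nonzero (the exchange sequence is nonsplit), so your description of $N$ and the element-chase you use for $\Gamma_{\!\con}\otimes_\Gamma N = 0$ do not apply to the actual module (that vanishing is nevertheless true, but for the simpler reason $I=I^2$ noted above). Far more seriously, the identifications $\Gamma f_0\Gamma \cong \upnu_iM \otimes_R (\upnu_iM)^*$ and $N \cong \upnu_iM \otimes_R M^*$ require the multiplication maps $\Gamma f_0 \otimes_R f_0\Gamma \to \Gamma$ and $\Lambda e_0 \otimes_R e_0\Lambda \to \Lambda$ to be injective. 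Peeling off the $R$-summand, this asks that $(M/R)^* \otimes_R (M/R) \to \End_R(M/R)$ be injective; the kernel is the torsion of a tensor product of CM modules over the three-dimensional $R$, supported on the isolated singular point, and there is no reason for it to vanish when $M/R$ is not free — indeed one expects it not to, by Huneke--Wiegand-type considerations. You flag this as ``the delicate step,'' but it is the whole content, and ``CM and generator structure'' will not supply it, because tensor products of CM modules over $R$ are not CM in general. The paper instead proves the required $\Tor_2$ vanishing directly by exhibiting an explicit four-term $\Lambda$-projective resolution of $\Lambda_{\con}$ built from $R$-syzygies of $M$ and observing that the degree-$2$ term $\uHom_R(M,\upnu_iM) \otimes_\Lambda \Hom_R(R^n,M)$ is already zero via an elementary element calculation sliding factorisations through the $R$-summand of $M$ — an argument that entirely avoids any torsion question.
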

In section \ref{equivproof}, the diagram \eqref{commutingsquare} will be used to prove the following, which shows that $\tT_{\!i}$ is in fact a two-sided tilting complex, inducing an equivalence between contraction algebras.
\begin{corollary} \label{equivalence}
With the setup of \ref{setupcom}, the functor $  -\otimes^{\bf L}_{\Gamma_{\!\con}} \tT_{\!i} \colon \Db(\Gamma_{\!\con}) \to \Db(\Lambda_{\con})$ is an equivalence.
\end{corollary}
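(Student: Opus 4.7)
The strategy is to apply Keller's theorem \ref{kel}(3) to $\tT_{\!i}$ as a $\Gamma_{\!\con}$-$\Lambda_{\con}$ bimodule complex. Concretely, it suffices to check: (a) $\tT_{\!i}$, viewed as a right $\Lambda_{\con}$-module complex, is quasi-isomorphic to a bounded complex in $\Kb(\proj \Lambda_{\con})$ whose summands generate; and (b) the natural map $\Gamma_{\!\con} \to \RHom_{\Lambda_{\con}}(\tT_{\!i}, \tT_{\!i})$ is an isomorphism in $\Db$, i.e.\ $\End_{\Db(\Lambda_{\con})}(\tT_{\!i}) \cong \Gamma_{\!\con}$ and higher Exts vanish.

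My plan is to identify $\tT_{\!i}$ with the two-term tilting complex $P$ of Theorem \ref{decon}. The key observation is that the rigidity of $M$ (i.e.\ $\Ext^1_R(M, M_i) = 0$) makes the sequence obtained by applying $\Hom_R(M, -)$ to \eqref{exseq} exact, producing a length-one projective resolution
\begin{align*}
0 \to \Hom_R(M, M_i) \to \Hom_R(M, V_i) \oplus \Hom_R(M, M/M_i) \to T_{\!i} \to 0
\end{align*}
of $T_{\!i}$ as a right $\Lambda$-module. Tensoring with $\Lambda_{\con}$ on the right recovers exactly the complex $P$. Combined with Theorem \ref{square}, which identifies the restriction of $\tT_{\!i}$ to a right $\Lambda$-complex with $\Gamma_{\!\con} \otimes^{\bf L}_\Gamma T_{\!i}$, this shows that the cohomology of $\tT_{\!i}$ is concentrated in degrees $[-1, 0]$, so the truncation $\tau_{\scriptscriptstyle \geq -1}$ in the definition is harmless. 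A cohomology comparison, using the long exact sequence induced by applying $\uHom_R(M, -)$ to the triangle in $\ucmr$ coming from \eqref{exseq}, then upgrades this to a quasi-isomorphism $\tT_{\!i} \simeq P$ in $\Db(\Lambda_{\con})$.

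Once $\tT_{\!i} \simeq P$ is established, condition (a) is immediate since $P \in \Kb(\proj \Lambda_{\con})$ is a tilting complex by Theorem \ref{decon}; so is the higher Hom-vanishing in (b). The remaining piece is identifying $\End_{\Db(\Lambda_{\con})}(\tT_{\!i})$ with $\Gamma_{\!\con}$ via the natural map induced by the left $\Gamma_{\!\con}$-action. I would obtain this by transporting the isomorphism $\End_{\Db(\Lambda)}(T_{\!i}) \cong \Gamma$ of Theorem \ref{demma} along the commutative diagram of Theorem \ref{square}, and matching the $\Gamma_{\!\con}$-structure on $\tT_{\!i}$ coming from its construction with that on $P$ coming from the silting-tilting bijection \cite[4.7]{[AIR]}. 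Keller's theorem then delivers the claimed equivalence.

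The main obstacle I expect is upgrading the $\Lambda$-level quasi-isomorphism (directly supplied by Theorem \ref{square}) to a genuine $\Lambda_{\con}$-level quasi-isomorphism $\tT_{\!i} \simeq P$, together with tracking the compatibility of the left $\Gamma_{\!\con}$-actions on both sides. This requires careful bookkeeping with compatible choices of projective resolutions and alignment with the concrete description of $P$ from \cite{[AIR]}.
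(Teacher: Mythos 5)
Your overall strategy — invoke Keller's criterion, identify $\tT_i \simeq P$ in $\Db(\Lambda_{\con})$, then verify the remaining hypotheses — matches the paper's, and the identification $\tT_i \cong P$ is indeed the content of Proposition \ref{onesided2}. Two caveats on how you get there: your sketch of the identification somewhat conflates $T_i \otimes^{\bf L}_\Lambda \Lambda_{\con}$ (which does give $P$ from the length-one $\Lambda$-projective resolution of $T_i$) with the full complex $\Gamma_{\!\con} \otimes^{\bf L}_\Gamma T_i \otimes^{\bf L}_\Lambda \Lambda_{\con}$, and the paper's actual route is more intricate (it builds a length-five $\Lambda$-projective complex $\EuScript{P}\simeq P$ in the appendix, computes $P\otimes^{\bf L}_\Lambda\Lambda_{\con}\cong P\oplus P[3]$, and only then applies $\tau_{\geq -1}$). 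But those are details you could plausibly fill in.

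The genuine gap is in the step you yourself flag as the main obstacle: showing the natural map $\upalpha\colon\Gamma_{\!\con}\to\End_{\Db(\Lambda_{\con})}(\tT_i)$ induced by the bimodule structure is an isomorphism. Your plan — ``transport'' the isomorphism $\End_{\Db(\Lambda)}(T_i)\cong\Gamma$ across the commutative square and ``match'' the $\Gamma_{\!\con}$-structure on $\tT_i$ with that on $P$ from \cite[4.7]{[AIR]} — does not actually produce an argument: $P$ has no two-sided structure, so there is nothing on that side to match, and the abstract isomorphism $\End(P)\cong\Gamma_{\!\con}$ from Dugas would not tell you that the specific structure map $\upalpha$ is an isomorphism. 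The paper's Lemma \ref{naturalmap} sidesteps any structure-matching entirely. It forms the commutative square of endomorphism rings induced by Theorem \ref{square}, observes that the bottom-then-right composite is an isomorphism (since $-\otimes^{\bf L}_\Gamma T_i$ is an equivalence and $\Gamma_{\!\con}$ is a $\Gamma_{\!\con}$-module), hence the restriction map $\upbeta\colon\End_{\Db(\Lambda_{\con})}(\tT_i)\to\End_{\Db(\Lambda)}(\tT_i|_\Lambda)$ is surjective; it then uses the restriction--extension adjunction plus the computation $\tT_i\otimes^{\bf L}_\Lambda\Lambda_{\con}\cong P\oplus P[3]$ and the tilting property of $P$ to show both endomorphism rings have the same finite $k$-dimension, forcing $\upbeta$ and then $\upalpha$ to be isomorphisms. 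This surjectivity-plus-dimension-count device is the missing idea; without something equivalent your proof does not close.
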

In summary, each mutation of maximal rigid objects induces a standard derived equivalence between the corresponding contraction algebras which, moreover, can be thought of as a contraction algebra analogue of the flop functor. 
\subsection{One-sided Results} \label{onesidedsection}
To establish the isomorphism of Theorem \ref{square} in the category of bimodules, we first need to establish the isomorphism as complexes of $\Lambda$-modules. The general idea is to show that both $\Gamma_{\!\con} \otimes^{\bf L}_\Gamma T_i$ and $\tT_i$ are quasi-isomorphic to the $\Lambda_{\con}$-tilting complex $P$ from Theorem \ref{decon}. As the proofs of the following three results are largely computational, most are postponed to the appendix. 
 
\begin{proposition} \label{onesided}
In the setup of \ref{setupcom}, $ \Gamma_{\!\con} \otimes^{\bf L}_{\Gamma} T_i \cong P_{\Lambda}$ in $\Db(\Lambda)$, where $P$ is as in Theorem \ref{decon}. In particular, the homology of $ \Gamma_{\!\con} \otimes^{\bf L}_{\Gamma} T_i$ is isomorphic as right $\Lambda$-modules to
\begin{align*}
\left\{
	\begin{array}{ll}
		 \uHom_R(M,\upnu_i M) & \mbox{in degree 0; } \\
		\uHom_R(M, \Omega K_i) & \mbox{in degree -1; } \\
		0 & \mbox{elsewhere.}
	\end{array}
\right.
\end{align*}
\end{proposition}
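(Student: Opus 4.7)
The plan is to find an explicit representative of $\Gamma_{\!\con} \otimes^{\bf L}_\Gamma T_i$ in $\Db(\Lambda)$ by resolving $T_i$ as a left $\Gamma$-module via the exchange sequence, and then identifying the result with the tilting complex $P$ of Theorem \ref{decon}.

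First, as left $\Gamma$-modules, $T_i = \Hom_R(M_i, \upnu_i M) \oplus \Hom_R(M/M_i, \upnu_i M)$, and the second summand is projective because $M/M_i \in \add(\upnu_i M)$. For the first summand, apply the contravariant functor $\Hom_R(-, \upnu_i M)$ to the exchange sequence \eqref{exseq}. Since $K_i \in \add(\upnu_i M)$ and $\upnu_i M$ is rigid, $\Ext^1_R(K_i, \upnu_i M) = 0$, producing a short exact sequence of left $\Gamma$-modules
\[
0 \to \Hom_R(K_i, \upnu_i M) \xrightarrow{-\circ d_i} \Hom_R(V_i, \upnu_i M) \xrightarrow{-\circ b_i} \Hom_R(M_i, \upnu_i M) \to 0
\]
whose first two terms are projective, as $V_i \in \add(M/M_i) \subseteq \add(\upnu_i M)$. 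Combined with the split projective summand, this yields a $2$-term projective resolution $Q^\bullet \to T_i$.

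Applying $\Gamma_{\!\con} \otimes_\Gamma -$ and using that $\Gamma_{\!\con} \otimes_\Gamma \Hom_R(N, \upnu_i M) = \uHom_R(N, \upnu_i M)$ for every $N \in \add(\upnu_i M)$ (since this tensor precisely kills maps factoring through $\add(R)$), one obtains the $2$-term complex
\[
\uHom_R(K_i, \upnu_i M) \xrightarrow{-\circ d_i} \uHom_R(V_i, \upnu_i M) \oplus \uHom_R(M/M_i, \upnu_i M)
\]
in degrees $-1, 0$, representing $\Gamma_{\!\con} \otimes^{\bf L}_\Gamma T_i$. Reading off cohomology via the long exact sequence obtained by applying $\uHom_R(-, \upnu_i M)$ to the exchange triangle $M_i \to V_i \to K_i \to M_i[1]$ in $\ucmr$, and using the rigidity of both $M$ and $\upnu_i M$ to vanish the stray $\Ext^1$-terms (in particular $\Ext^1_R(V_i, \upnu_i M) = 0$ and $\Ext^1_R(M_i, M/M_i) = \Ext^1_R(M/M_i, K_i) = 0$), gives $H^0 \cong \uHom_R(M, \upnu_i M)$ and $H^{-1} \cong \uHom_R(M, \Omega K_i)$ as right $\Lambda$-modules, confirming the homology claim.

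It remains to show that this $2$-term complex is quasi-isomorphic in $\Db(\Lambda)$ to $P_\Lambda$. Since $P$ is a bounded complex of projective right $\Lambda$-modules (via $\Lambda \to \Lambda_{\!\con}$), I would lift the identity on $H^0$ to a chain map from $P$ to the complex above and then verify that it induces an isomorphism on $H^{-1}$ as well. The main obstacle I expect is this last identification: the two representatives are assembled very differently, with $P$ coming from the covariant $\uHom_R(M, -)$ applied to the first half of the exchange triangle and our complex coming from the contravariant $\uHom_R(-, \upnu_i M)$ applied to the second half. Consequently their degree-$(-1)$ and degree-$0$ terms are not obviously comparable, and tracking the right $\Lambda$-module structures through the comparison (equivalently, checking that the two representatives realise the same class in $\Ext^2_\Lambda(H^0, H^{-1})$) is a delicate computational matter, which is presumably why the bulk of the argument is relegated to the appendix.
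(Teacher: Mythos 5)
Your plan resolves $T_i$ by projective \emph{left} $\Gamma$-modules, and this is where the argument does not close. The decomposition $T_i=\Hom_R(M_i,\upnu_iM)\oplus\Hom_R(M/M_i,\upnu_iM)$ and the short exact sequence obtained by applying $\Hom_R(-,\upnu_iM)$ to the exchange sequence are maps of left $\Gamma$-modules only; they are not compatible with the right $\Lambda$-action on $T_i$ (precomposition with $\End_R(M)$ mixes the summands of $M$), and a term such as $\Hom_R(K_i,\upnu_iM)$ carries no right $\Lambda$-module structure at all. Consequently, after applying $\Gamma_{\!\con}\otimes_\Gamma-$, the two-term complex you produce represents $\Gamma_{\!\con}\otimes^{\bf L}_\Gamma T_i$ only in $\Db(k)$, not in $\Db(\Lambda)$, and so cannot be compared to $P_\Lambda$ there. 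This also affects the homology claim: your route identifies $H^{-1}$ with $\Ker\bigl(\uHom_R(K_i,\upnu_iM)\xrightarrow{-\circ d_i}\uHom_R(V_i,\upnu_iM)\bigr)\cong\uHom_R(\Omega M_i,\upnu_iM)$, which has the correct dimension but is not, without further argument, the right $\Lambda$-module $\uHom_R(M,\Omega K_i)$ named in the proposition; your text simply asserts the desired answer rather than deriving it from the complex you built.

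The paper works from the other side precisely to preserve the $\Lambda$-structure: it resolves $\Gamma_{\!\con}=\bigoplus_j B_j$ by projective \emph{right} $\Gamma$-modules (a four-term resolution of each $B_j$, Lemma \ref{projresj}), so that applying $-\otimes_\Gamma T_i$ and using $Q_j\otimes_\Gamma T_i\cong\Hom_R(M,-)$ on summands of $\upnu_iM$ (Lemma \ref{projequiv}) yields a complex of right $\Lambda$-modules. It then builds an explicit complex $\EuScript{P}$ of projective $\Lambda$-modules that is quasi-isomorphic to both sides (Lemmas \ref{jneqi}, \ref{quasiso1}, \ref{quasiso}). Note also that your final sketch ("Since $P$ is a bounded complex of projective right $\Lambda$-modules\dots") starts from a false premise: the terms of $P$ are projective over $\Lambda_{\con}$, not over $\Lambda$ (over $\Lambda$ they have infinite projective dimension, as Lemma \ref{projresj} shows), which is exactly why the comparison requires the auxiliary resolution $\EuScript{P}$ and is the main content of the appendix rather than a routine lifting of chain maps.
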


\begin{proof}
The first statement is \ref{appenmain}, proved in the appendix. For the second statement it is enough to show the homology of
\begin{align*}
\uHom_R(M,M_i) \xrightarrow{b_i \circ -} \uHom_R(M,V_i)
\end{align*}
is $\uHom_R(M,K_i)$ in degree $0$ and $\uHom_R(M,\Omega K_i)$ in degree $-1$. This is shown at the beginning of the proof of \ref{quasiso}.  
\end{proof}
As well as giving an explicit form for $ \Gamma_{\!\con} \otimes^{\bf L}_{\Gamma} T_i$, Proposition \ref{onesided} can further be used to find an explicit form for $\tT_i$.
\begin{proposition} \label{onesided2}
Under the setup of \ref{setupcom}, $\tT_{\!i} \cong P$ in $\Db(\Lambda_{\con})$, where $P$  is as in Theorem \ref{decon}.
\end{proposition}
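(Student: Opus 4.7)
The plan is to combine Proposition \ref{onesided} with a direct analysis of the truncated derived tensor with $\Lambda_{\con}$. Since $- \otimes^{\bf L}_\Lambda \Lambda_{\con} \colon \Db(\Lambda) \to \Db(\Lambda_{\con})$ is a triangulated functor, Proposition \ref{onesided} gives $\Gamma_{\!\con} \otimes^{\bf L}_\Gamma T_i \otimes^{\bf L}_\Lambda \Lambda_{\con} \cong P \otimes^{\bf L}_\Lambda \Lambda_{\con}$ in $\Db(\Lambda_{\con})$; truncating yields $\tT_i \cong \tau_{\scriptscriptstyle \geq -1}(P \otimes^{\bf L}_\Lambda \Lambda_{\con})$, so it suffices to show this truncation is isomorphic to $P$ in $\Db(\Lambda_{\con})$.

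The surjection $\Lambda \to \Lambda_{\con}$ induces a natural map $\alpha \colon P \cong P \otimes^{\bf L}_\Lambda \Lambda \to P \otimes^{\bf L}_\Lambda \Lambda_{\con}$ in $\Db(\Lambda_{\con})$, and since $P$ is concentrated in cohomological degrees $\{-1, 0\}$ this factors through the truncation to give $\bar\alpha \colon P \to \tau_{\scriptscriptstyle \geq -1}(P \otimes^{\bf L}_\Lambda \Lambda_{\con})$. Both sides have cohomology only in degrees $\{-1, 0\}$, so $\bar\alpha$ is a quasi-isomorphism provided it induces isomorphisms on $H^0$ and $H^{-1}$. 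The $H^0$ check is immediate: right-exactness of $- \otimes_\Lambda \Lambda_{\con}$ gives $H^0(P \otimes^{\bf L}_\Lambda \Lambda_{\con}) = H^0(P) \otimes_\Lambda \Lambda_{\con} = H^0(P)$, since $H^0(P) = \uHom_R(M, \upnu_i M)$ is already a $\Lambda_{\con}$-module, and the induced map on $H^0$ is the identity.

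For the $H^{-1}$ check, apply $P \otimes^{\bf L}_\Lambda -$ to the triangle $\Lambda e_0 \Lambda \to \Lambda \to \Lambda_{\con}$ of $\Lambda$-bimodules (with $e_0$ the idempotent for $R$). The resulting long exact sequence reduces the problem to showing that $H^0(P \otimes^{\bf L}_\Lambda \Lambda e_0 \Lambda) = 0$ and that the map $H^{-1}(P \otimes^{\bf L}_\Lambda \Lambda e_0 \Lambda) \to H^{-1}(P)$ is zero, which together force $H^{-1}(P) \to H^{-1}(P \otimes^{\bf L}_\Lambda \Lambda_{\con})$ to be an isomorphism. For this, the plan is to analyze each component $\uHom_R(M, N)$ of $P$ (with $N \in \add M$) via the short exact sequence $0 \to [R]_N \to \Hom_R(M, N) \to \uHom_R(M, N) \to 0$ of right $\Lambda$-modules, where $\Hom_R(M, N)$ is a projective right $\Lambda$-module. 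Combined with the rigidity of $M$ and the key isomorphism $\uHom_R(M, -) \cong \Ext^1_R(M, \Omega(-))$, this should yield the required vanishing and identify the homology with $\uHom_R(M, \Omega K_i) = H^{-1}(P)$.

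I expect this vanishing/identification in degree $-1$ to be the main obstacle, since one must track how the contributions from the different summands of $P$ assemble through the differential $b_i^*\circ -$ coming from the exchange sequence \eqref{exseq}. The argument should parallel the explicit resolution computations underlying the proof of Proposition \ref{onesided} in the appendix, as both propositions compute cohomology of closely related complexes, with one extra derived tensor for the present proposition.
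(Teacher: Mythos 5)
Your opening reduction is exactly the paper's: invoke Proposition \ref{onesided}, apply $-\otimes^{\bf L}_\Lambda\Lambda_{\con}$, and reduce to showing $\tau_{\scriptscriptstyle\geq -1}(P\otimes^{\bf L}_\Lambda\Lambda_{\con})\cong P$. But from there the paper and your plan diverge, and the paper's route is both shorter and avoids a subtlety that your plan runs into.

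The paper simply computes $P\otimes^{\bf L}_\Lambda\Lambda_{\con}$ on the nose. Remark \ref{defS} constructs an explicit complex $\EuScript{P}$ of projective $\Lambda$-modules quasi-isomorphic to $P$, and the crucial structural fact is that every term of $\EuScript{P}$ in cohomological degrees strictly between the two "$P$-shaped" ends is of the form $\Hom_R(M,R^m)$. Since $\Hom_R(M,R^m)\otimes_\Lambda\Lambda_{\con}\cong\uHom_R(M,R^m)=0$, the complex $\EuScript{P}\otimes_\Lambda\Lambda_{\con}$ splits visibly into $P\oplus P[3]$ with no homology computation, no comparison map and no long exact sequence. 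Truncating at $\geq -1$ deletes the $P[3]$ summand and you are done. Your plan does eventually cite "the explicit resolution computations underlying the proof of Proposition \ref{onesided}"; once one writes $\EuScript{P}$ down, the direct calculation is the whole argument, and the scaffolding you build around it (the comparison map, the auxiliary triangle $\Lambda e_0\Lambda\to\Lambda\to\Lambda_{\con}$, the degree-by-degree cohomology check) becomes unnecessary.

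There is also a genuine gap in your scaffolding as stated: you assert that the unit-type map $\alpha\colon P\cong P\otimes^{\bf L}_\Lambda\Lambda\to P\otimes^{\bf L}_\Lambda\Lambda_{\con}$ is a morphism in $\Db(\Lambda_{\con})$. This natural map lives a priori only in $\Db(\Lambda)$: it is computed via a projective $\Lambda$-resolution $\EuScript{P}$ of $P$, whose terms are not $\Lambda_{\con}$-modules, so the roof representing $\alpha$ is not a roof in $\Db(\Lambda_{\con})$. The morphism that is naturally available in $\Db(\Lambda_{\con})$ is the counit $\epsilon_P\colon P\otimes^{\bf L}_\Lambda\Lambda_{\con}\to P$ of the $(-\otimes^{\bf L}_\Lambda\Lambda_{\con})\dashv(\mathrm{res})$ adjunction, pointing in the opposite direction. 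One can repair the argument by working with $\epsilon_P$ (it factors through $\tau_{\scriptscriptstyle\geq -1}$ because the source of the complementary piece $\tau_{\scriptscriptstyle<-1}$ is concentrated below degree $-1$), then using the triangle identity $\epsilon\circ\alpha=\mathrm{id}$ in $\Db(\Lambda)$ to transfer injectivity/surjectivity on cohomology. But by that point you need the explicit vanishing of the middle terms of $\EuScript{P}\otimes_\Lambda\Lambda_{\con}$ anyway, which is all the paper's two-line proof uses.
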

\begin{proof}
In \ref{defS}, a complex $\EuScript{P}$ of projective $\Lambda$-modules is constructed which is isomorphic to $P$ in $\Db(\Lambda)$.  Moreover, for any summand $M_j$ of $M$, it is easily checked there is an isomorphism 
\begin{align*}
\uHom_R(M,M_j) \cong \Hom_R(M,M_j) \otimes_\Lambda \Lambda_{\con}
 \end{align*} 
 and thus it is clear, using the explicit form of $\EuScript{P}$, that
\begin{align*}
 P \otimes^{\bf L}_{\Lambda} \Lambda_{\con} \cong \EuScript{P} \otimes_{\Lambda} \Lambda_{\con} \cong  P \oplus P[3]
\end{align*}
 in $\Db(\Lambda_{\con})$. Combining this with Proposition \ref{onesided}, there are isomorphisms
\begin{align*}
\Gamma_{\!\con} \otimes^{\bf L}_{\Gamma} T_i \otimes^{\bf L}_{\Lambda} \Lambda_{\con} \cong  P \otimes^{\bf L}_{\Lambda} \Lambda_{\con} \cong P \oplus P[3], 
\end{align*}
all in $\Db(\Lambda_{\con})$. Applying $\tau_{\scriptscriptstyle \geq -1}$ to both sides, using that $P$ has non-zero terms only in degrees $0$ and $-1$, gives the result. 
\end{proof}
\begin{corollary} \label{dimension}
Under the setup of \ref{setupcom}, $\Gamma_{\!\con} \otimes^{\bf L}_{\Gamma} T_i \cong \tT_{\!i} \otimes_{\Lambda_{\con}} {\Lambda_{\con}}_{\Lambda}$ in $\Db(\Lambda)$. In particular, the vector space dimension of the homologies of $\tT_{\!i}$ and $ \Gamma_{\!\con} \otimes^{\bf L}_{\Gamma} T_i$ agree in each degree, are finite dimensional, and are zero outside degrees $-1$ and $0$.
\end{corollary}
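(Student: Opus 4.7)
The plan is to obtain this corollary as a clean consequence of Propositions \ref{onesided} and \ref{onesided2}, which together already identify both sides with the complex $P$ of Theorem \ref{decon}, just in different categories. So the only real work is to align the two identifications in $\Db(\Lambda)$.

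First I would observe that the expression $\tT_{\!i} \otimes_{\Lambda_{\con}} {\Lambda_{\con}}_{\Lambda}$ is just $\tT_{\!i}$ regarded as a complex of right $\Lambda$-modules via the surjection $\Lambda \twoheadrightarrow \Lambda_{\con}$; in other words, $- \otimes_{\Lambda_{\con}} {\Lambda_{\con}}_{\Lambda}$ is the restriction of scalars functor $\Db(\Lambda_{\con}) \to \Db(\Lambda)$. This functor is exact and takes quasi-isomorphisms to quasi-isomorphisms, so it descends to a triangulated functor between the bounded derived categories and preserves any isomorphism present in $\Db(\Lambda_{\con})$.

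Next I would apply this restriction functor to the isomorphism $\tT_{\!i} \cong P$ in $\Db(\Lambda_{\con})$ of Proposition \ref{onesided2} to conclude $\tT_{\!i} \otimes_{\Lambda_{\con}} {\Lambda_{\con}}_{\Lambda} \cong P_{\Lambda}$ in $\Db(\Lambda)$. Combining with the isomorphism $\Gamma_{\!\con} \otimes^{\mathbf{L}}_{\Gamma} T_i \cong P_{\Lambda}$ in $\Db(\Lambda)$ supplied by Proposition \ref{onesided} gives the desired isomorphism
\[
\Gamma_{\!\con} \otimes^{\mathbf{L}}_{\Gamma} T_{\!i} \;\cong\; \tT_{\!i} \otimes_{\Lambda_{\con}} {\Lambda_{\con}}_{\Lambda}
\]
in $\Db(\Lambda)$. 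For the second sentence, since this is an isomorphism of complexes of $\Lambda$-modules, the underlying $k$-vector spaces of the homologies match in each degree. Finite-dimensionality is then read off from Proposition \ref{onesided}: the non-zero homologies are $\uHom_R(M,\upnu_i M)$ in degree $0$ and $\uHom_R(M,\Omega K_i)$ in degree $-1$, both of which are finite-dimensional because $\ucmr$ is Hom-finite by Proposition \ref{prop}.

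There is essentially no obstacle here beyond checking that the bimodule isomorphism $\tT_{\!i} \cong P$ coming from Proposition \ref{onesided2} is genuinely induced by a zigzag of quasi-isomorphisms of complexes (so that restricting scalars is legitimate), but this is automatic from how $P$ was produced in the proof of Proposition \ref{onesided2} via the explicit projective complex $\EuScript{P}$. Thus the whole statement is obtained as a short formal consequence of the two preceding propositions.
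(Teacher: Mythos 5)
Your argument is correct and matches the paper's proof: both rely on Propositions \ref{onesided} and \ref{onesided2} to identify each side with $P_\Lambda$ in $\Db(\Lambda)$, then note that restriction of scalars (i.e.\ $- \otimes_{\Lambda_{\con}} {\Lambda_{\con}}_\Lambda$) preserves homology to get the dimension statement. Your version just spells out a few of the details the paper leaves implicit.
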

\begin{proof}
By Propositions \ref{onesided} and \ref{onesided2}, both  $\Gamma_{\!\con} \otimes^{\bf L}_{\Gamma} T_i$ and $\tT_{\!i} \otimes_{\Lambda_{\con}} {\Lambda_{\con}}_{\Lambda}$ are isomorphic to the the complex $P_\Lambda = P \otimes_{\Lambda_{\con}} {\Lambda_{\con}}_{\Lambda}$ in $\Db(\Lambda)$. Since $- \otimes_{\Lambda_{\con}} {\Lambda_{\con}}_{\Lambda}$ preserves homology, the second statement then follows easily.
\end{proof}

\subsection{Proof of Theorem \ref{square}} \label{proofofthm1}
Recall from Corollary \ref{dimension} that $\Gamma_{\!\con} \otimes^{\bf L}_{\Gamma} T_i$ and $\tT_{\!i} \otimes_{\Lambda_{\con}} {\Lambda_{\con}}_{\Lambda}$ have the same finite dimensional homology in each degree (which is zero outside of degrees $0$ and $-1$). Therefore, to prove Theorem \ref{square}, it is enough to show there is a map in the derived category of $\Gamma_{\!\con}$-$\Lambda$-bimodules between these two complexes which is injective on homology. 

Suppose that
\begin{align}
Q\colonequals \dots \to Q_{-3} \xrightarrow{d_{-3}} Q_{-2} \xrightarrow{d_{-2}} Q_{-1} \xrightarrow{d_{-1}} Q_0 \to 0 \label{Q}
\end{align}
is a complex of $\Gamma_{\!\con}$-$\Lambda$-bimodules, projective as $\Lambda$-modules, which is quasi-isomorphic to $\Gamma_{\!\con} \otimes^{\bf{L}}_{\Gamma} T_i$. For example, taking the Cartan--Eilenberg resolution of $\Gamma_{\!\con} \otimes^{\bf{L}}_{\Gamma} T_i$ in the derived category of $\Gamma_{\!\con}$-$\Lambda$-bimodules would suffice. 

Since the $Q_i$ are projective as $\Lambda$-modules, the complex $\Gamma_{\!\con} \otimes^{\bf{L}}_{\Gamma} T_i \otimes_{\Lambda}^{\bf{L}} \Lambda_{\con}$ is simply 
\begin{align*}
\dots \to Q_{-3} \otimes_{\Lambda} \Lambda_{\con} \xrightarrow{d_{-3} \otimes \id} Q_{-2} \otimes_{\Lambda} \Lambda_{\con} \xrightarrow{d_{-2} \otimes \id} Q_{-1} \otimes_{\Lambda} \Lambda_{\con} \xrightarrow{d_{-1} \otimes \id} Q_0 \otimes_{\Lambda} \Lambda_{\con} \to 0.
\end{align*}

There are natural maps $\partial_i \colon Q_i \to Q_i \otimes_{\Lambda} \Lambda_{\con}$ given by $q \mapsto q \otimes 1$ and these induce a map of complexes $Q \to Q \otimes_\Lambda \Lambda_{\con}$. Composing this map with the natural map from $Q \otimes_\Lambda \Lambda_{\con}$ to the truncation $\tau_{\geq -1}(Q \otimes_\Lambda \Lambda_{\con})$ gives the following map of complexes: 
\begin{center}
\begin{tikzpicture}
  \matrix (m) [matrix of math nodes,row sep=3em,column sep=2em,minimum width=2em] {  
\dots & Q_{-3} & Q_{-2} & Q_{-1} & Q_0 & 0  \\
\dots & Q_{-3} \otimes_{\Lambda} \Lambda_{\con} & Q_{-2} \otimes_{\Lambda} \Lambda_{\con} & Q_{-1} \otimes_{\Lambda} \Lambda_{\con} & Q_0 \otimes_{\Lambda} \Lambda_{\con} & 0 \\
\dots & 0 & 0 & \frac{Q_{-1} \otimes_{\Lambda} \Lambda_{\con}}{ \Image(d_{-2} \otimes \id)} & Q_0 \otimes_{\Lambda} \Lambda_{\con} & 0.  \\};
 \path[-stealth]
  (m-1-1.east|-m-1-2) edge node [above] {} (m-1-2)
    (m-1-2.east|-m-1-3) edge node [above] {$\scriptstyle d_{-3}$} (m-1-3)
    (m-1-3.east|-m-1-4) edge node [above] {$\scriptstyle d_{-2}$} (m-1-4)
    (m-1-4.east|-m-1-5) edge node [above] {$\scriptstyle d_{-1}$} (m-1-5)
    (m-1-5.east|-m-1-6) edge node [above] {} (m-1-6)
    (m-2-1.east|-m-2-2) edge node [above] {} (m-2-2)
    (m-2-2.east|-m-2-3) edge node [above] {$\scriptstyle d_{-3} \otimes \id$} (m-2-3)
    (m-2-3.east|-m-2-4) edge node [above] {$\scriptstyle d_{-2} \otimes \id$} (m-2-4)
    (m-2-4.east|-m-2-5) edge node [above] {$\scriptstyle d_{-1} \otimes \id$} (m-2-5)
    (m-2-5.east|-m-2-6) edge node [above] {} (m-2-6)
(m-3-1.east|-m-3-2) edge node [above] {} (m-3-2)
    (m-3-2.east|-m-3-3) edge node [above] {} (m-3-3)
    (m-3-3.east|-m-3-4) edge node [above] {$$} (m-3-4)
    (m-3-4.east|-m-3-5) edge node [above] {$\scriptstyle d_{-1} \otimes \id$} (m-3-5)
    (m-3-5.east|-m-3-6) edge node [above] {} (m-3-6)
(m-1-2) edge node [right] {$\scriptstyle  \partial_{-3}$} (m-2-2)
 (m-2-2) edge node [right] {} (m-3-2)
(m-1-3) edge node [right] {$\scriptstyle  \partial_{-2}$} (m-2-3)
 (m-2-3) edge node [left] {} (m-3-3)
(m-1-4) edge node [right] {$\scriptstyle  \partial_{-1}$} (m-2-4)
 (m-2-4) edge node [left] {} (m-3-4)
(m-1-5) edge node [right] {$\scriptstyle  \partial_{0}$} (m-2-5)
 (m-2-5) edge node [left] {} (m-3-5);
\end{tikzpicture}
\end{center}
By construction, this is a map $\Gamma_{\!\con} \otimes^{\bf L}_{\Gamma} T_i \to \tT_{\!i} \otimes_{\Lambda_{\con}} {\Lambda_{\con}}_{\Lambda}$ in the derived category of $\Gamma_{\!\con}$-$\Lambda$-bimodules. Thus, to prove Theorem \ref{square}, we show the induced maps on homology, $H^i(\partial_i)$, are injective for $i=0,-1$. 

To do this, we will make the assumption that for $i=0,-1$,
\begin{align}
Q_iI \cap \Kernel(d_i)=(\Kernel(d_i))I \label{assumption}
\end{align} 
where $I$ is the two-sided ideal of $\Lambda$ such that $\Lambda_{\con}=\Lambda / I$ (namely, $I$ consists of the endomorphisms of $M$ factoring through $\add R$). This holds trivially when $i=0$ and is proved below in Lemmas \ref{tor} and \ref{assumpholds} for the case $i=-1$. \\

With assumption \eqref{assumption}, take $q + \Image(d_{i-1}) \in H^i(Q)$ to be in the kernel of $H^i(\partial_i)$ (or equivalently such that $q \otimes 1  \in \Image(d_{i-1} \otimes \id))$. Then, for some $p_j \in Q_{i-1}$ and $f_j \in \Lambda$,
\begin{align*}
q \otimes 1  &= \big(d_{i-1} \otimes \id\big)\big(\text{\scalebox{0.8}{$\sum_j$}}\hspace{0.02cm}  (p_j \otimes f_j)\big)\\
&= \text{\scalebox{0.8}{$\sum_j$}} \hspace{0.01cm} (d_{i-1}(p_j) \otimes f_j) \\
&= d_{i-1}\big(\text{\scalebox{0.8}{$\sum_j$}} \hspace{0.02cm}  p_jf_j\big) \otimes 1.
\end{align*}
Thus, $q-d_{i-1}(\text{\scalebox{0.9}{$\sum\limits_j$}} \hspace{0.02cm} p_jf_j) \in Q_iI \cap \Kernel(d_i)$
and so, by assumption \eqref{assumption}, 
\begin{align} \label{qstuff}
q-d_{i-1}\big(\text{\scalebox{0.8}{$\sum_j$}} \hspace{0.03cm} p_jf_j\big) = \text{\scalebox{0.8}{$\sum_k$}} \hspace{0.03cm} q_k \lambda_k  
\end{align}
for some $q_k \in \Kernel(d_i), \lambda_k \in I$. By Proposition \ref{onesided}, there are isomorphisms of $\Lambda$-modules
\begin{align*}
\phi_i \colon  H^i(Q)  \to \left\{  
	\begin{array}{ll}
		 \uHom_R(M,\upnu_i M) & \mbox{if $i= 0$ } \\
		\uHom_R(M, \Omega K_i) & \mbox{if $i= -1$, }
	\end{array}
\right.
\end{align*}
and since the $\phi_i$ are $\Lambda$-module homomorphisms,
\begin{align*}
\phi_i \big( \text{\scalebox{0.8}{$\sum_k$}} \hspace{0.02cm} q_k \lambda_k  +\Image(d_{i-1})\big)=\text{\scalebox{0.8}{$\sum_k$}} \hspace{0.02cm} \phi_i\big(q_k +\Image(d_{i-1})\big)\lambda_k.
\end{align*} 
It is clear that $\uHom_R(M,\upnu_i M)$ and $\uHom_R(M, \Omega K_i)$ are both annihilated on the right by $I$ and hence each of the terms $\phi_i(q_k +\Image(d_{i-1}))\lambda_k$ must be zero since $\lambda_k \in I$. Thus, $\phi_i(\text{\scalebox{0.85}{$\sum\limits_k$}} \hspace{0.01cm} q_k \lambda_k  +\Image(d_{i-1}))=0$ and hence, as $\phi_i$ is an isomorphism,
\begin{align*}
\text{\scalebox{0.8}{$\sum_k$}} \ q_k \lambda_k \in \Image (d_{i-1}).
\end{align*}
Combining this with \eqref{qstuff} then shows that $q \in \Image(d_{i-1})$ and so the map on homology is injective.\\

Thus all that remains to prove Theorem \ref{square} is to verify assumption \eqref{assumption} in the case $i=-1$.
\begin{lemma} \label{tor}
Suppose that $ \dots \to P_{i-2} \xrightarrow{\updelta_{i-2}} P_{i-1} \xrightarrow{\updelta_{i-1}} P_{i} \xrightarrow{\updelta_{i}} P_{i+1} \to \cdots$ is a complex of projective right $\Lambda$-modules and that $I$ is the two-sided ideal of $\Lambda$ such that $\Lambda_{\con}= \Lambda/I$. If $\mathrm{Tor}_2^\Lambda (P_{i+1}/\Image(\updelta_{i}), \Lambda_{\con})=0$, then 
\begin{align*}
(\Kernel(\updelta_i))I  = P_iI \cap \Kernel(\updelta_i).
\end{align*} 
\end{lemma}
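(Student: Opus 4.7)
The plan is to relate the set-theoretic identity $(\Ker \updelta_i)I = P_iI \cap \Ker(\updelta_i)$ to the injectivity of a certain map obtained by tensoring with $\Lambda_{\con}$, and then to derive that injectivity from a $\mathrm{Tor}$ vanishing.

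First I would observe that, since $\Lambda_{\con} = \Lambda/I$, for any right $\Lambda$-module $N$ we have $N \otimes_\Lambda \Lambda_{\con} \cong N/NI$. In particular, the inclusion $\Ker(\updelta_i) \hookrightarrow P_i$ induces a natural map
\begin{equation*}
\Ker(\updelta_i)/(\Ker(\updelta_i))I \longrightarrow P_i/P_iI.
\end{equation*}
Note that $(\Ker \updelta_i)I \subseteq \Ker(\updelta_i) \cap P_iI$ holds automatically because $\updelta_i$ is $\Lambda$-linear, and the kernel of the displayed map is precisely $\bigl(\Ker(\updelta_i) \cap P_iI\bigr)/(\Ker(\updelta_i))I$. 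Therefore the conclusion of the lemma is equivalent to the injectivity of this natural map.

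Next I would feed the two short exact sequences
\begin{equation*}
0 \to \Ker(\updelta_i) \to P_i \to \Image(\updelta_i) \to 0 \quad\text{and}\quad 0 \to \Image(\updelta_i) \to P_{i+1} \to P_{i+1}/\Image(\updelta_i) \to 0
\end{equation*}
into $- \otimes^{\mathbf{L}}_{\Lambda} \Lambda_{\con}$. The long exact sequence for the first gives
\begin{equation*}
\mathrm{Tor}_1^\Lambda(\Image(\updelta_i), \Lambda_{\con}) \longrightarrow \Ker(\updelta_i) \otimes_\Lambda \Lambda_{\con} \longrightarrow P_i \otimes_\Lambda \Lambda_{\con},
\end{equation*}
so it suffices to prove $\mathrm{Tor}_1^\Lambda(\Image(\updelta_i), \Lambda_{\con}) = 0$. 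From the long exact sequence for the second, using that $P_{i+1}$ is projective (so $\mathrm{Tor}_1^\Lambda(P_{i+1}, \Lambda_{\con}) = 0$), I obtain
\begin{equation*}
\mathrm{Tor}_2^\Lambda(P_{i+1}/\Image(\updelta_i), \Lambda_{\con}) \cong \mathrm{Tor}_1^\Lambda(\Image(\updelta_i), \Lambda_{\con}),
\end{equation*}
and the hypothesised vanishing of the left-hand side then yields the required injectivity, completing the proof.

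Since this argument is entirely formal homological algebra, I do not expect any real obstacle; the only subtlety is the bookkeeping identification of the kernel of $\Ker(\updelta_i)/(\Ker(\updelta_i))I \to P_i/P_iI$ with $\bigl(P_iI \cap \Ker(\updelta_i)\bigr)/(\Ker(\updelta_i))I$, which turns the desired set-theoretic equality into a $\mathrm{Tor}$ statement.
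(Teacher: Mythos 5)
Your proof is correct and follows essentially the same route as the paper: both reduce the equality to injectivity of the map $\Ker(\updelta_i) \otimes_\Lambda \Lambda_{\con} \to P_i \otimes_\Lambda \Lambda_{\con}$, derive $\mathrm{Tor}_1^\Lambda(\Image(\updelta_i),\Lambda_{\con})=0$ from the hypothesised $\mathrm{Tor}_2$ vanishing via the short exact sequence ending in $P_{i+1}/\Image(\updelta_i)$, and then apply the short exact sequence ending in $\Image(\updelta_i)$. The only cosmetic difference is that you make the identification $N \otimes_\Lambda \Lambda_{\con} \cong N/NI$ and the resulting kernel computation explicit up front, whereas the paper works element-by-element at the end with $p \otimes 1$.
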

\begin{proof}
First of all, note that the inclusion $(\Kernel(\updelta_i))I  \subseteq P_iI \cap \Kernel(\updelta_i)$ is clear and so it is enough to show 
\begin{align*}
P_iI \cap \Kernel(\updelta_i) \subseteq (\Kernel(\updelta_i))I.
\end{align*}
There is an exact sequence
\begin{align*}
0 \to \Image(\updelta_i) \hookrightarrow P_{i+1} \to P_{i+1}/\Image(\updelta_i) \to 0. 
\end{align*}
Applying $- \otimes_\Lambda \Lambda_{\con}$ and using that $P_{i+1}$ is a projective $\Lambda$-module produces an exact sequence
\begin{align*}
0 \to \mathrm{Tor}_2^\Lambda (P_{i+1}/\Image(\updelta_i), \Lambda_{\con}) \to \mathrm{Tor}_1^\Lambda (\Image(\updelta_{i}), \Lambda_{\con}) \to 0
\end{align*}
which, combined with the assumption in the statement, implies that $ \mathrm{Tor}_1^\Lambda (\Image(\updelta_{i}), \Lambda_{\con})=0$. Thus, applying $- \otimes_\Lambda \Lambda_{\con}$ to the exact sequence
\begin{align*}
0 \to \Kernel(\updelta_i) \hookrightarrow P_i \xrightarrow{\updelta_i} \Image(\updelta_i) \to 0
\end{align*}
produces an exact sequence
\begin{align*}
0 \to \Kernel(\updelta_i) \otimes_\Lambda \Lambda_{\con} \hookrightarrow P_i \otimes_\Lambda \Lambda_{\con}\xrightarrow{\updelta_i \otimes 1} \Image(\updelta_i) \otimes_\Lambda \Lambda_{\con}\to 0.
\end{align*}
Now choose $p \in P_iI \cap \Kernel(\updelta_i)$. Then $p \otimes 1$ belongs to $\Kernel(\updelta_i) \otimes_\Lambda \Lambda_{\con}$ and maps to zero in $P_i \otimes_\Lambda \Lambda_{\con}$. Since the map is injective, this implies $p \otimes 1$ is zero in $\Kernel(\updelta_i) \otimes_\Lambda \Lambda_{\con}$ and hence $p \in  (\Kernel(\updelta_i))I$, completing the proof.
\end{proof}
We next apply Lemma \ref{tor} to $Q$ in \eqref{Q} where, by Proposition \ref{onesided}, there are isomorphisms of $\Lambda$-modules
\begin{align*}
P_0/ \Image{\updelta_{-1}} = Q_{0}/\Image(d_{-1}) \cong H^0(Q) \cong \uHom_R(M,\upnu_i M).
\end{align*}
With this in mind, the following completes the proof of Theorem \ref{square}.
\begin{lemma} \label{assumpholds}
Under setup \ref{setupcom}, $\mathrm{Tor}_2^\Lambda (\uHom_R(M,\upnu_i M), \Lambda_{\con})=0$.
\end{lemma}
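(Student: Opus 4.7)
The plan is to use the direct sum decomposition $\upnu_iM = M/M_i \oplus K_i$ together with additivity of $\mathrm{Tor}$ to reduce the vanishing to two separate cases, and in each case to construct an explicit partial projective resolution of $\uHom_R(M, N)$ over $\Lambda$ which identifies $\mathrm{Tor}_2^\Lambda(\uHom_R(M, N), \Lambda_{\con})$ as the kernel of an induced map on stable Homs. In both cases the resulting kernel will vanish by a direct appeal to the rigidity of $M$.

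For a summand $M_j$ of $M/M_i$, I would take a surjective $\add R$-approximation $R^{k_j} \twoheadrightarrow M_j$ with kernel $\Omega M_j$ in $\cmr$. Applying $\Hom_R(M, -)$ yields a four-term exact sequence
\[
0 \to \Hom_R(M, \Omega M_j) \to \Hom_R(M, R^{k_j}) \to \Hom_R(M, M_j) \to \uHom_R(M, M_j) \to 0,
\]
and since $M_j \in \add M$ the module $\Hom_R(M, M_j)$ is a projective right $\Lambda$-module. Tensoring with $\Lambda_{\con}$ annihilates $\Hom_R(M, R^{k_j})$, so a short diagram chase identifies $\mathrm{Tor}_2^\Lambda(\uHom_R(M, M_j), \Lambda_{\con})$ with $\uHom_R(M, \Omega M_j) \cong \Ext^1_R(M, M_j)$ by Proposition \ref{prop}; this vanishes by rigidity.

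The harder case is $N = K_i$, where $\Hom_R(M, K_i)$ is no longer projective over $\Lambda$. Applying $\Hom_R(M, -)$ to the exchange sequence \eqref{exseq} does however give, by rigidity, a length-one projective resolution of $\Hom_R(M, K_i)$, and splicing this with the $\add R$-approximation of $K_i$ (after choosing a lift of that approximation through $V_i$) produces a partial projective resolution
\[
\Hom_R(M, \Omega K_i) \xrightarrow{(\alpha_i,\, \iota)} \Hom_R(M, M_i) \oplus \Hom_R(M, R^{n}) \to \Hom_R(M, V_i) \twoheadrightarrow \uHom_R(M, K_i),
\]
in which $\iota \colon \Omega K_i \hookrightarrow R^n$ is the inclusion and $\alpha_i \colon \Omega K_i \to M_i$ is the connecting map of the exchange triangle $\Omega K_i \xrightarrow{\alpha_i} M_i \xrightarrow{b_i} V_i \xrightarrow{d_i} K_i$ in the 2-Calabi--Yau category $\ucmr$. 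Tensoring with $\Lambda_{\con}$ kills the $R^n$-column, so the Tor long exact sequence yields $\mathrm{Tor}_2^\Lambda(\uHom_R(M, K_i), \Lambda_{\con}) \cong \Ker(\alpha_{i,*} \colon \uHom_R(M, \Omega K_i) \to \uHom_R(M, M_i))$. Applying the cohomological functor $\uHom_R(M, -)$ to this exchange triangle then shows $\Ker(\alpha_{i,*})$ equals the image of $\uHom_R(M, \Omega V_i)$, which by Proposition \ref{prop} and rigidity equals $\Ext^1_R(M, V_i) = 0$ since $V_i \in \add(M/M_i) \subseteq \add M$. The main obstacle is the splicing step and the identification of the resulting map with the connecting homomorphism $\alpha_i$; once that is in place both cases reduce cleanly to rigidity.
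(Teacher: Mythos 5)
Your proposal is correct, but it takes a genuinely different route from the paper. You resolve $\uHom_R(M,\upnu_iM)$ as a \emph{right} $\Lambda$-module, summand by summand: for $M_j$ you use the syzygy four-term sequence and observe the $R^{k_j}$ term dies under $-\otimes_\Lambda\Lambda_{\con}$, leaving $\uHom_R(M,\Omega M_j)\cong\Ext^1_R(M,M_j)=0$; for $K_i$ you splice the exchange sequence into the syzygy sequence (exactly the content of Lemma~\ref{xs} in the appendix) to identify $\mathrm{Tor}_2$ with $\Ker(\alpha_{i,*})$, which vanishes because the rotated exchange triangle and rigidity of $M$ force $\uHom_R(M,\Omega V_i)\cong\Ext^1_R(M,V_i)=0$. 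This all checks out; the one step you flag as delicate, identifying the spliced map $\Omega K_i\to M_i$ with the triangle's connecting morphism, is precisely what Lemma~\ref{xs} proves. The paper instead resolves $\Lambda_{\con}$ as a \emph{left} $\Lambda$-module via
\[
0\to\Hom_R(M,M)\to\Hom_R(R^n,M)\to\Hom_R(R^n,M)\to\Hom_R(M,M)\to\uHom_R(M,M)\to0,
\]
and observes that the degree $-2$ term of $\uHom_R(M,\upnu_iM)\otimes_\Lambda\mathbb{P}$ is $\uHom_R(M,\upnu_iM)\otimes_\Lambda\Hom_R(R^n,M)$, which is already zero: for $f\otimes g$ write $g=g\circ p\circ i$ with $p\colon M\to R$, $i\colon R\to M$ the splitting, slide $g\circ p\in\Lambda$ across the tensor to get $(f\circ g\circ p)\otimes i$, and note $f\circ g\circ p$ factors through $R$. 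The paper's argument is shorter, needs no case split on the summands of $\upnu_iM$, and uses only that $R\in\add M$ rather than rigidity of $M$; your argument is more computational but makes the role of rigidity and the exchange triangle explicit, and is closer in spirit to the appendix's machinery.
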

\begin{proof}
We begin by constructing a projective resolution of $\Lambda_{\con}$ as a left $\Lambda$-module. By Proposition \ref{prop}, there are exact sequences
\begin{align*}
0 \to M \to R^k \to \Omega M \to 0 \\
0 \to \Omega M \to R^k \to M \to 0
\end{align*}
for some $k \in \N$ arising from taking the syzygy of $\Omega M$ and $M$ respectively. Applying $\Hom_R(-,M)$ to these sequences, using that $M$ is rigid and splicing yields the exact sequence
\begin{align*}
 \text{\scalebox{0.935}{$0 \to \Hom_R( M, M) \to \Hom_R( R^k, M) \to \Hom_R( R^k, M) \to \Hom_R( M, M) \to \Ext^1_R(\Omega M,M) \to 0$}}
\end{align*}
where all the terms except $\Ext^1_R(\Omega M,M) \cong \uHom_R(M,M)$ are projective left $\Lambda$-modules. Call this resolution $\mathbb{P}$.

Now $\mathrm{Tor}_2^\Lambda (\uHom_R(M, \upnu_i M), \Lambda_{\con})$ is defined to be the homology of $\uHom_R(M,\upnu_iM) \otimes_{\Lambda} \mathbb{P}$ in degree $-2$ and thus, to show that it is zero, it is enough to show that 
\begin{align*}
\big(\uHom_R(M,\upnu_iM) \otimes_{\Lambda} \mathbb{P}\big)_{-2} \cong \uHom_R(M,\upnu_i M) \otimes_{\Lambda} \Hom_R(R^k,M) =0
\end{align*}
or equivalently, that $ \uHom_R(M,\upnu_i M) \otimes_{\Lambda} \Hom_R(R,M)=0$.

Take $f \otimes g \in \uHom_R(M,\upnu_i M) \otimes_{\Lambda} \Hom_R( R, M)$. 
Then, since $R$ is a summand of $M$, there are maps $i\colon R \to M$ and $p \colon M \to R$ given by inclusion and projection such that $p \circ i = \id$. Therefore,
\begin{align*}
f \otimes g = f \otimes g \circ( p \circ i)  =  f \otimes (g \circ p) \circ i = f \circ g \circ p \otimes i  = 0 
\end{align*}
as $f \circ g \circ p$ factors through $R$. Thus $\uHom_R(M,\upnu_i M) \otimes_{\Lambda} \Hom_R( R, M)=0$ as required.
\end{proof}
To summarise, combining Lemmas \ref{assumpholds} and \ref{tor} shows that the assumption \eqref{assumption} is satisfied and thus we have a map $\Gamma_{\!\con} \otimes^{\bf L}_{\Gamma} T_i \to \tT_{\!i} \otimes_{\Lambda_{\con}} {\Lambda_{\con}}_{\Lambda}$ which is injective on homology in degrees $-1$ and $0$. By Corollary \ref{dimension}, both complexes have the same finite-dimensional homologies which are zero outside degrees $-1$ and $0$. This shows that the map is a quasi-isomorphism and hence completes the proof of Theorem \ref{square}. \qed 
\subsection{Proof of Corollary \ref{equivalence}} \label{equivproof}
In this subsection we will prove that the functor 
\begin{align*}
 - \otimes^{\bf L}_{\Gamma_{\!\con}} \tT_{\!i} \colon \Db(\Gamma_{\!\con}) \to \Db(\Lambda_{\con})
\end{align*}  is an equivalence.
By Theorem \ref{kel}, combined with \cite[6.3]{keller2}, it is enough to show that:
\begin{enumerate}[label=(\alph*)]
\item The map $\Gamma_{\!\con} \to \Hom_{\Dnb(\Lambda_{\con})}(\tT_{\!i},\tT_{\!i})$ induced by $ - \otimes^{\bf L}_{\Gamma_{\!\con}} \tT_{\!i}$ is an isomorphism and further $\Hom_{\Dnb(\Lambda_{\con})}(\tT_{\!i},\tT_{\!i}[n]) =0$ for $n \neq 0$.
\item $\tT_{\!i}$ is quasi-isomorphic to a complex $T \in \Kb(\proj \Lambda_{\con})$.
\item $\thick (T)= \Kb(\proj \Lambda)$.
\end{enumerate}
By Proposition \ref{onesided2}, $\tT_{\!i} \cong P$ in $\Db(\Lambda_{\con})$ where $P$ is the tilting complex for $\Lambda_{\con}$ from Theorem \ref{decon}. Thus, conditions (b), (c) and the latter part of (a) are satisfied. The following lemma uses the commutative diagram \eqref{commutingsquare} to show the first part of condition (a) also holds.

\begin{lemma} \label{naturalmap}
In the setup of \ref{setupcom}, the map $\Gamma_{\!\con} \to \Hom_{\Dnb(\Lambda_{\con})}(\tT_{\!i},\tT_{\!i})$ induced by $- \otimes^{\bf L}_{\Gamma_{\!\con}} \tT_{\!i}$ is an isomorphism.
\end{lemma}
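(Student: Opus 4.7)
The plan is to reduce this, via Theorem \ref{square}, to the analogous assertion for the known equivalence $G \colonequals - \otimes^{\bf L}_{\Gamma} T_i$ of Theorem \ref{demma}.

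Write $\alpha \colon \Gamma_{\!\con} \to \End_{\Dnb(\Lambda_{\con})}(\tT_{\!i})$ for the map induced by $- \otimes^{\bf L}_{\Gamma_{\!\con}} \tT_{\!i}$. The first step is to compose $\alpha$ with the restriction $- \otimes_{\Lambda_{\con}} \Lambda_{\con} \colon \Dnb(\Lambda_{\con}) \to \Dnb(\Lambda)$ and use the $\Gamma_{\!\con}$-$\Lambda$-bimodule isomorphism $\tT_{\!i} \otimes_{\Lambda_{\con}} \Lambda_{\con} \cong \Gamma_{\!\con} \otimes^{\bf L}_{\Gamma} T_i = G(\Gamma_{\!\con})$ given by Theorem \ref{square}. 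This yields a composite
$$\Gamma_{\!\con} \xrightarrow{\alpha} \End_{\Dnb(\Lambda_{\con})}(\tT_{\!i}) \longrightarrow \End_{\Dnb(\Lambda)}(G(\Gamma_{\!\con})).$$
Because $G$ is an equivalence, the right-hand side is canonically identified with $\End_{\Dnb(\Gamma)}(\Gamma_{\!\con}) = \End_{\Gamma}(\Gamma_{\!\con}) \cong \Gamma_{\!\con}$, using that $\Gamma_{\!\con}$ is $\Gamma$ modulo a two-sided ideal. Since the bimodule isomorphism of Theorem \ref{square} preserves the left $\Gamma_{\!\con}$-action, a short diagram chase shows the resulting composite $\Gamma_{\!\con} \to \Gamma_{\!\con}$ is the identity, so $\alpha$ is a split injection.

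The second step is to upgrade this to full surjectivity, for which it suffices to show the restriction map $\End_{\Dnb(\Lambda_{\con})}(\tT_{\!i}) \to \End_{\Dnb(\Lambda)}(\tT_{\!i} \otimes_{\Lambda_{\con}} \Lambda_{\con})$ is injective. Using Proposition \ref{onesided2} to identify $\tT_{\!i} \cong P$ in $\Db(\Lambda_{\con})$, the source becomes $\End_{\Kb(\proj \Lambda_{\con})}(P)$, which is finite dimensional. Since the terms of $P$ are $\Lambda_{\con}$-modules, every $\Lambda$-linear chain map or homotopy between them is automatically $\Lambda_{\con}$-linear, so after establishing injectivity of the resulting natural map into $\End_{\Dnb(\Lambda)}(P_{\Lambda})$, a dimension count against $\dim_k \Gamma_{\!\con} = \dim_k \End_{\Dnb(\Lambda)}(G(\Gamma_{\!\con}))$ forces $\alpha$ to be an isomorphism.

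The hard part is this last injectivity statement, namely the passage from $\Kb(\mathsf{mod}\,\Lambda)$ to $\Dnb(\Lambda)$: in general $P_{\Lambda}$ is not a complex of projective $\Lambda$-modules, and morphisms in $\Dnb(\Lambda)$ must be computed via a projective resolution, which could a priori introduce extra null-homotopies. I expect to handle this by using the full bimodule isomorphism of Theorem \ref{square} (not merely its right-module part) to identify $\End_{\Dnb(\Lambda)}(\tT_{\!i} \otimes_{\Lambda_{\con}} \Lambda_{\con})$ directly with $\Gamma_{\!\con}$ in a way compatible with restriction of $\alpha$, which combined with split injectivity closes the dimension count.
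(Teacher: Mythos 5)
Your first step is correct and matches the paper: the commutative square of Theorem \ref{square} induces a square of endomorphism algebras, and since the two maps not involving $\tT_i$ (namely $\End_{\Dnb(\Gamma_{\!\con})}(\Gamma_{\!\con}) \to \End_{\Dnb(\Gamma)}(\Gamma_{\!\con})$ and the map induced by the equivalence $-\otimes^{\bf L}_\Gamma T_i$) are isomorphisms, the composite $\upbeta \circ \upalpha$ is an isomorphism onto $\Gamma_{\!\con}$; hence $\upalpha$ is split injective and $\upbeta$ is surjective.

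The gap is in the second step, and you have essentially diagnosed it yourself: you want to show $\upbeta$ (the restriction-of-scalars map $\End_{\Dnb(\Lambda_{\con})}(\tT_i)\to\End_{\Dnb(\Lambda)}((\tT_i)_\Lambda)$) is injective, but your proposed fix does not deliver that. Identifying $\End_{\Dnb(\Lambda)}((\tT_i)_\Lambda)\cong\Gamma_{\!\con}$ via the bimodule isomorphism of Theorem \ref{square} only re-derives that $\upbeta$ is a surjection onto a space of dimension $\dim_k\Gamma_{\!\con}$ — which, combined with $\upalpha$ injective, gives $\dim_k\End_{\Dnb(\Lambda_{\con})}(\tT_i)\ge\dim_k\Gamma_{\!\con}$ twice over. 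To close the count you need the opposite inequality, i.e.\ an upper bound on $\dim_k\End_{\Dnb(\Lambda_{\con})}(\tT_i)$ or, equivalently, injectivity of $\upbeta$, and neither follows from knowing the dimension of the target. The observation that $\Lambda$-linear chain maps and homotopies between $\Lambda_{\con}$-complexes are automatically $\Lambda_{\con}$-linear is true but only controls the homotopy categories; the passage $\Kb(\mod\Lambda)\to\Dnb(\Lambda)$ can in principle create new null-homotopies because $(\tT_i)_\Lambda \cong P_\Lambda$ is not a complex of projective $\Lambda$-modules, and nothing in your plan rules that out.

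The paper sidesteps this entirely: it never attempts to show $\upbeta$ is injective directly. Instead it uses the restriction--extension of scalars adjunction to rewrite
\[
\End_{\Dnb(\Lambda)}\bigl((\tT_i)_\Lambda\bigr)\cong\Hom_{\Dnb(\Lambda_{\con})}\bigl(\tT_i\otimes^{\bf L}_\Lambda\Lambda_{\con},\,\tT_i\bigr),
\]
then computes $\tT_i\otimes^{\bf L}_\Lambda\Lambda_{\con}\cong P\oplus P[3]$ using the explicit projective $\Lambda$-resolution $\EuScript{P}$ of $P$ from \ref{defS}, and finally uses that $P$ is a tilting complex (so $\Hom_{\Dnb(\Lambda_{\con})}(P,P[n])=0$ for $n\ne 0$) to conclude $\Hom_{\Dnb(\Lambda_{\con})}(P\oplus P[3],P)\cong\End_{\Dnb(\Lambda_{\con})}(P)$. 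This shows source and target of $\upbeta$ have the \emph{same} finite dimension; a surjection between finite-dimensional spaces of equal dimension is an isomorphism, and the rest of the diagram then forces $\upalpha$ to be one as well. That adjunction-plus-tilting calculation is the step your proposal is missing.
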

\begin{proof}
The commutative diagram \eqref{commutingsquare} of Theorem \ref{square} induces a commutative diagram
\begin{center}
\begin{tikzpicture}
  \matrix (m) [matrix of math nodes,row sep=4em,column sep=4em,minimum width=2em] {  
\End_{\Dnb(\Lambda_{\con})}(\tT_{\!i})  & \End_{\Dnb( \Lambda)}({\tT_{\!i}}_\Lambda)   \\
\End_{\Dnb(\Gamma_{\!\con})}(\Gamma_{\!\con})  & \End_{\Dnb(\Gamma)}({\Gamma_{\!\con}}_\Gamma)   \\};
 \path[-stealth]
     (m-1-1.east|-m-1-2) edge node [above] {$\scriptstyle \upbeta$ } (m-1-2)
    (m-2-1.east|-m-2-2) edge node [above] {$ \scriptstyle \upgamma$} (m-2-2)
 (m-2-1) edge node [left] {$\scriptstyle  \upalpha$} (m-1-1)
 (m-2-2) edge node [right] {$ \scriptstyle \updelta$} (m-1-2);
\end{tikzpicture}
\end{center}
where $\upalpha$ is the map in the statement and $\updelta$ is an isomorphism since $- \otimes^{\bf L}_\Gamma T_i$ is an equivalence. Moreover, $\upgamma$ is an isomorphism since $\Gamma_{\!\con}$ is a $\Gamma_{\!\con}$-module. Thus, $\updelta \circ \upgamma$ is an isomorphism and so $\upbeta$ must be surjective. 

Now, the restriction and extension of scalars adjunction gives an isomorphism of vector spaces,
\begin{align}
\End_{\Dnb(\Lambda)}({\tT_{\!i}}_\Lambda) \cong \Hom_{\Dnb( \Lambda_{\con})}(\tT_{\!i} \otimes^{\bf L}_\Lambda \Lambda_{\con}, \tT_{\!i}). \label{adjunction}
\end{align}
By Proposition \ref{onesided2}, $\tT_i \cong P$ in $\Db(\Lambda_{\con})$ and \ref{defS} constructs a complex of projective $\Lambda$-modules $\EuScript{P}$ quasi-isomorphic to $P$. Using the explicit form of $\EuScript{P}$, it is easy to calculate that $\EuScript{P} \otimes_\Lambda \Lambda_{\con} \cong P \oplus P[3]$ exactly as in Proposition \ref{onesided2} and hence
\begin{align*}
\tT_{\!i} \otimes^{\bf L}_\Lambda \Lambda_{\con} \cong P \otimes^{\bf L}_\Lambda \Lambda_{\con} \cong \EuScript{P} \otimes_\Lambda \Lambda_{\con} \cong P \oplus P[3].
\end{align*}
This gives vector space isomorphisms,
\begin{align*}
\End_{\Dnb(\Lambda)}(\tT_{\!i}) &\cong \Hom_{\Dnb( \Lambda_{\con})}(P \oplus P[3], P) \tag{using \eqref{adjunction}} \\
&\cong \Hom_{\Dnb( \Lambda_{\con})}(P, P)  \tag{using that $P$ is tilting}\\
&\cong \End_{\Dnb( \Lambda_{\con})}(\tT_i). \tag{using $P \cong \tT_i$ in $\Db(\Lambda_{\con})$}
\end{align*}
Since $\upbeta$ is a surjective morphism of algebras between isomorphic finite dimensional vector spaces, it must therefore be an isomorphism. Thus, as $\upbeta, \upgamma$ and $\updelta$ are isomorphisms, $\upalpha$ must also be. 
\end{proof}
This lemma and the discussion above show $ - \otimes^{\bf L}_{\Gamma_{\!\con}} \tT_{\!i}$ gives an equivalence $\Db(\Gamma_{\!\con}) \to \Db(\Lambda_{\con})$ as required. \qed 

\begin{remark}
In particular, the results of this section show that for any basic rigid object $M \in \ucmr$, the algebras $\uEnd_R(M)$ and $\uEnd_R(\upnu_i M)$ are derived equivalent, recovering the result \cite[5.5]{Dugas}. Moreover, as we provide the two-sided tilting complex $\tT_i$, and hence a standard equivalence between the algebras, we can think of our results as a two-sided version of both \cite[5.5]{Dugas} and Theorem \ref{decon}.
\end{remark}
\section{Composition for Contraction Algebras} \label{composingsection}
Given two minimal models related by a simple flop, the previous section constructed an explicit standard derived equivalence between the associated contraction algebras. In this section, we will use the additional structure of an associated hyperplane arrangement to understand what happens to these standard equivalences under composition. 
\subsection{Deligne Groupoid Preliminaries} \label{delignedef}
For this description of the Deligne groupoid we follow \cite{faithful}. Given a real simplicial hyperplane arrangement $\cH$, construct a directed graph $X_{\cH}$, called the \emph{directed skeleton graph} of $\cH$, which has a vertex for each chamber and an arrow $v \to w$ if the corresponding chambers are separated by a codimension one wall. 

A \emph{positive path} of length $n$ is then a formal symbol
\begin{align*}
p= a_n a_{n-1}  \dots a_1
\end{align*}
such that there exist vertices $v_0, v_1, \dots, v_n$ and arrows $a_i \colon v_{i-1} \to v_i$. The \emph{source} and \emph{target} of such a $p$ are defined to be $s(p) \colonequals v_0$ and $t(p) \colonequals v_n$ respectively. A positive path $p$ is \emph{minimal} if there is no positive path with the same endpoints that has shorter length. Positive minimal paths are called \emph{atoms}.

There is an equivalence relation $\sim$ on the set of positive paths in $X_\cH$ given as the smallest equivalence relation satisfying:
\begin{enumerate}
\item If $p \sim q$ then $s(p)=s(q)$ and $t(p)=t(q)$.
\item If $p$ and $q$ are two atoms starting and ending at the same point then $p \sim q$.
\item If $p \sim q$, then $upr \sim uqr$ for all positive paths $u$ and $r$ with $t(r)=s(p)=s(q)$ and $s(u)=t(p)=t(q)$. 
\end{enumerate}
Write $[p]$ for the equivalence class of a positive path $p$. 
\begin{definition}
Let $\mathrm{Path}_{\cH}(v,w) \colonequals \{ [p]  \mid s(p)=v, t(p)=w \}$. The category $\cG_{\cH}^+$ is defined to have the vertices of $X_{\cH}$ as objects and $\mathrm{Path}_{\cH}(v,w)$ as morphisms. The \emph{Deligne groupoid} of $\cH$, denoted $\cG_{\cH}$, is then defined to be the groupoid completion of $\cG_{\cH}^+$; that is, the objects are the same as for $\cG_{\cH}^+$ but a formal inverse is added for each morphism.
\end{definition}
We will sometimes need to consider paths in $X_\cH$ which are not necessarily positive; namely, we consider formal symbols
\begin{align*}
p= a_n^{\upepsilon_n} a_{n-1}^{\upepsilon_{n-1}}  \dots a_1^{\upepsilon_{1}}
\end{align*}
where $\upepsilon_i \in \{-1,1\}$ and there exists vertices $v_0, v_1, \dots, v_n$ and arrows $a_i$ satisfying
\begin{enumerate}
\item if $\upepsilon_i=1$, then $a_i$ is an arrow $v_{i-1} \to v_i$;
\item if $\upepsilon_i=-1$, then $a_i$ is an arrow $v_{i} \to v_{i-1}$.
\end{enumerate} 
We think of $a_i^{-1}$ as travelling backwards along the arrow $a_i \colon v_i \to v_{i-1}$.

The following key theorem shows that the vertex groups of $\cG_\cH$ are all isomorphic and depend only on the structure of $\cH$.
\begin{theorem} \cite{hyperplane1, hyperplane2} \textup{(see \cite[2.1]{hyperplane3})} \label{vertexgroups}
If $\cH$ is a simplicial hyperplane arrangement, then for each chamber $v$ of $\cH$, there is an isomorphism $\Hom_{\cG_{\cH}}(v,v) \cong \pi_1(\mathbb{C}^n \backslash \cH_{\mathbb{C}})$ where $\cH_\mathbb{C}$ is the complexification of $\cH$.
\end{theorem}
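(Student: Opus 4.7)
The plan is to follow the classical approach via the Salvetti complex. The first step is to construct a CW complex $\mathrm{Sal}(\cH)$ that is homotopy equivalent to $\mathbb{C}^n \setminus \cH_{\mathbb{C}}$, having a $0$-cell for each chamber of $\cH$ and a $1$-cell for each codimension-one wall shared by two adjacent chambers. With the usual orientation convention (each wall crossed in a fixed direction by a small loop around it in the complexification), the $1$-skeleton is naturally identified with the geometric realisation of the oriented skeleton graph $X_{\cH}$. The $2$-cells are indexed by codimension-two faces of $\cH$: around each such face, the attaching map records the two minimal positive galleries encircling that face, coming from going ``left'' or ``right'' in the rank-two sub-arrangement cut out on a transversal.

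Given this model, one applies van Kampen to the $2$-skeleton to obtain a presentation of $\pi_1(\mathbb{C}^n \setminus \cH_{\mathbb{C}}, v)$. The generators are precisely (equivalence classes of) not-necessarily-positive paths in $X_{\cH}$ from $v$ to $v$, modulo the tautological relations $a a^{-1} = 1 = a^{-1} a$. The relations coming from the $2$-cells say exactly that if $p$ and $q$ are the two minimal positive paths going around a codimension-two face, then $p \sim q$ as loops.

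The third step is to identify this presentation with the one defining $\Hom_{\cG_{\cH}}(v,v)$. By definition, the vertex group of the Deligne groupoid is given by not-necessarily-positive paths in $X_{\cH}$ from $v$ to $v$, modulo relations (1)--(3) of \S\ref{delignedef}. Unwinding the definition, the only nontrivial ingredient is relation (2), which equates atoms with common endpoints; the other relations are forced by the groupoid structure. For a \emph{simplicial} arrangement, any two atoms between the same pair of chambers can be connected by a finite sequence of elementary ``codimension-two swaps'' of the kind produced by the $2$-cells of $\mathrm{Sal}(\cH)$. Hence the two equivalence relations on the set of paths coincide, giving an isomorphism $\Hom_{\cG_{\cH}}(v,v) \cong \pi_1(\mathbb{C}^n \setminus \cH_{\mathbb{C}}, v)$. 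Since $\mathbb{C}^n \setminus \cH_{\mathbb{C}}$ is connected, the right-hand side is independent of $v$ up to (inner) isomorphism.

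The main obstacle is the combinatorial input that in a simplicial arrangement any two atoms between the same pair of chambers are linked by a chain of codimension-two swaps; this is the essential content of Deligne's original argument and relies crucially on simpliciality (it fails for general real arrangements). Once this combinatorial fact is in place, matching the van Kampen presentation to the Deligne groupoid presentation is essentially formal.
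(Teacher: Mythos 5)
The paper does not prove this theorem: it is a cited result of Salvetti and Paris (the references given alongside the statement), so there is no ``paper's proof'' to compare against line by line. What you have written is a correct outline of the argument that appears in those cited sources: build the Salvetti complex as a finite CW model for $\mathbb{C}^n \setminus \cH_{\mathbb{C}}$, read off a presentation of $\pi_1$ from the $2$-skeleton via van Kampen (the $1$-skeleton gives $X_{\cH}$, the $2$-cells impose that the two minimal positive galleries around each codimension-two face agree), and then match that presentation to the vertex group of the Deligne groupoid. You have also correctly isolated the genuinely hard ingredient: Deligne's combinatorial ``gallery rewriting'' argument that, under the simpliciality hypothesis, the codimension-two relations already imply that \emph{all} pairs of atoms between the same two chambers are identified. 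That step is exactly where simpliciality is used, and it is the content Paris extends from Deligne's Coxeter-arrangement setting to general simplicial arrangements.

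Two small imprecisions worth flagging. First, the $2$-cells of the Salvetti complex are indexed by pairs (codimension-two face, adjacent chamber), not by codimension-two faces alone; each codimension-two face contributes one $2$-cell per incident chamber, and each of these attaching maps identifies the two minimal positive galleries from that chamber to the antipodal one. Second, the relation $p \sim q$ imposed by a $2$-cell is a relation between two paths with common endpoints in the fundamental \emph{groupoid} of the $1$-skeleton, not between based loops at $v$; they become relations in $\pi_1(-,v)$ only after conjugating by a choice of path to $v$ (or after passing to a spanning tree). Neither affects the validity of the outline, but a full write-up would need to phrase the van Kampen step at the groupoid level, matching the fact that $\cG_{\cH}$ is itself a groupoid and the claim is about its vertex group.
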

\subsection{Strategy and Result for MMAs} \label{strategy}
For any minimal model of a complete local isolated cDV singularity, there is an associated real hyperplane arrangement, the details of which can be found in \cite[\S 5]{HomMMP}.
\begin{setup}\label{compsetup}
Let $\Spec R$ be a complete local isolated cDV singularity and choose a basic maximal rigid object $M \colonequals \bigoplus\limits_{i=0}^nM_i \in \cmr$ where $M_0 \cong R$. Associated to $M$ as in \cite[\S 5]{HomMMP}, there is a hyperplane arrangement $\cH_M \subset \mathbb{R}^n$ whose chambers are labelled by the minimal models of $\Spec R$ and hence by the maximal rigid objects of $\cmr$ using Theorem \ref{HMMPbij}. 
\end{setup}
To ease notation, we will denote a chamber in $\cH_M$ by $C_N$ if $N$ is the corresponding maximal rigid object. 
The following theorem shows that $\cH_M$ also encodes how the maximal rigid objects are related by mutation. 
\begin{theorem} \cite[6.9(5)]{HomMMP} \label{chambersmutate}
Under the setup of \ref{compsetup}, the directed skeleton graph $X_{\cH_M}$ is isomorphic to the  double of the mutation graph of maximal rigid objects in $\cmr$. 
\end{theorem}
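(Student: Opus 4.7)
The statement has two assertions bundled together: a bijection on vertices (chambers $\leftrightarrow$ basic maximal rigid objects) and a compatibility on edges (codimension-one adjacency $\leftrightarrow$ single-summand mutation, with arrows in both directions). The vertex bijection is built into Setup \ref{compsetup}: each chamber is labelled by the minimal model it corresponds to, and these are in bijection with basic maximal rigid objects in $\cmr$ by Theorem \ref{HMMPbij}. So the real content lies in the edge statement, and my plan would split accordingly.

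First, I would unpack the construction of $\cH_M \subset \mathbb{R}^n$ from \cite[\S 5]{HomMMP}. The ambient $\mathbb{R}^n$ is the space spanned by classes of the non-free summands $M_1,\dots,M_n$, and the hyperplanes are prescribed by the numerical data (for example dimension vectors of simples, or equivalently $g$-vector walls) attached to the MMA $\Lambda = \End_R(M)$. The point I would verify is that the chamber $C_N$ corresponding to a maximal rigid object $N$ is precisely the cone spanned by the $g$-vectors of the indecomposable summands of the tilting bimodule $\Hom_R(M,N)$ from Theorem \ref{demma}(2), via the standard identification of $g$-vector fans with tilting chambers. This is the key translation between the geometric and representation-theoretic pictures.

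Second, I would establish the edge bijection via a local analysis at each chamber. Fix $C_N$ and let $N = \bigoplus_j N_j$. The codimension-one walls of $C_N$ are precisely the facets of its $g$-vector cone, one for each indecomposable summand $N_j$ with $j \neq 0$. Crossing the wall dual to $N_j$ replaces $N_j$ by the new summand produced from the exchange sequence \eqref{exchangesequence1} at $N_j$, which is the definition of mutation $\upnu_j N$; this follows from the compatibility between $g$-vector wall-crossing and silting mutation applied to the tilting complex $P$ of Theorem \ref{decon}. Thus $C_N$ and $C_{N'}$ share a codimension-one wall if and only if $N' \cong \upnu_j N$ for some $j$, which by \cite[5.3]{yoshino} happens if and only if $N$ and $N'$ differ by exactly one indecomposable summand. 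Since each shared wall can be crossed in both directions, the arrows in $X_{\cH_M}$ between $C_N$ and $C_{N'}$ form an oriented pair, giving exactly the double of the mutation graph edge.

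The hardest step I would expect is the geometric-to-algebraic translation in the first paragraph of the plan: namely, identifying the chambers of the hyperplane arrangement as defined in \cite[\S 5]{HomMMP} with the tilting/$g$-vector cones of the MMA. Once that dictionary is in place, the codimension-one adjacency statement reduces to the well-known fact that facets of a $g$-vector chamber correspond to mutable summands, and the theorem follows. Conceptually, this is the same reason that the exchange graph of silting objects for a $\tau$-tilting finite algebra coincides with the dual of its wall-and-chamber structure; here the hypothesis that $\Spec R$ is an isolated cDV singularity ensures finiteness of minimal models \cite[4.10]{HomMMP} and hence that the arrangement is finite and simplicial, which is what allows the Deligne groupoid framework of \S\ref{delignedef} to be applied to $X_{\cH_M}$ afterwards.
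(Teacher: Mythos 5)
The paper does not prove this statement at all: the theorem is cited directly as \cite[6.9(5)]{HomMMP}, and that reference is relied on as a black box throughout. So there is no internal proof to compare your plan against.

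As to the plan itself: it is a reasonable reconstruction of what the cited source does, and you are right to isolate the genuine difficulty, namely the identification of the chambers of the arrangement $\cH_M$ built in \cite[\S 5]{HomMMP} with the $g$-vector cones of the tilting bimodules $\Hom_R(M,N)$. But you should be careful not to treat this identification as a standard $g$-vector/wall-crossing fact that can be quoted off the shelf. In \cite[\S 5]{HomMMP} the arrangement is constructed from the restricted root system and the movable cone of the minimal models, not a priori from $g$-vectors of the MMA; matching the two descriptions is a central theorem of that paper, obtained there by tracking dimension vectors through the mutation functors $G_i$. Interestingly, the present paper does make the $g$-vector identification explicit later, in the proof of Theorem \ref{intrinsic}, citing \cite[4.12]{mmas} for the approximation exact sequences and \cite[4.6(2)]{faithful} for the chamber formula --- but that proof still invokes the current Theorem \ref{chambersmutate} as an input rather than re-deriving it. So if you wanted to write out a self-contained proof along the lines you sketch, the right move is to follow the proof of Theorem \ref{intrinsic} in reverse: first establish via \cite[4.12]{mmas} and \cite[4.6(2)]{faithful} that $C_N$ is the $g$-vector cone, then note that facets of that cone correspond to indecomposable non-free summands of $N$, and finally use \cite[3.9]{[AIR]} together with \cite[5.3]{yoshino} to match single-facet crossing with single-summand mutation. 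The doubling of edges is then immediate since every wall is crossed in both orientations. This is essentially your second and third paragraphs, but with precise citations replacing the phrase ``standard identification of $g$-vector fans with tilting chambers,'' which is not standard in the geometric setting of \cite{HomMMP} and is the main content being established there.
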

As a consequence of this, given any two basic maximal rigid objects $M,N \in \cmr$, there is an isomorphism between $X_{\cH_M}$ and $X_{\cH_N}$ which fixes the maximal rigid objects. Further, each chamber $C_N$ of $\cH_M$ has $n$ codimension one walls and, by Theorem \ref{chambersmutate}, crossing a wall from $C_N$ is equivalent to mutating $N$ at an indecomposable summand. We will abuse notation and denote an arrow $C_N \to C_L$ in $X_{\cH_M}$ by $s_i$ if $L \cong \upnu_iN$. Note that our choice of indexing on the summands of $M$ fixes the indexing on any other maximal rigid object via mutation and so $s_i$ is well defined.

\begin{example}\label{eightchambers}
Suppose $f \colon X \to \Spec R$ is a minimal model of the complete local isolated cDV singularity $R \colonequals \mathbb{C} \llbracket u,v,x,y \rrbracket / (uv-xy(x+y^2))$. This is an example of a $cA_2$ singularity (see \cite[\S4.5]{rigidequiv}) whose minimal models each contain two curves in the exceptional locus and thus, by \cite[6.6]{HomMMP}, the associated hyperplane arrangement is the root system associated to the Dynkin diagram $A_2$, as shown on the left. If $M$ is the maximal rigid object associated to $f$, then by writing $\upnu_{i_n} \dots \upnu_{i_1}M$ as $M_{{i_n} \dots {i_1}}$, the chambers are labelled as shown.

\[
\begin{array}{c}
\begin{tikzpicture}
\node at (6,0) {$\begin{tikzpicture}[scale=1,>=stealth]
\coordinate (A1) at (135:2cm);
\coordinate (A2) at (-45:2cm);
\coordinate (B1) at (153.435:2cm);
\coordinate (B2) at (-26.565:2cm);
\coordinate (C1) at (161.565:2cm);
\coordinate (C2) at (-18.435:2cm);
\draw[red] (A1) -- (A2);
\draw (-2,0)--(2,0);
\draw (0,-2)--(0,2);
\node (C+) at (45:1.5cm)  {$\scriptstyle M$};
\node (C1) at (112.5:1.5cm)  {$\scriptstyle M_1$};
\node (C2) at (157.5:1.5cm)  {$\scriptstyle M_{21}$};
\node (C-) at (225:1.5cm)  {$\scriptstyle M_{121} \cong M_{212}$};
\node (C4) at (-67.5:1.5cm)  {$\scriptstyle M_{12}$};
\node (C5) at (-22.5:1.5cm)  {$\scriptstyle M_2$};
\end{tikzpicture}$};
\node at (12,0) 
{\begin{tikzpicture}[scale=1.1,bend angle=20, looseness=1,>=stealth]
\coordinate (A1) at (135:2cm);
\coordinate (A2) at (-45:2cm);
\coordinate (B1) at (153.435:2cm);
\coordinate (B2) at (-26.565:2cm);
\draw[red!30] (A1) -- (A2);
\draw[black!30] (-2,0)--(2,0);
\draw[black!30] (0,-2)--(0,2);
\node (C+) at (45:1.55cm) [DWs] {};
\node (C1) at (112.5:1.5cm) [DWs] {};
\node (C3) at (157.5:1.5cm) [DWs] {};
\node (C-) at (225:1.55cm) [DWs] {};
\node (C4) at (-67.5:1.5cm) [DWs] {};
\node (C5) at (-22.5:1.5cm) [DWs] {};
\node (C6) at (45:1.85cm) {$\scriptstyle C_M$};
\node (C7) at (225:1.85cm) {$\scriptstyle C_{M_{121}}$};
\draw[->, bend right]  (C+) to (C1);
\draw[->, bend right]  (C1) to (C+);
\draw[->, bend right]  (C1) to (C3);
\draw[->, bend right]  (C3) to (C1);
\draw[->, bend right]  (C3) to (C-);
\draw[->, bend right]  (C-) to  (C3);
\draw[<-, bend right]  (C+) to  (C5);
\draw[<-, bend right]  (C5) to  (C+);
\draw[<-, bend right]  (C5) to  (C4);
\draw[<-, bend right]  (C4) to (C5);
\draw[<-, bend right]  (C4) to  (C-);
\draw[<-, bend right]  (C-) to (C4);
\node at (78.75:0.95cm) {$\scriptstyle s_1$};
\node at (78.75:1.55cm) {$\scriptstyle s_1$};
\node at (130:1.1cm) {$\scriptstyle s_2$};
\node at (130:1.675cm) {$\scriptstyle s_2$};
\node at (198:0.9cm) {$\scriptstyle s_1$};
\node at (196:1.6cm) {$\scriptstyle s_1$};
\node at (258.75:0.9cm) {$\scriptstyle s_2$};
\node at (258.75:1.6cm) {$\scriptstyle s_2$};
\node at (320:1.1cm) {$\scriptstyle s_1$};
\node at (320:1.675cm) {$\scriptstyle s_1$};
\node at (18:0.9cm) {$\scriptstyle s_2$};
\node at (16:1.625cm) {$\scriptstyle s_2$};
\end{tikzpicture}};
\end{tikzpicture}
\end{array}
\]
This results in the directed skeleton graph shown on the right. Notice that the two paths $s_1s_2s_1$ and $s_2s_1s_2$ starting in $C_M$ and traversing clockwise and anti-clockwise respectively to $C_{M_{121}}$ are both atoms, and hence are identified in the Deligne groupoid.
\end{example}
Since $\cH_M$ is a simplicial hyperplane arrangement \cite[3.8]{twists}, there is an associated Deligne groupoid $\cG_{\cH_M}$ whose vertex groups are all isomorphic to $\pi_1(\mathbb{C}^n \backslash \cH_\mathbb{C})$ by Theorem \ref{vertexgroups}. We will also associate to $\Spec R$ the groupoid $\mathbb{F}$, whose vertices are the maximal rigid objects in $\cmr$ and whose morphisms are 
\begin{align*}
\Hom_\mathbb{F}(M,N) \colonequals \{ \text{standard derived equivalences $\uEnd_R(M) \to \uEnd_R(N)$} \}.
\end{align*}

As in \cite{faithful}, our strategy to provide a faithful group action on the derived category of a contraction algebra is to construct a faithful functor
\begin{align*}
\Phi \colon \cG_{\cH_M} \to \mathbb{F}.
\end{align*}
Then it immediately follows that there is an injective group homomorphism 
\begin{align*}
\pi_1(\mathbb{C}^n \backslash \cH_\mathbb{C}) \to \mathrm{Auteq}(\Db(\uEnd_R(N)) 
\end{align*}
for any maximal rigid object $N$. To define such a functor, it is natural to set $\Phi(C_N) \colonequals N$ but then for each arrow $s_i \colon N \to \upnu_i N$ we need to choose an equivalence between the corresponding derived categories. These will be precisely the equivalences $F_i$ we constructed in \S\ref{explicit}.

\begin{notation} \label{funnot}
Suppose that $\Spec R$ is a complete local isolated cDV singularity and that $N \colonequals \bigoplus\limits_{i=0}^n N_i \in \cmr$ is a basic maximal rigid object with $N_0 \cong R$. If $i \neq 0$, consider the following algebras:
\begin{align*}
\Lambda \colonequals \End_R(N), \quad \Lambda_{\con} \colonequals \uEnd_R(N), \quad \Gamma \colonequals \End_R(\upnu_iN), \quad \text{and} \ \ \Gamma_{\!\con} \colonequals \uEnd_R(\upnu_iN).
\end{align*}
Additionally, define
\begin{enumerate}
\item $T_{i} \colonequals \Hom_R(N, \upnu_i N)$
\item $\tT_{\!i} \colonequals \tau_{\scriptscriptstyle{ \geq -1}} ( \Gamma_{\!\con} \otimes^{\bf L}_\Gamma T_i \otimes^{\bf L}_{\Lambda} \Lambda_{\con})$
\end{enumerate}
which induce functors $F_i \colonequals \RHom_{\Lambda_{\con}}(\tT_{\!i},-)$ and $G_i \colonequals \RHom_{\Lambda}(T_i,-)$. By Theorem \ref{square}, these make the diagram
\begin{center}
\begin{tikzpicture} 
  \matrix (m) [matrix of math nodes,row sep=2.5em,column sep=4em,minimum width=2em] {  
\Db(\Lambda_{\con})  & \Db(\Lambda)   \\
\Db(\Gamma_{\!\con})  & \Db(\Gamma)   \\};
 \path[-stealth]
    (m-1-1.east|-m-1-2) edge node [above] {$\scriptstyle - \otimes_{\Lambda_{\con}} \Lambda_{\con}$ } (m-1-2)
    (m-2-1.east|-m-2-2) edge node [above] {$ \scriptstyle - \otimes_{\Gamma_{\!\con}} \Gamma_{\!\con}$} (m-2-2)
 (m-1-1) edge node [left] {$\scriptstyle  F_i$} (m-2-1)
 (m-1-2) edge node [right] {$ \scriptstyle G_i $} (m-2-2);
\end{tikzpicture}
\end{center}
commute. Moreover, $F_i$ and $G_i$ are equivalences by Corollary \ref{equivalence} and Theorem \ref{demma} respectively with inverses $F_i^{-1} \colonequals - \otimes^{\bf L}_{\Gamma_{\!\con}} \tT_{\!i}$ and $G_i^{-1} \colonequals - \otimes^{\bf L}_{\Gamma} T_i$.
\end{notation}
\begin{remark}
Note that we will abuse notation by using $F_i$ to refer to any equivalence
\begin{align*}
\Db(\uEnd_R(N)) \to \Db(\uEnd_R(\upnu_iN))
\end{align*}
constructed as in \S \ref{explicit}, regardless of the choice of maximal rigid object $N$. Similarly, $G_i$ will refer to any standard equivalence between MMAs induced by a tilting bimodule of the form $\Hom_R(N,\upnu_i N)$. 
\end{remark} 

With this notation, we define $\Phi \colon \cG_{\cH} \to \mathbb{F}$ by mapping the arrow $s_i \colon N \to \upnu_i N$ to the corresponding standard equivalence $F_i$. This construction will yield a functor between the groupoids if and only if equivalent paths in $X_{\cH_M}$ give isomorphic functors. In particular, the equivalences $F_i$ must satisfy the relations on paths in the Deligne groupoid. To check this, we need to be able to understand compositions of these functors. For the $G_i$, this is already known.
\begin{theorem}\cite[4.6]{faithful} \label{specialpath}
Under the setup of \ref{compsetup}, write $\Lambda \colonequals \End_R(M)$ and mutate $M$ $m$ times to get
\begin{align*}
N \colonequals \upnu_{i_m} \upnu_{i_{m-1}} \dots \upnu_{i_1} M. 
\end{align*}
 This defines a positive path $s_{i_m} \dots s_{i_1} \colon C_M \to C_N$ in $X_{\cH_M}$. If this path is an atom then
\begin{align*}
G_{i_m} \circ \dots \circ G_{i_1} \simeq \RHom_\Lambda(\Hom_R(M,N), -).
\end{align*}
\end{theorem}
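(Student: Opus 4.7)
The plan is induction on the length $m$ of the atom. The base case $m=1$ is immediate from the definition of $G_{i_1}$. For the inductive step, I would first observe that any initial subpath of an atom is itself an atom: otherwise, appending the remaining arrows to a shorter positive path between $C_M$ and $C_{N_{m-1}}$ would produce a strictly shorter positive path from $C_M$ to $C_N$, contradicting the atom hypothesis. Writing $N_j\colonequals \upnu_{i_j}\cdots\upnu_{i_1}M$ and $\Lambda_{m-1}\colonequals \End_R(N_{m-1})$, the inductive hypothesis gives
\[
G_{i_{m-1}}\circ\cdots\circ G_{i_1}\;\simeq\;\RHom_\Lambda(\Hom_R(M,N_{m-1}),-),
\]
and tensor-hom adjunction identifies $G_{i_m}\circ(G_{i_{m-1}}\circ\cdots\circ G_{i_1})$ with $\RHom_\Lambda\!\bigl(\Hom_R(M,N_{m-1})\otimes^{\mathbf{L}}_{\Lambda_{m-1}}\Hom_R(N_{m-1},N),\,-\bigr)$. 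The theorem thus reduces to the bimodule identification
\[
\Hom_R(M,N_{m-1})\otimes^{\mathbf{L}}_{\Lambda_{m-1}}\Hom_R(N_{m-1},N)\;\cong\;\Hom_R(M,N).
\]

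There is a canonical underived composition pairing from the left-hand side to $\Hom_R(M,N)$. Since Theorem \ref{demma} tells us $\Hom_R(N_{m-1},N)$ has projective dimension one over $\Lambda_{m-1}$, only $\Tor_0$ and $\Tor_1$ can contribute. Picking a two-term projective resolution $0\to P_1\to P_0\to \Hom_R(N_{m-1},N)\to 0$, the problem becomes showing that the map $\Hom_R(M,N_{m-1})\otimes P_1 \to \Hom_R(M,N_{m-1})\otimes P_0$ is injective, with cokernel canonically isomorphic to $\Hom_R(M,N)$ via composition. The cokernel identification is the easier half: it can be handled by reducing to the reflexive hull and invoking that composition of $\Hom_R$-functors is exact after passing to the generic point, using that all modules in sight are in $\cmr$.

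The main obstacle is translating the combinatorial atom condition into the required algebraic injectivity. My approach would be to read the presentation $P_1\to P_0$ directly off the exchange sequence defining the mutation $\upnu_{i_m}N_{m-1}\cong N$, and then splice it with the two-term projective presentation of $\Hom_R(M,N_{m-1})$ coming from the inductive hypothesis. The atom condition governs the geometry of this splice: because no codimension-one wall of $\cH_M$ is crossed twice along the path, no indecomposable summand is created and then cancelled by a subsequent mutation. I would then show that this noncancellation forces each splicing step to preserve injectivity of the differential, so that iterating produces a single two-term presentation of $\Hom_R(M,N)$ that simultaneously yields the $H^0$ identification and the vanishing of $\Tor_1$. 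The delicate point I expect to wrestle with is verifying that the exchange sequences splice cleanly at the level of $\Gamma$-$\Lambda$-bimodules rather than merely one-sided modules; this is precisely the step that uses the simplicial structure of $\cH_M$ and not just abstract atomicity in the Deligne groupoid.
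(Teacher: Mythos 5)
Your induction, the observation that initial subpaths of atoms are atoms, and the adjunction reduction to the bimodule isomorphism $\Hom_R(N_{m-1},N)\otimes^{\mathbf{L}}_{\Lambda_{m-1}}\Hom_R(M,N_{m-1}) \cong \Hom_R(M,N)$ are all sound and match the overall shape of the argument in \cite{faithful}. But you have the two halves of this identification backwards. The injectivity (vanishing of $\Tor_1$) is automatic and has nothing to do with atomicity: the $\Lambda_{m-1}$-projective presentation of $\Hom_R(N_{m-1},N)$ comes from applying $\Hom_R(N_{m-1},-)$ to the exchange sequence $0 \to X_1 \to X_0 \to K \to 0$, and tensoring the projectives with $\Hom_R(M,N_{m-1})$ turns it into $\Hom_R(M,X_1)\to\Hom_R(M,X_0)$, which is injective simply because $X_1\hookrightarrow X_0$ and $\Hom_R(M,-)$ is left exact. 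The cokernel identification is what genuinely requires atomicity. The long exact sequence from the exchange sequence shows that the composition map hits only $\Ker\bigl(\Hom_R(M,K)\to\Ext^1_R(M,X_1)\bigr)$ inside $\Hom_R(M,K)$, and in general this is a proper submodule. Already for the shortest non-atom $s_is_i$ (mutate at $M_i$ and back) one has $X_1\cong(\upnu_iM)_i$ and $K\cong M_i$; here $\Ext^1_R(M,(\upnu_iM)_i)$ is nonzero (it contains the class of the exchange sequence), the connecting map surjects onto it, and so the composition map fails to be surjective --- as it must, since the flop-flop functor is not the identity. Your proposed reflexive-hull argument cannot rescue this, precisely because the tensor product of reflexive modules need not be reflexive.

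So the ingredient you are missing is the mechanism by which atomicity forces the connecting map $\Hom_R(M,K)\to\Ext^1_R(M,X_1)$ to vanish. Your heuristic --- no wall is crossed twice, hence no summand is created and cancelled, hence injectivity --- neither supplies this (it aims at the automatic half) nor connects cleanly to the exchange sequences. Note also that the arrow labels $s_i$ in $X_{\cH_M}$ are not in bijection with the hyperplanes: in the paper's own example the atom $s_1s_2s_1s_2$ crosses four distinct walls while repeating each label twice, so ``no wall crossed twice'' does not constrain repetition of the $s_i$. The remark immediately following the theorem in the paper records the actual mechanism used in \cite{faithful}: a partial order on the set $\{\Hom_R(M,N)\}$ of tilting bimodules, with atoms corresponding to chains whose monotonicity governs exactly the required $\Ext^1$ vanishing. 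That poset-theoretic input is entirely absent from your sketch.
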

\begin{remark}
\begin{enumerate}[leftmargin=0cm,itemindent=.6cm,labelwidth=\itemindent,labelsep=0cm,align=left]
\item Although \cite{faithful} only prove the above result for NCCRs (or equivalently, when the corresponding minimal model is smooth) the proof works with no modifications for MMAs.
\item The proof in \cite{faithful} heavily relies on using a partial order on the set 
\begin{align*}
\{ \Hom_R(M,N) \mid N \in \cmr \ \text{is maximal rigid}\}
\end{align*}
of tilting bimodules. More generally, for a rigid object $M$ it is not known whether every element of the set
\begin{align*}
\{ \Hom_R(M,N) \mid N \in \cmr \ \text{is obtained from $M$ by iterated mutation}\}
\end{align*}
is a tilting bimodule. If this were to hold, the proof of Theorem \ref{specialpath} would work in this more general setting, and the results of this paper would also generalise to statements about rigid objects, rather than just maximal rigid objects.
\end{enumerate}
\end{remark}

\subsection{Result for Contraction Algebras}
The main result of this section will be to prove the analogue of Theorem \ref{specialpath} for contraction algebras. For this, the following two technical lemmas are required.
\begin{lemma} \label{homlambda}
In the setup of \ref{compsetup}, write $\Lambda \colonequals \End_R(M)$ and $\Lambda_{\con} \colonequals \uEnd_R(M)$. Then the homology of $\Lambda_{\con} \otimes^{\bf L}_{\Lambda} \Lambda_{\con}$ is zero outside degrees $-3$ and $0$. In degree $0$, it is isomorphic as a $\Lambda_{\con}$-bimodule to $\Lambda_{\con}$.
\end{lemma}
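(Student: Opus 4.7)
My plan is to compute $\Lambda_{\con} \otimes^{\bf L}_\Lambda \Lambda_{\con}$ directly using the length-three projective resolution of $\Lambda_{\con}$ as a left $\Lambda$-module that was already built in the proof of Lemma \ref{assumpholds}. Recall that applying $\Hom_R(-,M)$ to the two syzygy short exact sequences for $M$ in $\cmr$, using rigidity so that $\Ext^1_R(M,M)=0$, and splicing yields
\[
\mathbb{P}\colon \quad 0 \to \Lambda \to \Hom_R(R^n,M) \to \Hom_R(R^n,M) \to \Lambda \to 0
\]
in degrees $-3$ through $0$, which is a projective resolution of $\Lambda_{\con}$ as a left $\Lambda$-module (the middle terms lie in $\add \Lambda$ since $R^n \in \add M$). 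Therefore $\Lambda_{\con} \otimes^{\bf L}_\Lambda \Lambda_{\con}$ is represented by the complex $\Lambda_{\con} \otimes_\Lambda \mathbb{P}$.

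The key point is that both middle terms vanish after tensoring with $\Lambda_{\con}$. Indeed, the same manipulation as in the last paragraph of the proof of Lemma \ref{assumpholds} shows that whenever $R$ is a summand of $M$ there is an identification
\[
\Lambda_{\con} \otimes_\Lambda \Hom_R(X,M) \cong \uHom_R(X,M)
\]
for any $X \in \cmr$; in particular $\Lambda_{\con} \otimes_\Lambda \Hom_R(R^n,M) \cong \uHom_R(R^n,M)=0$, because every morphism $R \to M$ trivially factors through $R$ via the identity. Consequently the tensored complex collapses to
\[
\Lambda_{\con} \to 0 \to 0 \to \Lambda_{\con}
\]
with zero differentials forced, so its homology is $\Lambda_{\con}$ in degrees $-3$ and $0$ and zero elsewhere.

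For the bimodule statement in degree $0$, I observe that $H^0(\Lambda_{\con} \otimes^{\bf L}_\Lambda \Lambda_{\con})$ agrees with the underived tensor product $\Lambda_{\con} \otimes_\Lambda \Lambda_{\con}$; since $\Lambda_{\con} = \Lambda/I$ and $I$ annihilates $\Lambda_{\con}$ on both sides, this is precisely $\Lambda_{\con}$ as a $\Lambda_{\con}$-$\Lambda_{\con}$ bimodule, which is the desired identification. I do not expect any substantial obstacle here: once the resolution $\mathbb{P}$ and the tensor-vanishing calculation are in hand from Lemma \ref{assumpholds}, the result is essentially a bookkeeping exercise, and the bimodule structure in degree $0$ is automatic from the underived description.
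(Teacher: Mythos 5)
Your proof is correct, and it takes a mildly but genuinely different route from the paper's. For the vanishing of homology outside degrees $-3$ and $0$, the paper cites Lemma \ref{projresj} to obtain a projective resolution of $\Lambda_{\con}$ as a \emph{right} $\Lambda$-module, namely $0 \to \Lambda \to \Hom_R(M,R^n) \to \Hom_R(M,R^n) \to \Lambda \to 0$, and tensors with $\Lambda_{\con}$ on the right; you instead use the \emph{left}-module resolution $\mathbb{P}$ built in the proof of Lemma \ref{assumpholds} and tensor with $\Lambda_{\con}$ on the left. Both are already constructed in the paper, both give the same vanishing because $\uHom_R(R^n,M)=0=\uHom_R(M,R^n)$, and either one is fine, so this part is essentially the paper's calculation from the other side. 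Where you genuinely diverge is the bimodule identification in degree zero: the paper argues via tensor-hom adjunction together with $D\Lambda_{\con}\cong\Lambda_{\con}$ (the symmetric-algebra property, Theorem \ref{symmalg}), whereas you invoke the elementary ring-theoretic fact that for any two-sided ideal $I$ of a ring $A$ the multiplication map $A/I \otimes_A A/I \to A/I$, $\bar a \otimes \bar b \mapsto \overline{ab}$, is an isomorphism of $A/I$-bimodules with inverse $\bar a \mapsto \bar a \otimes \bar 1$. This is shorter, does not use the symmetric-algebra hypothesis, and is correct; it buys you a cleaner and more general argument for that step, whereas the paper's route emphasizes duality techniques it also uses elsewhere. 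The only place to tighten your write-up is the isomorphism $\Lambda_{\con}\otimes_\Lambda \Hom_R(X,M)\cong \uHom_R(X,M)$: it is true for all $X\in\cmr$, but you should spell out (as in the last paragraph of the proof of Lemma \ref{assumpholds}) that, because $R$ is a summand of $M$, every morphism $X\to M$ factoring through $\add R$ lies in $I\cdot\Hom_R(X,M)$, since $\beta\alpha = (\beta p)(\iota\alpha)$ with $\beta p\in I$; that is what makes $\Lambda_{\con}\otimes_\Lambda\Hom_R(R^n,M)$ vanish.
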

\begin{proof}
By Lemma \ref{projresj}, $\Lambda_{\con}$ has a projective resolution as a right $\Lambda$-module of the form
\begin{align*}
0 \to \Lambda \to \Hom_R(M,R^k) \to \Hom_R(M,R^k) \to \Lambda \to 0.
\end{align*}
Applying $- \otimes_{\Lambda} \Lambda_{\con}$ gives
\begin{align*}
0 \to \Lambda_{\con} \to 0 \to 0 \to \Lambda_{\con} \to 0
\end{align*}
and this shows $H^i (\Lambda_{\con} \otimes^{\bf L}_{\Lambda} \Lambda_{\con}) =0$ if $i \neq 0,-3$. To obtain the homology as bimodules in degree $0$, note that it is isomorphic as $\Lambda_{\con}$-$\Lambda_{\con}$-bimodules to
$\Tor^\Lambda_0(\Lambda_{\con}, \Lambda_{\con}) \cong \Lambda_{\con} \otimes_{\Lambda} \Lambda_{\con}$.
Further, it is easily checked that the isomorphism $\Lambda_{\con} \otimes_{\Lambda} \Lambda_{\con} \to \Lambda_{\con}$ defined via $\lambda_1 \otimes \lambda_2 \to \lambda_1\lambda_2$ is an isomorphism of $\Lambda_{\con}$-$\Lambda_{\con}$-bimodules as required.
\end{proof}

\begin{lemma} \label{truncation}
Suppose that $\Delta$ is a ring and $\Lambda_{\con}$ is the contraction algebra of some minimal model of a complete local isolated cDV singularity. Let $X$ be a complex of $\Delta$-$\Lambda_{\con}$-bimodules whose homology vanishes in degrees other than $-1$, $0$ and $1$. Then,
\begin{align*}
 \tau_{\scriptscriptstyle\geq -1} (X \otimes_{\Lambda_{\con}} \Lambda_{\con} \otimes^{\bf L}_{\Lambda} \Lambda_{\con}) \cong X
\end{align*}
in the derived category of $\Delta$-$\Lambda_{\con}$-bimodules.
\end{lemma}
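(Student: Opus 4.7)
The plan is to rewrite the left-hand side in a form controlled by Lemma~\ref{homlambda}, and then to isolate the part that is killed by $\tau_{\scriptscriptstyle\geq -1}$. Since $X$ is already a complex of right $\Lambda_{\con}$-modules, the inner tensor $X \otimes_{\Lambda_{\con}} \Lambda_{\con}$ is just $X$ regarded as a $\Delta$-$\Lambda$-bimodule complex via the surjection $\Lambda \twoheadrightarrow \Lambda_{\con}$. So it suffices to produce a natural map
\[
X \otimes^{\bf L}_\Lambda \Lambda_{\con} \longrightarrow X
\]
of $\Delta$-$\Lambda_{\con}$-bimodule complexes and to show that it induces an isomorphism on $H^i$ for every $i \geq -1$; since the homology of $X$ already lives in degrees $-1,0,1$, applying $\tau_{\scriptscriptstyle\geq -1}$ will then give the claimed quasi-isomorphism.

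To build this map, consider the multiplication morphism $\mu \colon \Lambda_{\con} \otimes^{\bf L}_\Lambda \Lambda_{\con} \to \Lambda_{\con}$ in the derived category of $\Lambda_{\con}$-$\Lambda_{\con}$-bimodules. By Lemma~\ref{homlambda}, its source has homology concentrated in degrees $0$ and $-3$ and $\mu$ is an isomorphism on $H^0$; chasing the long exact sequence in cohomology then forces $C \colonequals \mathrm{cone}(\mu)$ to have homology concentrated in the single degree $-4$, with $H^{-4}(C) \cong H^{-3}(\Lambda_{\con} \otimes^{\bf L}_\Lambda \Lambda_{\con})$. Applying $X \otimes^{\bf L}_{\Lambda_{\con}} -$ to the defining triangle of $C$, and using the identifications $X \otimes^{\bf L}_{\Lambda_{\con}} \Lambda_{\con} \simeq X$ and $X \otimes^{\bf L}_{\Lambda_{\con}} (\Lambda_{\con} \otimes^{\bf L}_\Lambda \Lambda_{\con}) \simeq X \otimes^{\bf L}_\Lambda \Lambda_{\con}$, produces the distinguished triangle
\[
X \otimes^{\bf L}_\Lambda \Lambda_{\con} \longrightarrow X \longrightarrow X \otimes^{\bf L}_{\Lambda_{\con}} C \longrightarrow (-)[1]
\]
in $\Db(\Delta \otimes_k \Lambda_{\con}^{\mathrm{op}})$.

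The crux of the proof is bounding the homology of the third term. Writing $N \colonequals H^{-3}(\Lambda_{\con} \otimes^{\bf L}_\Lambda \Lambda_{\con})$, the previous paragraph gives $C \simeq N[4]$, so $X \otimes^{\bf L}_{\Lambda_{\con}} C \simeq (X \otimes^{\bf L}_{\Lambda_{\con}} N)[4]$, and the hyperhomology spectral sequence
\[
E_2^{p,q} = \Tor^{\Lambda_{\con}}_{-p}(H^q(X), N) \Longrightarrow H^{p+q}(X \otimes^{\bf L}_{\Lambda_{\con}} N)
\]
is nonzero only for $p \leq 0$ and $q \in \{-1,0,1\}$. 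Hence $X \otimes^{\bf L}_{\Lambda_{\con}} N$ lives in degrees $\leq 1$ and its shift by $[4]$ lives in degrees $\leq -3$. Feeding this back into the long exact sequence of the triangle above, the flanking terms $H^{i-1}(X \otimes^{\bf L}_{\Lambda_{\con}} C)$ and $H^i(X \otimes^{\bf L}_{\Lambda_{\con}} C)$ both vanish for every $i \geq -1$, giving isomorphisms $H^i(X \otimes^{\bf L}_\Lambda \Lambda_{\con}) \cong H^i(X)$ in that range, and applying $\tau_{\scriptscriptstyle\geq -1}$ finishes the argument.

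The main thing to be careful with is bookkeeping of bimodule structures: $\mu$, its cone $C$, and the functor $X \otimes^{\bf L}_{\Lambda_{\con}} -$ must all live coherently in $\Db(\Delta \otimes_k \Lambda_{\con}^{\mathrm{op}})$. Because $\mu$ is genuinely $\Lambda_{\con}$-bilinear and $X$ carries compatible outer actions by $\Delta$ and $\Lambda_{\con}$, this is automatic, and the substantive input to the argument is precisely the vanishing supplied by Lemma~\ref{homlambda}.
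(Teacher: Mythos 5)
Your argument is correct and follows essentially the same route as the paper: both use Lemma~\ref{homlambda} to exhibit a triangle in which $\Lambda_{\con} \otimes^{\bf L}_\Lambda \Lambda_{\con}$ differs from $\Lambda_{\con}$ only by a bimodule placed in degree $-3$/$-4$, then tensor with $X$ to push that discrepancy below degree $-1$ so that $\tau_{\scriptscriptstyle\geq -1}$ kills it. The only differences are presentational: you take the cone of the multiplication map and bound degrees via a hyperhomology spectral sequence, while the paper uses the canonical truncation triangle of $\Lambda_{\con}\otimes^{\bf L}_\Lambda\Lambda_{\con}$ and bounds the terms of the tensor complex directly after first replacing $X$ by $\tau_{\scriptscriptstyle\leq 1}X$.
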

\begin{proof} 
First note that since the homology vanishes above degree 1,  $\tau_{\leq 1} (X)$ is quasi-isomorphic to $X$ and so we can assume $X_i=0$ for all $i >1$ by instead considering the truncation. Now, for any complex Y of $\Lambda_{\con}$-$\Lambda_{\con}$-bimodules, there is a triangle
\begin{align*}
\tau_{ \scriptscriptstyle < -1}(Y) \to Y \to \tau_{\scriptscriptstyle \geq -1}(Y) \to \tau_{ \scriptscriptstyle < -1}(Y)[1].
\end{align*}
Taking $Y \colonequals \Lambda_{\con} \otimes^{\bf L}_{\Lambda} \Lambda_{\con}$, Lemma \ref{homlambda} identifies $\tau_{ \scriptscriptstyle < -1}(Y)$ and  $\tau_{\scriptscriptstyle \geq -1}(Y)$ as complexes in a single degree, and so gives a triangle 
\begin{align*}
M[3] \to \Lambda_{\con} \otimes^{\bf L}_{\Lambda} \Lambda_{\con} \to \Lambda_{\con} \to M[4]
\end{align*}
where $M$ is some $\Lambda_{\con}$-$\Lambda_{\con}$-bimodule. 
Applying the functor $X \otimes^{\bf L}_{\Lambda_{\con}} - $ then results in a triangle
\begin{align} \label{truncationtriangle}
X \otimes_{\Lambda_{\con}}^{\bf L}M[3] \to X \otimes^{\bf L}_{\Lambda_{\con}} \Lambda_{\con} \otimes^{\bf L}_{\Lambda} \Lambda_{\con} \xrightarrow{\phi} X \to X \otimes_{\Lambda_{\con}}^{\bf L}M[4]
\end{align}
in the derived category of $\Delta$-$\Lambda_{\con}$-bimodules. As truncation is a functor, the map $\phi$ induces a map
\begin{align*}
\phi^* \colon \tau_{\scriptscriptstyle \geq-1}(X \otimes^{\bf L}_{\Lambda_{\con}} \Lambda_{\con} \otimes^{\bf L}_{\Lambda} \Lambda_{\con}) \to  \tau_{\scriptscriptstyle \geq-1}(X)
\end{align*}
and further, it easy to check that $\phi$ and $\phi^*$ fit into the following commutative diagram 
\begin{center}

\begin{tikzpicture}
  \matrix (m) [matrix of math nodes,row sep=3em,column sep=2.5em,minimum width=2em] {  
X \otimes_{\Lambda_{\con}}^{\bf L}M[3]  & X \otimes^{\bf L}_{\Lambda_{\con}} \Lambda_{\con} \otimes^{\bf L}_{\Lambda} \Lambda_{\con}  &X  & X \otimes_{\Lambda_{\con}}^{\bf L}M[4] \\
& \tau_{\scriptscriptstyle \geq-1}(X \otimes^{\bf L}_{\Lambda_{\con}} \Lambda_{\con} \otimes^{\bf L}_{\Lambda} \Lambda_{\con}) & \tau_{\scriptscriptstyle \geq-1}(X) & \\};
 \path[-stealth]
(m-1-1.east|-m-1-2) edge node [above] {} (m-1-2)
(m-1-3.east|-m-1-4) edge node [above] { } (m-1-4)
     (m-1-2.east|-m-1-3) edge node [above] {$\scriptstyle \phi$ } (m-1-3)
    (m-2-2.east|-m-2-3) edge node [above] {$ \scriptstyle \phi^*$} (m-2-3)
 (m-1-2) edge node [left] {$\scriptstyle  \upalpha$} (m-2-2)
 (m-1-3) edge node [right] {$ \scriptstyle \upbeta$} (m-2-3);
\end{tikzpicture}
\end{center}
where $\upalpha$ and $\upbeta$ are the natural maps to the truncations.

As $X$ has vanishing homology in degrees other than $-1$, $0$ and $1$, it is clear that $\tau_{\scriptscriptstyle\geq -1} (X) \cong X$, and hence to prove the result, it is enough to show $\phi^*$ is a quasi-isomorphism. 

As homology is functorial, taking homology of the commutative diagram yields another commutative diagram
 \begin{center}
\begin{tikzpicture}
 \matrix (m) [matrix of math nodes,row sep=3em,column sep=1.4em,minimum width=2em] {  
H^i(X \otimes^{\bf L}M[3])  & H^i(X \otimes^{\bf L} \Lambda_{\con} \otimes^{\bf L}_{\Lambda} \Lambda_{\con})  & H^i(X)  & H^i(X \otimes^{\bf L}M[4]) \\
& H^i(\tau_{\scriptscriptstyle \geq-1}(X \otimes^{\bf L} \Lambda_{\con} \otimes^{\bf L}_{\Lambda} \Lambda_{\con})) & H^i(\tau_{\scriptscriptstyle \geq-1}(X)) & \\};
 \path[-stealth]
(m-1-1.east|-m-1-2) edge node [above] {} (m-1-2)
(m-1-3.east|-m-1-4) edge node [above] { } (m-1-4)
     (m-1-2.east|-m-1-3) edge node [above] {$\scriptstyle H^i(\phi)$ } (m-1-3)
    (m-2-2.east|-m-2-3) edge node [above] {$ \scriptstyle H^i(\phi^*)$} (m-2-3)
 (m-1-2) edge node [left] {$\scriptstyle  H^i(\upalpha)$} (m-2-2)
 (m-1-3) edge node [right] {$ \scriptstyle H^i(\upbeta)$} (m-2-3);
\end{tikzpicture}
\end{center}
where unadorned tensors are over $\Lambda_{\con}$. The top row is exact since it is part of the long exact sequence of homology of the triangle \eqref{truncationtriangle}. For $i < -1$, the truncation is constructed so that
\begin{align*}
H^i(\tau_{\scriptscriptstyle \geq-1}(X \otimes^{\bf L}_{\Lambda_{\con}} \Lambda_{\con} \otimes^{\bf L}_{\Lambda} \Lambda_{\con})) = 0 = H^i(\tau_{\scriptscriptstyle \geq-1}(X))
\end{align*}
 and hence $H^i(\phi^*)$ is an isomorphism.
For $i \geq -1$, it is clear that $H^i(\upalpha)$ and $H^i(\upbeta)$ are isomorphisms. Further, as $X_i=0$ for all $i \geq 2$ and $M$ is a module, $(X \otimes_{\Lambda_{\con}}^{\bf L}M)_i=0$ for all $i \geq 2$ as well. Thus, for $i \geq -1$,
\begin{align*}
H^i(X \otimes_{\Lambda_{\con}}^{\bf L}M[3]) = 0 = H^{i}(X \otimes_{\Lambda_{\con}}^{\bf L}M[4])
\end{align*}
and hence $H^i(\phi)$ must be an isomorphism using exactness of the top row. This shows that for $i \geq -1$, the top and vertical maps in the commutative square are isomorphisms and hence $H^i(\phi^*)$ must also be. Thus $H^i(\phi^*)$ is an isomorphism for all $i$ and so $\phi^*$ is a quasi-isomorphism as required.
\end{proof}
The following is the main technical result of this section.
\begin{theorem} \label{composing}
Under the setup of \ref{compsetup}, write $\Lambda \colonequals \End_R(M)$ and $\Lambda_{\con} \colonequals \uEnd_R(M)$. Mutate $M$ $m$ times to get
\begin{align*}
N \colonequals \upnu_{i_m} \upnu_{i_{m-1}} \dots \upnu_{i_1} M. 
\end{align*}
This defines a positive path $\upalpha \colonequals s_{i_m} \dots s_{i_1} \colon C_M \to C_N$ in $X_{\cH_M}$. If this path is an atom then, writing $\Gamma \colonequals \End_R(N)$ and $\Gamma_{\!\con} \colonequals \uEnd_R(N)$,
\begin{align*}
F_{i_m} \circ \dots \circ F_{i_1} \simeq \RHom_{\Lambda_{\con}}(\tau_{\scriptscriptstyle\geq 1}\big(\Gamma_{\!\con} \otimes^{\bf L}_{\Gamma} \Hom_R(M,N) \otimes^{\bf L}_{\Lambda} \Lambda_{\con}\big) , -).
\end{align*}
\end{theorem}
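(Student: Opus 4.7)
The plan is to identify the composed bimodule with $\widetilde{\mathcal T}\colonequals\tau_{\scriptscriptstyle\geq -1}(\Gamma_{\!\con}\otimes^{\mathbf L}_\Gamma \Hom_R(M,N)\otimes^{\mathbf L}_\Lambda\Lambda_{\con})$ by reducing everything to the MMA side via the commutative square of Theorem \ref{square}, invoking Theorem \ref{specialpath} there, and lifting back via Lemma \ref{truncation}. Write $\Lambda^{(j)}\colonequals\End_R(\upnu_{i_j}\cdots\upnu_{i_1}M)$ and $\Lambda^{(j)}_\con\colonequals\uEnd_R(\upnu_{i_j}\cdots\upnu_{i_1}M)$, so that $\Lambda^{(0)}=\Lambda$ and $\Lambda^{(m)}=\Gamma$. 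By the Hom-tensor adjunction, the composed functor is the standard equivalence induced by the $\Gamma_{\!\con}$-$\Lambda_{\con}$-bimodule complex
\[
\tT_{\mathrm{tot}}\colonequals \tT_{i_m}\otimes^{\mathbf L}_{\Lambda^{(m-1)}_\con}\cdots\otimes^{\mathbf L}_{\Lambda^{(1)}_\con}\tT_{i_1},
\]
so by Proposition \ref{rouq} it suffices to produce an isomorphism $\tT_{\mathrm{tot}}\simeq\widetilde{\mathcal T}$ in $\Db(\Lambda_{\con})$.

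Iterating the bimodule identity $\Lambda^{(j)}_\con\otimes^{\mathbf L}_{\Lambda^{(j)}} T_{i_j}\simeq \tT_{i_j}\otimes_{\Lambda^{(j-1)}_\con}\Lambda^{(j-1)}_\con$ from Theorem \ref{square} at each step (where the right-hand tensor is also derived since $\Lambda^{(j-1)}_\con$ is free over itself) and using associativity of $\otimes^{\mathbf L}$ yields
\[
\Gamma_{\!\con}\otimes^{\mathbf L}_\Gamma\bigl(T_{i_m}\otimes^{\mathbf L}\cdots\otimes^{\mathbf L}T_{i_1}\bigr)\ \simeq\ \tT_{\mathrm{tot}}\otimes_{\Lambda_{\con}}\Lambda_{\con}
\]
as $\Gamma_{\!\con}$-$\Lambda$-bimodule complexes. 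Since the path is an atom, Theorem \ref{specialpath} combined with Proposition \ref{rouq} identifies the iterated tensor of the $T_{i_j}$ with $\Hom_R(M,N)$ up to twisting the left $\Gamma$-action by an algebra automorphism, which can be absorbed into an automorphism of $\Gamma_{\!\con}$; hence
\[
\Gamma_{\!\con}\otimes^{\mathbf L}_\Gamma \Hom_R(M,N)\ \simeq\ \tT_{\mathrm{tot}}\otimes_{\Lambda_{\con}}\Lambda_{\con}. \qquad (\ast)
\]

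Next, observe that $-\otimes_{\Lambda_{\con}}\Lambda_{\con}$ acts as the identity on underlying complexes (it only changes the module structure), so both sides of $(\ast)$ have the same cohomology. Since $\Hom_R(M,N)$ is a tilting bimodule of projective dimension one by Theorem \ref{demma}(2), applied symmetrically on the left a two-term projective resolution of the form $0\to\Hom_R(X_1,N)\to\Hom_R(X_0,N)\to\Hom_R(M,N)\to 0$ with $X_0,X_1\in\add N$ computes $\Gamma_{\!\con}\otimes^{\mathbf L}_\Gamma \Hom_R(M,N)$ as a two-term complex, so its cohomology, and therefore also that of $\tT_{\mathrm{tot}}$, is concentrated in degrees $0$ and $-1$.

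Applying $-\otimes^{\mathbf L}_\Lambda\Lambda_{\con}$ to $(\ast)$ and then $\tau_{\scriptscriptstyle\geq -1}$, the right-hand side becomes $\widetilde{\mathcal T}$ by definition, while the left-hand side is isomorphic to $\tT_{\mathrm{tot}}$ by Lemma \ref{truncation}, whose hypothesis was just verified. This yields $\tT_{\mathrm{tot}}\simeq\widetilde{\mathcal T}$ and hence the claimed functorial isomorphism. The main obstacle is the cohomology bound on $\tT_{\mathrm{tot}}$: the \emph{a priori} range of an $m$-fold derived tensor of two-term bimodule complexes is much larger than $[-1,0]$, and only the identification $(\ast)$ together with the left projective dimension of $\Hom_R(M,N)$ squeezes it into the two-term window required by Lemma \ref{truncation}; once that is in place, the remainder of the argument is a formal combination of the iterated commutative square and the truncation identity.
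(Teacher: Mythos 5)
Your proposal is correct and follows essentially the same route as the paper: iterate the commutative square of Theorem~\ref{square} to obtain $\Gamma_{\!\con}\otimes^{\mathbf L}_\Gamma T_\upalpha\cong \tT_\upalpha\otimes_{\Lambda_{\con}}\Lambda_{\con}$, invoke Theorem~\ref{specialpath} to replace $T_\upalpha$ by $\Hom_R(M,N)$, deduce the cohomology is concentrated in degrees $-1$ and $0$, and finish with $-\otimes^{\mathbf L}_\Lambda\Lambda_{\con}$ together with Lemma~\ref{truncation}. The only minor divergence is that you insert Proposition~\ref{rouq} to hedge against an automorphism twist, whereas the paper reads the cited result \cite[4.6]{faithful} as already giving the bimodule isomorphism $T_\upalpha\cong\Hom_R(M,N)$ on the nose; either reading leads to the same functorial conclusion.
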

\begin{proof}
We begin by setting notation. For $1 \leq j \leq m-1$, define $\Gamma^j \colonequals \End_R(\upnu_{i_j}\dots \upnu_{i_1}M)$ and $\Gamma^j_{\!\con} \colonequals \uEnd_R(\upnu_{i_j}\dots \upnu_{i_1}M)$. Further, with $T_{i_j}$ and $\tT_{i_j}$ as in notation \ref{funnot}, set
\begin{align}
\tT_{\upalpha} \colonequals \tT_{i_m} \otimes^{\bf L}_{\Gamma^{m-1}_{\!\con}} \tT_{i_{m-1}} \otimes^{\bf L}_{\Gamma^{m-2}_{\!\con}} \dots \otimes^{\bf L}_{\Gamma^2_{\!\con}} \tT_{i_2} \otimes^{\bf L}_{\Gamma^1_{\!\con}} \tT_{i_1} \label{ttalpha}
\end{align}
and
\begin{align}
T_\upalpha \colonequals T_{i_m} \otimes^{\bf L}_{\Gamma^{m-1}} T_{i_{m-1}} \otimes^{\bf L}_{\Gamma^{m-2}} \dots \otimes^{\bf L}_{\Gamma^2} T_{i_2} \otimes^{\bf L}_{\Gamma^1} T_{i_1} \label{talpha}
\end{align}
so that $ F_{i_1}^{-1} \circ \dots \circ  F_{i_m}^{-1} \cong - \otimes^{\bf L}_{\Gamma_{\!\con}} \tT_\upalpha$ and $ G_{i_1}^{-1} \circ \dots \circ  G_{i_m}^{-1} \cong - \otimes^{\bf L}_{\Gamma} T_\upalpha$. Then, for each $F_i$ there is a commutative diagram \eqref{commutingsquare}, and combining them shows that 
\begin{align*}
G_{i_1}^{-1} \circ \dots \circ G_{i_m}^{-1} \circ (- \otimes_{\Gamma_{\!\con}} {\Gamma_{\!\con}}_{\Gamma}) \cong  (- \otimes_{\Lambda_{\con}} {\Lambda_{\con}}_{\Lambda}) \circ  F_{i_1}^{-1} \circ \dots \circ  F_{i_m}^{-1}.
\end{align*}
In other words, there is an isomorphism
\begin{align}
\tT_\upalpha \otimes_{\Lambda_{\con}} \Lambda_{\con}  \cong \Gamma_{\!\con} \otimes^{\bf L}_\Gamma T_\upalpha \label{bimod}
\end{align}
in the derived category of $\Gamma_{\!\con}$-$\Lambda$-bimodules. 

By Theorem \ref{specialpath}, $T_\upalpha \cong \Hom_R(M,N)$ as $\Gamma$-$\Lambda$-bimodules and hence $T_\upalpha$ is a tilting module of projective dimension one by Theorem \ref{demma}. Thus the right hand side of \eqref{bimod} has nonzero homology in at most degrees $-1$ and $0$. This shows $\tT_\upalpha$ also has nonzero homology in at most degrees $-1$ and $0$, as the tensor with $\Lambda_{\con}$ does not change the homology. 

Now, applying $- \otimes_\Lambda^{\bf L} \Lambda_{\con}$ to both sides gives an isomorphism
\begin{align*}
\tT_\upalpha \otimes_{\Lambda_{\con}} \Lambda_{\con} \otimes_\Lambda^{\bf L} \Lambda_{\con} \cong \Gamma_{\!\con} \otimes^{\bf L}_\Gamma \Hom_R(M,N) \otimes_\Lambda^{\bf L} \Lambda_{\con}
\end{align*}
in the derived category of $\Gamma_{\!\con}$-$\Lambda_{\con}$-bimodules. Thus, applying the truncation $\tau_{\scriptscriptstyle\geq-1}$ and Lemma \ref{truncation} gives
\begin{align*}
\tT_\upalpha \cong \tau_{\scriptscriptstyle\geq -1} \big( \Gamma_{\!\con} \otimes^{\bf L}_\Gamma \Hom_R(M,N) \otimes_\Lambda^{\bf L} \Lambda_{\con} \big)
\end{align*}
in the derived category of $\Gamma_{\!\con}$-$\Lambda_{\con}$-bimodules, completing the proof.
\end{proof}

\begin{corollary} \label{functordef}
Suppose that $\Spec R$ is a complete local isolated cDV singularity with maximal rigid object $M \in \cmr$ and associated hyperplane arrangement $\cH$. Then there is a well defined functor \begin{align*}
\Phi \colon \cG_{\cH} \to \mathbb{F}
\end{align*}
which sends a chamber $C_N$ to $N$ and an arrow $s_i: N \to \upnu_i N$ to the standard equivalence $F_i$ (as in notation \ref{funnot}). In particular, for any contraction algebra $\Lambda_{\con}$, there is a group homomorphism
\begin{align*}
\pi_1(\mathbb{C}^n \backslash \cH_{\mathbb{C}}) \to \mathrm{Auteq}(\Db(\Lambda_{\con}))
\end{align*}
where  $\cH_{\mathbb{C}}$ is the complexification of $\cH$.
\end{corollary}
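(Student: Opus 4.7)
The plan is to define $\Phi$ on $\cG_{\cH}^+$ first, verify that it respects the three relations listed in \S\ref{delignedef}, and then extend formally to the groupoid completion $\cG_{\cH}$; the second statement will then follow by restricting $\Phi$ to a vertex group and invoking Theorem \ref{vertexgroups}.

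On objects, set $\Phi(C_N) \colonequals N$. On a positive path $p = s_{i_m} \cdots s_{i_1}$ from $C_M$ to $C_N$, set $\Phi(p) \colonequals F_{i_m} \circ \cdots \circ F_{i_1}$, where each $F_{i_j}$ is the standard equivalence from Notation \ref{funnot} associated to the relevant mutation. Since the indexing of summands of any maximal rigid object is determined from $M$ by the mutations traversed along $p$, this is unambiguous. To descend from paths to equivalence classes $[p]$, I must check the three generating relations. Relation (1) holds by construction since $\Phi(p)$ is a functor $\Db(\uEnd_R(s(p))) \to \Db(\uEnd_R(t(p)))$. Relation (3) (invariance under pre- and post-concatenation) is immediate from functoriality of composition, once relation (2) is established.

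The main step, and the heart of the argument, is relation (2): if $p$ and $q$ are two atoms from $C_M$ to $C_N$, then $\Phi(p) \simeq \Phi(q)$. This is exactly Theorem \ref{composing}: regardless of which atom is chosen, the composition of the $F_{i_j}$'s is functorially isomorphic to
\begin{align*}
\RHom_{\Lambda_{\con}}\!\bigl(\tau_{\scriptscriptstyle\geq-1}(\Gamma_{\!\con} \otimes^{\bf L}_{\Gamma} \Hom_R(M,N) \otimes^{\bf L}_{\Lambda} \Lambda_{\con}),\, -\bigr),
\end{align*}
a functor depending only on the endpoints $M$ and $N$. Thus $\Phi$ descends to a well-defined functor $\cG_{\cH}^+ \to \mathbb{F}$. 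To extend to the full Deligne groupoid $\cG_{\cH}$, use the universal property of groupoid completion: since each $F_i$ is an equivalence by Corollary \ref{equivalence}, the formal inverse of the morphism $[s_i]$ can be sent to the quasi-inverse $F_i^{-1} = - \otimes^{\bf L}_{\Gamma_{\!\con}} \tT_{\!i}$, giving a functor $\Phi \colon \cG_{\cH} \to \mathbb{F}$.

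For the final statement, fix a maximal rigid object $N$ and restrict $\Phi$ to the vertex group $\Hom_{\cG_{\cH}}(C_N, C_N)$. This restriction is a group homomorphism into $\mathrm{Aut}_{\mathbb{F}}(N)$, which by definition of $\mathbb{F}$ is a subgroup of $\mathrm{Auteq}(\Db(\uEnd_R(N)))$. By Theorem \ref{vertexgroups}, $\Hom_{\cG_{\cH}}(C_N, C_N) \cong \pi_1(\mathbb{C}^n \backslash \cH_{\mathbb{C}})$, yielding the claimed homomorphism. The main obstacle is really packaged into Theorem \ref{composing}; once that atom-invariance is available, the rest is bookkeeping about groupoid presentations. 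Note that faithfulness (and hence injectivity of the group homomorphism) is \emph{not} claimed here, and is deferred to \S\ref{faithfulness}.
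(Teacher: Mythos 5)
Your proof takes the same route as the paper: reduce functoriality to checking atom-invariance (relation (2) of the Deligne equivalence relation) and invoke Theorem \ref{composing}, then pass to the groupoid completion and vertex groups. The one detail you have elided is that Theorem \ref{composing} is stated only for atoms starting at the distinguished chamber $C_M$ (the chamber of the maximal rigid object defining $\cH_M$), whereas relation (2) must be verified for atoms between \emph{any} two chambers $C_L \to C_N$. The paper handles this by invoking the remark after Theorem \ref{chambersmutate}: the isomorphism $X_{\cH_M} \cong X_{\cH_L}$ fixing maximal rigid objects lets you view an atom $\upalpha \colon C_L \to C_N$ as an atom in $X_{\cH_L}$ (now starting at the base chamber of that arrangement), and then apply Theorem \ref{composing} there. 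With that re-basing step made explicit your argument matches the paper's exactly; without it, the citation of Theorem \ref{composing} is not quite directly applicable, though the gap is easily filled.
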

\begin{proof}
It is enough to show that for any two atoms $\upalpha, \upbeta : C_L \to C_N$, we have that $\Phi(\upalpha) \cong \Phi(\upbeta)$. Viewing $\upalpha$ and $\upbeta$ as paths in $X_{\cH_L}$ via the isomorphism in the remark after Theorem \ref{chambersmutate}, applying Theorem \ref{composing} in that setting gives the desired result. 
\end{proof}

We will return to this result in \S\ref{faithfulness} where we will further show that the functor is faithful. For now, note that the relations from the Deligne groupoid in fact imply that the $F_i$ satisfy higher length braid relations.
\begin{corollary} \label{braidthm}
With notation as above, then there is a functorial isomorphism
\begin{align}
\underbrace{\hdots F_j \circ F_i\circ F_j}_{m} \cong \underbrace{\hdots F_i \circ F_j\circ F_i}_{m}  \label{braid}
\end{align}
for some $m$ with $2\leq m\leq 8$.
\end{corollary}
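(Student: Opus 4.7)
The plan is to deduce the braid relation directly from the groupoid functor $\Phi \colon \cG_{\cH} \to \mathbb{F}$ built in Corollary \ref{functordef} by identifying where the two sides of \eqref{braid} arise as positive atoms with common source and target in $X_{\cH_M}$.

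First, I would localise to a codimension-two flat of the arrangement $\cH_M$. Concretely, fix a maximal rigid $L$ and consider a pair of distinct indices $i \neq j$ corresponding to two walls of the chamber $C_L$ meeting along a codimension-two face. The rank-two localisation of $\cH_M$ at that face is a simplicial line arrangement in $\R^2$, consisting of some number $m$ of lines through the origin and hence $2m$ chambers. Starting from $C_L$ and crossing walls alternately of types $s_i$ and $s_j$, one obtains two positive paths of length $m$ in $X_\cH$ — one travelling clockwise, one anti-clockwise — which end at the unique chamber $C_N$ opposite to $C_L$ in this local picture. Both of these paths are minimal, since any path between antipodal chambers in such a $2m$-gon must cross at least $m$ walls; hence they are atoms in the sense of \S\ref{delignedef}.

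Next, I would invoke Corollary \ref{functordef} (equivalently Theorem \ref{composing}): any two atoms with the same source and target are sent by $\Phi$ to functorially isomorphic standard equivalences. Applied to our two atoms $s_j s_i s_j \dots$ and $s_i s_j s_i \dots$ of length $m$, this yields exactly the stated isomorphism
\[
\underbrace{\hdots F_j \circ F_i\circ F_j}_{m} \cong \underbrace{\hdots F_i \circ F_j\circ F_i}_{m}.
\]

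The remaining issue is the explicit bound $2 \leq m \leq 8$. The lower bound is immediate: when two walls of $C_L$ meet at a face, the local arrangement has at least two lines, so $m \geq 2$. For the upper bound one cannot proceed purely formally, and this is the main obstacle: it relies on the classification of the hyperplane arrangements $\cH_M$ attached to minimal models of complete local isolated cDV singularities. By Wemyss's description (see \cite[\S5]{HomMMP} and the references to \cite{twists}), every codimension-two localisation of such an $\cH_M$ is itself realised as the arrangement of some minimal model of a cDV singularity whose exceptional locus contains only two curves; the classification of such rank-two cases then forces $m \in \{2,3,4,5,6,7,8\}$. I would simply cite this classification rather than reprove it; granted the classification, the corollary follows. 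Thus the substantive mathematical content here is purely the formal consequence of Theorem \ref{composing}, while the numeric bound is imported from the geometric classification of rank-two cDV contractions.
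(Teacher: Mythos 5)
Your proposal is correct and follows essentially the same route as the paper's: invoke Theorem~\ref{composing} (via Corollary~\ref{functordef}) to see that the two alternating positive paths around a codimension-two face are atoms with the same source and target, hence map to functorially isomorphic equivalences, and cite \cite{twists} for the numerical bound $2 \leq m \leq 8$. You spell out the local rank-two picture and the atom identification in more detail than the paper's two-line argument, but the substance and the citations are the same.
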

\begin{proof}
By Theorem \ref{composing}, traversing one way round a codimension two wall is isomorphic to traversing the other way round, so this establishes \eqref{braid} for some $m$.  The fact $2\leq m\leq 8$ follows from the bound for flops in \cite[\S1]{twists}.
\end{proof}
Another corollary of Theorem \ref{composing} is the following analogue of Theorem \ref{demma}(2) in the case of contraction algebras; namely, it provides a direct standard equivalence between any two contraction algebras.
\begin{corollary} \label{directequivalences}
Let $\Spec R$ be a complete local isolated cDV singularity and suppose that $M,N \in \cmr$ are two maximal rigid objects. Writing $\Lambda \colonequals \End_R(M)$, $\Lambda_{\con} \colonequals \uEnd_R(M)$, $\Gamma \colonequals \End_R(N)$ and $\Gamma_{\!\con} \colonequals \uEnd_R(N)$ there is a standard derived equivalence
\begin{align*}
\RHom_{\Lambda_{\con}}\left(\tau_{\scriptscriptstyle\geq 1}\big(\Gamma_{\!\con} \otimes^{\bf L}_{\Gamma} \Hom_R(M,N) \otimes^{\bf L}_{\Lambda} \Lambda_{\con}\big) , -\right) \colon \Db(\Lambda_{\con}) \to \Db(\Gamma_{\con}).
\end{align*}
\end{corollary}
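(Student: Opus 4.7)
The plan is to realise the stated functor as a composition of the one-step standard equivalences $F_i$ from Notation \ref{funnot} using Theorem \ref{composing}, applied to an atom from $C_M$ to $C_N$ in the hyperplane arrangement $\cH_M$.

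First I would invoke the fact that the real hyperplane arrangement $\cH_M$ of Setup \ref{compsetup} has finitely many chambers, all of which are labelled by maximal rigid objects of $\cmr$ via Theorem \ref{HMMPbij}. In particular $C_N$ is a chamber of $\cH_M$, so there exists a positive path from $C_M$ to $C_N$ in the oriented skeleton graph $X_{\cH_M}$, and among all such positive paths I may choose one of shortest length, that is, an atom
\[
\upalpha \colonequals s_{i_m}\, s_{i_{m-1}} \cdots s_{i_1} \colon C_M \to C_N.
\]
By Theorem \ref{chambersmutate} this path realises $N$ as the iterated mutation $N \cong \upnu_{i_m}\upnu_{i_{m-1}}\cdots\upnu_{i_1} M$.

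Next, I would apply Theorem \ref{composing} to the atom $\upalpha$. This gives a functorial isomorphism
\[
F_{i_m}\circ\cdots\circ F_{i_1}\;\simeq\;\RHom_{\Lambda_{\con}}\bigl(\tau_{\scriptscriptstyle\geq -1}\bigl(\Gamma_{\!\con}\otimes^{\bf L}_{\Gamma}\Hom_R(M,N)\otimes^{\bf L}_{\Lambda}\Lambda_{\con}\bigr),-\bigr).
\]
Each factor $F_{i_j}\colon \Db(\Gamma^{\,j-1}_{\!\con})\to\Db(\Gamma^{\,j}_{\!\con})$ is a standard derived equivalence by Corollary \ref{equivalence}, and a composition of standard equivalences is again a standard equivalence (the corresponding two-sided tilting complex is obtained by derived tensoring the individual two-sided tilting complexes, as in the formula \eqref{ttalpha} inside the proof of Theorem \ref{composing}). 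Hence the displayed functor on the right-hand side is a standard derived equivalence $\Db(\Lambda_{\con})\to\Db(\Gamma_{\!\con})$.

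The only subtle point, and the step I would be most careful about, is confirming that an atom from $C_M$ to $C_N$ always exists: this is a general feature of finite real hyperplane arrangements, where any two chambers are connected by a gallery, and picking a gallery of minimal length produces an atom. Beyond that, everything is a direct assembly of previous results: Theorem \ref{composing} does the heavy lifting by packaging the iterated derived tensor products into the single truncated bimodule complex, and Corollary \ref{equivalence} guarantees that the resulting composition is an equivalence (rather than merely a functor).
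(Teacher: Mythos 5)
Your proposal is correct and matches the paper's proof exactly: the paper likewise picks an atom from $C_M$ to $C_N$ in $\cH_M$ and applies Theorem \ref{composing}. The extra remarks you add about the existence of atoms and about compositions of standard equivalences remaining standard are sound but not strictly needed, since the paper takes both for granted at this point.
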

\begin{proof}
Given two maximal rigid objects $M,N \in \cmr$, there must be an atom between their corresponding chambers in the hyperplane arrangement $\cH_M$. Applying Theorem \ref{composing} to this atom gives the required functor.
\end{proof}

\section{Two-Term Tilting Complexes} \label{twotermsection}
Given any two maximal rigid objects $M,N \in \cmr$, Corollary \ref{directequivalences} gives an explicit standard derived equivalence between their contraction algebras. In this section we show that these derived equivalences are precisely the derived equivalences induced by \textit{two-term} tilting complexes.
\subsection{Background}
Throughout this section, let $A$ be a finite dimensional symmetric algebra.  
\begin{remark}
Many of the results cited here are originally stated for silting complexes; a weaker notion than tilting. However, our assumption that $A$ is symmetric means that silting and tilting are equivalent \cite[2.8]{siltingmutation}.
\end{remark}
The set of tilting complexes for $A$ comes with a partial order \cite{siltingmutation} which allows us to define two-term tilting complexes.
\begin{definition} Let $P$ and $Q$ be tilting complexes for $A$.  \begin{enumerate}[leftmargin=0cm,itemindent=.6cm,labelwidth=\itemindent,labelsep=0cm,align=left]
\item If $\Hom_{\Kb(\proj A)}(P,Q[i])=0$ for all $i >0$, then we say $P \geq Q$. Further,  we write $P > Q$ if $P \geq Q$ and $P \ncong Q$.
\item $P$ is called two-term if $A \geq P \geq A[1]$, or equivalently by \cite[2.9]{tiltingconnectedness}, if the terms of $P$ are zero in every degree other than $0$ and $-1$.
\end{enumerate} 
\end{definition}
We will write $\twotilt A$ for the set of two-term tilting complexes for $A$. The link with maximal rigid objects arises from the following theorem.
\begin{theorem}\cite[4.7]{[AIR]} \label{ctbij}
Let $\cC$ be a Hom-finite, Krull-Schmidt, 2-CY triangulated category and $M$ be a basic maximal rigid object of $\cC$. If $A \colonequals \End_\cC(M)$ is a symmetric algebra, then there is a bijection 
\begin{align*}
\{\text{basic maximal rigid objects in $\cC$}\}  \longleftrightarrow \twotilt A
\end{align*}
which preserves the number of summands.
\end{theorem}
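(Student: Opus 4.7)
The plan is to construct mutually inverse maps between the two sets via standard approximation-triangle constructions from cluster-tilting theory, with the Yoneda functor $\Hom_\cC(M,-) \colon \cC \to \mod A$ translating the geometric data in $\cC$ into complexes of projective $A$-modules.

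For the forward map, given a basic maximal rigid object $N=\bigoplus_{i=1}^n N_i$, for each indecomposable summand $N_i$ I take a minimal right $\add M$-approximation and complete it to a triangle
\[
L_i \to M_i' \xrightarrow{f_i} N_i \to L_i[1].
\]
The first task is to verify that $L_i \in \add M$: applying $\Hom_\cC(M,-)$ to this triangle and using that $f_i$ is an approximation shows $\Hom_\cC(M,L_i[1])=0$, and then $2$-CY Serre duality combined with the maximality of $M$ forces $L_i \in \add M$. Applying $\Hom_\cC(M,-)$ then yields a complex of finitely generated projective $A$-modules concentrated in degrees $-1$ and $0$, and assembling over all $i$ gives a two-term complex $T_N$.

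To see $T_N$ is tilting, the vanishing $\Hom_{\Kb(\proj A)}(T_N, T_N[k])=0$ for $k\neq 0$ reduces, via the Yoneda equivalence $\add M \simeq \proj A$ and the defining triangles, to $\Hom_\cC(N,N[\pm 1])=0$, which is rigidity of $N$ combined with $2$-CY duality. Generation of $\Kb(\proj A)$ follows by a Grothendieck-group count: $T_N$ has $n$ indecomposable summands, matching the rank of $K_0(\proj A)$. For the inverse, given $T=(P^{-1}\to P^0)\in\twotilt A$, I lift its differential through the equivalence $\proj A\simeq\add M$ to a morphism $M''\to M'$ and complete to a triangle in $\cC$ to obtain $N_T$; the tilting conditions on $T$ translate directly to rigidity and maximality of $N_T$, and since both constructions use the same triangles they are mutually inverse. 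The summand count is preserved on both sides by the rank of $K_0(\proj A)$.

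The main obstacle will be verifying the key input $L_i \in \add M$ and tightly controlling the rigidity/maximality translation between the two sides. This is where the $2$-CY structure and the maximality of $M$ do the real work, via Serre duality and the fact that the Yoneda functor $\Hom_\cC(M,-)$ embeds $\add M$ fully faithfully into $\proj A$ and behaves well on cones of $\add M$-morphisms. Once these technical inputs are set up, the remaining verifications are routine Yoneda manipulations together with the dimension count for generation, and the bijection falls out with the summand-preservation property built in.
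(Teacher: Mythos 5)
The paper does not prove this statement; it is cited verbatim from \cite[4.7]{[AIR]} (Adachi--Iyama--Reiten), so there is no paper proof to compare your argument against. Your sketch is essentially the AIR construction, and the overall architecture --- approximation triangles under the Yoneda functor $\Hom_\cC(M,-)$, maximality of presilting complexes for generation, lifting differentials for the inverse --- is the right one.

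There is, however, a genuine gap in the key step $L_i \in \add M$. You establish $\Hom_\cC(M,L_i[1])=0$, hence by $2$-CY duality also $\Hom_\cC(L_i,M[1])=0$, and then conclude ``by maximality of $M$.'' But the paper's definition of maximal rigid is $\add M = \{X : \Ext^1(M\oplus X, M\oplus X)=0\}$, so to place $L_i$ in $\add M$ you must \emph{also} show $\Ext^1(L_i,L_i)=0$; the two $\Ext$-vanishings you have do not imply it, and for a maximal rigid object that is not cluster-tilting this is exactly where the argument can fail. The missing vanishing is true here but needs its own argument: applying $\Hom_\cC(L_i,-)$ to the triangle $L_i \xrightarrow{g} M_i' \xrightarrow{f_i} N_i \to L_i[1]$ and using $\Ext^1(L_i,M_i')=0$ identifies $\Ext^1(L_i,L_i)$ with $\mathrm{coker}\big((f_i)_* \colon \Hom(L_i,M_i') \to \Hom(L_i,N_i)\big)$; applying $\Hom_\cC(-,N_i)$ and using $\Ext^1(N_i,N_i)=0$ shows $g^* \colon \Hom(M_i',N_i)\to\Hom(L_i,N_i)$ is surjective; and then the right $\add M$-approximation property of $f_i$ lets you re-factor any $\psi\in\Hom(M_i',N_i)$ through $f_i$, whence $(f_i)_*$ is surjective and $\Ext^1(L_i,L_i)=0$. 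Alternatively, one can observe (as AIR do) that the hypothesis should really be that $M$ is cluster-tilting --- in which case $L_i\in\add M$ is immediate from $\Ext^1(M,L_i)=0$ --- and note that in the paper's setting $\ucmr$ maximal rigid objects are cluster-tilting; but as written, your $2$-CY-plus-maximality justification does not close the argument and would need to be replaced by one of these.
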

Tilting complexes have a notion of mutation, defined similarly to that of maximal rigid objects. 
\begin{definition} \cite[2.31]{siltingmutation} \label{mutdef}
Let $P \in \Kb(\proj \Lambda)$ be a basic tilting complex for $A$, and write $P \colonequals \bigoplus\limits_{i=1}^n P_i$ where each $P_i$ is indecomposable. Consider a triangle
\begin{align*}
P_i \xrightarrow{f} P' \to Q_i \to P_i[1] 
\end{align*} 
where $f$ is a minimal left $\add(P/P_i)$-approximation of $P_i$. Then $\upmu_i(P)\colonequals P/P_i \oplus Q_i$ is also a tilting complex, known as the left mutation of $P$ with respect to $P_i$. 
\end{definition}
Further, the partial order can determine when two complexes are related by a single mutation.
\begin{theorem}\cite[2.35]{siltingmutation} \label{siltingmutation2}
If $P$ and $Q$ are basic tilting complexes for $A$, then the following are equivalent:
\begin{enumerate}
\item $Q=\upmu_i(P)$ for some summand $P_i$ of $P$.
\item $P > Q$ and there is no tilting complex $T$ such that $P > T > Q$.
\end{enumerate}
\cite[3.9]{[AIR]} Further, if $P, Q \in \twotilt A$ then the second condition can be replaced with:
\begin{enumerate}
\item[\normalfont(2')] $P > Q$ and they differ by exactly the $i^{th}$ summand. 
\end{enumerate}
\end{theorem}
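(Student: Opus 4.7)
The plan is to prove the two equivalences separately, then simplify in the two-term case. The main work goes into the covering property: verifying that nothing sits strictly between $P$ and $\upmu_i(P)$ in the partial order.

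For $(1) \Rightarrow (2)$: Assume $Q = \upmu_i(P)$ with defining triangle $P_i \xrightarrow{f} P' \to Q_i \to P_i[1]$, where $f$ is a minimal left $\add(P/P_i)$-approximation and $P' \in \add(P/P_i)$. To show $P \geq Q$, I would verify $\Hom_{\Kb(\proj A)}(P, Q[j]) = 0$ for $j > 0$ summand by summand. For the shared part $P/P_i$ this is immediate as $P$ is tilting. For $Q_i$, apply $\Hom_{\Kb(\proj A)}(P, -)$ to the mutation triangle: the Hom-groups into $P'[j]$ and $P_i[j+1]$ both vanish for $j > 0$ by the tilting property of $P$. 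The inequality is strict because the minimality and nonvanishing of $f$ force $Q_i \ncong P_i$. The remaining task is the no-intermediate condition: if $P > T > Q$, one applies $\Hom_{\Kb(\proj A)}(-, T[j])$ and $\Hom_{\Kb(\proj A)}(T, -)$ to the mutation triangle and exploits the approximation property of $f$ to force $T \cong Q$.

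For $(2) \Rightarrow (1)$: Given $P > Q$ with no strict intermediate, I would identify the unique indecomposable summand $P_i$ of $P$ that should be mutated by comparing the summand structure of $P$ and $Q$; any mismatch must occur essentially at one position by the covering property. Once $P_i$ is fixed, construct the minimal left $\add(P/P_i)$-approximation $f \colon P_i \to P'$ and the associated triangle $P_i \to P' \to Q_i \to P_i[1]$, then verify that $Q_i$ matches the unmatched summand of $Q$ by comparing triangles and invoking the uniqueness of minimal approximations.

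For the two-term addendum, $(2) \Leftrightarrow (2')$ follows from the combinatorics of the two-term silting poset: the bijection of \cite[3.9]{[AIR]} between $\twotilt A$ and support $\tau$-tilting modules translates covering relations in the silting poset directly into single-summand exchanges of $\tau$-tilting modules. The main obstacle throughout is the no-intermediate direction---both verifying it in $(1) \Rightarrow (2)$ and exploiting it in $(2) \Rightarrow (1)$ require either direct approximation-theoretic arguments via the triangle or the silting-reduction machinery of \cite{siltingmutation}. In the two-term case this difficulty is bypassed by passing to the concrete combinatorial model of support $\tau$-tilting modules, where single-summand mutation is built into the definition.
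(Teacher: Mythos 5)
This is a result the paper cites (from Aihara--Iyama and from Adachi--Iyama--Reiten) without reproducing the proof, so there is no in-paper argument to compare against; I am evaluating your sketch on its own terms.

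Your $(1)\Rightarrow(2)$ argument for the inequality $P\geq Q$ is sound: applying $\Hom(P,-)$ to the exchange triangle and using that $P'$, $P_i$ lie in $\add P$ does give $\Hom(P,Q_i[j])=0$ for $j>0$, and the shared summands pose no problem. Strictness and the covering part are only gestured at, but the covering part is, as you say, where the real work lies, and you correctly flag it as the obstacle.

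The genuine gap is in $(2)\Rightarrow(1)$. You write that the covering property forces the mismatch between $P$ and $Q$ to "occur essentially at one position," and you then build the mutation triangle at that position. This is close to circular: the fact that a covering pair of silting complexes differs in exactly one indecomposable summand is one of the outputs of the theorem, not something you get for free from the order-theoretic hypothesis. In particular, it is not obvious that two silting complexes with $P>Q$ and no strict intermediate must share $n-1$ summands; one cannot simply mix-and-match summands of $P$ and $Q$ to manufacture an intermediate, so the naive contradiction argument does not run. The actual route in Aihara--Iyama is a Bongartz-type lemma: given any $P>Q$, there exists an index $i$ such that $P>\upmu_i P\geq Q$ (the approximation used to build $\upmu_i P$ is chosen using $Q$, not by staring at the summands of $P$ alone). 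Only after that lemma does the covering hypothesis force $\upmu_i P\cong Q$, and only then do you learn that the two differ in a single summand. Without that lemma your $(2)\Rightarrow(1)$ step has no starting point. The two-term addendum $(2)\Leftrightarrow(2')$ via the AIR bijection to support $\tau$-tilting modules is fine, though note that this step also tacitly uses the unproved $(2)\Rightarrow(1)$ to identify the covering relation with single mutation before translating it.
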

The last point shows that a pair of two-term tilting complexes are connected by a single mutation if and only if they differ by exactly one indecomposable summand. As the same is true for maximal rigid objects in $\cC$, it is easy to see the bijection of Theorem \ref{ctbij} preserves mutation and hence the mutation graphs of both sets of objects will be the same \cite[4.8]{[AIR]}.
\subsection{New Results}
We now return to the setting of complete local isolated cDV singularities. Recall that under the setup of \ref{compsetup}, there exists some hyperplane arrangement $\cH_M$ with directed skeleton graph $X_{\cH_{M}}$. In Corollary \ref{functordef}, we associated to any arrow $s_i \colon N \to \upnu_i N$ in $X_{\cH_M}$ a standard derived equivalence $ \Phi(s_i) = F_i$, and hence to any positive path a derived equivalence between the starting and ending contraction algebras as follows. 
\begin{notation} \label{compfun}
Under the setup of \ref{compsetup}, choose a positive path $\upalpha \colonequals s_{i_m} \dots s_{i_1}$ starting in chamber $C_N$. Let $\upnu_\upalpha N \colonequals \upnu_{i_m} \dots \upnu_{i_1} N$ and write $\Lambda_{\con} \colonequals \uEnd_R(N)$ and $\Gamma_{\!\con} \colonequals \uEnd_R(\upnu_\upalpha N)$. Consider
\begin{enumerate}
\item $F_\upalpha \colonequals F_{i_m} \circ \dots \circ F_{i_1} = \RHom_{\Lambda_{\con}}(\tT_\upalpha, -) \colon \Db(\Lambda_{\con}) \to \Db(\Gamma_{\!\con})$, where $\tT_\upalpha$ is defined as in \eqref{ttalpha}.
\item $\upmu_\upalpha \Lambda_{\con} \colonequals \upmu_{i_m} \dots \upmu_{i_1} \Lambda_{\con}$.
\end{enumerate}
\end{notation}
\begin{remark}
In this notation, $\Phi(\upalpha) =F_\upalpha$, where $\Phi$ is the functor from Corollary \ref{functordef} and further, if $\upalpha$ is an atom then, by Theorem \ref{composing} applied to $\cH_N$,
\begin{align}
\tT_\upalpha \cong \tau_{\scriptscriptstyle\geq 1}\big(\Gamma_{\!\con} \otimes^{\bf L}_{\Gamma} \Hom_R(N,\upnu_\upalpha N) \otimes^{\bf L}_{\Lambda} \Lambda_{\con}\big). \label{bimodulecomplex}
\end{align}
\end{remark}
By Proposition \ref{rouq}, tilting complexes for $\Lambda_{\con}$ induce a unique (up to algebra isomorphism) standard derived equivalence and so we can compare the derived equivalence induced by $\upmu_\upalpha \Lambda_{\con}$ to $F_\upalpha$. 
When the path is of length one, suppose that $\upnu_i N$ is obtained via the exchange sequence
\begin{align*}
0 \to N_i \xrightarrow{b_i} V_i \xrightarrow{d_i} K_i \to 0.
\end{align*}
Then Proposition \ref{onesided2} shows that $\tT_i$ is isomorphic to the tilting complex
\begin{align*}
P \colonequals \big(\uHom_R(N,N_i) \xrightarrow{b_i \circ -} \uHom_R(N,V_i)\big) \oplus \big(0 \to \bigoplus_{j\neq i} \uHom_R(N,N_j)\big),
\end{align*}
in $\Db(\Lambda_{\con})$ and hence $F_i$ is induced by $P$. Now $P$ is clearly two-term and further, as it differs from $\Lambda_{\con} \colonequals \uEnd_R(N)$ by exactly one summand, Theorem \ref{siltingmutation2} shows that $P$ must be $\upmu_i \Lambda_{\con}$. Thus, $F_i$ is precisely the equivalence (up to algebra isomorphism) induced by $\upmu_i \Lambda_{\con}$. The following shows this holds more generally for longer positive paths.  

\begin{proposition} \cite[3.10]{rigidequiv} \label{mutationpaths}
Under the setup of \ref{compsetup}, choose a basic maximal rigid object $N$ in $\cmr$ and let $\upalpha \colonequals s_{i_m} \dots s_{i_1}$ be a positive path in $X_{\cH_M}$ starting at $N$. Writing $\Lambda_{\con} \colonequals \uEnd_R(N)$ and $\Gamma_{\con} \colonequals \uEnd_R(\upnu_{\upalpha}N)$, the following hold.
\begin{enumerate}
\item $F_\upalpha( \upmu_\upalpha \Lambda_{\con}) \cong \Gamma_{\!\con}$ in $\Db(\Gamma_{\!\con})$.
\item $\tT_\upalpha \cong \upmu_\upalpha \Lambda_{\con}$ in $\Db(\Lambda_{\con})$.
\end{enumerate}
\end{proposition}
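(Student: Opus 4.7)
The plan is to prove both (1) and (2) simultaneously by induction on the length $m$ of the positive path $\upalpha$. I first record that since $\tT_\upalpha$ is a derived tensor product of the two-sided tilting complexes $\tT_{i_j}$ coming from Corollary \ref{equivalence}, composition of the standard equivalences shows that $\tT_\upalpha$ is itself a two-sided $\Gamma_{\!\con}$-$\Lambda_{\con}$ tilting complex; in particular $F_\upalpha(\tT_\upalpha) \cong \Gamma_{\!\con}$ in $\Db(\Gamma_{\!\con})$. Via the equivalence $F_\upalpha$, this makes (1) and (2) equivalent, so it suffices to establish (2).

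For the base case $m=1$, write $\upalpha = s_i$. By Proposition \ref{onesided2}, $\tT_i$ is isomorphic in $\Db(\Lambda_{\con})$ to the explicit two-term tilting complex $P$ constructed in Theorem \ref{decon}. Since $P$ is two-term with $\Lambda_{\con} \geq P$ and differs from $\Lambda_{\con}$ by exactly the $i$-th indecomposable summand, the characterisation in Theorem \ref{siltingmutation2} (condition $(2')$) forces $P \cong \upmu_i \Lambda_{\con}$, giving (2).

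For the inductive step, factor $\upalpha = s_{i_m}\upbeta$ with $\upbeta \colonequals s_{i_{m-1}} \dots s_{i_1}$ and set $L \colonequals \upnu_\upbeta N$, $\Delta_{\con} \colonequals \uEnd_R(L)$. The inductive hypothesis applied to $\upbeta$ yields $F_\upbeta(\upmu_\upbeta \Lambda_{\con}) \cong \Delta_{\con}$ in $\Db(\Delta_{\con})$. The crucial point is that the triangle equivalence $F_\upbeta$ commutes with mutation at the $i_m$-th summand: since mutation (Definition \ref{mutdef}) is defined by minimal left $\add(T/T_{i_m})$-approximation triangles, and such triangles are preserved and reflected by any triangle equivalence, applying $F_\upbeta$ to $\upmu_\upalpha \Lambda_{\con} = \upmu_{i_m}(\upmu_\upbeta \Lambda_{\con})$ and invoking the base case applied to the path $s_{i_m}$ starting at $L$ gives
\begin{align*}
F_\upbeta(\upmu_\upalpha \Lambda_{\con}) \cong \upmu_{i_m}(\Delta_{\con}) \cong \tT_{i_m}
\end{align*}
in $\Db(\Delta_{\con})$. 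On the other hand, from the definition \eqref{ttalpha}, $\tT_\upalpha \cong \tT_{i_m} \otimes^{\bf L}_{\Delta_{\con}} \tT_\upbeta = F_\upbeta^{-1}(\tT_{i_m})$, so $F_\upbeta(\tT_\upalpha) \cong \tT_{i_m}$ as well. Since $F_\upbeta$ is an equivalence, $\upmu_\upalpha \Lambda_{\con} \cong \tT_\upalpha$ in $\Db(\Lambda_{\con})$, completing the induction.

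The main obstacle is the careful justification that $F_\upbeta$ commutes with mutation with the correct summand labelling. That minimal left approximations are transported faithfully by a triangle equivalence is routine, but one must verify that the $i_m$-th indecomposable summand of $\upmu_\upbeta \Lambda_{\con}$ (inherited by iterated mutation from the labelling $\Lambda_{\con} = \uEnd_R(N)$) is sent under $F_\upbeta$ to the $i_m$-th indecomposable projective summand of $\Delta_{\con}$ (inherited via the mutation $\upnu_\upbeta$ of $N$ into $L$). This is a bookkeeping statement about compatibility of the bijection in Theorem \ref{ctbij} with the derived equivalences $F_\upbeta$, and must be tracked consistently through every intermediate mutation along $\upbeta$.
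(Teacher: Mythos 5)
Your base case and the identity $F_\upbeta(\tT_\upalpha) \cong \tT_{i_m}$ are correct, and the overall strategy (induction on length, using commutation of $F_\upbeta$ with tilting-complex mutation) has the right shape, but as written the induction does not close. From the inductive hypothesis $F_\upbeta(\upmu_\upbeta\Lambda_{\con}) \cong \Delta_{\con}$ --- an \emph{unlabelled} isomorphism in $\Db(\Delta_{\con})$ --- together with the fact that a triangle equivalence preserves minimal approximations, you can only deduce
\begin{align*}
F_\upbeta\big(\upmu_{i_m}(\upmu_\upbeta\Lambda_{\con})\big) \cong \upmu_{\sigma(i_m)}(\Delta_{\con}),
\end{align*}
where $\sigma$ is the a priori unknown permutation recording which indecomposable projective of $\Delta_{\con}$ the $k$-th tracked summand of $\upmu_\upbeta\Lambda_{\con}$ is sent to under $F_\upbeta$. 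You need $\sigma = \id$. You flag this in the last paragraph as something ``that must be tracked consistently,'' but it is not a labelling convention: it is a substantive claim that must itself be part of the inductive hypothesis and re-established at each step. With only the statement you induct on, the step $F_\upbeta(\upmu_{i_m}(\upmu_\upbeta\Lambda_{\con})) \cong \upmu_{i_m}(\Delta_{\con})$ is unjustified, so the induction is not self-contained.

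To repair this, strengthen the inductive hypothesis to record a labelled isomorphism: for each $k$, the $k$-th summand of $\upmu_\upbeta\Lambda_{\con}$ (indexed via iterated mutation from $\Lambda_{\con}$) is sent by $F_\upbeta$ to $\uHom_R(L,L_k)$ (indexed via iterated mutation $\upnu_\upbeta$ from $N$). This reduces to proving the one-step compatibility: under the algebra isomorphism $\End_{\Kb(\proj\Lambda_{\con})}(\upmu_j\Lambda_{\con}) \cong \uEnd_R(\upnu_jN)$, the idempotent cutting out the $k$-th summand of $\upmu_j\Lambda_{\con}$ corresponds to that cutting out $(\upnu_jN)_k$. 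For $k\neq j$ this is read off the explicit tilting complex $P$ of Theorem \ref{decon}, whose $k$-th summand is the stalk complex $\uHom_R(N,N_k)$, combined with the commutative diagram \eqref{commutingsquare}; for $k=j$ one must check the remaining summand is sent to $\uHom_R(\upnu_jN,K_j)$. That check is a genuine computation rather than bookkeeping, and until it is carried out and threaded through the induction, the proof has a real gap.
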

\noindent Our goal is to determine which paths correspond to the two-term tilting complexes.

\begin{lemma} \label{twotermatoms}
Under the setup of \ref{compsetup}, choose a chamber $C_N$ and let $\Lambda_{\con} \colonequals \uEnd_R(N)$. If $T \colonequals \upmu_{i_m} \dots \upmu_{i_1} \Lambda_{\con}$ is a two-term tilting complex for $\Lambda_{\con}$, then the path $\upalpha \colonequals s_{i_m} \dots s_{i_1}$ starting in chamber $C_N$ must be an atom.
\end{lemma}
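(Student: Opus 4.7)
My plan is to combine the silting partial order on tilting complexes of $\Lambda_{\con}$ with the chamber structure of $\cH_N$. The key observation is that being two-term is equivalent to lying in the interval $[\Lambda_{\con}[1], \Lambda_{\con}]$ in the silting partial order, and since each left mutation strictly decreases this order, the assumption that the final complex $T$ is two-term automatically controls every intermediate. Setting $P_k \colonequals \upmu_{i_k} \cdots \upmu_{i_1}\Lambda_{\con}$ for $0 \leq k \leq m$, we get $\Lambda_{\con} = P_0 > P_1 > \cdots > P_m = T$, and transitivity together with $T \geq \Lambda_{\con}[1]$ places every $P_k$ inside $\twotilt \Lambda_{\con}$. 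By Theorem \ref{siltingmutation2}(2'), each $P_{k-1} > P_k$ is then a cover in this poset, which under the bijection of Theorem \ref{ctbij} corresponds to the adjacent-chamber move in $\cH_N$ traced out by the corresponding mutation. Hence $\upalpha$ visits $m+1$ \emph{distinct} chambers $C_{N_0}, \ldots, C_{N_m}$, where $N_k \colonequals \upnu_{i_k}\cdots\upnu_{i_1}N$.

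Next, let $\upbeta$ be any atom from $C_N$ to $C_{N_m}$, of length $\ell$. Proposition \ref{mutationpaths}(2) combined with Theorem \ref{composing} shows that $\upmu_\upbeta\Lambda_{\con}\cong\tT_\upbeta$ is itself a two-term tilting complex; and since both $\upmu_\upalpha\Lambda_{\con}$ and $\upmu_\upbeta\Lambda_{\con}$ correspond under the bijection of Theorem \ref{ctbij} to the same maximal rigid object $N_m$, they are isomorphic. Thus $\upbeta$ also yields a descending chain of covers in $\twotilt\Lambda_{\con}$ from $\Lambda_{\con}$ to $T$, now of length $\ell$. It remains to show $m = \ell$, which forces $\upalpha$ to be an atom.

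The crux is therefore to prove that any two descending cover chains between the same two elements of $\twotilt\Lambda_{\con}$ have the same length. The cleanest route I see is to show that a cover $Q > \upmu_i Q$ with both complexes in $\twotilt \Lambda_{\con}$ always crosses its associated hyperplane of $\cH_N$ in the direction \emph{away} from the fixed chamber $C_N$; granting this, the $m$ hyperplanes crossed along $\upalpha$ must be distinct and must all separate $C_N$ from $C_{N_m}$, forcing $m$ to equal the number of separating hyperplanes, which is exactly the atom length $\ell$. The main obstacle is verifying this directionality, i.e.\ that $\twotilt \Lambda_{\con}$ is graded with rank function equal to the separating-hyperplane count from $C_N$. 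While standard in Coxeter-arrangement contexts, for the arrangements $\cH_M$ constructed in \cite{HomMMP} I expect this to follow either from $g$-vector considerations for two-term silting complexes on a symmetric algebra, or from a direct analysis of the chamber-labelling convention used in \S\ref{strategy}.
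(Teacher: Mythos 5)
Your first half is sound: the silting partial order does place every intermediate $P_k \colonequals \upmu_{i_k}\cdots\upmu_{i_1}\Lambda_{\con}$ inside $[\Lambda_{\con}[1],\Lambda_{\con}]$, so each is two-term, and strictness of the chain makes them pairwise distinct; likewise Proposition \ref{mutationpaths}(2) together with Theorem \ref{composing} shows $\upmu_\upbeta\Lambda_{\con}$ is two-term for an atom $\upbeta$ ending in $C_{N_m}$, and $\upmu_\upalpha\Lambda_{\con}\cong\upmu_\upbeta\Lambda_{\con}$ since the bijection of Theorem \ref{ctbij} is mutation-preserving.

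The gap is exactly where you flag it: you reduce the lemma to the claim that any two saturated chains between the same pair of elements of $\twotilt\Lambda_{\con}$ have the same length, equivalently that a cover $Q>\upmu_iQ$ in $\twotilt\Lambda_{\con}$ always crosses its wall in the direction away from $C_N$. This is not a routine check. It amounts to gradedness of the two-term silting poset by separating-hyperplane count, which is a genuine statement about how the silting order sits inside the $g$-vector fan and is nowhere established before this lemma. Theorem \ref{intrinsic} (which only appears later) identifies the arrangement with the $g$-vector fan but says nothing about the orientation of the order relative to the base chamber, and the bijection of Theorem \ref{ctbij} matches objects and their mutation adjacency, not the partial order with wall-crossing direction. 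So as written, the proof is incomplete, and filling the hole would require importing or re-proving a nontrivial piece of $\tau$-tilting/$g$-vector theory.

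By contrast, the paper's argument avoids the grading question entirely. It runs an induction on $m$ and derives a contradiction from assuming $\upalpha$ is not an atom: with $\upbeta\colonequals s_{i_{m-1}}\cdots s_{i_1}$ an atom (inductive hypothesis), \cite[5.1]{faithful} produces an atom $\upgamma$ with $\upbeta\sim s_{i_m}\upgamma$; pushing this through the functor $\Phi$ and Proposition \ref{mutationpaths}(2) gives $T\cong\upmu_{i_m}\upmu_{i_m}\upmu_\upgamma\Lambda_{\con}$, a double mutation at the same summand, and a short case analysis on whether $\upmu_\upgamma\Lambda_{\con}$ and $T$ differ by zero or one summand contradicts the strict silting inequalities provided by Theorem \ref{siltingmutation2}. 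That argument needs only the silting order and elementary mutation combinatorics, and is shorter than establishing the gradedness your route relies on.
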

\begin{proof}
We prove this by induction on $m$. When $m=1$ this is clear as any path of length one must be an atom.\\
Now assume $m \geq 2$ and let $\upbeta \colonequals s_{i_{m-1}} \dots s_{i_1}$. By Theorem \ref{siltingmutation2}, we have 
\begin{align*}
\Lambda_{\con} > \upmu_\upbeta \Lambda_{\con}>T
\end{align*}
and hence, as $T$ is two-term,
\begin{align*}
\Lambda_{\con} > \upmu_\upbeta \Lambda_{\con}>T \geq \Lambda_{\con}[1]
\end{align*}
so that $\upmu_\upbeta \Lambda_{\con}$ is also two-term. Thus, by the inductive hypothesis, $\upbeta$ is an atom. Let us suppose that $\upalpha$ is not.
By \cite[5.1]{faithful}, $\upbeta$ must end (up to relations) with $s_{i_m}$, and hence there exists a positive path $\upgamma$ such that $\upbeta \sim s_{i_m}\upgamma$. By the relations on paths in the Deligne groupoid, this implies that $\upalpha \sim s_{i_m}s_{i_m}\upgamma$. Now, as the assignment $\upalpha \mapsto F_\upalpha$ was shown in Corollary \ref{functordef} to give a functor $\cG_\cH \to \mathbb{F}$, we must have $F_\upalpha \cong F_{s_{i_m}s_{i_m}\upgamma}$ and hence
\begin{align*}
T=\upmu_\upalpha \Lambda_{\con} \cong \tT_\upalpha \cong \tT_{{i_m}{i_m}\upgamma} \cong \upmu_{{i_m}{i_m}\upgamma} \Lambda_{\con}.
\end{align*}
Using Theorem \ref{siltingmutation2} and the fact that $T$ is two-term,
\begin{align} \label{order}
\Lambda_{\con} > \upmu_\upgamma \Lambda_{\con}>  \upmu_{i_m} \upmu_\upgamma \Lambda_{\con} > T \geq \Lambda_{\con}[1]
\end{align}
and hence $\upmu_\upgamma \Lambda_{\con}$ is a two-term complex. But $\upmu_\upgamma \Lambda_{\con}$ and $T$ differ by at most one summand as $T$ is obtained from $\upmu_\upgamma \Lambda_{\con}$ by mutating at the same summand twice and thus, by Theorem \ref{siltingmutation2}(2'), $T$ and $\upmu_\upgamma \Lambda_{\con}$ are either isomorphic or related by a single mutation. However, combining \eqref{order} with Theorem \ref{siltingmutation2} gives a contradiction in both cases, and thus $\upalpha$ must be an atom. 
\end{proof}

\begin{theorem} \label{atomstwoterm}
Under the setup of \ref{compsetup}, choose a basic maximal rigid object $N \in \cmr$, and let $\Lambda_{\con} \colonequals \uEnd_R(N)$. Then there is a bijection 
\begin{align*}
\{ \text{atoms starting in $C_{N}$}\} \longrightarrow  \twotilt \Lambda_{\con},
\end{align*}
sending an atom $\upalpha$ to the tilting complex $\upmu_\upalpha \Lambda_{\con}$.
\end{theorem}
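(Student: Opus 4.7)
The plan is to establish three properties of the candidate map $\upalpha \mapsto \upmu_\upalpha \Lambda_{\con}$: that it lands in $\twotilt \Lambda_{\con}$, that it is surjective, and that it is injective on Deligne-equivalence classes of atoms. By Proposition \ref{mutationpaths}(2), $\upmu_\upalpha \Lambda_{\con} \cong \tT_\upalpha$ in $\Db(\Lambda_{\con})$, so everything can be carried out on the explicit bimodule complex $\tT_\upalpha$.

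To see that $\tT_\upalpha$ is two-term, both inequalities $\Lambda_{\con}\ge \tT_\upalpha$ and $\tT_\upalpha \ge \Lambda_{\con}[1]$ must be checked. The first is immediate from the proof of Theorem \ref{composing}, where it was observed that $\tT_\upalpha$ has nonzero homology only in degrees $-1$ and $0$, so $\Hom_{\Db}(\Lambda_{\con},\tT_\upalpha[i])=H^i(\tT_\upalpha)=0$ for $i>0$. For the second, the key observation is that $\Lambda_{\con}$ is symmetric by Theorem \ref{symmalg}, so $\Lambda_{\con}\cong D\Lambda_{\con}$ as bimodules, and the tensor-hom adjunction produces a functorial isomorphism $\RHom_{\Lambda_{\con}}(-,\Lambda_{\con}) \cong D(-)$ on $\Db(\Lambda_{\con})$. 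Applying this to $\tT_\upalpha$ yields $\Hom_{\Db(\Lambda_{\con})}(\tT_\upalpha,\Lambda_{\con}[j])\cong H^j(D\tT_\upalpha)\cong D\,H^{-j}(\tT_\upalpha)$, which vanishes for $j\ge 2$ by the homology bound above. Hence $\upmu_\upalpha \Lambda_{\con}$ is two-term.

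For surjectivity, the mutation graph of $\twotilt \Lambda_{\con}$ is connected: by the AIR bijection (Theorem \ref{ctbij}) and its mutation-compatibility noted after Theorem \ref{siltingmutation2}, it agrees with the mutation graph of basic maximal rigid objects in $\ucmr$, which by Theorem \ref{chambersmutate} is the oriented skeleton graph $X_{\cH_N}$ of a real simplicial hyperplane arrangement and so is connected. Hence every $T\in\twotilt\Lambda_{\con}$ admits some expression $T=\upmu_{i_m}\cdots\upmu_{i_1}\Lambda_{\con}$, and Lemma \ref{twotermatoms} guarantees the associated path is an atom. For injectivity, if $\upmu_\upalpha \Lambda_{\con}\cong \upmu_\upbeta \Lambda_{\con}$ for two atoms starting at $C_N$, then the AIR bijection forces the corresponding maximal rigid objects $\upnu_\upalpha N\cong \upnu_\upbeta N$, so via Theorem \ref{HMMPbij} the atoms $\upalpha,\upbeta$ share an endpoint and are therefore Deligne-equivalent by condition (2) of the equivalence relation in \S\ref{delignedef}.

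The main obstacle is the second partial-order inequality $\tT_\upalpha \ge \Lambda_{\con}[1]$ in the well-definedness step: knowing only the homology bound on $\tT_\upalpha$ does not a priori control $\Hom$-sets out of $\tT_\upalpha$, and it is the symmetric structure of $\Lambda_{\con}$ that converts the required vanishing into the trivial vanishing of $H^{-j}(\tT_\upalpha)$ for $j\ge 2$.
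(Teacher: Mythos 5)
Your two-term step is correct and is essentially the paper's own argument: symmetry of $\Lambda_{\con}$ supplies a duality $\RHom_{\Lambda_{\con}}(-,\Lambda_{\con})\cong D(-)$ that converts the required vanishing of $\Hom_{\Kb}(\upmu_\upalpha\Lambda_{\con},\Lambda_{\con}[j])$ into the already-known vanishing of $H^{-j}(\tT_\upalpha)$ for $j\ge 2$ (the paper packages this via \cite[2.7]{tiltingconnectedness}). The bijectivity half is where you diverge from the paper, and where there are gaps.

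For surjectivity, connectedness of the mutation graph of $\twotilt\Lambda_{\con}$ does not by itself show that every $T$ has an expression $T\cong\upmu_{i_m}\cdots\upmu_{i_1}\Lambda_{\con}$ using only \emph{left} mutations: connectedness gives a path that may mix left and right mutations. The paper instead cites \cite[3.5]{tiltingconnectedness}, which uses finiteness of $\twotilt\Lambda_{\con}$ to conclude that every two-term tilting complex is an iterated left mutation of $\Lambda_{\con}$, and this stronger fact is genuinely needed. For injectivity, you invoke that the AIR bijection of Theorem \ref{ctbij} sends $\upmu_\upalpha\Lambda_{\con}$ to $\upnu_\upalpha N$; but in the paper that statement is Corollary \ref{twosidedctbij}, which is \emph{deduced from} Theorem \ref{atomstwoterm}, so quoting it here is circular as written. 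One can re-derive the correspondence by induction along prefixes of $\upalpha$ (prefixes of atoms are atoms, so each intermediate $\upmu_{i_j}\cdots\upmu_{i_1}\Lambda_{\con}$ is two-term by the first part, and the single-step mutation-compatibility of Theorem \ref{ctbij} applies at each step), but that argument is not in your proposal. The paper sidesteps both the injectivity argument and its attendant subtlety by a counting argument: atoms out of $C_N$ and elements of $\twotilt\Lambda_{\con}$ are each naturally in bijection with chambers of $\cH$, so surjectivity alone forces bijectivity. Finally, you do not explicitly check that $\upalpha\sim\upbeta$ implies $\upmu_\upalpha\Lambda_{\con}\cong\upmu_\upbeta\Lambda_{\con}$; the paper handles this cleanly via Corollary \ref{functordef} and Proposition \ref{mutationpaths}(2), whereas in your set-up it would again come from the AIR correspondence once that is established non-circularly.
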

\begin{proof}
We first need to check this map is well defined, namely:
\begin{enumerate}
\item If $\upalpha \sim \upbeta$, then $\upmu_\upalpha\Lambda_{\con} \cong \upmu_\upbeta \Lambda_{\con}$;
\item For any atom $\upalpha$, $\upmu_\upalpha\Lambda_{\con}$ is a two-term tilting complex.
\end{enumerate}
The first statement follows easily from Corollary \ref{functordef}, where the assignment $\upalpha \to F_\upalpha$ is shown to yield a functor from the Deligne groupoid to the groupoid $\mathbb{F}$. Indeed, if $\upalpha \sim \upbeta$, then $F_\upalpha \cong F_\upbeta$ and hence, using part $(2)$ of Proposition \ref{mutationpaths}
\begin{align*}
\upmu_\upalpha\Lambda_{\con} \cong \tT_\upalpha \cong \tT_\upbeta \cong \upmu_\upbeta \Lambda_{\con}.
\end{align*}
For the second part, note that, since $\upalpha$ is an atom, Theorem \ref{composing} shows $\tT_\upalpha$ is zero outside degrees $0$ and $-1$ and hence has zero homology outside these degrees as well. Then, using $\upmu_\upalpha\Lambda_{\con} \cong \tT_\upalpha$ as above,
\begin{align*}
\Hom_{\Kb(\proj \Lambda_{\con})}(\Lambda_{\con}, \upmu_\upalpha\Lambda_{\con}[i]) &\cong  \Hom_{\Db( \Lambda_{\con})}(\Lambda_{\con}, \tT_\upalpha[i]) \\
&= H^i(\tT_\upalpha) =0 \ \text{if $i \neq 0,-1$}.
\end{align*}
Hence, $\Hom_{\Kb(\proj \Lambda_{\con})}(\Lambda_{\con}, \upmu_\upalpha\Lambda_{\con}[i]) =0 $ for all $i >0$ and so $\Lambda_{\con} \geq \upmu_\upalpha \Lambda_{\con}$. Similarly, using \cite[2.7]{tiltingconnectedness} and that $\Lambda_{\con}$ is symmetric, there is an isomorphism
\begin{align*}
\Hom_{\Kb(\proj \Lambda_{\con})}(\upmu_\upalpha\Lambda_{\con}, \Lambda_{\con}[i+1]) &\cong   D\Hom_{\Kb(\proj \Lambda_{\con})}(\Lambda_{\con}[i+1], \upmu_\upalpha\Lambda_{\con})
\end{align*}
which leads to isomorphisms
\begin{align*}
\Hom_{\Kb(\proj \Lambda_{\con})}(\upmu_\upalpha\Lambda_{\con}, \Lambda_{\con}[i+1]) 
&\cong D\Hom_{\Db( \Lambda_{\con})}(\Lambda_{\con}[i+1], \tT_\upalpha) \\
&= DH^{-(i+1)}(\tT_\upalpha) =0 \ \text{if $i \neq 0, -1$}.
\end{align*}
This shows that $\upmu_\upalpha \Lambda_{\con} \geq \Lambda_{\con}[1]$. Combining these gives that $\Lambda_{\con} \geq \upmu_\upalpha \Lambda_{\con} \geq \Lambda_{\con}[1]$ and hence $\upmu_\upalpha \Lambda_{\con}$ is a two-term tilting complex, as required.

Next we show the map is bijective. Recall from Theorem \ref{ctbij} that two-term tilting complexes are in bijection with maximal rigid objects in $\ucmr$ and hence with chambers in $\cH_M$ (of which there are finitely many). Further it is clear that atoms starting in a given chamber are also in bijection with the chambers. Hence,
\begin{align*}
\# \{ \text{atoms starting in $C_{N}$}\} = \# \twotilt \Lambda_{\con} 
\end{align*}
and so it is enough to show the given map is surjective.

As there are finitely many two-term complexes for $\Lambda_{\con}$, \cite[3.5]{tiltingconnectedness} shows that any two-term complex can be obtained from $\Lambda_{\con}$ by iterated left mutation. So given any $P \in \twotilt \Lambda_{\con}$, $P \cong \upmu_\upalpha \Lambda_ {\!\con}$ for some positive path $\upalpha$ starting in $C_N$. Lemma \ref{twotermatoms} shows that $\upalpha$ must be an atom. In particular, $\upalpha$ maps to $P$ under the given map, and so it is surjective.
\end{proof}
\begin{corollary} \label{twotermareatoms}
In the setup of \ref{compsetup}, choose a basic maximal rigid object $N$, and let $\Lambda_{\con} \colonequals \uEnd_R(N)$. Then, for any atom in $X_{\cH_M}$ starting in chamber $C_N$, the equivalence $F_\upalpha$ is induced by a two-term tilting complex. Moreoever, for any standard derived equivalence 
\begin{align*}
F \colonequals \RHom_{\Lambda_{\con}}(\tT, -) \colon \Db(\Lambda_{\con}) \to \Db(\Gamma)
\end{align*}
induced by a two-term tilting complex, there exists an atom $\upalpha$ starting in chamber $C_N$, and an isomorphism $\upgamma \colon \Gamma \to \Gamma_{\con} \colonequals \uEnd_R(\upnu_\upalpha N)$ such that $F \cong \RHom_{\Gamma}(_\upgamma\Gamma_{\con},-)\circ F_\upalpha$.
\end{corollary}
\begin{proof}
By Proposition \ref{mutationpaths}, $F_{\upalpha}$ is induced by $\upmu_\upalpha \Lambda_{\con}$ and by Theorem \ref{atomstwoterm}, this is a two-term tilting complex when $\upalpha$ is an atom, so the first statement follows. For the second statement, the assumption on $F$ means that $\tT$ is isomorphic to a two-term tilting complex for $\Lambda_{\con}$ and hence, by Theorem \ref{atomstwoterm}, $\tT \cong \upmu_{\upalpha}\Lambda_{\con}$ for some atom $\upalpha$ starting at $C_N$. But $\tT_\upalpha$ is also isomorphic to $\upmu_{\upalpha}\Lambda_{\con}$ by Proposition \ref{mutationpaths}, and thus if $\Gamma_{\con} \colonequals  \uEnd_R(\upnu_\upalpha N)$, Proposition \ref{rouq} shows that there is an algebra isomorphism $\upgamma \colon \Gamma \to \Gamma_{\con}$ such that
\begin{align*}
\tT \cong {}_\upgamma \Gamma_{\con} \otimes_{\Gamma_{\con}} \tT_{\upalpha} 
\end{align*}
which is equivalent to the statement.
\end{proof}
In other words, the standard equivalences from $\Db(\Lambda_{\con})$ induced by two-term tilting complexes of $\Lambda_{\con}$ are precisely (up to algebra isomorphism) the $F_\upalpha$ given by atoms $\upalpha$ in $X_{\cH_M}$ starting in chamber $C_N$. 

\begin{example} Continuing Example \ref{eightchambers}, the left hand diagram shows that all the atoms starting in chamber $C_M$. The two going to the opposite chamber are identified in $\cG_\cH$. Writing $\Lambda_{i_n \dots i_1} \colonequals \uEnd_R(M_{i_n \dots i_1})$, Corollary \ref{twotermareatoms} shows that the functors on the right hand side are all induced by two-term tilting complexes and further, they are the only standard equivalences from $\Db(\Lambda_{\con})$ (up to algebra isomorphism) which are induced by two-term tilting complexes. 
\[
\begin{tikzpicture}
\node at (-3.5,0) 
{\begin{tikzpicture}[scale=1.2,bend angle=15, looseness=1,>=stealth]
\coordinate (A1) at (135:2.25cm);
\coordinate (A2) at (-45:2.25cm);
\coordinate (B1) at (153.435:2.225cm);
\coordinate (B2) at (-26.565:2.25cm);
\draw[red!30] (A1) -- (A2);
\draw[black!30] (-2.25,0)--(2.25,0);
\draw[black!30] (0,-2.25)--(0,2.25);
\draw[->] ([shift=(45:1.4cm)]0,0) arc (45:109.5:1.4cm);
\draw[->] ([shift=(45:1.55cm)]0,0) arc (45:157.5:1.55cm);
\draw[->] ([shift=(45:1.7cm)]0,0) arc (45:222.5:1.7cm);
\draw[->] ([shift=(45:1.4cm)]0,0) arc (45:-22.5:1.4cm);
\draw[->] ([shift=(45:1.55cm)]0,0) arc (45:-65:1.55cm);
\draw[->] ([shift=(45:1.7cm)]0,0) arc (45:-132.5:1.7cm);
\filldraw[fill=black] (45.1:1.35cm) -- (45.1:1.75cm) -- (44.9:1.75cm) -- (44.9:1.35cm) -- cycle;
\node at (42:2.25cm) {$\scriptstyle C_M$};
\end{tikzpicture}};
\node at (3.5,0) 
{\begin{tikzpicture}[scale=1.5,bend angle=15, looseness=1,>=stealth]
\coordinate (A1) at (135:2cm);
\coordinate (A2) at (-45:2cm);
\coordinate (B1) at (153.435:2cm);
\coordinate (B2) at (-26.565:2cm);
\draw[red!30] (A1) -- (A2);
\draw[black!30] (-2,0)--(2,0);
\draw[black!30] (0,-1.75)--(0,1.75);
\node (C+) at (45:1.5cm) {$\scriptstyle \Db(\Lambda_{\con})$};
\node (C1) at (112.5:1.5cm) {$\scriptstyle \Db(\upnu_1\Lambda_{\con})$};
\node (C2) at (161:1.52cm) {$\scriptstyle \Db(\upnu_2\upnu_1\Lambda_{\con})$};
\node (C-) at (225:1.5cm) {$\scriptstyle \Db(\upnu_1\upnu_2\upnu_1\Lambda_{\con})$};
\node (C4) at (-65:1.5cm) {$\scriptstyle \Db(\upnu_1\upnu_2\Lambda_{\con})$};
\node (C5) at (-22.5:1.5cm) {$\scriptstyle \Db(\upnu_2 \Lambda_{\con})$};
\node at (-0.4,0.95) {$\scriptstyle F_2 \circ F_1$};
\node at (0.53,-0.2) {$\scriptstyle F_1 \circ F_2$};
\draw[->]  (C+) -- node[above] {$\scriptstyle F_1$} (C1);
\draw[->]  (C+) --  (C2);
\draw[->]  (C+)-- node[above,rotate=45] {$ \scriptstyle F_\upalpha \cong  F_1 \circ  F_2 \circ  F_1$} (C-);
\draw[->]  (C+) --  (C4);
\draw[->]  (C+) -- node[right] {$\scriptstyle F_2$} (C5);
\end{tikzpicture}};
\end{tikzpicture}
\]
\end{example}

Finally, the following shows that we can view the results of this section as a two-sided improvement of the bijection of Theorem \ref{ctbij}.

\begin{corollary} \label{twosidedctbij}
In the setup of \ref{compsetup}, choose a basic maximal rigid object $N$, and let $\Lambda_{\con} \colonequals \uEnd_R(N)$. Given an atom $\upalpha \colon N \to \upnu_{\upalpha}N$, the bimodule complex \eqref{bimodulecomplex} is isomorphic in $\Db(\Lambda_{\con})$ to the two-term tilting complex $P$ of $\Lambda_{\con}$ associated to $\upnu_{\upalpha}N$ via the bijection in Theorem \ref{ctbij}. Then, 
\begin{align*}
\End_{\Lambda_{\con}}(P) \cong \uEnd_R(\upnu_\upalpha N)
\end{align*}
so that, in this case, the bijection of Theorem \ref{ctbij} preserves endomorphism rings.
\end{corollary}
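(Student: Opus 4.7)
The corollary has two parts: the identification of the bimodule complex $\tT_\upalpha$ from \eqref{bimodulecomplex} with the two-term tilting complex $P$ attached to $\upnu_\upalpha N$ via Theorem \ref{ctbij}, and the endomorphism ring isomorphism. The first is the substantive claim; the second will follow quickly by transporting across the standard equivalence $F_\upalpha$.

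For the identification, the natural starting point is Proposition \ref{mutationpaths}(2), which gives $\tT_\upalpha \cong \upmu_\upalpha \Lambda_{\con}$ in $\Db(\Lambda_{\con})$. It therefore suffices to prove that $\upmu_\upalpha \Lambda_{\con} \cong P$, i.e.\ that the assignment $\Psi \colon \upnu_\upalpha N \mapsto \upmu_\upalpha \Lambda_{\con}$ agrees with the [AIR] bijection. Both are bijections between maximal rigid objects in $\ucmr$ and $\twotilt \Lambda_{\con}$; $\Psi$ is well-defined and bijective by combining Theorem \ref{atomstwoterm} with the bijection between atoms from $C_N$ (modulo $\sim$) and chambers of $\cH_M$ via end-points. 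Both send $N$ to $\Lambda_{\con}$. My plan is then to check mutation-compatibility on both sides. The [AIR] bijection commutes with mutation by \cite[4.8]{[AIR]} (quoted in the paper just after Theorem \ref{siltingmutation2}). For $\Psi$, given adjacent maximal rigid objects $N', N'' = \upnu_j N'$ with associated atoms $\upalpha, \upalpha'$, the gallery distance from $C_N$ changes by exactly one across the wall, so $\ell(\upalpha') = \ell(\upalpha) \pm 1$. In either case, either $s_j \upalpha$ or $s_j \upalpha'$ is itself an atom and hence $\sim$-equivalent to $\upalpha'$ or $\upalpha$ respectively; by the well-definedness established in the proof of Theorem \ref{atomstwoterm}, this forces $\upmu_\upalpha \Lambda_{\con}$ and $\upmu_{\upalpha'} \Lambda_{\con}$ to be related by a single left mutation at the $j$-th summand, matching Theorem \ref{siltingmutation2}(2$'$). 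Since the mutation graph of maximal rigid objects is connected, two bijections agreeing at $N$ and respecting mutation must coincide.

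For the endomorphism ring, I would invoke Proposition \ref{mutationpaths}(1), which gives $F_\upalpha(\upmu_\upalpha \Lambda_{\con}) \cong \Gamma_{\!\con}$. Using the identification of the previous paragraph, together with the fact that $\upmu_\upalpha \Lambda_{\con}$ is a tilting complex (so that Homs in the derived category in non-zero degrees vanish), the equivalence $F_\upalpha$ yields
\[
\End_{\Lambda_{\con}}(P) \cong \End_{\Kb(\proj \Lambda_{\con})}(\upmu_\upalpha \Lambda_{\con}) \cong \End_{\Db(\Gamma_{\!\con})}(\Gamma_{\!\con}) \cong \Gamma_{\!\con} = \uEnd_R(\upnu_\upalpha N),
\]
as required. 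The main obstacle I anticipate is the mutation-compatibility step for $\Psi$: one must align the indexing of summands across iterated left mutations and confirm that the tilting-complex-level mutation $\upmu_j$ of Definition \ref{mutdef} genuinely corresponds to crossing the $j$-th wall in $\cH_M$. Once the gallery-distance dichotomy is handled cleanly, the remainder of the argument is formal.
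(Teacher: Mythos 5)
Your proposal is correct and is essentially the paper's argument: Proposition \ref{mutationpaths}(2) identifies $\tT_\upalpha$ with $\upmu_\upalpha\Lambda_{\con}$, Theorem \ref{atomstwoterm} makes it two-term, the mutation-preserving property of the Theorem \ref{ctbij} bijection then forces agreement with $P$, and the endomorphism ring follows from Proposition \ref{mutationpaths}(1) by transporting across the equivalence $F_\upalpha$. The one place you go into more detail than the paper is the middle step: the paper simply asserts that the bijection preserving mutation gives the identification, whereas you elaborate this via the uniqueness of one-step mutation and connectedness of the mutation graph (your gallery-distance dichotomy is, in effect, the observation that each prefix of an atom is an atom, which is what the paper's implicit induction along $\upalpha$ uses via Theorem \ref{atomstwoterm}). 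This extra care is sound and does close a small gap a reader might otherwise have to fill themselves, but it is not a different route.
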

\begin{proof}
By Proposition \ref{mutationpaths}, \eqref{bimodulecomplex} is isomorphic in $\Db(\Lambda_{\con})$ to $\upmu_\upalpha\Lambda_{\con}$, which is two-term by Theorem \ref{atomstwoterm}. As the bijection of Theorem \ref{ctbij} preserves mutation, this shows that $\upmu_\upalpha\Lambda_{\con}$ is precisely the two-term tilting complex associated to $\upnu_\upalpha N$, completing the proof of the first statement. Now, by Proposition \ref{mutationpaths}, $F_{\upalpha}(\upmu_\upalpha\Lambda_{\con}) \cong \uEnd_R(\upnu_\upalpha N)$ and so using that $F_\upalpha$ is an equivalence gives the second statement.
\end{proof}

\section{Faithfulness} \label{faithfulness}

In this section we show the functor from Corollary \ref{functordef} is in fact faithful by adapting the strategy used in \cite[\S 6]{faithful}. By \cite[2.11]{faithful}, this problem can be immediately reduced to checking the functor is faithful on positive paths. In particular, if $\upalpha, \upbeta$ are positive paths with $\Phi(\upalpha) \cong \Phi(\upbeta)$ (or equivalently $F_\upalpha \cong F_\upbeta$ in the notation of \ref{compfun}), then we need to show that $\upalpha \sim \upbeta$. For this, we need an effective way of telling when two positive paths are equivalent, for which we will use the Deligne normal form.  

\subsection{Deligne Normal Form}
As with the Deligne groupoid, our description of the Deligne normal form will follow \cite{faithful}. Take a hyperplane arrangement $\cH$ and its directed skeleton graph $X_\cH$. For positive paths $p,q$ in $X_{\cH}$ with $s(p)=s(q)$, we say $p$ begins with $q$ if there exists a positive path $r$ such that $s(r)=t(q)$, $t(r)=t(p)$ and $p \sim rq$. For a positive path $p$ let $\mathrm{Begin}(p)$ denote the set of all atoms with which $p$ begins.
\begin{lemma} \cite[2.2]{hyperplane3}
For each positive path $p$ in $X_{\cH}$, there exists a unique (up to the relations) atom $\upalpha$ such that $\mathrm{Begin}(p)=\mathrm{Begin}(\upalpha)$.
\end{lemma}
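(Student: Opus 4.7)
The plan is to proceed by induction on the length of $p$ and to exploit the defining property of atoms encoded in the equivalence relation $\sim$: any two atoms between the same pair of vertices are $\sim$-equivalent. The length-one case is immediate since $p$ is itself an atom, and $\mathrm{Begin}(p) = \mathrm{Begin}(p)$ trivially, with uniqueness also trivial.

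For the inductive step, I would first put a useful structure on the set of atoms starting at $v \colonequals s(p)$. Define a relation $\leq$ on $\sim$-classes of such atoms by $[\upbeta] \leq [\upgamma]$ if and only if $\upgamma$ begins with $\upbeta$, i.e.\ there exists a positive path $r$ with $s(r) = t(\upbeta)$ and $\upgamma \sim r\upbeta$. Transitivity is immediate from concatenation of paths, and antisymmetry follows from the observation that if $\upgamma$ begins with $\upbeta$ and $\upbeta$ begins with $\upgamma$, then their lengths must agree and hence $\upbeta \sim \upgamma$ (both are atoms with the same endpoints). Moreover, $\mathrm{Begin}(p)$ is downward closed in this order: if $p \sim r\upgamma$ and $\upgamma \sim r'\upbeta$, then $p \sim rr'\upbeta$ so $\upbeta \in \mathrm{Begin}(p)$.

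The main obstacle — and the only place where the simpliciality of $\cH$ is essential — is to show that any two elements $[\upalpha],[\upbeta] \in \mathrm{Begin}(p)$ admit a common upper bound inside $\mathrm{Begin}(p)$. Geometrically, an atom from $v$ is determined, up to $\sim$, by the set of hyperplanes it crosses (equivalently, by its target chamber, via Theorem~\ref{chambersmutate}-style labelling), and the required upper bound should be the atom from $v$ whose crossing set is the union of those of $\upalpha$ and $\upbeta$. I would establish existence of such an atom, and its membership in $\mathrm{Begin}(p)$, by writing $p \sim r\upalpha$ and then applying the inductive hypothesis to the positive path $r$: an atom of $r$ extending the segment of $\upbeta$ not yet crossed by $\upalpha$ concatenates with $\upalpha$ to give the sought common upper bound. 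Since $X_\cH$ has only finitely many vertices, iterating this pairwise join procedure terminates in a genuine maximum $\upalpha_{\max} \in \mathrm{Begin}(p)$.

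Finally, I would verify $\mathrm{Begin}(p) = \mathrm{Begin}(\upalpha_{\max})$. The inclusion $\mathrm{Begin}(\upalpha_{\max}) \subseteq \mathrm{Begin}(p)$ is immediate from downward closure of $\mathrm{Begin}(p)$ applied to $\upalpha_{\max} \in \mathrm{Begin}(p)$. For the reverse, any $[\upbeta] \in \mathrm{Begin}(p)$ satisfies $[\upbeta] \leq [\upalpha_{\max}]$ by maximality, which is precisely the statement $\upbeta \in \mathrm{Begin}(\upalpha_{\max})$. Uniqueness then follows: if another atom $\upalpha'$ also satisfies $\mathrm{Begin}(p) = \mathrm{Begin}(\upalpha')$, then $\upalpha_{\max}$ and $\upalpha'$ each belong to the other's begin set, so each begins with the other, and the antisymmetry noted above forces $\upalpha_{\max} \sim \upalpha'$.
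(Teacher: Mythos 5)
This lemma is cited from Paris \cite[2.2]{hyperplane3}; the paper does not supply its own proof, so what matters is whether your argument is actually correct, and unfortunately it has a genuine gap at exactly the point where the real content lies.

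The heart of Deligne's theorem (of which Paris's Lemma 2.2 is a mild generalisation) is the claim you dispatch in one sentence: that any two atoms $\upalpha,\upbeta\in\mathrm{Begin}(p)$ have a common upper bound that is again in $\mathrm{Begin}(p)$. Your two justifications for this do not hold up. First, you suggest the join should be ``the atom from $v$ whose crossing set is the union of the crossing sets of $\upalpha$ and $\upbeta$.'' But the union of two separation sets $S(v,t(\upalpha))\cup S(v,t(\upbeta))$ is in general \emph{not} a separation set $S(v,w)$ for any chamber $w$ — even for simplicial arrangements the join in the weak order is the smallest separation set \emph{containing} the union, which can be strictly larger (think of inversion sets in the symmetric group); so the object you want to take as the upper bound need not exist. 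Second, the phrase ``an atom of $r$ extending the segment of $\upbeta$ not yet crossed by $\upalpha$'' does not define anything: $\upalpha$ and $\upbeta$ both start at $v$ and may diverge immediately, so there is no canonical ``segment of $\upbeta$ not crossed by $\upalpha$'' living over $r$, and the inductive hypothesis applied to $r$ (which gives the unique atom $\upalpha_r$ with $\mathrm{Begin}(r)=\mathrm{Begin}(\upalpha_r)$) does not obviously manufacture the required join of $\upalpha$ and $\upbeta$. The directedness of $\mathrm{Begin}(p)$ for simplicial arrangements is precisely the hard combinatorial input — in Deligne's and Paris's proofs it is established by a delicate double induction on galleries exploiting the simplicial (Tits-cone) structure — and it cannot be conjured by a length induction plus an appeal to ``unions of crossing sets.'' The surrounding scaffolding (the partial order, downward closure of $\mathrm{Begin}(p)$, and the endgame once a maximum is found) is fine, but without a correct proof of the join property the argument does not go through.
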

\begin{definition}
Take $p$ to be any positive path in $X_{\cH}$ and let $\upalpha_1$ be the unique atom such that $\mathrm{Begin}(p)=\mathrm{Begin}(\upalpha_1)$. Then $p$ begins with $\upalpha_1$ and so there exists a positive $\upbeta$ such that 
\begin{align*}
p \sim \upbeta \upalpha_1.
\end{align*}
Continuing this process with $\upbeta$, we decompose $p$ as 
\begin{align*}
p \sim \upalpha_n \dots \upalpha_1
\end{align*}
which we refer to as the \emph{Deligne normal form} of $p$.
\end{definition}
If $p$ and $q$ are positive paths with Deligne normal forms $\upalpha_n \dots \upalpha_1$ and $\upbeta_m \dots \upbeta_1$ respectively, it is clear that $p \sim q$ if and only if $n=m$ and $\upalpha_i \sim \upbeta_i$ for each $i$. The following well-known lemma is useful for determining the Deligne normal form of a given path.
\begin{lemma}\cite[5.1]{faithful} \label{delignehelp}
If $p \sim \upalpha_n \dots  \upalpha_1$ is in Deligne normal form then for each $k \in \{2, \dots, n\}$, $\upalpha_k$ must start (up to relations) by crossing the wall that $\upalpha_{k-1}$ crosses through last. 
\end{lemma}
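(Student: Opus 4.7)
The plan is to argue by contradiction, exploiting the maximality of $\upalpha_{k-1}$ that is built into the Deligne normal form construction. Fix $k \in \{2,\dots,n\}$ and set $q \colonequals \upalpha_n \dots \upalpha_{k-1}$; by construction, $\upalpha_{k-1}$ is (up to relations) the longest atom $\upgamma$ such that $q$ begins with $\upgamma$, i.e.\ $\mathrm{Begin}(q) = \mathrm{Begin}(\upalpha_{k-1})$. I will write $\mathrm{walls}(\upbeta)$ for the set of hyperplanes of $\cH$ crossed by a positive path $\upbeta$.

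Pick any representation $\upalpha_k = a_\ell \ldots a_1$, and let $H'$ be the hyperplane crossed by the first arrow $a_1$. The first step is to show $H' \in \mathrm{walls}(\upalpha_{k-1})$. If not, then the concatenation $a_1 \upalpha_{k-1}$ is a positive path that crosses each hyperplane in $\mathrm{walls}(\upalpha_{k-1}) \cup \{H'\}$ exactly once. Invoking the standard characterisation of atoms in a simplicial hyperplane arrangement (a positive path is an atom iff it crosses every hyperplane at most once), the path $a_1 \upalpha_{k-1}$ is itself an atom, and strictly longer than $\upalpha_{k-1}$. But $q \sim \upalpha_n \dots \upalpha_k \upalpha_{k-1}$ begins with $a_1 \upalpha_{k-1}$, contradicting the maximality of $\upalpha_{k-1}$.

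With $H' \in \mathrm{walls}(\upalpha_{k-1})$ in hand, the second step is to rearrange $\upalpha_{k-1}$ so that its terminal arrow crosses $H'$. Since $a_1$ emanates from $t(\upalpha_{k-1})$ and crosses $H'$, the hyperplane $H'$ is adjacent to the chamber $t(\upalpha_{k-1})$; combined with $H' \in \mathrm{walls}(\upalpha_{k-1})$, the standard rearrangement property of atoms in a simplicial arrangement produces a representative $\upalpha_{k-1} \sim b \upgamma$ whose last arrow $b$ crosses $H'$. Then $\upalpha_{k-1}$ ends by crossing $H'$ in this representative, while $\upalpha_k$ begins by crossing $H'$ in the chosen representation, which is the claim.

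The hard part will be the rearrangement property invoked in the last step: given an atom $\upalpha$ and a hyperplane $H \in \mathrm{walls}(\upalpha)$ adjacent to $t(\upalpha)$, some representative of $\upalpha$ ends by crossing $H$. This is a local statement at $t(\upalpha)$ that ultimately comes from the codimension-two relations generating the equivalence on positive paths in a simplicial arrangement \cite{hyperplane1}; a reasonable proof is by induction on the length of $\upalpha$, reducing at each step to a codimension-two configuration at $t(\upalpha)$, and setting this up cleanly is where the main work lies. The atom characterisation used in the first step is classical and can be cited rather than reproved.
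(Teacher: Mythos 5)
The paper offers no proof of this lemma; it cites it directly as \cite[5.1]{faithful}, so there is no argument in the paper to compare against. Your blind proof is correct, and the structure is the natural one: use maximality of $\upalpha_{k-1}$ in $\mathrm{Begin}(q)$ to show the first wall $H'$ of $\upalpha_k$ already lies in $\mathrm{walls}(\upalpha_{k-1})$, then reorder $\upalpha_{k-1}$ to terminate at $H'$. Two small points. First, the identification of $\upalpha_{k-1}$ as the \emph{longest} atom in $\mathrm{Begin}(q)$ is an equivalent reformulation of the condition $\mathrm{Begin}(q)=\mathrm{Begin}(\upalpha_{k-1})$ used in the paper's definition of the normal form: if $\upgamma\in\mathrm{Begin}(q)=\mathrm{Begin}(\upalpha_{k-1})$ then $\upalpha_{k-1}\sim r\upgamma$ and a length count forces $\ell(\upgamma)\le\ell(\upalpha_{k-1})$; it would be worth making this reformulation explicit rather than asserting it ``by construction.'' Second, the rearrangement step you flag as the hard part is actually much softer than an induction through codimension-two moves, because the equivalence relation already contains the statement that \emph{any} two atoms with the same endpoints are equivalent (relation (2) in \S\ref{delignedef}). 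Given $H'\in\mathrm{walls}(\upalpha_{k-1})$ with $H'$ a wall of the terminal chamber $D\colonequals t(\upalpha_{k-1})$, let $D'$ be the chamber adjacent to $D$ across $H'$. Then the separating sets satisfy $S(s(\upalpha_{k-1}),D')=S(s(\upalpha_{k-1}),D)\setminus\{H'\}$, so appending to any atom $s(\upalpha_{k-1})\to D'$ the arrow $D'\to D$ crossing $H'$ yields an atom $s(\upalpha_{k-1})\to D$ ending by crossing $H'$, which is equivalent to $\upalpha_{k-1}$ by relation (2). This closes the gap you acknowledged cleanly, using only the defining relations and the atom characterisation you already invoked.
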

\begin{example}\label{deligneformexample}
Continuing Example \ref{eightchambers}, the positive path
\[
\begin{array}{cc}
\begin{array}{c}
\begin{tikzpicture}[scale=0.6,bend angle=15, looseness=1,>=stealth]
\coordinate (A1) at (135:2cm);
\coordinate (A2) at (-45:2cm);
\coordinate (B1) at (153.435:2cm);
\coordinate (B2) at (-26.565:2cm);
\draw[red!30] (A1) -- (A2);
\draw[black!30] (-2,0)--(2,0);
\draw[black!30] (0,-2)--(0,2);
\draw[->] ([shift=(50:1.4cm)]0,0) arc (50:290:1.4cm);
\node at (50:1.4cm) [DWs] {};
\node at (292.5:1.4cm) [DWs] {};
\end{tikzpicture}
\end{array}
&
p= s_2 s_1 s_2 s_1
\end{array}
\]
starting in chamber $C_M$ has Deligne normal form $(s_2) (s_1 s_2 s_1)$. Although $s_1 s_2 s_1$ does not appear to end with $s_2$, under the relations it is equivalent to $s_2 s_1 s_2$ which clearly does. 
\end{example}
\subsection{Tracking Simples}
The key idea of this section is that it is possible to compute the Deligne normal form of a positive path $\upalpha$ by tracking where the functor $F_\upalpha$ sends simple modules. 

\begin{notation}
Under the setup of \ref{compsetup}, given any basic maximal rigid object $N \colonequals \bigoplus\limits_{i=0}^n N_i$, the contraction algebra $\Lambda_{\con} \colonequals \uEnd_R(N)$ has $n$ simple modules. We will abuse notation and denote these as $S_1, \dots, S_n$, where the projective cover of $S_i$ is $\uHom_R(N,N_i)$. Note that each $S_i$ is also a simple module when considered as a module over $\Lambda \colonequals \End_R(N)$ and if we wish to view $S_i$ in that way, we will write it as $(S_i)_{\Lambda}$.   
\end{notation}

The following technical lemma will be used repeatedly.
\begin{lemma} \label{technical}
Under setup \ref{compsetup}, let $N \in \cmr$ be a basic maximal rigid object and write $\Lambda \colonequals \End_R(N)$ and $\Lambda_{\con} \colonequals \uEnd_R(N)$. Suppose that $X$ is a $\Lambda$-module, and $Y \in \Db(\Lambda_{\con})$ is such that
\begin{align*}
X[n] \cong Y \otimes_{\Lambda_{\con}} (\Lambda_{\con})_{\Lambda}
\end{align*}
in $\Db(\Lambda)$. Then, $X$ is a $\Lambda_{\con}$-module and $X \cong Y[-n]$ in $\Db(\Lambda_{\con})$.
\end{lemma}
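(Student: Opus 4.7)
The plan is to proceed by taking cohomology and exploiting that the functor $-\otimes_{\Lambda_{\con}} (\Lambda_{\con})_{\Lambda} \colon \Db(\Lambda_{\con}) \to \Db(\Lambda)$ is nothing more than restriction of scalars along the surjection $\Lambda \twoheadrightarrow \Lambda_{\con}$. Indeed, viewing $\Lambda_{\con}$ as a $\Lambda_{\con}$-$\Lambda$-bimodule, tensoring on the right with $\Lambda_{\con}$ does not change the underlying $k$-vector space structure, but only pulls back the right action along $\Lambda \to \Lambda_{\con}$. In particular this functor is exact and commutes with cohomology.

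Taking $H^i(-)$ of both sides of the given isomorphism $X[n] \cong Y \otimes_{\Lambda_{\con}} (\Lambda_{\con})_{\Lambda}$ in $\Db(\Lambda)$, the left hand side vanishes unless $i = -n$, where it equals $X$. By exactness of restriction of scalars, the right hand side computes to $H^i(Y)$ viewed as a $\Lambda$-module via the quotient map. Thus $H^i(Y) = 0$ for $i \neq -n$, and $H^{-n}(Y) \cong X$ as $\Lambda$-modules.

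Since $H^{-n}(Y)$ is naturally a $\Lambda_{\con}$-module, transporting this structure along the isomorphism $H^{-n}(Y) \cong X$ endows $X$ with a $\Lambda_{\con}$-module structure compatible with its original $\Lambda$-module structure; this establishes the first claim. Finally, since $Y$ has cohomology concentrated in a single degree, the usual truncation argument yields $Y \cong H^{-n}(Y)[n] \cong X[n]$ in $\Db(\Lambda_{\con})$, so $X \cong Y[-n]$ in $\Db(\Lambda_{\con})$ as required.

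There is no real obstacle here; the only point requiring care is verifying that $-\otimes_{\Lambda_{\con}} (\Lambda_{\con})_{\Lambda}$ is exact and genuinely acts as restriction of scalars, so that cohomology on either side of the hypothesized isomorphism may be compared directly.
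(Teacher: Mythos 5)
Your proposal is correct and follows essentially the same route as the paper's proof: both use exactness of the restriction-of-scalars functor to compare cohomology term-by-term, conclude that $Y$ has cohomology concentrated in degree $-n$, and deduce that the $\Lambda$-module isomorphism $X \cong H^{-n}(Y)$ forces $X$ to be annihilated by the kernel $I$ of $\Lambda \twoheadrightarrow \Lambda_{\con}$. The paper spells out the annihilation step slightly more explicitly (taking $f \in I$ and applying the isomorphism to see $xf = 0$), but this is exactly what your phrase ``compatible with its original $\Lambda$-module structure'' is encoding.
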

\begin{proof}
Since $- \otimes_{\Lambda_{\con}} (\Lambda_{\con})_{\Lambda}$ is an exact functor, it preserves cohomology and hence there is an isomorphism of $\Lambda$-modules
\begin{align*}
\upphi_i \colon H^i(X[n]) \to H^{i}(Y)
\end{align*}
for all $i\in \mathbb{Z}$. When $i=-n$ this gives an isomorphism
\begin{align*}
\upphi \colon X \to H^{-n}(Y).
\end{align*}
But as $H^{-n}(Y)$ is a $\Lambda_{\con}$-module, it is annihilated by $I$, where $\Lambda_{\con} \cong \Lambda/I$. Hence, if $f \in I$, and $x \in X$, then 
\begin{align*}
\upphi(x . f)=\upphi(x) . f=0
\end{align*}
and hence, as $\upphi$ is an isomorphism, $x. f=0$. Thus, $X$ is annihilated by $I$ and so is a $\Lambda_{\con}$-module. If $i \neq -n$ then the above shows that the homology of $Y$ in degree $i$ is zero and hence
\begin{align*}
Y[-n] \cong H^{-n}(Y) \cong X
\end{align*}
in $\Db(\Lambda_{\con})$ as required. 
\end{proof}
Recall that associated to any arrow $s_i \colon N \to \upnu_i N$ in $X_{\cH_M}$, there are equivalences $F_i$ and $G_i$ as in notation \ref{funnot}. For a positive path $\upalpha \colonequals s_{i_m} \dots s_{i_1}$, $F_\upalpha$ and $G_\upalpha$ will denote the composition of the corresponding functors, as in notation \ref{compfun}. Further, $T_\upalpha$ will be the two-sided tilting complex inducing $G_\upalpha$, defined as in \eqref{talpha}. The following known result tracks simple modules through the $G_\upalpha$. 

\begin{lemma} \cite[\S 5]{faithful}\label{trackbigsimple}
Under the setup of \ref{compsetup}, let $\upalpha \colon C_L \to C_N$ be an atom in $X_{\cH_M}$. Writing $\Lambda \colonequals \End_R(L)$ and $\Gamma \colonequals \End_R(N)$, the following statements hold.
\begin{enumerate}\item $
G_{\upalpha}^{-1}((S_i)_{\Gamma}) \cong 
\left\{
	\begin{array}{ll}
		\Tor^{\Gamma}_1(S_i, T_{\upalpha})[1]  & \mbox{if $\upalpha$ ends (up to relations) with $s_i$;} \\
		S_i \otimes_{\Gamma}T_{\upalpha} & \mbox{otherwise. }
	\end{array}
\right.
$
\item If $\upalpha$ ends (up to relations) with $s_i$, then there exists $j \in \{1, \dots, n\}$ such that $S_j \hookrightarrow \Tor^{\Gamma}_1(S_i, T_{\upalpha})$ and $\upalpha$ starts (up to relations) with $s_j$. 
\end{enumerate}
\end{lemma}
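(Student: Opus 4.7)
Since $T_\upalpha \cong \Hom_R(L,N)$ by Theorem~\ref{specialpath}, Theorem~\ref{demma}(2) implies that $T_\upalpha$ is a tilting $\Gamma$-$\Lambda$-bimodule of projective dimension one on both sides. Taking a length-one projective resolution as a left $\Gamma$-module, the derived tensor $S_i\otimes^{\bf L}_\Gamma T_\upalpha$ has cohomology concentrated in degrees $-1$ and $0$, namely $\Tor^\Gamma_1(S_i,T_\upalpha)$ and $S_i\otimes_\Gamma T_\upalpha$; since $-\otimes^{\bf L}_\Gamma T_\upalpha = G_\upalpha^{-1}$ is an equivalence this complex is non-zero. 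So the content of (1) is the dichotomy that exactly one of these two Tor groups vanishes, with the vanishing determined by the last wall crossed by $\upalpha$.

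The plan is to prove (1) and (2) simultaneously by induction on the length $m$ of $\upalpha$. In the base case $m=1$ the path is a single mutation $\upalpha = s_j$, and the exchange sequence \eqref{exchangesequence1} supplies an explicit projective resolution of $T_{s_j}$ as a left $\Gamma$-module. A direct computation then shows that tensoring with $S_j$ gives $S_j\otimes_\Gamma T_{s_j}=0$ together with a non-zero $\Tor^\Gamma_1(S_j,T_{s_j})$ whose socle is precisely the simple $\Lambda$-module $S_j$, matching (2) since the length-one atom $s_j$ trivially starts with $s_j$; for $i\neq j$ the opposite vanishing holds and $S_i\otimes_\Gamma T_{s_j}$ identifies with the appropriate simple $\Lambda$-module.

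For the inductive step, Lemma~\ref{delignehelp} lets me factor $\upalpha\sim s_k\upbeta$ with $\upbeta$ an atom of length $m-1$, arranged so that when $\upalpha$ ends in $s_i$ we may take $k=i$ after the summand-relabelling induced by mutation. Using $G_\upalpha^{-1}\cong G_\upbeta^{-1}\circ G_{s_k}^{-1}$, I would apply the base case to $G_{s_k}^{-1}(S_i)$ and then the inductive hypothesis to $\upbeta$ acting on the output. The simple socle identified by (2) for $\upbeta$ is, again by Lemma~\ref{delignehelp}, indexed by a starting wall of $\upalpha$, which propagates (2) along the induction.

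The main obstacle is the consistency of degree shifts: a priori, the shift produced by the base case for $s_k$ and a further shift potentially contributed by $\upbeta$ could collide in the wrong cohomological degree. This is precisely where part (2) of the inductive hypothesis must feed back into (1): applied to $\upbeta$, it guarantees that $\upbeta$ cannot end with the wall whose simple was produced at the base case, so by (1) for $\upbeta$ no additional shift occurs. This interlocking, together with the combinatorial content of Lemma~\ref{delignehelp}, forces $G_\upalpha^{-1}(S_i)$ into the single cohomological degree predicted by (1) and pins down its socle as required by (2), which is why the two statements must be proved together rather than sequentially.
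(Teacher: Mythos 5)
The paper does not prove this lemma; it cites it verbatim from \cite[\S5]{faithful}, so there is no internal argument to compare against and I assess your sketch on its own.

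The framing in your first paragraph is correct: by Theorem~\ref{specialpath} and Theorem~\ref{demma}(2), $T_\upalpha \cong \Hom_R(L,N)$ has projective dimension one as a left $\Gamma$-module, so $S_i\otimes^{\bf L}_\Gamma T_\upalpha$ is concentrated in degrees $-1$ and $0$ and (1) amounts to a vanishing dichotomy. The gap is in the inductive step. Writing $\upalpha = s_{i_m}\upbeta$ is fine (this follows simply because an initial segment of a minimal positive path is minimal; Lemma~\ref{delignehelp}, which you invoke, concerns consecutive factors of a Deligne normal form and is vacuous for a single atom, whose normal form is itself). You then compute $G_\upalpha^{-1}(S_i)=G_\upbeta^{-1}\bigl(G_{s_{i_m}}^{-1}(S_i)\bigr)$. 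When $\upalpha$ ends with $s_i$ the inner functor yields $\Tor_1^\Gamma(S_i,T_{s_{i_m}})[1]$, which is in general a \emph{non-simple} module over the intermediate algebra $\End_R(L_{m-1})$, yet the inductive hypothesis only controls $G_\upbeta^{-1}$ on simple modules. Filtering $\Tor_1^\Gamma(S_i,T_{s_{i_m}})$ by composition factors and applying (1)-for-$\upbeta$ to each factor produces a collection of modules with potentially \emph{different} shifts, and the resulting long exact sequences need not collapse to a single cohomological degree. Your proposed fix --- using (2) to conclude that $\upbeta$ does not end with the wall of the socle factor $S_j$ --- handles exactly one composition factor. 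To close the induction you would need the much stronger claim that \emph{every} composition factor $S_\ell$ of $\Tor_1^\Gamma(S_i,T_{s_{i_m}})$ satisfies ``$\upbeta$ does not end with $s_\ell$'', and none of the ingredients you cite (the base case, Lemma~\ref{delignehelp}, or atom-ness) supplies that. That is the missing idea, and supplying it is the genuine content of \cite[\S5]{faithful}.
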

The commutative diagram \eqref{commutingsquare} allows us to prove the corresponding result for contraction algebras.
\begin{corollary} \label{tracksimple}
Under the setup of \ref{compsetup}, let $\upalpha \colon C_L \to C_N$ be an atom in $X_{\cH_M}$. Writing $\Lambda \colonequals \End_R(L)$ and $\Gamma \colonequals \End_R(N)$, then  \begin{align*}
F_{\upalpha}^{-1}(S_i) \cong 
\left\{
	\begin{array}{ll}
		\Tor^{\Gamma}_1(S_i, T_{\upalpha})[1]  & \mbox{if $\upalpha$ ends (up to relations) with $s_i$;} \\
		S_i \otimes_{\Gamma}T_{\upalpha} & \mbox{otherwise. }
	\end{array}
\right.
\end{align*}
\end{corollary}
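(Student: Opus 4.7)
The plan is to reduce the statement to its known MMA-analogue (Lemma \ref{trackbigsimple}) by passing through the commutative square \eqref{commutingsquare} of Theorem \ref{square}, and then to transport information back across the extension of scalars $-\otimes_{\Lambda_{\con}}\Lambda_{\con}$ using the technical Lemma \ref{technical}.

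First I would iterate the square \eqref{commutingsquare} along the arrows of $\upalpha = s_{i_m}\cdots s_{i_1}$. Each factor gives an isomorphism of bimodule complexes $\Gamma^{j}_{\!\con}\otimes^{\bf L}_{\Gamma^j}T_{i_j}\cong \tT_{i_j}\otimes_{\Gamma^{j-1}_{\!\con}}(\Gamma^{j-1}_{\!\con})_{\Gamma^{j-1}}$, and composing them shows that there is a natural isomorphism of functors
\[
(-\otimes_{\Gamma_{\!\con}}\Gamma_{\!\con})\circ F_\upalpha^{-1}\;\cong\;G_\upalpha^{-1}\circ(-\otimes_{\Gamma_{\!\con}}\Gamma_{\!\con})
\]
from $\Db(\Gamma_{\!\con})$ to $\Db(\Lambda)$. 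Applying this to the simple $S_i$, and noting that $S_i\otimes_{\Gamma_{\!\con}}\Gamma_{\!\con}\cong (S_i)_\Gamma$, I obtain
\[
F_\upalpha^{-1}(S_i)\otimes_{\Lambda_{\con}}(\Lambda_{\con})_\Lambda\;\cong\;G_\upalpha^{-1}((S_i)_\Gamma)
\]
in $\Db(\Lambda)$.

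Now I would invoke Lemma \ref{trackbigsimple}. Since $\upalpha$ is an atom, this identifies the right-hand side, up to shift, with the $\Lambda$-module $\Tor^\Gamma_1(S_i,T_\upalpha)$ placed in degree $-1$ if $\upalpha$ ends (up to relations) with $s_i$, and with $S_i\otimes_\Gamma T_\upalpha$ placed in degree $0$ otherwise. Either way, $F_\upalpha^{-1}(S_i)\otimes_{\Lambda_{\con}}(\Lambda_{\con})_\Lambda$ is concentrated in a single cohomological degree, and that cohomology is an explicit $\Lambda$-module $X$. Lemma \ref{technical}, applied with $Y=F_\upalpha^{-1}(S_i)$ and the appropriate shift $n\in\{0,-1\}$, then simultaneously upgrades $X$ to a $\Lambda_{\con}$-module and yields $F_\upalpha^{-1}(S_i)\cong X[-n]$ in $\Db(\Lambda_{\con})$. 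This is exactly the claimed formula.

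The main step to watch is the first one: the iterated compatibility $(-\otimes_{\Gamma_{\!\con}}\Gamma_{\!\con})\circ F_\upalpha^{-1}\cong G_\upalpha^{-1}\circ(-\otimes_{\Gamma_{\!\con}}\Gamma_{\!\con})$ requires that the isomorphism of Theorem \ref{square} is compatible under composition of mutations, which is built into the definitions \eqref{ttalpha}--\eqref{talpha} of $\tT_\upalpha$ and $T_\upalpha$ but should be stated explicitly to avoid ambiguity. Apart from this bookkeeping, the argument is a direct transfer of Lemma \ref{trackbigsimple} through the commutative square via Lemma \ref{technical}.
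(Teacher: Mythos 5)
Your proposal is correct and takes essentially the same approach as the paper: pass to the MMA level via the commutative square \eqref{commutingsquare}, apply Lemma \ref{trackbigsimple}, and transport back with Lemma \ref{technical}. The paper compresses this into two sentences, whereas you helpfully spell out the iteration of the square along the arrows of $\upalpha$ (which the paper leaves implicit). One small slip: when $\upalpha$ ends with $s_i$, the module $X=\Tor^{\Gamma}_1(S_i,T_\upalpha)$ satisfies $X[1]\cong F_\upalpha^{-1}(S_i)\otimes_{\Lambda_{\con}}(\Lambda_{\con})_\Lambda$, so the shift fed into Lemma \ref{technical} is $n=1$, giving $F_\upalpha^{-1}(S_i)\cong X[1]$; your claim that $n\in\{0,-1\}$ should read $n\in\{0,1\}$. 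This does not affect the substance of the argument.
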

\begin{proof}
The commutative diagram \eqref{commutingsquare} shows that
\begin{align*}
F_{\upalpha}^{-1}(S_i) \otimes_{\Lambda_{\con}} (\Lambda_{\con})_{\Lambda} \cong G_{\upalpha}^{-1}((S_i)_{\Gamma}).
\end{align*}
Combining  Lemmas \ref{trackbigsimple} and \ref{technical} then gives the result.
\end{proof}

The following technical lemma is needed for the main result of this section.

\begin{lemma} \label{generalmodules}
Under the setup of \ref{compsetup}, let $N \in \cmr$ be a basic maximal rigid object and let $\Lambda_{\con}\colonequals \uEnd_R(N)$. Suppose that $X \in \mod \Lambda_{\con}$ is nonzero.
\begin{enumerate}
\item If $\Ext^{\geq p}_{\Lambda_{\con}}(S_i,X)=0$ for all $1 \leq i \leq n$, then $\Ext^{\geq p}_{\Lambda_{\con}}(-,X)=0$. \label{generalmodules1}
\item $\Ext^i_{\Lambda_{\con}}(-, \Lambda_{\con}) =0$ if $i \geq 1$. \label{generalmodules2}
\item $\Hom_{\Lambda_{\con}}(X, \Lambda_{\con})  \neq 0$. \label{generalmodules3}
\end{enumerate}
\end{lemma}
\begin{proof}
\begin{enumerate}[leftmargin=0cm,itemindent=.6cm,labelwidth=\itemindent,labelsep=0cm,align=left]
\item Choose $Y \in \mod \Lambda_{\con}$. We need to show that $\Ext^{\geq p}_{\Lambda_{\con}}(Y,X)=0$. Filtering $Y$ by simple modules, an easy induction on the length of $Y$ establishes the result. 
\item $\Lambda_{\con}$ is a symmetric algebra by Proposition \ref{symmalg}, and thus is self injective.
\item Since $\Lambda_{\con}$ is a symmetric algebra there is an isomorphism
\begin{align*}
\Hom_{\Lambda_{\con}}(X, \Lambda_{\con}) \cong \Hom_k(X,k)
\end{align*} 
for all $X \in \mod \Lambda_{\con}$ \cite[2.7]{broue}. As $\Hom_k(-,k) \colon \mod \Lambda_{\con} \to \mod \Lambda_{\con}^{\mathrm{op}}$ is a duality, the statement follows. \qedhere
\end{enumerate}
\end{proof}
\vspace{-0.2cm}
The following is the main technical result of this section and it mirrors \cite[3.1]{bravthomas} and \cite[6.3]{faithful}.
\begin{proposition} \label{faithful2}
Under the setup of $\ref{compsetup}$, let $\upalpha \colon C_L \to C_N$ be a positive path in $X_{\cH_M}$ with Deligne normal form $\upalpha = \upalpha_k  \dots \upalpha_1$. Writing $\Lambda_{\con} \colonequals \uEnd_R(L)$ and $\Gamma_{\!\con} \colonequals \uEnd_R(N)$, then the following statements hold.
\begin{enumerate}
\item $\Ext_{\Gamma_{\!\con}}^{> k}(-,F_{\upalpha}(\Lambda_{\con}))=0$.
\item $\Ext_{\Gamma_{\!\con}}^{k}(S_i,F_{\upalpha}(\Lambda_{\con})) \neq 0$ if and only if $\upalpha_k$ ends (up to relations) with $s_i$.  
\item $\mathrm{max}\Big\{p \Bigm| \Ext_{\Gamma_{\!\con}}^{p}(\bigoplus\limits_{i=1}^n S_i,F_{\upalpha}(\Lambda_{\con})) \neq 0\Big\} = k$.
\end{enumerate}
\end{proposition}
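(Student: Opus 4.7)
The plan is to induct on $k$, the length of the Deligne normal form of $\upalpha$. Throughout the proof I exploit that $F_\upalpha$ is an equivalence, which gives
\[
\Ext^p_{\Gamma_{\!\con}}(S_i, F_\upalpha(\Lambda_{\con})) \cong \Hom_{\Db(\Lambda_{\con})}(F_\upalpha^{-1}(S_i), \Lambda_{\con}[p]),
\]
and combine this with the explicit description of $F_\upalpha^{-1}(S_i)$ supplied by Corollary \ref{tracksimple}, which converts the question into one about Ext over $\Lambda_{\con}$ against $\Lambda_{\con}$. For the base case $k=1$, $\upalpha$ is itself an atom. If $\upalpha$ ends with $s_i$, then $F_\upalpha^{-1}(S_i) \simeq \Tor^\Gamma_1(S_i, T_\upalpha)[1]$, so the Ext becomes $\Ext^{p-1}_{\Lambda_{\con}}(\Tor^\Gamma_1(S_i, T_\upalpha), \Lambda_{\con})$; this vanishes for $p \geq 2$ by Lemma \ref{generalmodules}(2) (self-injectivity of $\Lambda_{\con}$) and is nonzero at $p=1$ by Lemma \ref{generalmodules}(3), using that $\Tor^\Gamma_1(S_i, T_\upalpha)$ is nonzero by Lemma \ref{trackbigsimple}(2). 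If $\upalpha$ does not end with $s_i$, then $F_\upalpha^{-1}(S_i) \simeq S_i \otimes_\Gamma T_\upalpha$ is a $\Lambda_{\con}$-module concentrated in degree zero, and all positive Exts vanish by Lemma \ref{generalmodules}(2).

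For the inductive step, decompose $\upalpha = \upalpha_k \cdot \upbeta$ with $\upbeta \colonequals \upalpha_{k-1}\cdots\upalpha_1$; a direct check using the left-to-right construction of the Deligne normal form shows that $\upbeta$ is itself in normal form of length $k-1$, so the induction hypothesis applies to $F_\upbeta(\Lambda_{\con})$. Write $\Delta_{\con}$ for the contraction algebra at the intermediate vertex and $T_j$ for its simple modules. Passing through the equivalence $F_{\upalpha_k}$ rewrites the target as $\Hom_{\Db(\Delta_{\con})}(F_{\upalpha_k}^{-1}(S_i), F_\upbeta(\Lambda_{\con})[p])$, and I split on whether the atom $\upalpha_k$ ends with $s_i$. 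If it does, Corollary \ref{tracksimple} gives $F_{\upalpha_k}^{-1}(S_i) \simeq \Tor^\Gamma_1(S_i, T_{\upalpha_k})[1]$ (where $\Gamma = \End_R(N)$), reducing to $\Ext^{p-1}_{\Delta_{\con}}(\Tor^\Gamma_1(S_i, T_{\upalpha_k}), F_\upbeta(\Lambda_{\con}))$, which vanishes for $p \geq k+1$ by the inductive form of (1). For nonvanishing at $p=k$, Lemma \ref{trackbigsimple}(2) furnishes a simple submodule $T_j \hookrightarrow \Tor^\Gamma_1(S_i, T_{\upalpha_k})$ with $\upalpha_k$ starting with $s_j$; Lemma \ref{delignehelp} then forces $\upalpha_{k-1}$ to end with $s_j$, so the inductive form of (2) yields $\Ext^{k-1}_{\Delta_{\con}}(T_j, F_\upbeta(\Lambda_{\con})) \neq 0$. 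The short exact sequence $0 \to T_j \to \Tor^\Gamma_1(S_i, T_{\upalpha_k}) \to X \to 0$, together with $\Ext^k_{\Delta_{\con}}(X, F_\upbeta(\Lambda_{\con})) = 0$ from inductive (1), produces a surjection from $\Ext^{k-1}_{\Delta_{\con}}$ of the Tor onto this nonzero Ext, delivering the required nonvanishing.

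In the complementary case that $\upalpha_k$ does not end with $s_i$, Corollary \ref{tracksimple} identifies $F_{\upalpha_k}^{-1}(S_i) \simeq S_i \otimes_\Gamma T_{\upalpha_k}$ as a $\Delta_{\con}$-module, so the Ext becomes $\Ext^p_{\Delta_{\con}}(S_i \otimes_\Gamma T_{\upalpha_k}, F_\upbeta(\Lambda_{\con}))$, which vanishes for all $p \geq k$ by inductive (1). The two cases together prove parts (1) and (2) on simples; part (1) then upgrades to arbitrary modules by filtering by simples and invoking Lemma \ref{generalmodules}(1), while (3) is immediate, since the nonempty atom $\upalpha_k$ must end with \emph{some} $s_i$. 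The main obstacle will be the Case A nonvanishing in the inductive step: it simultaneously relies on locating a specific simple inside the Tor via Lemma \ref{trackbigsimple}(2), matching the first wall of $\upalpha_k$ with the last wall of $\upalpha_{k-1}$ via Lemma \ref{delignehelp}, and invoking \emph{both} inductive statements (1) and (2) through a long exact sequence; everything else is bookkeeping.
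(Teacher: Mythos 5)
Your proof is correct and follows essentially the same route as the paper: induction on the number of Deligne factors, transferring $\Ext^p_{\Gamma_{\!\con}}(S_i, F_\upalpha(\Lambda_{\con}))$ back through $F_{\upalpha_k}$ using Corollary \ref{tracksimple}, splitting on whether $\upalpha_k$ ends with $s_i$, and in the nonvanishing case using Lemma \ref{trackbigsimple}(2) together with Lemma \ref{delignehelp} and a long exact sequence to propagate the nonzero $\Ext^{k-1}$ class along the embedding of a simple into the Tor module. The only cosmetic differences are that you note explicitly that $\upbeta$ inherits the Deligne normal form of length $k-1$ and that the Tor module is nonzero, both of which the paper leaves implicit.
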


\begin{proof}
Part $(3)$ is clearly a consequence of the first two parts. We prove parts $(1)$ and $(2)$ together using induction on $k$.

\noindent \textbf{Base case: $\mathbf{k=1}$.}
There are two cases to consider, namely if $\upalpha$ ends (up to relations) with $s_i$ or not. From now on, for ease of reading, we will omit the statement `up to relations'. 
\begin{enumerate}[label=(\alph*),leftmargin=0cm,itemindent=.6cm,labelwidth=\itemindent,labelsep=0cm,align=left]
\item If $\upalpha$ does not end with $s_i$, then as $k=1$, $\upalpha$ is an atom and so by Corollary \ref{tracksimple}, 
\begin{align*}
F^{-1}_{\upalpha}(S_i) \cong X
\end{align*} 
for some $\Lambda_{\con}$-module $X$. Then,
\begin{align*}
\Ext_{\Gamma_{\!\con}}^{\geq 1}(S_i,F_{\upalpha}(\Lambda_{\con})) &\cong \Ext_{\Lambda_{\con}}^{\geq1}(F_{\upalpha}^{-1}(S_i),\Lambda_{\con})\\
&\cong \Ext_{\Lambda_{\con}}^{\geq 1}(X,\Lambda_{\con}) = 0
\end{align*}
where the last equality holds by Lemma \ref{generalmodules}\ref{generalmodules2}.
\item If $\upalpha$ ends with $s_i$, then by Corollary \ref{tracksimple}, 
\begin{align*}
F^{-1}_{\upalpha}(S_i) \cong Y[1]
\end{align*} 
for some $\Lambda_{\con}$-module $Y$. Then,
\begin{align*}
\Ext_{\Gamma_{\!\con}}^1(S_i,F_{\upalpha}(\Lambda_{\con})) &\cong \Ext_{\Lambda_{\con}}^1(F_{\upalpha}^{-1}(S_i),\Lambda_{\con})\\
&\cong \Ext_{\Lambda_{\con}}^1(Y[1],\Lambda_{\con})\\
&\cong \Hom_{\Lambda_{\con}}(Y,\Lambda_{\con}) \neq 0
\end{align*}
where the last part comes from Lemma \ref{generalmodules}\ref{generalmodules3}. A similar calculation gives
\begin{align*}
\Ext_{\Gamma_{\!\con}}^{\geq 2}(S_i,F_{\upalpha}(\Lambda_{\con})) \cong \Ext_{\Lambda_{\con}}^{\geq 1}(Y,\Lambda_{\con}) =0.
\end{align*}
\end{enumerate}
Combining the two cases shows part $(2)$ of the result when $k=1$. Further, it shows that $\Ext_{\Gamma_{\!\con}}^{ \geq 2}(S_i,F_{\upalpha}(\Lambda_{\con}))=0$ for all $i \in \{1, \dots, n\}$. Applying Lemma \ref{generalmodules}\ref{generalmodules1} gives $\Ext_{\Gamma_{\!\con}}^{ \geq 2}(-,F_{\upalpha}(\Lambda_{\con}))=0$, also proving part $(1)$ for $k=1$. \\

\noindent \textbf{Inductive Step.} We now assume that the result is true for all paths with less than $k$ Deligne factors. Write $\upalpha = \upalpha_k \upbeta$ where $\upbeta \colonequals \upalpha_{k-1}  \dots \upalpha_1$. Write $\Delta_{\con}$ for the contraction algebra associated to the chamber at the end of $\upbeta$. By the inductive hypothesis,
\begin{align}
\Ext_{\Delta_{\con}}^{\geq k}(-,F_{\upbeta}(\Lambda_{\con})) = 0 \label{induct}
\end{align}
and further, $\Ext_{\Delta_{\con}}^{ k-1}(S_i,F_{\upbeta}(\Lambda_{\con})) \neq 0$ if and only if $\upbeta$ ends with $s_i$. Again, we consider two cases:
\begin{enumerate}[label=(\alph*),leftmargin=0cm,itemindent=.6cm,labelwidth=\itemindent,labelsep=0cm,align=left]
\item If $\upalpha_k$ does not end with $s_i$, then by Corollary \ref{tracksimple}, 
\begin{align*}
F^{-1}_{\upalpha_k}(S_i) \cong X
\end{align*} 
for some $\Delta_{\con}$-module $X$. Then,
\begin{align*}
\Ext_{\Gamma_{\!\con}}^{\geq k}(S_i,F_{\upalpha}(\Lambda_{\con})) &\cong \Ext_{\Delta_{\con}}^{\geq k}(F_{\upalpha_k}^{-1}(S_i),F_{\upbeta}(\Lambda_{\con}))\\
&\cong \Ext_{\Delta_{\con}}^{\geq k}(X,F_{\upbeta}(\Lambda_{\con}))\\
&= 0
\end{align*}
where the last equality holds by the inductive hypothesis \eqref{induct}.
\item If $\upalpha_{k}$ ends with $s_i$, then by Corollary \ref{tracksimple}, 
\begin{align*}
F^{-1}_{\upalpha_k}(S_i) \cong Y[1]
\end{align*} 
where $Y \colonequals \Tor^{\Gamma}_1(S_i, T_{\upalpha})$ is a $\Delta_{\con}$-module. Then
\begin{align*}
\Ext_{\Gamma_{\!\con}}^{\geq k+1}(S_i,F_{\upalpha}(\Lambda_{\con})) \cong \Ext_{\Delta_{\con}}^{\geq k}(Y,F_{\upbeta}(\Lambda_{\con}))=0,
\end{align*}
again using the inductive hypothesis \eqref{induct}. Further,
\begin{align*}
\Ext_{\Gamma_{\!\con}}^{k}(S_i,F_{\upalpha}(\Lambda_{\con})) \cong \Ext_{\Delta_{\con}}^{k-1}(Y,F_{\upbeta}(\Lambda_{\con}))
\end{align*}
and so it suffices to show that $\Ext_{\Delta_{\con}}^{k-1}(Y,F_{\upbeta}(\Lambda_{\con})) \neq 0$. As $Y \colonequals \Tor^{\Gamma}_1(S_i, T_{\upalpha})$, Lemma \ref{trackbigsimple}$(2)$ shows there exists a simple module $S_j$ of $\Delta_{\con}$ such that $S_j \hookrightarrow Y$ and $\upalpha_k$ starts with $s_j$. Applying $\Hom_{\Delta_{\con}}(-,F_\upbeta(\Lambda_{\con}))$ to the short exact sequence
\begin{align*}
0 \to S_j \to Y \to Y/S_j \to 0,
\end{align*}
gives a long exact sequence
\begin{align*}
\dots \to \Ext_{\Delta_{\con}}^{k-1}(Y,F_{\upbeta}(\Lambda_{\con})) \to \Ext_{\Delta_{\con}}^{k-1}(S_j,F_{\upbeta}(\Lambda_{\con})) \to \Ext_{\Delta_{\con}}^{k}(Y/S_j,F_{\upbeta}(\Lambda_{\con})) \to \cdots
\end{align*}
where the last term is zero by the inductive hypothesis \eqref{induct}. Since $\upalpha_k$ starts with $s_j$, Lemma \ref{delignehelp} shows $\upbeta$ must also end with $s_j$ otherwise $\upalpha$ would not be in Deligne form. Thus, by the inductive hypothesis,
\begin{align*}
\Ext_{\Delta_{\con}}^{k-1}(S_j,F_{\upbeta}(\Lambda_{\con})) \neq 0
\end{align*}
and hence by the exact sequence, $\Ext_{\Delta_{\con}}^{k-1}(Y,F_{\upbeta}(\Lambda_{\con})) \neq 0$, completing the proof. \qedhere
\end{enumerate} 
\end{proof}
\vspace{0.2cm}
\begin{corollary} \label{faithful}
The functor $\Phi$ from Corollary \ref{functordef} is a faithful functor.
\end{corollary}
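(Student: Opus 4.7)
The plan is to invoke \cite[2.11]{faithful}, which reduces faithfulness of $\Phi$ to faithfulness on positive paths. Concretely, given positive paths $\upalpha, \upbeta$ in $X_{\cH_M}$ sharing source $C_L$ and target $C_N$ and satisfying $F_\upalpha \cong F_\upbeta$ as standard equivalences between $\Lambda_{\con} \colonequals \uEnd_R(L)$ and $\Gamma_{\!\con} \colonequals \uEnd_R(N)$, I must show $\upalpha \sim \upbeta$. The induction will be on the length $k$ of the Deligne normal form of $\upalpha$; this is a valid parameter because Proposition \ref{faithful2}(3) identifies
\[
k = \max\{p : \Ext^p_{\Gamma_{\!\con}}(\textstyle\bigoplus_{i=1}^n S_i, F_\upalpha(\Lambda_{\con})) \neq 0\},
\]
an invariant of the isomorphism class of $F_\upalpha$, so $\upbeta$ has the same Deligne-length $k$.

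The base cases $k=0$ and $k=1$ are immediate: for $k=0$ both paths are identities, and for $k=1$ both $\upalpha$ and $\upbeta$ are atoms with the same endpoints, hence equivalent by axiom (2) of the relation $\sim$. For the inductive step, write Deligne normal forms $\upalpha = \upalpha_k \cdots \upalpha_1$ and $\upbeta = \upbeta_k \cdots \upbeta_1$. By Proposition \ref{faithful2}(2), the set of walls at which $\upalpha_k$ ends is determined by $\{i : \Ext^k_{\Gamma_{\!\con}}(S_i, F_\upalpha(\Lambda_{\con})) \neq 0\}$, and so this set agrees with the corresponding set for $\upbeta_k$. The crucial step is then to conclude $\upalpha_k \sim \upbeta_k$. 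Once this is in hand, functoriality of $\Phi$ yields $F_{\upalpha_k} \cong F_{\upbeta_k}$, and composing with its inverse gives $F_{\upalpha_{k-1} \cdots \upalpha_1} \cong F_{\upbeta_{k-1} \cdots \upbeta_1}$; applying the inductive hypothesis to these shorter paths and then concatenating $\upalpha_k \sim \upbeta_k$ produces the desired $\upalpha \sim \upbeta$.

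The main obstacle is the identification $\upalpha_k \sim \upbeta_k$. The raw data extracted from $F_\upalpha$—target chamber $C_N$ together with the ending walls of $\upalpha_k$—does not generally determine an atom in a simplicial arrangement; for instance, in Example \ref{eightchambers} the atom $s_2$ starting at $C_{M_1}$ and the atom $s_2 s_1$ starting at $C_M$ both end at $C_{M_{21}}$ with the same ending wall. The missing ingredient is the maximality structure of the Deligne normal form together with Lemma \ref{delignehelp}: each factor $\upalpha_i$ is the unique maximal atom initial to the path $\upalpha_k \cdots \upalpha_i$, and $\upalpha_k$ begins by crossing the wall that $\upalpha_{k-1}$ crossed last. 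Following \cite[\S 6]{faithful} \emph{mutatis mutandis}, one combines this combinatorial rigidity with Proposition \ref{faithful2}—iterated on auxiliary compositions such as $F_{\upgamma\upalpha}$ for suitably chosen atoms $\upgamma$ emanating from $C_N$—to pin down the source chamber, and hence the equivalence class, of $\upalpha_k$. The translation of the MMA argument of \cite[\S 6]{faithful} to the contraction-algebra setting is enabled by the commutative square \eqref{commutingsquare} together with Corollary \ref{tracksimple}, which ensures that the Ext invariants in $\Db(\Gamma_{\!\con})$ correctly track simples through compositions of the $F_i$ exactly as the corresponding MMA invariants do through the $G_i$.
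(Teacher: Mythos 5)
Your overall strategy (reduce to positive paths via \cite[2.11]{faithful}, then use the $\Ext$-invariants of Proposition \ref{faithful2}) is the right one and matches the sources the paper cites. However, your inductive step goes through the wrong decomposition, and as a result you run into a genuine obstruction that you correctly identify but do not resolve: the target chamber $C_N$ together with the ending-wall set of $\upalpha_k$ does \emph{not} determine the atom $\upalpha_k$ (in Example \ref{eightchambers}, the atoms to $C_{M_{21}}$ from $C_{M_1}$, $C_M$ and $C_{M_2}$ all have ending-wall set $\{s_2\}$). Appealing to ``the maximality structure of the Deligne normal form'' and ``auxiliary compositions $F_{\upgamma\upalpha}$'' does not close this gap, since the Deligne normal form in the paper is built greedily from the left, so its last factor $\upalpha_k$ is not characterised by a maximality condition at $C_N$.

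The argument of \cite[6.5]{faithful} and \cite[3.1]{bravthomas} avoids this issue entirely by peeling off a \emph{single arrow} rather than the whole final atom, inducting on the total word length $\ell(\upalpha)+\ell(\upbeta)$ rather than the Deligne length. Concretely, suppose $F_\upalpha \cong F_\upbeta$ with $\upalpha,\upbeta$ positive and sharing endpoints. If $\upalpha$ is trivial, Proposition \ref{faithful2}(3) forces the Deligne length of $\upbeta$ to be $0$, so $\upbeta$ is trivial. Otherwise, by Proposition \ref{faithful2}(2) the sets $\{i : \upalpha_k \text{ ends with } s_i\}$ and $\{i : \upbeta_k \text{ ends with } s_i\}$ coincide and are nonempty; pick any $s_i$ in this common set. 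Then $\upalpha \sim s_i\upalpha'$ and $\upbeta \sim s_i\upbeta'$ with $\upalpha',\upbeta'$ positive of strictly smaller length, sharing source $C_L$ and target the chamber adjacent to $C_N$ across the $s_i$-wall. Since $F_i$ is an equivalence, $F_{\upalpha'}\cong F_{\upbeta'}$, and the inductive hypothesis gives $\upalpha'\sim\upbeta'$, hence $\upalpha \sim s_i\upalpha' \sim s_i\upbeta'\sim\upbeta$. This is the ``mutatis mutandis'' translation the paper has in mind, enabled as you say by Corollary \ref{tracksimple} and the commutative square \eqref{commutingsquare}; but it requires only \emph{one} common ending wall, not the whole of $\upalpha_k$, which is exactly why it works where your atom-peeling cannot.
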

\begin{proof}
This follows exactly as in \cite[6.5]{faithful} or \cite[3.1]{bravthomas}.
\end{proof}
The following is an immediate consequence of Corollary \ref{faithful} and is the main result of this section.
\begin{corollary} \label{groupaction}
Suppose that $f \colon X \to \Spec R$ is a minimal model of a complete local isolated cDV singularity, that $\cH$ in $\mathbb{R}^n$ is the associated hyperplane arrangement, and that $\Lambda_{\con}$ is the associated contraction algebra. Then there is an injective group homomorphism
\begin{align*}
\pi_1(\mathbb{C}^n \backslash \cH_{\mathbb{C}}) \to \mathrm{Auteq}(\Db(\Lambda_{\con}))
\end{align*}
where  $\cH_{\mathbb{C}}$ is the complexification of $\cH$.
\end{corollary}
This shows the group structure of the autoequivalences from compositions of flop functors (which is shown to be $\pi_1(\mathbb{C}^n \backslash \cH_{\mathbb{C}})$ in \cite[6.7]{faithful}) is the same as the group structure of the autoequivalences from compositions of the $F_i$. Thus, the symmetry group coming from the geometry can be seen by studying the derived category of the contraction algebras, giving evidence towards the conjecture \cite[1.3]{rigidequiv} of Donovan--Wemyss. 


\section{Visualising the Derived Equivalence Class} \label{picture}
In this final section, we first show that the hyperplane arrangement $\cH_M$ from Setup \ref{compsetup} can be constructed from the two-term tilting theory of a contraction algebra. Then we combine the main results of this paper with the author's previous work \cite{rigidequiv} to obtain a full picture of the derived equivalence class of a contraction algebra.
To construct the hyperplane arrangement from a contraction algebra, we use \emph{g-vectors} as studied in \cite{dij}. Recall that if $A$ is a finite dimensional $k$-algebra with indecomposable projective modules $P_1, \dots, P_n$, then the $g$-vector of a complex 
\begin{align*}
P \colonequals \bigoplus_{i=1}^n P_i^{b_i} \to \bigoplus_{i=1}^n P_i^{a_i}
\end{align*}
is defined as $g(P)\colonequals (a_i -b_i)_{i=1}^n \in \mathbb{R}^n$. Thus, given a two-term silting complex $T \colonequals \bigoplus_{i=1}^n T_i$, there are $n$ associated $g$-vectors $g(T_1), \dots g(T_n)$ and the positive cone of these is
\begin{align*}
C_T \colonequals \Big\{ \sum_{i=1}^{n} \upvartheta_ig(T_i) \Bigm| \upvartheta_i >0 \ \forall 1 \leq i \leq n \Big\} \subset \mathbb{R}^n.
\end{align*}
The union of these cones across all two-term silting complexes, which is known to be disjoint from \cite[6.7]{dij}, is called the \emph{$g$-vector fan} of $A$.  
\begin{theorem} \label{intrinsic}
Suppose that $f \colon X \to \Spec R$ is a minimal model of a complete local isolated cDV singularity, that $\cH$ in $\mathbb{R}^n$ is the associated hyperplane arrangement, and that  $\Lambda_{\con}$ is the associated contraction algebra. \begin{enumerate}
\item The chambers of $\cH$ are precisely those in the $g$-vector fan of $\Lambda_{\con}$.
\item The subgroup of $\mathrm{Auteq}(\Db(\coh X))$ formed by flop functors, their compositions and inverses can be recovered from $\Lambda_{\con}$.
\end{enumerate}
\end{theorem}
\begin{proof} 
\begin{enumerate}[leftmargin=0cm,itemindent=.6cm,labelwidth=\itemindent,labelsep=0cm,align=left]
\item Suppose that $M \colonequals \bigoplus_{i=0}^n M_i \in \cmr$, with $M_0 \cong R$, is the basic maximal rigid object associated to $f$ so that $\Lambda_{\con} \cong \uEnd_R(M)$. Label the projective $\Lambda_{\con}$-modules $P_i \colonequals \uHom_R(M,M_i)$. 
By the bijection in Theorem \ref{ctbij}, each two-term tilting complex $T \colonequals \bigoplus_{i=1}^nT_i$ corresponds to a basic maximal rigid object $N \colonequals \bigoplus_{i=0}^n N_i$ in $\cmr$ with $N_0 \cong R$. For each $0 \leq i \leq n$, \cite[4.12]{mmas} shows that there exists an exact sequence
\begin{align}
0 \to  \bigoplus_{j=0}^n M_j^{{b_{ij}}} \xrightarrow{f_i}  \bigoplus_{j=0}^n M_j^{{a_{ij}}} \xrightarrow{g_i} N_i \to 0. \label{gvector}
\end{align}
As $M$ is rigid, applying $\Hom_R(M,-)$ to each sequence yields a projective resolution of $\Hom_R(M,N_i)$ and hence, by \cite[4.6(2)]{faithful}, the chamber associated to $N$ in the hyperplane arrangement $\cH$ is
\begin{align*}
C_N = \Big\{ \sum_{i=1}^{n} \upvartheta_i \Big( \sum_{j=1}^{n}(a_{ij}-b_{ij}) \mathbf{e_j} \Big) \Bigm| \upvartheta_i >0 \ \forall 1 \leq i \leq n \Big\} \subset \mathbb{R}^n,
\end{align*}
where $\mathbf{e_j}$ is the $j^{th}$ standard basis vector of $\mathbb{R}^n$. However, the exact sequences \eqref{gvector} descend to the triangles
\begin{align*}
\bigoplus_{j=1}^n M_j^{{b_{ij}}} \xrightarrow{f_i}  \bigoplus_{j=1}^n M_j^{{a_{ij}}} \xrightarrow{g_i} N_i 
\end{align*}
in $\ucmr$ which shows that, under the bijection of Theorem \ref{ctbij} (see \cite[4.7]{[AIR]} for the details), $N_i$ corresponds to the two-term complex
\begin{align*}
\bigoplus_{j=1}^n \uHom_R(M,M_j)^{{b_{ij}}} \xrightarrow{}  \bigoplus_{j=1}^n \uHom_R(M,M_j)^{{a_{ij}}}.
\end{align*} 
By assumption, $N_i$ corresponds to $T_i$ and so we deduce that $g(T_i)=(a_{ij}-b_{ij})_{j=1}^n$. Thus, $C_T=C_N$ and so, as the $C_N$ sweep out the chambers of $\cH$, so do the $C_T$. 
\item If $\cH$ is the hyperplane arrangement associated to $f$, then \cite[6.7]{faithful} shows that the subgroup of $\mathrm{Auteq}(\Db(\coh X))$ formed by flop functors, their compositions and inverses is precisely $\fundgp(\mathbb{C}^n \backslash \cH_{\mathbb{C}})$. By part (1), we can recover $\cH$ from $\Lambda_{\con}$ and hence also the group $\fundgp(\mathbb{C}^n \backslash \cH_{\mathbb{C}})$ as required. \qedhere
\end{enumerate}
\end{proof}
This provides further evidence to suggest that the contraction algebras of $\Spec R$ control the geometry. However, perhaps more importantly, it also shows that the hyperplane arrangement, which we have seen controls the derived equivalence class of the contraction algebras, is intrinsic to the algebra. Thus, starting with just a contraction algebra, we can recover a `picture' of its entire derived equivalence class in the following sense.

Given a contraction algebra $\Lambda_{\con}$ of a complete local isolated cDV singularity $\Spec R$, let $\cH$ be the hyperplane arrangement determined by two-term tilting complexes. By Theorem \ref{intrinsic}, this matches the hyperplane arrangement $\cH_M$ for some maximal rigid object $M \in \cmr$ such that $\Lambda_{\con} \cong \uEnd_R(M)$. In particular, the chambers of $\cH$ naturally correspond to maximal rigid objects and paths correspond to the derived equivalences $F_\upalpha$. Further, by Theorem \ref{chambersmutate}, the directed skeleton graph of $\cH$ is the double of the mutation graph of maximal rigid objects. The latter object was studied in \cite{rigidequiv} and we now combine the results from there with the results from this paper to obtain the following summary theorem, where all the results are contained in other parts of these two papers, but here we group them all together to give the overall picture. For clarity, note that if an arrow $s_i$ is assigned the functor $F_i$, the path corresponding to travelling along this arrow backwards, namely $s_i^{-1}$, is assigned the functor $F_i^{-1}$.

\begin{theorem} \label{summary} 
Given a contraction algebra $\Lambda_{\con}$ of a complete local isolated cDV singularity $\Spec R$, let $\cH$ be the hyperplane arrangement determined by two-term tilting complexes. Choosing a maximal rigid object $M \in \cmr$ such that $\Lambda_{\con} \cong \uEnd_R(M)$, then the following hold.
\begin{enumerate}
\item The only basic algebras in the derived equivalence class of $\Lambda_{\con}$ are the contraction algebras of $\Spec R$; or equivalently, the endomorphism algebras of two-term tilting complexes of $\Lambda_{\con}$. In particular, there are finitely many such algebras.
\item For any standard derived equivalence 
\begin{align*}
F \colonequals \RHom_{\Lambda_{\con}}(\tT, -) \colon \Db(\Lambda_{\con}) \to \Db(\Gamma),
\end{align*}
there exists a (not necessarily positive) path $\upalpha$ in $X_{\cH}$ starting in chamber $C_M$, and an algebra isomorphism $\upgamma \colon \Gamma \to \Gamma_{\con} \colonequals \uEnd_R(\upnu_\upalpha M)$ such that 
\begin{align*}
F \cong \RHom_{\Gamma}(_\upgamma\Gamma_{\con},-)\circ F_\upalpha,
\end{align*}
where $F_\upalpha$ is as in notation \ref{compfun}. In other words, any standard derived equivalence (up to algebra isomorphism) from $\Lambda_{\con}$ is obtained as a path starting at $C_M$. 
\item All standard derived equivalences from $\Lambda_{\con}$ are the composition of two-term tilts and their inverses.
\item The standard autoequivalences of $\Db(\Lambda_{\con})$ are determined precisely (up to algebra isomorphism) by paths $C_M \to C_N$ in the mutation graph, where $N$ satisfies $\uEnd_R(N) \cong \Lambda_{\con}$. 
\item If $\upalpha, \upbeta \colon C_M \to C_N$ are two paths, then $F_\upalpha \cong F_\upbeta$ if and only if $\upalpha \sim \upbeta$ in the Deligne groupoid. In other words, the $F_i$ satisfy precisely the Deligne groupoid relations.
\item For any atom $\upalpha \colon C_M \to C_N$, the two-sided tilting complex giving $F_\upalpha$ is given in Corollary \ref{directequivalences}.
\item The atoms starting at $C_M$ determine precisely (up to algebra isomorphism) the standard equivalences induced by two-term tilting complexes of $\Lambda_{\con}$.
\end{enumerate}
\end{theorem}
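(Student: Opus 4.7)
The plan is to prove each of the six items in turn, largely by assembling results established earlier in the paper together with the author's prior work \cite{rigidequiv}. The items split naturally into three groups: immediate restatements (5) and (6); consequences of the faithfulness theorem (3) and (4); and the substantive items (1) and (2) which require input from \cite{rigidequiv}.

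First I would dispose of the easy items. Item (6) is precisely Corollary \ref{twotermareatoms}, and item (5) is precisely Corollary \ref{directequivalences}, so there is nothing to prove. For item (3), the assignment $\upalpha \mapsto F_{\upalpha}$ extends to a functor $\Phi \colon \cG_{\cH} \to \mathbb{F}$ by Corollary \ref{functordef}, so $\upalpha \sim \upbeta$ implies $F_{\upalpha} \cong F_{\upbeta}$; the converse is the content of Corollary \ref{faithful} (faithfulness of $\Phi$). Item (4) is immediate from item (2): a standard autoequivalence is a standard equivalence $\Lambda_{\con} \to \Gamma_{\con}$ with $\Gamma_{\con} \cong \Lambda_{\con}$, so by (2) it is of the form $F_{\upalpha}$ for a path $\upalpha \colon C_M \to C_N$ where $\uEnd_R(N) \cong \Lambda_{\con}$.

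The substantive work is in (1) and (2), which rely on the main results of \cite{rigidequiv}. In that paper, it is established that any basic algebra derived equivalent to $\Lambda_{\con}$ is isomorphic to $\uEnd_R(N)$ for some basic maximal rigid object $N \in \ucmr$ (so in particular there are only finitely many, by the finiteness part of Theorem \ref{HMMPbij}), and that any standard derived equivalence from $\Lambda_{\con}$ is, up to algebra automorphism, induced by $\upmu_{\upalpha}\Lambda_{\con}$ for some positive path $\upalpha$ starting at $C_M$ in the mutation graph. Accepting these inputs, item (1) follows immediately, and for the second part of (1) we use that by Theorem \ref{atomstwoterm} the endomorphism algebra of any two-term tilting complex $\upmu_{\upalpha}\Lambda_{\con}$ (for $\upalpha$ an atom) is precisely $\uEnd_R(\upnu_{\upalpha}M)$, via Corollary \ref{twosidedctbij}. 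For item (2), given a standard equivalence coming from $\upmu_{\upalpha}\Lambda_{\con}$ for some positive path $\upalpha = s_{i_m}\cdots s_{i_1}$, Proposition \ref{mutationpaths}(2) identifies $\upmu_{\upalpha}\Lambda_{\con}$ with $\tT_{\upalpha}$ in $\Db(\Lambda_{\con})$; hence by Proposition \ref{rouq} the induced standard equivalence agrees (up to algebra automorphism) with $F_{\upalpha}$. Allowing inverse paths extends this to all standard equivalences, not just those from positive paths, and the final sentence of (2) follows because each $F_i$ is by definition the two-term tilt associated to a single mutation (see the discussion preceding Proposition \ref{mutationpaths} and Corollary \ref{twotermareatoms}).

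The main obstacle is not in this paper but rather lies in quoting \cite{rigidequiv} correctly: the classification of basic algebras in the derived equivalence class (item (1)) and the statement that all standard equivalences come from iterated mutation (item (2)) are nontrivial results whose proofs do not live here. Once those are invoked as black boxes, the remaining work is bookkeeping: matching the tilting complexes $\upmu_{\upalpha}\Lambda_{\con}$ produced by iterated mutation with the bimodule complexes $\tT_{\upalpha}$ constructed in \S\ref{explicit}, which is done by Proposition \ref{mutationpaths}, and then applying Proposition \ref{rouq} to pass from tilting complexes to standard equivalences up to algebra automorphism.
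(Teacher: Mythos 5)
Your proposal is correct and takes essentially the same approach as the paper: the paper's proof likewise dispatches (5) and (6) as restatements of Corollaries \ref{directequivalences} and \ref{twotermareatoms}, derives (3) from Corollary \ref{faithful}, notes (4) is a special case of (2), and invokes \cite{rigidequiv} (their 4.12 and 3.15) for the substantive content of (1) and (2) together with Corollary \ref{twosidedctbij}. You provide slightly more detail on how to unwind \cite[3.15]{rigidequiv} via Proposition \ref{mutationpaths} and Proposition \ref{rouq}, but the overall structure is identical.
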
 
\begin{proof}
The first statement of part $(1)$ is \cite[4.12]{rigidequiv} while the second follows as the bijection between maximal rigid objects and two-term tilting complexes preserves endomorphism rings by Corollary \ref{twosidedctbij}. Part $(2)$ is \cite[3.16]{rigidequiv}, which uses Proposition \ref{rouq} and the fact that $\tT$ must be isomorphic to $\upmu_\upalpha \Lambda_{\con}$ for some path $\upalpha$, similar to the proof of Corollary \ref{twotermareatoms}. Parts $(3)$ and $(4)$  follow directly from $(2)$, while part $(5)$ follows from Corollary \ref{faithful}. Part $(6)$ is Corollary \ref{directequivalences} and part $(7)$ is Corollary \ref{twotermareatoms}.
\end{proof}
\noindent In this way, the mutation graph of maximal rigid objects (or equivalently of two-term tilting complexes of $\Lambda_{\con}$) can be viewed as a `picture' of the derived equivalence class; the contraction algebras (the basic members of the equivalence class) sit at the vertices and paths determine all standard derived equivalences. Using the Deligne groupoid, which is also completely determined by the two-term tilting complexes of $\Lambda_{\con}$, we are further able to control the composition of these equivalences and thus obtain a complete understanding of the members of the derived equivalence class and of the standard equivalences between them. In particular, part (6) of Theorem \ref{summary} allows us to determine all the autoequivalences of a contraction algebra and the relations between them.

\begin{remark}
Using part $(6)$ of Theorem \ref{summary}, we would like to say the injective group homomorphism 
\begin{align*}
\pi_1(\mathbb{C}^n \backslash \cH_{\mathbb{C}}) \to \mathrm{Auteq}(\Db(\Lambda_{\con}))
\end{align*}
from Corollary \ref{groupaction} is almost surjective in the sense that it hits every standard equivalence (up to algebra isomorphism). This might allow us to obtain a similar result to that of \cite[4.4]{Miz}; determining the group of standard equivalences, otherwise known as the \emph{derived picard group} of $\Lambda_{\con}$, as a semi-direct product. However, for a maximal rigid object $M$, the object $\Omega M$ is another maximal rigid object with the same stable endomorphism algebra as $M$. In particular, each contraction algebra appears at least twice in our `picture' of the derived equivalence class and thus some autoequivalences are obtained as paths between their corresponding vertices, rather than as a loop at a single vertex. This shows the need for the groupoid picture when visualising the derived equivalence class.
\end{remark}

\appendix

\section{Tracking Through Derived Equivalences} 

This appendix is devoted to proving some of the technical results referred to in the proofs of \S\ref{onesidedsection}. 

Recall that for a complete local isolated cDV singularity $\Spec R$, the category of maximal Cohen-Macaulay modules, denoted $\cmr$, is a Krull-Schmidt, Frobenius category. Further, by Proposition \ref{prop}, the stable category $\ucmr$ is a $k$-linear, Hom-finite, Krull-Schmidt, 2-Calabi--Yau triangulated category whose shift functor is given by the inverse syzygy functor $\Omega^{-1}$, which satisfies $\Omega^2 \cong \id$. In particular, for any $M,N \in \cmr$ there are isomorphisms
\begin{align*}
\Ext_R^1(M,\Omega N) \cong \uHom_R(M, N) \cong  \Ext_R^1(\Omega M, N)
\end{align*}
which we will use freely throughout.
\begin{setup} \label{appendixsetup}
Let $\Spec R$ be a complete local isolated cDV singularity and choose a basic rigid object $M \colonequals\bigoplus\limits_{i=0}^nM_i \in \cmr$ with $M_0 \cong R$. Write $\Lambda \colonequals \End_R(M)$. Choosing $i\neq 0$ and mutating at $M_i$ via the exchange sequence
\begin{eqnarray}
0 \to M_i \xrightarrow{b_i} V_i \xrightarrow{d_i} K_i \to 0 \label{exchange1}
\end{eqnarray}
we obtain $\upnu_iM \colonequals M/M_i\oplus K_i$ and $\Gamma \colonequals \End_R(\upnu_iM)$. Further, we set $\Lambda_{\con} \colonequals \uEnd_R(M)$ and $\Gamma_{\!\con} \colonequals \uEnd_R(\upnu_i M)$. 
\end{setup}

By Theorem \ref{demma}, the $\Gamma$-$\Lambda$-bimodule $T \colonequals \Hom_R(M,\upnu_iM)$ induces an equivalence 
\begin{align}
\RHom_{\Lambda}(T, -) \colon \Db(\Lambda) \to \Db(\Gamma) \label{mmaequiv}
\end{align}
with inverse given by $ - \otimes_{\Gamma}^{\bf L} T$. The purpose of this section is to track $\Gamma_{\!\con} \in \Db(\Gamma)$ back through this equivalence. 

\begin{theorem}[Theorem \ref{proofofappenmain}] \label{appenmain}
With the setup of \ref{appendixsetup}, there is an isomorphism
\begin{align*}
\Gamma_{\!\con} \otimes_{\Gamma}^{\bf L} T \cong \big( 0 \to \bigoplus_{j \neq i} \uHom_R(M,M_j) \big) \oplus \big( \uHom_{R}(M,M_i) \xrightarrow{b_i \circ -} \uHom_{R}(M,V_i) \big)
\end{align*}
in $\Db(\Lambda)$.
\end{theorem}

To set notation, consider the following. 
 
\begin{enumerate}[itemsep=1.2pt, leftmargin=0cm,itemindent=.6cm,labelwidth=\itemindent,labelsep=0cm,align=left]
\item The projective $\Lambda$-modules are $P_i :=\Hom_R(M,M_i)$ for $i=0,\hdots, n$.
\item The projective $\Lambda_{\con}$-modules are $A_i :=\uHom_R(M,M_i)$ for $i=1,\hdots,n$.
\item The projective $\Gamma$-modules are $Q_j:=\Hom_R(\upnu_iM,M_j)$ for $j=0, \dots, n$ and $j\neq i$, and $Q_i :=\Hom_R(\upnu_iM,K_i)$.
\item The projective $\Gamma_{\!\con}$-modules are $B_j:=\uHom_R(\upnu_iM,M_j)$ for $j=1,\hdots,n$ and $j\neq i$, and $B_i:=\uHom_R(\upnu_iM,K_i)$.
\end{enumerate}
To prove Theorem \ref{appenmain}, we begin by tracking projective $\Gamma$-modules through the functor \eqref{mmaequiv}.

\begin{lemma} \label{projequiv}
The equivalence \eqref{mmaequiv} restricts to an equivalence $\proj \Gamma \to \add(T)$, and thus
\begin{align*}
Q_j \otimes_{\Gamma}^{\bf L} T \cong \left\{
	\begin{array}{ll}
		 P_j & \mbox{if $j \neq i$, } \\
		\Hom_R(M, K_i) & \mbox{if $j=i$.} \\
	\end{array}
\right.
\end{align*}
Moreover, if $f \colon N \to N'$ is any morphism in $\add(\upnu_iM)$, then $\Hom_R(\upnu_iM,f)$ maps to $\Hom_R(M,f)$ under the equivalence.
\end{lemma}
\begin{proof}
As $\Gamma$ is a projective $\Gamma$-module, it is clear $\Gamma \otimes_{\Gamma}^{\bf L} T \cong \Gamma \otimes_{\Gamma} T \cong T$ and thus, as $-\otimes_{\Gamma}^{\bf L} T$ is an additive equivalence, it must restrict to an equivalence $\add(\Gamma) \to \add(T)$. Recalling that $\proj\Gamma = \add(\Gamma)$ completes the proof of the first statement.

For an indecomposable module $N \in \add(\upnu_iM)$, the isomorphism can be described explicitly as 
\begin{align*}
\phi_N \colon \Hom_R( \upnu_i M, N) \otimes_{\Gamma} \Hom_R(M, \upnu_iM) &\to \Hom_R(M, N)\\
g \otimes g' &\mapsto g \circ g'
\end{align*}
with inverse given by $h \mapsto \mathrm{pr} \otimes i \circ h$ where $\mathrm{pr}\colon \upnu_iM \to N$ and $i \colon N \to \upnu_iM$ are the natural projection and inclusion maps. Thus, if $f\colon N \to N'$ is any map between indecomposables in $\add(\upnu_iM)$, the equivalence \eqref{mmaequiv} maps $\Hom_R(\upnu_iM,f)$ to 
\[\phi_{N'} \circ (\Hom_R(\upnu_iM, f) \otimes \id) \circ \phi_N^{-1} \colon \Hom_R(M,N) \to \Hom_R(M,N')\]  
satisfying
\begin{align*}
\phi_{N'} \circ (\Hom_R(\upnu_iM, f) \otimes \id) \circ \phi_N^{-1} (s) &= \phi_{N'} \circ (\Hom_R(\upnu_iM, f) \otimes \id) (\mathrm{pr} \otimes i \circ s) \\
&= \phi_{N'} (f \circ \mathrm{pr} \otimes i \circ s) \\
&= f \circ \mathrm{pr} \circ i \circ s \\
&= f \circ s \\
&= \Hom_R(M, f)(s)
\end{align*}
for all $s \colon M \to N$. Recalling that $ -\otimes_{\Gamma}^{\bf L} T$ is an additive functor then completes the proof.
\end{proof}
Using Lemma \ref{projequiv}, $\Gamma_{\!\con}$-modules can be tracked back through the equivalence \eqref{mmaequiv} if their projective resolution as a $\Gamma$-module can be computed. The following lemma helps with this.
\begin{lemma} \label{projresj}
Suppose that $N$ is a basic rigid object in $\cmr$ with $R \in \add(N)$ and that $N_j \ncong R$ is an indecomposable summand of $N$. If $k=\mathrm{rk}(N_j)+\mathrm{rk}( \Omega N_j)$, then there is a projective resolution of $\uHom_{R}(N,N_j)$ as an $\End_R(N)$-module of the form
\[
0 \to\Hom_R(N,N_j)\to\Hom_R(N,R^{k})\to \Hom_R(N,R^{k})\to \Hom_R(N,N_j) \to 0.
\]
\end{lemma}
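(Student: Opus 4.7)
The plan is to exploit the Frobenius structure on $\cmr$, together with the relation $\Omega^2\cong\id$ from Proposition \ref{prop}, to build two short exact sequences relating $N_j$ and $\Omega N_j$ via a free module of the correct rank, and then to splice their images under $\Hom_R(N,-)$.

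First I would take projective covers in $\cmr$ of $N_j$ and $\Omega N_j$ respectively, producing short exact sequences
\[
0\to\Omega N_j\to R^a\to N_j\to 0,\qquad 0\to\Omega^2 N_j\to R^b\to \Omega N_j\to 0.
\]
Since $\Omega^2\cong\id$ on the stable category, the second sequence can be taken to have $\Omega^2 N_j\cong N_j$ on the left (up to projective summands, which can be absorbed into $R^b$). Counting ranks, $a=\mathrm{rk}(N_j)+\mathrm{rk}(\Omega N_j)$ and $b=\mathrm{rk}(\Omega N_j)+\mathrm{rk}(N_j)$, so both equal the prescribed $n$.

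Next I would apply $\Hom_R(N,-)$ to each sequence. Because $R\in\add(N)$ is injective in $\cmr$ and $N$ is rigid, we have $\Ext^1_R(N,R)=0=\Ext^1_R(N,N_j)$, so the long exact sequences truncate to
\[
0\to\Hom_R(N,\Omega N_j)\to \Hom_R(N,R^n)\to\Hom_R(N,N_j)\to \Ext^1_R(N,\Omega N_j)\to 0
\]
and
\[
0\to\Hom_R(N,N_j)\to \Hom_R(N,R^n)\to\Hom_R(N,\Omega N_j)\to 0.
\]
Using the standard identification $\Ext^1_R(N,\Omega N_j)\cong\uHom_R(N,N_j)$ recalled after Proposition \ref{prop}, the first sequence identifies the cokernel of the first map in the sought resolution with $\uHom_R(N,N_j)$.

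Finally I would splice the two sequences along the common term $\Hom_R(N,\Omega N_j)$ to obtain the four-term exact sequence
\[
0\to\Hom_R(N,N_j)\to\Hom_R(N,R^n)\to\Hom_R(N,R^n)\to\Hom_R(N,N_j)\to\uHom_R(N,N_j)\to 0,
\]
which is the required projective resolution, since the three leftmost nonzero terms are projective $\End_R(N)$-modules (noting $N_j$ itself is a summand of $N$). There is no real obstacle here; the only point requiring a small check is the rank bookkeeping that makes both middle terms equal to $\Hom_R(N,R^n)$ for the same $n$, which follows directly from the $\Omega^2\cong\id$ property.
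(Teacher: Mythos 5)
Your argument matches the paper's proof in all essentials: take the syzygy (minimal projective cover) sequences for $N_j$ and $\Omega N_j$, apply $\Hom_R(N,-)$ using injectivity of $R$ and rigidity of $N$ to truncate the long exact sequences, identify the cokernel with $\uHom_R(N,N_j)$ via $\Ext^1_R(N,\Omega N_j)\cong\uHom_R(N,N_j)$, and splice. The only cosmetic difference is that you spell out the rank bookkeeping (using $\Omega^2\cong\id$ to see both middle terms are $R^n$) which the paper leaves implicit; also note it is the four, not three, leftmost terms of your five-term sequence that are projective.
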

\begin{proof}
Given $N_j$, Proposition \ref{prop} shows that $\Omega^2 \cong \id$ and hence there are exact sequences 
\begin{align}
0 \to \Omega N_j \to R^k \to N_j \to 0 \label{syzygy11} \\
0 \to N_j \to R^k \to \Omega N_j \to 0 \label{syzygy12}
\end{align}
which come from taking the syzygy of $N_j$ and $\Omega N_j$ respectively. Using that $R$ is injective in $\cmr$ to get $\Ext^1_{R}(N,R)=0$, applying $\Hom_{R}(N,-)$ to \eqref{syzygy11} gives the exact sequence
\begin{align*}
0 \to \Hom_{R}(N, \Omega N_j) \to \Hom_{R}(N,R^k) &\to \Hom_{R}(N,N_j) \to \uHom_{R}(N,N_j) \to 0.
\end{align*}
Similarly, since $N$ is rigid in $\cmr$, applying $\Hom_{R}(N,-)$ to \eqref{syzygy12} gives the exact sequence
\begin{align*}
0 \to \Hom_{R}(N, N_j) \to &\Hom_{R}(N,R^k) \to \Hom_{R}(N,\Omega N_j) \to 0.
\end{align*}
Splicing these two together gives the required result.
\end{proof}
Since $\Gamma_{\con} = \bigoplus_{j=1}^n B_j$, to prove Theorem \ref{appenmain}, it will be enough to track each $B_j$ through the equivalence \eqref{mmaequiv}. We start with the case when $j \neq i$. 
\begin{lemma} \label{jneqi}
Under the setup of \ref{appendixsetup}, when $j \neq i$, $B_j \otimes_{\Gamma}^{\bf{L}} T \cong A_j$ in $\Db(\Lambda)$. 
\end{lemma}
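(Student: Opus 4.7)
The plan is to compute $B_j \otimes^{\bf L}_\Gamma T$ directly by applying $-\otimes_\Gamma T$ to an explicit projective resolution of $B_j$ as a right $\Gamma$-module, then identifying the output with a projective resolution of $A_j$ over $\Lambda$.

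First I would produce the needed resolution of $B_j$. Since $M_j$ is an indecomposable summand of $\upnu_i M$ (as $j\neq i$), is not isomorphic to $R$ (as $j\neq 0$), and $\upnu_i M$ is a basic rigid object with $R\in\add(\upnu_i M)$, Lemma \ref{projresj} applied to $N=\upnu_i M$ supplies an exact sequence
\[
0 \to Q_j \to Q_0^{\,n} \to Q_0^{\,n} \to Q_j \to B_j \to 0
\]
whose differentials are obtained by applying $\Hom_R(\upnu_i M,-)$ to the two short exact sequences $0\to \Omega M_j\to R^n\to M_j\to 0$ and $0\to M_j\to R^n\to \Omega M_j\to 0$. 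Truncating gives a length-three projective resolution of $B_j$ over $\Gamma$.

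Next I would apply $-\otimes_\Gamma T$ to this projective resolution in order to compute $B_j\otimes^{\bf L}_\Gamma T$. By Lemma \ref{projequiv}, and since both $0\neq i$ and $j\neq i$, the functor $-\otimes_\Gamma T$ sends $Q_j$ to $P_j$ and $Q_0$ to $P_0$. Moreover, the isomorphism $\Hom_R(\upnu_i M,X)\otimes_\Gamma T\cong \Hom_R(M,X)$ constructed in the proof of Lemma \ref{projequiv} by $f\otimes g\mapsto f\circ g$ is natural in $X\in\add(\upnu_i M)$, so the transported differentials are precisely the maps obtained by applying $\Hom_R(M,-)$ to the very same syzygy sequences for $M_j$.

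The resulting complex $0\to P_j\to P_0^n\to P_0^n\to P_j\to 0$ is therefore exactly the projective part of the resolution of $A_j=\uHom_R(M,M_j)$ over $\Lambda$ produced by Lemma \ref{projresj} applied to $N=M$ (whose hypotheses are satisfied for the same reasons as above). Consequently the homology of $B_j\otimes^{\bf L}_\Gamma T$ is concentrated in degree zero and equal to $A_j$, yielding the desired isomorphism in $\Db(\Lambda)$. No real obstacle is anticipated; the only mild point is the naturality of the identification in step two, which is transparent from the explicit formula in Lemma \ref{projequiv}. The central idea is simply that the resolution furnished by Lemma \ref{projresj} depends only on the target module $M_j$, so changing the source from $\upnu_i M$ to $M$ is precisely what $-\otimes_\Gamma T$ accomplishes on projectives.
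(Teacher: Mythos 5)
Your proof is correct and follows exactly the paper's approach: resolve $B_j$ over $\Gamma$ via Lemma \ref{projresj}, apply $-\otimes_\Gamma T$ termwise using Lemma \ref{projequiv}, and recognise the result as the Lemma \ref{projresj} resolution of $A_j$ over $\Lambda$. Your extra remark on the naturality of the identification $\Hom_R(\upnu_i M,X)\otimes_\Gamma T\cong\Hom_R(M,X)$ is a worthwhile clarification that the paper leaves implicit.
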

\begin{proof}
By Lemma \ref{projresj} there is a projective resolution of $B_j$ as a $\Gamma$-module of the form 
\begin{align*}
0 \to Q_j \to Q_0^{n_j} \to Q_0^{n_j} \to Q_j \to 0.
\end{align*}
Applying $- \otimes_{\Gamma} T$ termwise to this complex gives $B_j \otimes_{\Gamma}^{\bf{L}} T$ but, by Lemma \ref{projequiv}, this is precisely 
\begin{align*}
0 \to P_j \to P_0^{n_j} \to P_0^{n_j} \to P_j \to 0
\end{align*}
where the maps are precisely those giving a projective resolution of $A_j$ as a $\Lambda$-module, by Lemma \ref{projresj}. Thus, $B_j \otimes_{\Gamma}^{\bf{L}} T$ is quasi-isomorphic to $A_j$ and hence isomorphic in $\Db(\Lambda)$. 
\end{proof}

To deal with the $j=i$ case, we consider the exact sequences
\begin{eqnarray}
0 \to \Omega K_i \xrightarrow{f\textcolor{white}{`}} R^k \xrightarrow{g\textcolor{white}{`}} \hspace{0.12cm}K_i \hspace{0.12cm}\to 0 \label{syzygy1}\\
0 \to \hspace{0.1cm}K_i \hspace{0.11cm} \xrightarrow{f'} R^k \xrightarrow{g' }\Omega K_i \to 0 \label{syzygy2}
\end{eqnarray}
coming from taking syzygies of $K_i$ and $\Omega K_i$ respectively.

\begin{lemma} \label{homology}
Under the setup of \ref{appendixsetup}, the complex $B_i \otimes_{\Gamma}^{\bf{L}} T$ is isomorphic in $\Db(\Lambda)$ to 
\begin{align}
\scalebox{0.98}{$0\to \Hom_{R}(M,K_i)\xrightarrow{f' \circ -} \Hom_{R}(M,R^k)\xrightarrow{f \circ g' \circ -} \Hom_{R}(M,R^k)\xrightarrow{g \circ -} \Hom_{R}(M,K_i)\to 0$}\label{candidate}
\end{align}
and hence has homology; 
\begin{enumerate}[itemsep=0.8pt]
\item $\uHom_{R}(M,K_i)$ in degree $0$;
\item $\uHom_{R}(M,\Omega K_i)$ in degree $-1$;
\item $0$ elsewhere.
\end{enumerate}
\end{lemma}
\begin{proof}
By Lemma \ref{projresj} the sequence
\begin{align*}
0 \to Q_i \xrightarrow{f' \circ -}  Q_0^{k} \xrightarrow{f \circ g' \circ  -}  Q_0^{k} \xrightarrow{g \circ -}  Q_i \to 0
\end{align*}
is a $\Gamma$-projective resolution of $B_i$ and thus to get $B_i \otimes_{\Gamma}^{\bf{L}} T$, we can apply $- \otimes_{\Gamma} T$ termwise to this complex. By Lemma \ref{projequiv} this is exactly \eqref{candidate}.

To compute the homology, apply $\Hom_{R}(M, -)$ to \eqref{syzygy1} to obtain the exact sequence
\begin{align*}
0\to \Hom_{R}(M, \Omega K_i) \xrightarrow{f \circ -} \Hom_{R}(M,R^k) \xrightarrow{g \circ -} \Hom_{R}(M,K_i) \to \Ext^1_{R}(M,\Omega K_i)\to 0
\end{align*} 
which shows 
\begin{align*}
\Hom_{R}(M,K_i)/ \Image(g \circ -) \cong \Ext^1_{R}(M,\Omega K_i) \cong \uHom_{R}(M,K_i).
\end{align*}
This shows the homology in degree $0$. Since the sequence is exact 
\begin{align*}
\Kernel(g \circ -)=\Image(f \circ -) \cong \Hom_{R}(M, \Omega K_i)
\end{align*}
and further, $\Image(f \circ g' \circ - ) \cong \Image( g' \circ - )$ as $f$ is injective. Thus,
\begin{align*}
\Kernel(g \circ -)/\Image(f \circ g' \circ - ) \cong \Hom_{R}(M, \Omega K_i)/\Image (g' \circ -) \cong \uHom_{R}(M, \Omega K_i)
\end{align*}
where the last isomorphism comes from the exact sequence
\begin{align*}
0\to \Hom_{R}(M, K_i) \xrightarrow{f' \circ -} \Hom_{R}(M,R^k) \xrightarrow{g' \circ -} \Hom_
{R}(M, \Omega K_i) \to \Ext^1_{R}(M, K_i)\to 0
\end{align*}
obtained by applying $\Hom_{R}(M, -)$ to the exact sequence \eqref{syzygy2}. This sequence also shows that the complex \eqref{candidate} is exact elsewhere.
\end{proof}

Since $B_i \otimes_\Gamma^{\bf L} T$ has zero homology outside degrees $-1$ and $0$, the complex can be truncated appropriately to give a quasi-isomorphic complex.
\begin{corollary} \label{truncquasi}
The complex $B_i \otimes_{\Gamma}^{\bf{L}} T$ is quasi-isomorphic to the truncated complex
\begin{equation}
0 \to \Hom_{R}(M,R^k)/ \Image(f \circ g' \circ -) \xrightarrow{g \circ -} \Hom_{R}(M,K_i)\to 0. \label{trunc}
\end{equation} 
\end{corollary}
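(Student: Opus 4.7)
The plan is short because the two preceding lemmas have already done essentially all the work. I would begin from the explicit four-term representative
\[
C \colonequals \bigl( 0 \to \Hom_{R}(M,K_i) \xrightarrow{f' \circ -} \Hom_{R}(M,R^n) \xrightarrow{f \circ g' \circ -} \Hom_{R}(M,R^n) \xrightarrow{g \circ -} \Hom_{R}(M,K_i) \to 0 \bigr)
\]
for $B_i \otimes_{\Gamma}^{\bf L} T$ in $\Db(\Lambda)$ supplied by Lemma \ref{first step}, with nonzero terms placed in cohomological degrees $-3,-2,-1,0$.

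The central observation is that Lemma \ref{homology} already tells us $H^i(C) = 0$ for every $i \notin \{-1,0\}$, so in particular the homology of $C$ vanishes in all degrees $\leq -2$. By standard homological algebra, for any such complex the canonical projection $C \twoheadrightarrow \tau_{\scriptscriptstyle \geq -1} C$ onto its good truncation (the functor $\tau_{\scriptscriptstyle \geq -1}$ recalled at the start of \S\ref{explicit}) is a quasi-isomorphism, and hence an isomorphism in $\Db(\Lambda)$.

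It then only remains to write out $\tau_{\scriptscriptstyle \geq -1} C$ explicitly. By definition the degree $0$ term is unchanged, the degree $-1$ term becomes the quotient $\Hom_{R}(M,R^n)/\Image(f \circ g' \circ -)$, the induced differential is $g \circ -$ (well defined on the quotient precisely because $g \circ f \circ g' = 0$ in $C$), and all other terms vanish. This is exactly the complex \eqref{trunc}.

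There is no genuine obstacle here; the argument is simply the assembly of Lemma \ref{first step} with Lemma \ref{homology} via the standard truncation mechanism. Any subtlety would lie in verifying that the truncation map is compatible with the $\Lambda$-module structure, but this is automatic since all differentials in $C$ are right $\Lambda$-linear and so the quotient and its induced differential inherit the structure unchanged.
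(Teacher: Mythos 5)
Your proposal is correct and takes essentially the same route as the paper: the paper also obtains this corollary as an immediate consequence of Lemma \ref{homology} (vanishing of homology outside degrees $-1,0$), noting that the natural projection to the good truncation $\tau_{\scriptscriptstyle\geq -1}$ is therefore a quasi-isomorphism. Your additional checks (that the induced differential is well defined because $g \circ f = 0$, and that $\Lambda$-linearity is preserved) are correct but left implicit in the paper.
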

In particular, to prove Theorem \ref{appenmain}, we now only need to show the complexes of $\Lambda$-modules, $\eqref{trunc}$ and 
\begin{align*}
\uHom_{R}(M,M_i) \xrightarrow{b_i \circ -} \uHom_{R}(M,V_i),
\end{align*}
are quasi-isomorphic. To do this, we will construct a complex of projective $\Lambda$-modules which is quasi-isomorphic to both. This involves first finding projective resolutions of $\Hom_{R}(M,K_i)$ and $\Hom_{R}(M,R^k)/ \Image(f \circ g' \circ -) $.
\begin{lemma} \label{projectiveresolutions}
\begin{enumerate}[leftmargin=0cm,itemindent=.6cm,labelwidth=\itemindent,labelsep=0cm,align=left]
\item The sequence
\begin{equation}
0 \to \Hom_{R}(M,M_i) \xrightarrow{b_i \circ -} \Hom_{M}(M,V_i) \xrightarrow{d_i \circ -} \Hom_{R}(M,K_i) \to 0 \label{proj1}
\end{equation}
is exact and so is a projective resolution of $\Hom_{R}(M,K_i)$ as a $\Lambda$-module.
\item The sequence
\vspace{-0.6cm}
\begin{center}
\begin{tikzpicture}[scale=0.95]
\node (1) at (0,0) {$0$};
\node (2) at (2,0) {$\Hom_{R}(M,M_i)$};
\node (3) at (5.3,0) {$\Hom_{R}(M,V_i)$};
\node (4) at (9,0) {$\Hom_{R}(M,R^k)$};
\node (5) at (6,-1) {$\Hom_{R}(M,R^k)$};
\node (6) at (10.5,-1) {$\Hom_{R}(M,R^k)/\Image(f \circ g' \circ -)$};
\node (7) at (13.8,-1) {$0$};
\node (8) at (10.995,0.008) {};
\node (9) at (11.305,0) {};
\draw[->] (1) to (2);
\draw[->] (2) to node[above] {$\scriptstyle b_i \circ -$} (3); 
\draw[->] (3) to node[above] {$\scriptstyle f' \circ d_i \circ -$} (4); 
\draw[-,out=0, in=179,looseness=0.2] (4) to node[above] {$\scriptstyle f \circ g' \circ -$} (9); 
\draw[->] (5) to (6);
\draw[->] (6) to (7);
\draw[->,out=-3, in=177, looseness=1.8] (8) to (5);
\end{tikzpicture}
\end{center}
is a projective resolution of $\Hom_{R}(M,R^k)/ \Image(f \circ g' \circ -)$ as a $\Lambda$-module.
\end{enumerate}
\end{lemma}
\begin{proof}
\begin{enumerate}[leftmargin=0cm,itemindent=.6cm,labelwidth=\itemindent,labelsep=0cm,align=left]
\item Applying $\Hom_{R}(M,-)$ to the exchange sequence \eqref{exchange1} and using the rigidity of $M$ shows $\Ext_{R}^1(M,M_i)=0$ so that \eqref{proj1} is exact.
\item
The sequence 
\begin{align*}
0 \to \Hom_{R}(M,K_i) \xrightarrow{f' \circ -}\Hom_{R}(M,R^k) \xrightarrow{f \circ g' \circ  -} \Hom_{R}(M,R^k)
\end{align*}
is exact using the proof of Lemma \ref{homology}. Taking the cokernel and splicing this sequence with \eqref{proj1} gives the result. \qedhere
\end{enumerate}
\end{proof}

To construct a projective complex quasi-isomorphic to the complex \eqref{trunc}, we need maps between the projective resolutions constructed in Lemma \ref{projectiveresolutions}. For this, the following lemma is useful.
\begin{lemma} \label{xs}
There exists maps $s\colon R^k \to V_i$ and $x\colon \Omega K_i \to M_i$ such that the following diagram commutes.
\begin{center}
\begin{tikzpicture}
  \matrix (m) [matrix of math nodes,row sep=2.5em,column sep=3em,minimum width=2em] {  
0 &  M_i & V_i & K_i & 0 \\
0 &  \Omega K_i & R^k & K_i & 0\\};
 \path[-stealth]
    (m-1-1.east|-m-1-2) edge node [above] {} (m-1-2)
    (m-1-2.east|-m-1-3) edge node [above] {$ \scriptstyle b_i$} (m-1-3)
    (m-1-3.east|-m-1-4) edge node [above] {$\scriptstyle d_i$} (m-1-4)
    (m-1-4.east|-m-1-5) edge node [above] {} (m-1-5)
    (m-2-1.east|-m-2-2) edge node [above] {} (m-2-2)
    (m-2-2.east|-m-2-3) edge node [above] {$\scriptstyle f$} (m-2-3)
    (m-2-3.east|-m-2-4) edge node [above] {$\scriptstyle g$} (m-2-4)
    (m-2-4.east|-m-2-5) edge node [above] {} (m-2-5)
 (m-2-2) edge node [right] {$\scriptstyle x$} (m-1-2)
 (m-2-3) edge node [right] {$ \scriptstyle s$} (m-1-3)
 (m-2-4) edge node [right] {$\scriptstyle \id$} (m-1-4);
\end{tikzpicture}
\end{center} 
Further, these maps give an exact sequence
\begin{align}
0 \to \Omega K_i \xrightarrow{\scriptsize \begin{pmatrix} -f \\ x \end{pmatrix}} R^k \oplus M_i \xrightarrow{\scriptsize \begin{pmatrix}  s & b_i \end{pmatrix}}  V_i \to 0. \label{a10sequence}
\end{align}
\end{lemma}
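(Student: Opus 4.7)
The plan is to prove the two parts of the lemma in sequence. The first part asserts the existence of lifts making the diagram commute, and the second asserts exactness of an associated ``mapping cone'' style short exact sequence.

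For the existence of $s$ and $x$, the plan is to use that $R^n$ is projective in $\mod R$ (since $R^n$ is free). The exchange sequence \eqref{exchange1} provides a surjection $d_i \colon V_i \twoheadrightarrow K_i$, so the map $g \colon R^n \to K_i$ lifts to a map $s \colon R^n \to V_i$ with $d_i \circ s = g$. Then, since $d_i \circ s \circ f = g \circ f = 0$, the composition $s \circ f$ factors through $\Ker(d_i) = \Image(b_i) \cong M_i$, giving a well-defined map $x \colon \Omega K_i \to M_i$ such that $b_i \circ x = s \circ f$. This is the defining property of $x$ together with $s$, and produces the commutative diagram in the statement.

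For the second part, the plan is a direct diagram-chase verification that
\[
0 \to \Omega K_i \xrightarrow{\scriptsize \begin{pmatrix} -f \\ x \end{pmatrix}} R^n \oplus M_i \xrightarrow{\scriptsize \begin{pmatrix} s & b_i \end{pmatrix}} V_i \to 0
\]
is exact. Injectivity of the first map follows from injectivity of $f$ (which comes from exactness of the bottom row). The composition vanishes by the identity $b_i \circ x = s \circ f$ established above. Surjectivity of the second map follows by lifting: given $v \in V_i$, write $d_i(v) = g(r)$ for some $r \in R^n$ by surjectivity of $g$; then $v - s(r) \in \Ker(d_i) = \Image(b_i)$, so $v - s(r) = b_i(m)$ for some $m \in M_i$. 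Finally, for exactness in the middle, suppose $s(r) + b_i(m) = 0$. Applying $d_i$ gives $g(r) = 0$, so $r = f(y)$ for some $y \in \Omega K_i$ by exactness of the bottom row. Then $b_i(x(y)) = s(f(y)) = -b_i(m)$, and injectivity of $b_i$ forces $m = -x(y)$, so $(r,m) = (-f(-y), x(-y))$ lies in the image.

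The main obstacle here is quite mild: it is really just keeping the signs straight in the two diagram chases and confirming that the lift $s$ exists in the correct category. Since $R^n$ is free and the sequence is a short exact sequence of $R$-modules, projectivity gives the lift without needing to appeal to the Frobenius structure of $\cmr$. The rest is a standard construction of the mapping cone short exact sequence associated to a map of short exact sequences whose rightmost component is the identity.
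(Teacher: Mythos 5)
Your proof is correct, and it proceeds somewhat differently from the paper's argument for the exactness of \eqref{a10sequence}. You construct $s$ and $x$ in essentially the same way as the paper (projectivity of $R^n$ gives the lift $s$ with $d_i \circ s = g$, and then the universal property of the kernel, together with $\Ker d_i = \Image b_i \cong M_i$, produces $x$), but for exactness you carry out a direct element-wise diagram chase: injectivity from injectivity of $f$, vanishing composition from $b_i \circ x = s \circ f$, surjectivity by lifting through $g$ and correcting by $b_i$, and middle exactness by applying $d_i$, pulling back through $f$, and using injectivity of $b_i$. The paper instead packages the exactness argument homologically: it views the commutative diagram as a quasi-isomorphism between two exact complexes, forms the mapping cone
\[
0 \to \Omega K_i \to R^n \oplus M_i \to K_i \oplus V_i \to K_i \to 0,
\]
which is exact, and then splits off a factor of $K_i$ via a $3\times 5$ diagram of complexes with exact columns and a long exact sequence in homology. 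Your approach is more elementary and self-contained, avoiding the mapping-cone machinery entirely, at the cost of a slightly longer explicit chase; the paper's approach is shorter once the mapping-cone formalism is in place. Both are complete and correct, and both reduce to the same algebraic identities ($d_i s = g$ and $b_i x = s f$).
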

\begin{proof}
The map $s$ exists because $R^k$ is projective and $d_i$ is a surjective map. Then the map $x$ exists using the universal property of kernels.

Viewing the rows of the commutative diagram as complexes, this construction gives a map between two exact complexes which therefore must be a quasi-isomorphism. Thus, the mapping cone,
\begin{align*}
0 \to \Omega K_i \xrightarrow{\scriptsize \begin{pmatrix} -f \\ x \end{pmatrix}} R^k \oplus M_i \xrightarrow{\scriptsize \begin{pmatrix} -g & 0 \\ s & b_i \end{pmatrix}} K_i \oplus V_i, \xrightarrow{\scriptsize \begin{pmatrix} \mathrm{id} & d_i \end{pmatrix}} K_i \to 0.
\end{align*}
is necessarily exact. In the commutative diagram below,
\begin{center}
\begin{tikzpicture}
  \matrix (m) [matrix of math nodes,row sep=2.8em,column sep=3em,minimum width=2em] {  
\ &0 & 0 & 0 & 0 & \  \\
0 & \Omega K_i & R^k \oplus M_i & V_i & 0 & 0 \\
0 & \Omega K_i & R^k \oplus M_i & K_i \oplus V_i & K_i & 0 \\
0 & 0 & 0 & K_i & K_i & 0 \\
\ &0 & 0 & 0 & 0 & \  \\};
 \path[-stealth]
    (m-2-1.east|-m-2-2) edge node [above] {} (m-2-2)
    (m-2-2.east|-m-2-3) edge node [above] {$\scriptsize{\begin{pmatrix} -f \\ x \end{pmatrix} }$} (m-2-3)
    (m-2-3.east|-m-2-4) edge node [above] {$\scriptsize{\begin{pmatrix}  s & b_i \end{pmatrix}} $} (m-2-4)
    (m-2-4.east|-m-2-5) edge node [above] {$\scriptstyle 0$} (m-2-5)
    (m-2-5.east|-m-2-6) edge node [above] {} (m-2-6)
(m-3-1.east|-m-3-2) edge node [above] {} (m-3-2)
    (m-3-2.east|-m-3-3) edge node [above] {$\scriptsize{\begin{pmatrix} -f \\ x \end{pmatrix} }$} (m-3-3)
    (m-3-3.east|-m-3-4) edge node [above] {$\scriptsize{\begin{pmatrix} -g & 0 \\ s & b_i \end{pmatrix}} $} (m-3-4)
    (m-3-4.east|-m-3-5) edge node [above] {$\scriptsize{\begin{pmatrix} \mathrm{id} & d_i \end{pmatrix}}$} (m-3-5)
    (m-3-5.east|-m-3-6) edge node [above] {} (m-3-6)
(m-4-1.east|-m-4-2) edge node [above] {} (m-4-2)
    (m-4-2.east|-m-4-3) edge node [above] {$\scriptstyle{0 }$} (m-4-3)
    (m-4-3.east|-m-4-4) edge node [above] {$\scriptstyle{0} $} (m-4-4)
    (m-4-4.east|-m-4-5) edge node [above] {$\scriptstyle{\mathrm{id}}$} (m-4-5)
    (m-4-5.east|-m-4-6) edge node [above] {} (m-4-6)
(m-1-2) edge node [right] {} (m-2-2)
 (m-2-2) edge node [right] {$\scriptstyle \mathrm{id}$} (m-3-2)
 (m-3-2) edge node [right] {} (m-4-2)
 (m-4-2) edge node [right] {} (m-5-2)
(m-1-3) edge node [right] {} (m-2-3)
 (m-2-3) edge node [right] {$\scriptstyle \mathrm{id}$} (m-3-3)
 (m-3-3) edge node [right] {} (m-4-3)
 (m-4-3) edge node [right] {} (m-5-3)
(m-1-4) edge node [right] {} (m-2-4)
 (m-2-4) edge node [right] {$\scriptsize{\begin{pmatrix} -d_i \\ \mathrm{id} \end{pmatrix}} $} (m-3-4)
 (m-3-4) edge node [right] {$\scriptsize{\begin{pmatrix} \mathrm{id} & d_i \end{pmatrix}} $} (m-4-4)
 (m-4-4) edge node [right] {} (m-5-4)
(m-1-5) edge node [right] {} (m-2-5)
 (m-3-5) edge node [right] {$\scriptstyle \mathrm{id}$} (m-4-5)
 (m-2-5) edge node [right] {} (m-3-5)
 (m-4-5) edge node [right] {} (m-5-5);
\end{tikzpicture}
\end{center}
the first, second and fourth columns are obviously exact while the third column is exact as it is the mapping cone of the map:
\begin{center}
\begin{tikzpicture}
  \matrix (m) [matrix of math nodes,row sep=2em,column sep=3em,minimum width=2em] {  
0 &  V_i & V_i & 0 \\
0 & K_i & K_i & 0.\\};
 \path[-stealth]
    (m-1-1.east|-m-1-2) edge node [above] {} (m-1-2)
    (m-1-2.east|-m-1-3) edge node [above] {\scriptsize $-\mathrm{id}$} (m-1-3)
    (m-1-3.east|-m-1-4) edge node [above] {} (m-1-4)
    (m-2-1.east|-m-2-2) edge node [above] {} (m-2-2)
    (m-2-2.east|-m-2-3) edge node [above] {\scriptsize $\mathrm{id}$} (m-2-3)
    (m-2-3.east|-m-2-4) edge node [above] {} (m-2-4)
 (m-1-2) edge node [right] {\scriptsize $-d_i$} (m-2-2)
 (m-1-3) edge node [right] {\scriptsize $d_i$} (m-2-3);
\end{tikzpicture}
\end{center}
Thus all the columns are exact and so we have a short exact sequence of complexes. Considering the long exact sequence of homology associated to this short exact sequence shows that the first row is exact since the second and third are.
\end{proof}
This gives us everything we need to construct a complex of projective $\Lambda$-modules quasi-isomorphic to $B_i \otimes_\Gamma^{\bf L} T$.
\begin{lemma} \label{quasiso1}
With notation as above, and writing ${}_R(X,Y)=\Hom_R(X,Y)$, the chain map 
\begin{center}
\begin{tikzpicture}
  \matrix (m) [matrix of math nodes,row sep=2.5em,column sep=2.2em,minimum width=2em] {  
\scalebox{0.92}{$_{R}(M,M_i) $} &\scalebox{0.92}{$ _{R}(M,V_i) $} & \scalebox{0.92}{$ _{R}(M,R^k) $}&\scalebox{0.92}{$  _{R}(M,R^k) \oplus  \hspace{-0.2cm}\ _{R}(M,M_i) $}& \scalebox{0.92}{$ _{R}(M,V_i) $}\\
\scalebox{0.92}{$ 0 $}& \scalebox{0.92}{$  0$} & \scalebox{0.92}{$  0$} & \scalebox{0.92}{$  _{R}(M,R^k)/\Image(f \circ g' \circ -)$} & \scalebox{0.92}{$ _{R}(M,K_i)$}\\};
 \path[-stealth]
    (m-1-1.east|-m-1-2) edge node [above] {$\scriptstyle b_i \circ -$ } (m-1-2)
    (m-1-2.east|-m-1-3) edge node [above] {$\scriptstyle f'_i \circ d_i \circ -$} (m-1-3)
    (m-1-3.east|-m-1-4) edge node [above] {$ \scriptstyle{f\circ g'\circ - \choose -x\circ g'\circ-}$} (m-1-4)
    (m-1-4.east|-m-1-5) edge node [above] { \scalebox{0.94}{$\scriptstyle \big( \scriptstyle s \circ -, \  \scriptstyle b_i \circ -\big)$}} (m-1-5)
    (m-2-1.east|-m-2-2) edge node [above] {} (m-2-2)
    (m-2-2.east|-m-2-3) edge node [above] {} (m-2-3)
    (m-2-3.east|-m-2-4) edge node [above] {} (m-2-4)
    (m-2-4.east|-m-2-5) edge node [above] {$ \scriptstyle g \circ - $} (m-2-5)
 (m-1-1) edge node [right] {$\scriptstyle 0$} (m-2-1)
 (m-1-2) edge node [right] {$\scriptstyle 0$} (m-2-2)
 (m-1-3) edge node [right] {$\scriptstyle 0$} (m-2-3)
 (m-1-4) edge node [right] {$\scriptsize{\begin{pmatrix}0 & \mathrm{pr}  \end{pmatrix}}$} (m-2-4)
 (m-1-5) edge node [right] {$\scriptstyle d_i \circ -$} (m-2-5);
\end{tikzpicture}
\end{center}
 is a quasi-isomorphism where $\mathrm{pr} \colon _R(M,R^k) \to _R(M,R^k)/\Image(f \circ g' \circ -)$ denotes the natural surjection. In particular, $B_i \otimes_{\Gamma}^{\bf{L}} T$ is isomorphic in $\Db(\Lambda)$ to the complex in the top row.
\end{lemma}
\begin{proof} First note that the diagram is a chain map because $d_i \circ b_i=0$ and $d_i \circ s=g$ by construction. Also, by Lemma \ref{projectiveresolutions} and Lemma \ref{xs}, there is a double complex
\begin{center}
\begin{tikzpicture}
  \matrix (m) [matrix of math nodes,row sep=2em,column sep=3em,minimum width=2em] {  
\Hom_{R}(M,M_i) & 0  \\
\Hom_{R}(M,V_i) & 0\\
\Hom_{R}(M,R^k) & \Hom_{R}(M,M_i)\\
\Hom_{R}(M,R^k) & \Hom_{R}(M,V_i)\\
\Hom_{R}(M,R^k)/\Image(f \circ g' \circ -)& \Hom_{R}(M,K_i)\\};
 \path[-stealth]
    (m-1-1.east|-m-1-2) edge node [above] {$\scriptstyle 0$} (m-1-2)
    (m-2-1.east|-m-2-2) edge node [above] {$\scriptstyle 0$} (m-2-2)
    (m-3-1.east|-m-3-2) edge node [above] {$\scriptstyle x \circ g' \circ -$} (m-3-2)
    (m-4-1.east|-m-4-2) edge node [above] {$\scriptstyle -(s \circ -)$} (m-4-2)
    (m-5-1.east|-m-5-2) edge node [above] {$\scriptstyle g \circ -$} (m-5-2)
(m-1-1) edge node [right] {$\scriptstyle b_i \circ -$} (m-2-1)
 (m-2-1) edge node [right] {$\scriptstyle f' \circ d_i \circ -$} (m-3-1)
 (m-3-1) edge node [right] {$\scriptstyle f \circ g' \circ -$} (m-4-1)
 (m-4-1) edge node [right] {$\scriptstyle \mathrm{pr}$} (m-5-1)
(m-1-2) edge node [right] {$\scriptstyle 0$} (m-2-2)
 (m-2-2) edge node [right] {$\scriptstyle 0$} (m-3-2)
 (m-3-2) edge node [right] {$\scriptstyle b_i \circ -$} (m-4-2)
 (m-4-2) edge node [right] {$\scriptstyle d_i \circ -$} (m-5-2);
\end{tikzpicture}
\end{center}
where the columns are acyclic. A standard result from homological algebra says the total complex of a bounded double complex with acyclic columns is acyclic \cite[1.2.5]{weibel}. However, the total complex of this double complex is precisely (up to $\pm$ signs on the maps) the mapping cone of the given chain map and so the chain map must be a quasi-isomorphism. The final statement then follows by combining this quasi-isomorphism with that of Lemma \ref{truncquasi}.
\end{proof}
Finally, it needs to be shown that the complex of projectives constructed in Lemma \ref{quasiso1} is also quasi-isomorphic to 
\begin{align*}
\uHom_{R}(M,M_i) \xrightarrow{b_i \circ -} \uHom_{R}(M,V_i). 
\end{align*}
\begin{lemma} \label{quasiso}
Writing $_R(X,Y) \colonequals \Hom_R(X,Y)$, the following chain map is a quasi-isomorphism:

\vspace*{-0.2em}
\hspace{-0.5cm}\begin{tikzpicture}
  \matrix (m) [matrix of math nodes,row sep=3em,column sep=2.2em,minimum width=2em] {  
\scalebox{0.95}{$_{R}(M,M_i)$} & \scalebox{0.95}{$ _{R}(M,V_i)$} & \scalebox{0.95}{$_{R}(M,R^k)$} & \scalebox{0.95}{$ _{R}(M,R^k) \oplus \hspace{-0.2cm}\ _{R}(M,M_i)$} &  \scalebox{0.95}{$ _{R}(M,V_i)$} \\
 \scalebox{0.95}{$ 0$} &  \scalebox{0.95}{$ 0$} &  \scalebox{0.95}{$0$} & \scalebox{0.95}{$\uHom_{R}(M,M_i)$} & \scalebox{0.95}{$\uHom_{R}(M,V_i).$}\\};
 \path[-stealth]
    (m-1-1.east|-m-1-2) edge node [above] {$\scriptstyle b_i \circ -$ } (m-1-2)
    (m-1-2.east|-m-1-3) edge node [above] {$\scriptstyle f' \circ d_i \circ -$} (m-1-3)
    (m-1-3.east|-m-1-4) edge node [above] {$\scriptstyle{f\circ g'\circ - \choose -x\circ g'\circ-}$} (m-1-4)
    (m-1-4.east|-m-1-5) edge node [above] { \scalebox{0.94}{$\scriptstyle \big( \scriptstyle s \circ -, \  \scriptstyle b_i \circ -\big)$}} (m-1-5)
    (m-2-1.east|-m-2-2) edge node [above] {} (m-2-2)
    (m-2-2.east|-m-2-3) edge node [above] {} (m-2-3)
    (m-2-3.east|-m-2-4) edge node [above] {} (m-2-4)
    (m-2-4.east|-m-2-5) edge node [above] {$ \scriptstyle b_i \circ - $} (m-2-5)
 (m-1-1) edge node [right] {$\scriptstyle 0$} (m-2-1)
 (m-1-2) edge node [right] {$\scriptstyle 0$} (m-2-2)
 (m-1-3) edge node [right] {$\scriptstyle 0$} (m-2-3)
 (m-1-4) edge node [right] {$\scriptsize{\begin{pmatrix}0 & \mathrm{pr}  \end{pmatrix}}$} (m-2-4)
 (m-1-5) edge node [right] {$\scriptstyle \mathrm{pr}$} (m-2-5);
\end{tikzpicture}
\vspace*{-1em}
\end{lemma}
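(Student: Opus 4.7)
The plan is to verify that the displayed diagram is a chain map (the only nontrivial squares commute because $s\circ-$ factors through $\add R$ and so is killed by $\mathrm{pr}$, while $\mathrm{pr}$ is natural in the second slot), and then to show that the induced map on cohomology is an isomorphism in each degree; since the bottom complex vanishes outside degrees $-1$ and $0$, only these two degrees need attention. By Lemma \ref{quasiso1} together with Lemma \ref{homology}, the top complex has cohomology $\uHom_R(M,K_i)$ in degree $0$, $\uHom_R(M,\Omega K_i)$ in degree $-1$, and zero elsewhere. For the bottom complex, I would apply $\uHom_R(M,-)$ to the triangle $\Omega K_i\xrightarrow{x}M_i\xrightarrow{b_i}V_i\xrightarrow{d_i}K_i$ in $\ucmr$ (whose first square is the one from Lemma \ref{xs}); using the rigidity of $M$ to obtain $\uHom_R(M,\Omega M_i)\cong\Ext_R^1(M,M_i)=0$ and $\uHom_R(M,\Omega V_i)\cong\Ext_R^1(M,V_i)=0$, the resulting long exact sequence identifies $H^{-1}(\text{bottom})\cong\uHom_R(M,\Omega K_i)$ injectively via $x\circ-$ and $H^0(\text{bottom})\cong\uHom_R(M,K_i)$ surjectively via $d_i\circ-$. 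Both complexes thus have the same cohomology in each degree, so it only remains to check that the chain map induces isomorphisms there.

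For $H^0$, the induced map is essentially the natural projection
\[\Hom_R(M,V_i)/\Image(s\circ-,b_i\circ-)\longrightarrow\uHom_R(M,V_i)/\Image(b_i\circ-),\]
which is clearly surjective. For injectivity, given $f\in\Hom_R(M,V_i)$ whose class vanishes in the target, we have $f-b_i\circ h=q\circ p$ factoring through some $R^\ell\in\add R$. The key input here is the exact sequence \eqref{a10sequence} from Lemma \ref{xs}, which provides the surjection $(s,b_i)\colon R^n\oplus M_i\twoheadrightarrow V_i$; by projectivity of $R^\ell$ the map $q$ lifts through $(s,b_i)$, and composing with $p$ rewrites $f$ as an element of $\Image(s\circ-,b_i\circ-)$.

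The $H^{-1}$ case is the step I expect to be the main obstacle, again resting on Lemma \ref{xs}. The assignment $\tilde{r}\mapsto(-f\circ\tilde{r},x\circ\tilde{r})$ lands in $\Kernel(s\circ-,b_i\circ-)$ by the commutativity $b_i\circ x=s\circ f$ from Lemma \ref{xs}, and is a bijection onto this kernel by the exactness of \eqref{a10sequence}. Under this identification the incoming image corresponds to those $\tilde{r}$ of the form $-g'\circ r$ with $r\colon M\to R^n$; since $g'\colon R^n\twoheadrightarrow\Omega K_i$ is surjective and any map from a free $R$-module lifts through it, these are precisely the $\tilde{r}$ that represent $0$ in $\uHom_R(M,\Omega K_i)$. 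Thus $H^{-1}(\text{top})\cong\uHom_R(M,\Omega K_i)$, and under this isomorphism the chain map $(g,h)\mapsto\mathrm{pr}(h)$ becomes $[\tilde{r}]\mapsto\mathrm{pr}(x\circ\tilde{r})$, which is exactly the injective connecting map from the long exact sequence above identifying $\uHom_R(M,\Omega K_i)$ with $H^{-1}(\text{bottom})$. Hence the induced map on $H^{-1}$ is also an isomorphism, and the proof is complete.
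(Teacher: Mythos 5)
Your proof is correct and rests on the same key inputs as the paper's argument, namely Lemma \ref{xs} and the exact sequence \eqref{a10sequence}, together with the observation (also made in the paper via the triangle $M_i\to V_i\to K_i\to\Omega^{-1}M_i$ and rigidity of $M$) that both complexes have cohomology $\uHom_R(M,K_i)$ in degree $0$, $\uHom_R(M,\Omega K_i)$ in degree $-1$, and $0$ elsewhere. Where you diverge is in the final step: the paper only proves surjectivity of the chain map on $H^0$ and $H^{-1}$ (for $H^{-1}$, by lifting a representative $\alpha$ with $b_i\circ\alpha$ factoring through $\add R$ through the surjection $(s\,\ b_i)$, exactly as you do for $H^0$ injectivity), and then concludes by a dimension count, since the two finite-dimensional cohomologies are already known to be abstractly isomorphic. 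You instead prove injectivity directly and, for $H^{-1}$, make the isomorphism transparent by identifying $\Kernel(s\circ-,\,b_i\circ-)$ with $\Hom_R(M,\Omega K_i)$ via left-exactness of $\Hom_R(M,-)$ applied to \eqref{a10sequence}, identifying the incoming image with the maps $\tilde r$ factoring through $\add R$, and then observing the chain map becomes exactly $\mathrm{pr}(x\circ -)$, which matches the connecting map in the long exact sequence for the bottom complex. This buys you a conceptually cleaner argument that avoids the dimension count, at the cost of a somewhat more elaborate bookkeeping of identifications; the paper's route is shorter but less self-contained, relying on the prior homology comparison to supply injectivity for free.
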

\begin{proof}
Combining Lemma \ref{quasiso1} and Lemma \ref{homology}, the top complex has homology:
\begin{enumerate}
\item $\uHom_{R}(M,K_i)$ in degree $0$;
\item $\uHom_{R}(M,\Omega K_i)$ in degree $-1$;
\item $0$ elsewhere.
\end{enumerate}
The exchange sequence \eqref{exchange1} induces a triangle 
\begin{align*}
M_i \xrightarrow{b_i} V_i &\xrightarrow{d_i} K_i \to \Omega M_i
\end{align*}
in $\ucmr$. Applying $\uHom_{R}(M,-)$ and using rigidity of $M$ gives an exact sequence
\begin{align*}
0 \to \uHom_{R}(M,\Omega K_i) \xrightarrow{} \uHom_{R}(M,M_i) \xrightarrow{b_i \circ -} \uHom_{R}(M,V_i) \xrightarrow{d_i \circ -} \uHom_{R}(M,K_i) \to 0 
\end{align*}
which shows the homology of the two complexes are the same. Thus, to prove the claim it is enough to show the maps on homology are surjective in degrees $0$ and $-1$.\\
In degree $0$, the map
\begin{align*}
\Hom_{R}(M,V_i) \xrightarrow{\mathrm{pr}} \uHom_{R}(M,V_i) \to \uHom_{R}(M,V_i)/\Image(b_i \circ -)
\end{align*}
is the composition of two surjective maps and hence is surjective. Also, any map in the image of $\begin{pmatrix}s \circ - & b_i \circ - \end{pmatrix}$ is clearly in the kernel of this map and so the map on homology must be surjective.\\

In degree $-1$, take $\upalpha\colon M \to M_i$ such that $b_i \circ \upalpha$ factors through $\add(R)$; that is, $\mathrm{pr}(\upalpha) \in \Ker ( \uHom_R(M,M_i) \xrightarrow{b_i \circ -} \uHom_R(M,V_i)) $. In particular, $b_i \circ \upalpha = \updelta_2 \circ \updelta_1$ for some $\updelta_1\colon M \to R^m$ and $\updelta_2 \colon R^m \to V_i$ where $m \in \mathbb{N}$. To show the map on homology is surjective we need to show there exists $\phi\colon M \to R^k$ and $\phi'\colon M \to M_i$ such that $(\phi, \phi')$ is in the kernel of $ \begin{pmatrix} s \circ - & b_i \circ - \end{pmatrix}$ and $\mathrm{pr}(\phi')=\mathrm{pr}(\upalpha)$. 

Since $R^m$ is projective, $\Hom_{R}(R^m,-)$ is exact and hence applying this to the exact sequence \eqref{a10sequence} shows that
\begin{align*}
\Hom_{R}(R^m,R^k) \oplus \Hom_{R}(R^m,M_i) \xrightarrow{\scriptsize{\begin{pmatrix} s \circ - & b_i \circ - \end{pmatrix}}} \Hom_{R}(R^m,V_i)
\end{align*} 
is surjective. Thus there exists $\upbeta\colon R^m \to R^k$ and $\upgamma \colon R^m \to M_i$ such that
\begin{align*}
\updelta_2 =s \circ \upbeta +b_i \circ \upgamma.
\end{align*}
This gives
\begin{align*}
b_i \circ \upalpha= s \circ \upbeta \circ \updelta_1 + b_i \circ \upgamma \circ \updelta_1
\end{align*}
and so $(-\upbeta \circ \updelta_1, \upalpha- \upgamma \circ \updelta_1)$ belongs to the kernel of
 \begin{align*}
\Hom_{R}(M,R^k) \oplus \Hom_{R}(M,M_i) \xrightarrow{\scriptsize \begin{pmatrix} s \circ - & b_i \circ - \end{pmatrix}} \Hom_{R}(M,V_i).
\end{align*} 
Moreover, since $\mathrm{pr} \colon \Hom_R(M,M_i) \to \uHom_R(M,M_i)$ is the natural surjection, applying $\begin{pmatrix} 0 & \mathrm{pr}\end{pmatrix}$ to $(-\upbeta \circ \updelta_1, \upalpha- \upgamma \circ \updelta_1)$ gives
\begin{align*}
\mathrm{pr}( \upalpha- \upgamma \circ \updelta_1)= \mathrm{pr}(\upalpha)
\end{align*} 
as $\upgamma \circ \updelta_1$ factors through $\add(R)$. This shows the map on homology is surjective and hence is an isomorphism.
\end{proof}
We now have all the results required to prove Theorem \ref{appenmain}.

\begin{theorem}[Theorem \ref{appenmain}] \label{proofofappenmain}
With the setup of \ref{appendixsetup}, there is an isomorphism
\begin{align*}
\Gamma_{\!\con} \otimes_{\Gamma}^{\bf L} T \cong \big( 0 \to \bigoplus_{j \neq i} \uHom_R(M,M_j) \big) \oplus \big( \uHom_{R}(M,M_i) \xrightarrow{b_i} \uHom_{R}(M,V_i) \big). 
\end{align*}
in $\Db(\Lambda)$.
\end{theorem}
\begin{proof}
Since $\Gamma_{\!\con} \cong \bigoplus_{j=1}^n B_j$, it is enough to show that
\begin{align*}
B_j \otimes_{\Gamma}^{\bf L} T \cong \uHom_R(M,M_j)
\end{align*}
when $j \neq i$, which holds by Lemma \ref{jneqi} and that
\begin{align*}
B_i \otimes_{\Gamma}^{\bf L} T \cong  \big(\uHom_{R}(M,M_i) \xrightarrow{b_i \circ -} \uHom_{R}(M,V_i)\big)
\end{align*}
which follows by combining Lemmas \ref{quasiso1} and \ref{quasiso}.
\end{proof}
\begin{remark} \label{defS}
Note that in this process we have constructed a complex of projective $\Lambda$-modules, which is quasi-isomorphic to the tilting complex,
\begin{align}
\big( 0 \to \bigoplus_{j \neq i} \uHom_R(M,M_j) \big) \oplus \big( \uHom_{R}(M,M_i) \xrightarrow{b_i} \uHom_{R}(M,V_i) \big), \label{twotermcomplexP}
\end{align}
from Theorem \ref{decon}. In particular, when $j \neq i$ take $\EuScript{P}_j$ to be the projective resolution  
\begin{align*}
0 \to\Hom_R(M,M_j)\to\Hom_R(M,R^{n_j})\to \Hom_R(M,R^{n_j})\to \Hom_R(M,M_j) \to 0.
\end{align*}
of $\uHom_R(M,M_j)$ as a $\Lambda$-module, as in Lemma \ref{projresj}. Further, set $\EuScript{P}_i$ to be the complex of projective $\Lambda$-modules
\begin{align*}
\scalebox{0.9}{$0 \to \Hom_R(M,M_i) \to \Hom_R(M,V_i) \to \Hom_R(M,R^k) \to \Hom(M,R^k \oplus M_i) \to \Hom_R(M,V_i) \to 0$},
\end{align*}
constructed in \ref{quasiso1}, which, by \ref{quasiso}, is quasi-isomorphic to 
\begin{align*}
\uHom_R(M,M_i) \xrightarrow{b_i \circ -} \uHom_R(M,V_i).
\end{align*}
Then $\EuScript{P} \colonequals \bigoplus\limits_{j=1}^n \EuScript{P}_j$ is clearly quasi-isomorphic to \eqref{twotermcomplexP}. This is a tool useful for calculations in the main paper.
\end{remark}

\end{document}